\documentclass[10pt,compress]{article}

\usepackage{latexsym,amsfonts}
\usepackage{amssymb,amsthm,upref,amscd}
\usepackage[T1]{fontenc}
\usepackage{times}
\usepackage{amscd}
\usepackage{enumerate}
\usepackage{mathrsfs}
\usepackage{amsmath}
\usepackage{color}

\usepackage{indentfirst}
\usepackage{geometry}
 \geometry{
 a4paper,
 total={170mm,257mm},
 left=20mm,right=22mm,
 top=20mm,
 }
 

\PassOptionsToPackage{reqno}{amsmath}


\usepackage{mathbbol}

\DeclareSymbolFontAlphabet{\amsmathbb}{AMSb}

\newcommand{\blue}[1]{\textcolor{blue}{#1}}
\newcommand{\dd}{\;{\rm d}}
\newcommand{\ee}{{\rm e}}
\newcommand{\ff}{\varphi}
\newcommand{\rr}{\mathbb{R}}
\newcommand{\lt}{{L^2}}

\newcommand{\scal}[1]{\left\langle #1 \right\rangle}
\newcommand{\rt}{{\mathbb{R}^2}}
\newcommand{\abso}[1]{\left|   #1   \right|}
\newcommand{\norm}[1]{\left\|   #1   \right\|}
\newcommand{\paar}[1]{\left(   #1   \right)}
\newcommand{\sett}[1]{\left\{   #1   \right\}}
\newcommand{\nl}{\mathscr{L}}
\newcommand{\nh}{\mathcal{H}}
\newcommand{\ii}{{\rm i}}

\newcommand{\fc}{\mathfrak{C}}

\newcommand{\fm}{\mathfrak{M}}

\newcommand{\ft}{\mathfrak{T}}


\newtheorem{thm}{Theorem}[section]
\newtheorem{prop}{Proposition}[section]
\newtheorem{defi}{Definition}[section]
\newtheorem{lem}{Lemma}[section]
\newtheorem{cor}{Corollary}[section]

\newtheorem{rem}{Remark}[section]
\theoremstyle{notation}

\newcommand{\R}{\mathbb{R}}

\numberwithin{equation}{section}

\newcommand{\Z}{\mathbb{Z}}

\newcommand{\E}{J}

\newcommand{\al}{\alpha}

\newcommand{\om}{\omega}

\newcommand{\eps}{\epsilon}

\newcommand{\bW}{{\mathbb W}}

\newcommand{\wto}{\rightharpoonup}

\makeatletter \@addtoreset{equation}{section} \makeatother

\usepackage[colorlinks=true,urlcolor=blue,
citecolor=blue,linkcolor=blue,linktocpage,pdfpagelabels,
bookmarksnumbered,bookmarksopen,pagebackref]{hyperref}
\newcounter{const}
\setcounter{const}{0}

\usepackage[colorinlistoftodos]{todonotes}
\makeatletter
\providecommand\@dotsep{5}
\def\listtodoname{List of Todos}
\def\listoftodos{\@starttoc{tdo}\listtodoname}
\makeatother
\allowdisplaybreaks

\usepackage{fancyhdr}
\pagestyle{fancy}
\fancyhf{}
\fancyhead[L]{Anisotropic NLS equation on the plane}  
\fancyhead[R]{\thepage} 
 
\author{Amin Esfahani\footnote{Department of Mathematics, Nazarbayev University, Astana 010000, Kazakhstan.   E-mail: amin.esfahani@nu.edu.kz}, Hichem Hajaiej\footnote{Department of Mathematics, Cal State LA,
Los Angeles CA 90032, USA.
Email: hhajaie@calstatela.edu}, Alessio Pomponio\footnote{Dipartimento di Meccanica, Matematica e Management,	Politecnico di Bari,
via Orabona 4,  70125  Bari, Italy. E-mail:
alessio.pomponio@poliba.it}   }

\title{New insights into the solutions of a class of anisotropic nonlinear Schr\"odinger equations on the plane   
 }

\date{} 
\begin{document}

\maketitle

 \begin{abstract}
		In this paper, we study the following anisotropic nonlinear Schr\"odinger equation on the plane,
	\[\begin{cases}
	    \ii\partial_t \Phi+\partial_{xx} \Phi -D_y^{2s}  \Phi +|\Phi|^{p-2}\Phi=0,&\quad    (t,x,y)\in\mathbb{R} \times \mathbb{R}^2,\\
     \Phi(x,y,0)=\Phi_0(x,y),&\quad  (x,y)\in\rt,
	\end{cases}\]
		 		where  $D_y^{2s}=\left(-\partial_{yy}\right)^s$ denotes the fractional Laplacian with $0<s<1$ and $2<p<\frac{2(1+s)}{1-s}$. We first study the existence of normalized solutions to this equation in the subcritical, critical, and supercritical cases.  To this aim, regularity results and a Pohozaev type identity are necessary. Then, we  determine the conditions under which the solutions blow up. Furthermore, we demonstrate the existence of boosted traveling waves when $s\geq1/2$ and their decay at infinity. Additionally, for the delicate case $s=1/2$, we provide a non-existence result of boosted traveling waves and we establish that there is no scattering for small data. Finally, we also study normalized boosted travelling waves in the mass subcritical case.  Due to the nature of the equation, we do not impose any radial symmetry on the initial data or on the solutions.  \\ \\

     \noindent Keywords: Anisotropic NLS equation, Normalized solution, Blow-up, Boosted traveling wave \\
     2020 Mathematical subject classification: 35Q55, 35A15, 35B65,  35B44, 35R11
     
   \end{abstract}

\tableofcontents
	\section{Introduction}

In recent decades, there has been significant interest among mathematicians in the study of nonlinear fractional Schr\"{o}dinger equations, primarily due to their applications in nonlinear optics. Notably, Laskin introduced the concept of using the path integral over Lévy trajectories instead of Brownian trajectories to deduce the fractional Schr\"{o}dinger equation \cite{laskin1,laskin2}. In the context of fractional quantum mechanics, the Hamiltonian can be given as follows:
\[H_s(\mathbf{p},\mathbf{r})=|\mathbf{p}_1|^2+|\mathbf{p}_2|^{2s}.\]
Following the argument presented in \cite{laskin1} (see also \cite{copac}), we arrive at the anisotropic nonlinear Schr\"{o}dinger equations with local nonlinearities:
\begin{equation} \label{eq0}
	\begin{cases}
	    \ii\partial_t \Phi+\partial_{xx} \Phi -D_y^{2s}  \Phi +|\Phi|^{p-2}\Phi=0,&\quad \text{for }  (t,x,y)\in\mathbb{R} \times \mathbb{R}^2,\\
     \Phi(x,y,0)=\Phi_0(x,y),&\quad \text{for }  (x,y)\in\rt,
	\end{cases}
\end{equation}
where $D_y^{2s}=\left(-\partial_{yy}\right)^s$ denotes the fractional Laplacian with $0<s<1$ and $p>2$. Equation \eqref{eq0} models important problems in physics and engineering, see \cite{YIK} and references therein for more details.

The fractional Laplacian $\left(-\partial_{yy}\right)^s$ is defined by its Fourier transform as $\mathcal{F}\left(D_y^{2s}u\right)(\xi)=|\xi_2|^{2s}\mathcal{F}(u)(\xi)$, where $\xi=(\xi_1,\xi_2)$ and $\mathcal{F}$ represents the Fourier transform. The operator $\mathcal{L}=-\partial_{xx} +D_y^{2s} $ arises in the investigation of toy models \cite{BCCI1, BCCI2} for parabolic equations exhibiting local diffusion solely in the $x$-direction, while non-local diffusion, capturing long-range interactions, occurs in the $y$-direction. Furthermore, $\mathcal{L}$ appears in the study of the two-dimensional generalized Benjamin-Ono-Zakharov-Kuznetzov equation, serving as a model for electromagnetic phenomena in thin nano-conductors on a dielectric substrate (see \cite{epb}).
Equation \eqref{eq0} was initially explored in \cite{Xu}, where the scattering of solutions to \eqref{eq0} defined on $\R_x\times \mathbb{T}_y$ with $s=\frac{1}{2}$ and $p=4$ was discussed. Subsequently, in \cite{YIK}, the local and global well-posedness of solutions to \eqref{eq0}, along with the existence and orbital stability of solitary waves for $1/2<s<1$ and $2<p<6$, were investigated.

Equation \eqref{eq0} enjoys (formally) the following conservation laws:
\begin{align*}
	\text{Mass:}\quad &M(\Phi)=\int_{\R^2}|\Phi(t)|^2\dd x\dd y,\\
	\text{Energy:}\quad &E(\Phi)=\frac12\int_{\R^2}\paar{|\Phi_x(t)|^2+|D_y^s\Phi(t)|^2}\dd x\dd y-\frac1p\int_\rt|\Phi(t)|^p\dd x\dd y,\\
	\text{Angular  Momentum:}\quad & P(\Phi)=\ii\scal{D_y^{\frac12}\Phi(t),\nh_yD_y^{\frac12} \Phi(t)}+
	\ii\scal{ \Phi(t),  \Phi_x(t)},
\end{align*}
 where $\nh_y$ represents the Hilbert transform in the $y$-variable.

The study of nonlinear fractional Schr\"{o}dinger equations, particularly with anisotropy, provides valuable insights and has significant applications in many domains, including nonlinear optics, ecology, biology, and fractional quantum mechanics.
Seeking solitary wave solutions (solitons) is crucial to understand the behavior of particles  in quantum mechanics. A soliton refers to a particle-like solution to the nonlinear equation that retains its shape and speed while propagating. In this context, soliton solutions take the form:
\[\Phi(t,x,y)=e^{\ii\al t} u(x,y),\]
where $\al\in \mathbb{R}$ represents the frequency of the wave. Equation \eqref{eq0} then reduces to the following equation for $u(x,y)$:
\begin{align} \label{equ-0}
	-\partial_{xx} u + D_y^{2s} u + \al u=|u|^{p-2}u \quad \mbox{in} \ \R^2.
\end{align}

Addressing the existence of normalized solutions provides valuable information in the understanding of the model under-study and its realization.
While the linear Schr\"{o}dinger equation possesses the universality property allowing solutions to be freely normalized, the situation is not as straightforward for Schr\"{o}dinger equations with polynomial nonlinearities.  

In this work, we first aim to find normalized solutions of equation \eqref{equ-0}, which are subject to the $L^2$-norm constraint:
\begin{align} \label{mass}
	\int_{\R^2} |u|^2 \dd x\dd y =c>0.
\end{align}

Normalized solutions of nonlinear Schr\"{o}dinger equations have garnered substantial attention within the mathematical community.
The first breakthrough paper providing a general approach to establish the existence of normalized solutions is due to Lions \cite{lions85}. In the celebrated concentration-compactness method, the author presented a heuristic method to establish the existence of normalized solutions in the subcritical case.  Hundreds of papers used this approach to study the existence of normalized solutions for particular nonlinearities. In \cite{HS1}, the authors were able to develop Lions' method and included a large class of nonlinearities.  The critical and the supercritical cases remained unsolved until the brilliant approach of   Bellazzini and Jeanjean \cite{Bellazzini-Jeanjean}   who suggested a new constraint for this challenging case. See also \cite{jeanjean}. More recently,   Hajaiej and   Song developed novel and self-contained approaches that not only provide the key ingredients to study the existence/non-existence of normalized solutions but also addresses their multiplicity or uniqueness \cite{HA1, hajsong, Hajaiej-Song-unique}. However, all the existing results do not seem to directly apply to the anisotropic setting where some fundamental properties of this operator have not been addressed yet. To the best of our knowledge, the maximum principle, the Pohozaev identity as well as the regularity, and decay properties of the solutions to \eqref{equ-0} seem to all be missing in the literature.  We will shed some lights on some of these fundamental properties of our anisotropic operator. This presented some difficulties and required new ideas contrarly to the study of normalized solutions in the fractional setting that didn't require novelties  but rather added considerable technical difficulties except for the regularity. The reader can refer to \cite{HH1, HH2, HH3}, where the existence/non-existence,  orbital stability of standing waves under optimal nonlinearities have been established.

In the  first part of the  present paper, we address the solutions to \eqref{equ-0} under the $L^2$-norm constraint \eqref{mass}, $2<p<\frac{2(s+1)}{1-s}$, and the parameter $\alpha \in \R$ as an unknown Lagrange multiplier. Notably, \cite{GHS} demonstrated the existence of a positive ground state for equation \eqref{equ-0} when $p<\frac{2(1+s)}{1-s}$, by exploring the axial symmetry. In this context, a ground state   is defined as the solution to \eqref{equ-0} with the lowest energy among all solutions. Additionally, \cite{GHS} revealed that these waves exhibit spectral stability when $p\leq\frac{2(1+s)}{1-s}$ and spectral instability within the complementary range $\frac{2(3s+1)}{1+s}<p<\frac{2(1+s)}{1-s}$. Uniqueness results were also presented in \cite{GHS} under non-degeneracy abstract conditions. As established in \cite{hajsong} the study of normalized solutions is much harder than the one of ground states. 

 Here, solutions to \eqref{equ-0}-\eqref{mass} correspond to critical points of the energy functional $E$ restricted on $S_c$, where
\[
S_c:=\left\{u \in H : \|u\|^2_2  =c>0\right\} 
\]
and $H$ is defined via the norm
\[
\|u\|_H^2=\|u\|_{2}^2+
\|u_x\|_{2}^2+
\|D_y^{s}u\|_{2}^2.
\]
Note that $H=H_x^1L_y^2\cap L^2_xH^s_y$.  In this situation the parameter $\al$ arises as a Lagrange multiplier, it does depend on the
solution and is not a priori given.

We will show that the energy $E$ is bounded from below on $S_c$ if $2<p<\frac{2(3s+1)}{1+s}$ (called subcritical case). This is an easy corollary of an anisotropic Gagliardo-Nirenberg inequality associated with \eqref{equ-0} (see Lemma \ref{gn-lemma} and Theorem \ref{thm1}). On the other hand, the boundedness of $E$ on $S_c$ fails when $p>\frac{2(3s+1)}{1+s}$ (supercritical case). Indeed, for $u \in S_c$ and $t>0$, we introduce a scaling of $u$ as	 
\begin{equation}\label{scaling-0}
		u_t(x, y):=t^{\frac{s+1}{2}} u(t^s x, t y), \quad (x, y) \in \R^2.
\end{equation}
Thus, we get that $u_t \in S_c$ and
	\begin{align} \label{scaling}
		E(u_t)=\frac{t^{2s}}{2} \|\partial_x u\|_2^2  + \frac{t^{2s}}{2} \|D_y^{s} u\|_2^2  -\frac{t^{\frac{(s+1)(p-2)}{2}}}{p} \|u\|_p^p .
	\end{align}
As we let $t\to+\infty$, it becomes evident that the energy $E$ constrained on $S_c$ becomes unbounded from below for any $c > 0$. Consequently, a global minimizing approach fails to be effective in the supercritical case.  Inspired by the concept of the minimizing method on the Pohozaev manifold, our objective is to create a submanifold within $S_c$ where $E(u)$ is both bounded from below and coercive. Subsequently, we aim to identify minimizers of $E(u)$ on this specific submanifold. To achieve this, we present the following minimizing problem:
\[
\inf_{u \in P_c} E(u),
\]
 where
\begin{equation}\label{Pc}
    P_c:=\{u \in S_c : Q(u)=0\},
\end{equation}
and 
\begin{equation}\label{Q}
Q(u):= s \paar{\norm{u_x}_2^2+\norm{D_y^su}_2^2} -\frac {(s+1)(p-2)}{2p} \|u\|_{p}^p.
\end{equation}
The submanifold $P_c$ is known as the Pohozaev-Nehari manifold. We demonstrate that it constitutes a smooth manifold of codimension 2 in $H$ for any $c>0$. Additionally, we establish the coercivity of $E$ on $P_c$.

Shifting focus, our meticulous observations reveal that the energy functional $E\vert_{S_c}$ exhibits a mountain pass geometry. This discovery propels us towards seeking a critical point utilizing the mountain pass theorem. This strategy heavily relies on an enhanced iteration of the min-max principle, as originally introduced by Ghoussoub \cite{Gh}. 

It is noteworthy that the Pohozaev identity $Q(u) = 0$ was already established for the ground states of \eqref{equ-0} in \cite{GHS}, employing an idea related to the maximum principle derived from \cite{FW}. However, our approach does not assume that the solutions to \eqref{equ-0} are non-negative real-valued. This constitutes a major restriction and requires novel ideas to address. The distinctive contribution of our work lies in proving the Pohozaev identity for any solution of \eqref{equ-0} while relaxing the assumptions mentioned earlier (see Theorem \ref{ph}).
To demonstrate the validity of this identity, we establish appropriate regularity and decay properties of the solutions, which are the main ingredients. Our method is inspired by \cite{FW}, where Felmer and Wang constructed suitable super and sub-solutions to obtain a version of the Hopf Lemma and a strong maximum principle, leading to suitable decay of positive classical solutions of \eqref{equ-0}. However, in our case, we do not restrict ourselves to these solutions and establish the essential decay and regularity of any solution $u\in H$ of \eqref{equ-0}, improving the results of \cite{FW}. To achieve this, we employ the methods developed by   Bona and Li in \cite{bonali} (see also \cite{capni,epb}) concerning the asymptotic properties of solitary-wave solutions of the one-dimensional dispersive equation. Their results heavily rely on the kernel of the associated linear equation.
It is worth noting that an important feature of the results of \cite{bonali} is that the solutions are not necessarily positive. This aspect aligns with our direction of proof for $Q(u)=0$. Nevertheless, a principal challenge in achieving this goal is that the kernel of the linear equation associated with \eqref{equ-0} cannot be explicitly presented using Cauchy's Theorem or Residue Theorem. The anisotropic nature of the equation adds complexity to the analysis, as evidenced by Lemma \ref{lem1} and Theorem \ref{decay-regul-lemma}.

In light of the aforementioned results, we proceed to demonstrate the existence of energy minimizers (normalized solutions) of \eqref{equ-0} on the sphere $S_c$. In Sections \ref{subcritical-section} and \ref{critical-section}, we first investigate the subcritical case $p<\frac{2(3s+1)}{s+1}$ and the critical case $p=\frac{2(3s+1)}{s+1}$, respectively. In the subcritical case, the existence of normalized solutions follows directly from the boundedness of $E$ from below on $S_c$   and a standard minimization argument (see Theorem \ref{thm1}). Moreover the set of minimizers of $E$ on $S_c$ is orbitally stable, as shown in Theorem \ref{thm2}. 
In the critical case, we demonstrate that no normalized solution exists when $c$ is small in Theorem \ref{thm1-mass-cr}.
In the supercritical case, in Section \ref{supercritical-section} we leverage the concepts introduced in \cite{jeanjeanlu2} and consider more general nonlinearities to establish the existence of normalized solutions of \eqref{equ-0}. Furthermore, by employing homotopy and a version of Ghossoub's min-max principle, we succeed in proving the existence of ground states in Theorem \ref{groun-thm}.

 In the second part of this paper with Section \ref{blowup-section},  we present an independent result by identifying criteria for the existence of blow-up solutions in the Cauchy problem associated with \eqref{eq0}. In \cite{BHL}, implementing a localized virial law and by means of the representation Balakrishnan’s formula, the authors demonstrated the blow-up of  solutions of the fractional nonlinear Schr\"odinger  equation in the critical or supercritical case in the radial setting. 
In the estimates, therein, the radial symmetry, together with the associated uniform decay at infinity of radial functions (due to a generalized Strauss Lemma for fractional Sobolev space), is a crucial assumption. 
Here, we adapt some ideas of \cite{BHL} but, unlike the fractional nonlinear Schr\"odinger equation, our anisotropic problem  \eqref{eq0} lacks natural  radial symmetry, necessitating the removal of this condition and modification of the argument to apply it to our case. 
In particular we have to require some additional regularity and integrability of the local solution of \eqref{eq0} obtaining some criteria for the blow-up phenomena in Theorem \ref{bl-c-thmm}. Consequently, in Theorem \ref{unstable} are able to demonstrate the instability of standing waves by exploiting the mechanism of blow-up.

 We conclude with  Section \ref{boosted-section}, where we explore the existence of boosted traveling waves represented by the form $\ee^{\ii\al t} u(x,y-\omega t)$, with $\al,\omega\in\rr$. These waves exhibit a non-zero speed $\omega$. Hong and Sire \cite{hong-sire} achieved similar solutions for the one-dimensional fractional nonlinear Schr\"odinger equation with $s\in(1/2,1)$ and cubic nonlinearity. They addressed the fractional nonlinear Schr\"odinger equation's lack of Galilean invariance by introducing an ansatz function derived from pseudo-Galilean transformations. These transformations ensure the fractional nonlinear Schr\"odinger  equation remains invariant, albeit with a controllable error term. 
 Here, drawing inspiration from \cite{bglv}, we initiate our approach with a Weinstein-type minimization problem. Through this, we establish the existence of a nontrivial boosted traveling wave, also known as a ground state, when $s\geq1/2$ (and suitable $\omega$). Addressing the issue of compactness within the minimizing sequence of the aforementioned minimization problem, we introduce a generalization of Lieb’s compactness lemma.   
 Subsequently, utilizing the Steiner symmetrization with respect to $x$ in the Fourier space, as outlined in \cite{BHL}, we establish the symmetry properties of these boosted ground states. We also study the decay of such solutions. See Theorems \ref{exis-thm-bt} and \ref{decay-thm-re}. 
However, in the case where $s=1/2$, the scenario becomes more intricate due to the competing effects of $\ii\partial_y$ and $D_y^1$. This intricacy leads us, in Theorem \ref{nonexi} by means of a Pohozaev type identity, to the conclusion that no nontrivial boosted travelling wave exists if $|\omega|>1$. Later on, we demonstrate the absence of small data scattering for \eqref{eq0} within the energy space $H$, for $s=1/2$, in Theorem \ref{nonscat}. At the end, we also prove the existence of normalized boosted travelling waves in the mass subcritical case, in Theorem \ref{boost-norm}.

	\section{Regularity and Pohozaev type identity}\label{se:reg}
	
	In this section,  after some preliminary results, we show regularity properties for solutions of \eqref{equ-0} which will provide a Pohozaev type identity.  
 
 First of all, we shall present the following anisotropic Gagliardo-Nirenberg  inequality in $H$ and some properties of $H$.
	
	\begin{lem}  {\cite[Theorem 1.1]{E}}\label{gn-lemma}
		Let  $s\in (0,1)$ and $2 \leq q < q_s=\frac{2(s+1)}{1-s}$, then there exists a constant $C_{q,s}>0$ such that
		\begin{align} \label{gn}
			\int_{\R^2}|u|^q \,\dd x\dd y \leq C_{q, s} \left(\int_{\R^2}|u|^2  \dd x\dd y \right)^{\frac q2 -\frac{(q-2)(s+1)}{4s}}\left(\int_{\R^2} |\partial_x u|^2  \dd x\dd y \right)^{\frac{q-2}{4}}\left(\int_{\R^2} |D_y^{s} u|^2  \dd x\dd y \right)^{\frac{q-2}{4s}}.
		\end{align}
 If $s>1/2$, then $q=q_s$ is allowed. The sharp constant $C_{q,s}$ in \eqref{gn} is presented by
 \begin{equation}\label{best-con} 
 C_{q,s}^{-1}=
 (2qs)^{-1}
 (q-2)^{\frac{(q-2)(1+s)}{4s}}s^{\frac{q-2}{4}}
 \paar{2(1+s)-q(1-s)}^\frac{4s-(q-2)(1+s)}{4s}
 \|\ff\|_{L^2}^{q-2},
\end{equation}
 where $\ff$ is a ground state of \eqref{equ-0} with $p=q$ and $\alpha =1$.
	\end{lem}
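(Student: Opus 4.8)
The inequality \eqref{gn} is invariant under the three--parameter group generated by amplitude dilation $u\mapsto\mu u$ and the two anisotropic spatial dilations $u(x,y)\mapsto u(\la x,y)$ and $u(x,y)\mapsto u(x,\nu y)$; checking that the exponents $\tfrac q2-\tfrac{(q-2)(s+1)}{4s}$, $\tfrac{q-2}{4}$, $\tfrac{q-2}{4s}$ make both sides equivariant under each generator is exactly what forces them, so the exponents are not an input but an output of the scaling. The plan is therefore to realise $C_{q,s}^{-1}$ as the infimum of the Weinstein quotient
\[
J(u):=\frac{\paar{\norm{\partial_x u}_2^2}^{\frac{q-2}{4}}\,\paar{\norm{D_y^s u}_2^2}^{\frac{q-2}{4s}}\,\paar{\norm{u}_2^2}^{\frac q2-\frac{(q-2)(s+1)}{4s}}}{\norm{u}_q^q},\qquad u\in H\setminus\{0\},
\]
and to show that this infimum is positive and attained, and that at a minimiser the three quadratic norms are rigidly related. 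That $\inf J>0$ is merely the embedding $H=H_x^1L_y^2\cap L^2_xH^s_y\hookrightarrow L^q$, obtainable by anisotropic interpolation; the substance of the argument is that the infimum is \emph{achieved}.

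First I would establish existence of an optimiser. Using the three scalings one normalises a minimising sequence $\{u_n\}$ so that $\norm{\partial_x u_n}_2=\norm{D_y^s u_n}_2=\norm{u_n}_2=1$, which keeps $\{u_n\}$ bounded in $H$ and reduces $J(u_n)=\norm{u_n}_q^{-q}\to\inf J$ to a uniform two-sided control of $\norm{u_n}_q$. Compactness is then recovered by concentration--compactness adapted to the anisotropic space $H$: vanishing would force $\norm{u_n}_q\to0$, contradicting the lower bound, and dichotomy is excluded by the strict subadditivity of the scale-invariant functional (splitting the mass strictly raises $J$), so after recentring by $(x,y)$--translations one obtains a nonzero minimiser $\ff$. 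This step, where the absence of radial symmetry and the genuinely anisotropic scaling block any reduction to a radial or one-dimensional problem, is the main obstacle. The endpoint $q=q_s$ (admissible only for $s>1/2$) is more delicate still: there the exponent $\tfrac q2-\tfrac{(q-2)(s+1)}{4s}$ vanishes, $\norm{u}_2$ drops out of $J$, and one faces the \emph{critical} anisotropic Sobolev inequality, so the compactness analysis must be run in its critical version, tracking possible concentration, with $s>1/2$ being precisely what secures a minimiser.

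Next I would identify the constant. The Euler--Lagrange equation for $J$, after the amplitude and two spatial dilations are used to normalise all coefficients to $1$, is exactly \eqref{equ-0} with $p=q$ and $\al=1$; hence $\ff$ is a ground state of that equation, consistent with the appearance of $\|\ff\|_{L^2}$ in \eqref{best-con}. Being a critical point of the action $\cA(u)=\tfrac12\paar{\norm{\partial_x u}_2^2+\norm{D_y^s u}_2^2+\norm{u}_2^2}-\tfrac1q\norm{u}_q^q$, the function $\ff$ renders $\cA\paar{\mu\,\ff(\la x,\nu y)}$ stationary at $\mu=\la=\nu=1$; the $\mu$--derivative gives the Nehari identity
\[
\norm{\partial_x\ff}_2^2+\norm{D_y^s\ff}_2^2+\norm{\ff}_2^2=\norm{\ff}_q^q,
\]
while the $\la$-- and $\nu$--derivatives give two anisotropic Pohozaev identities. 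Solving the resulting $3\times3$ linear system produces the rigid relations
\[
\norm{\partial_x\ff}_2^2=\tfrac{q-2}{2q}\,\norm{\ff}_q^q,\qquad \norm{D_y^s\ff}_2^2=\tfrac{q-2}{2qs}\,\norm{\ff}_q^q,\qquad \norm{\ff}_2^2=\tfrac{2(1+s)-q(1-s)}{2qs}\,\norm{\ff}_q^q,
\]
the last coefficient being positive exactly in the subcritical range $q<q_s$, which is where $\norm{\ff}_2^2>0$ is forced.

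Finally, substituting these relations into $C_{q,s}^{-1}=J(\ff)$ and eliminating $\norm{\ff}_q^q$ via the third relation in favour of $\norm{\ff}_2$ collapses the expression to a single power $\norm{\ff}_{L^2}^{q-2}$ multiplied by explicit numerical factors; bookkeeping the accumulated powers of $q-2$, of $s$, and of $2(1+s)-q(1-s)$ reproduces \eqref{best-con}. I expect essentially all the difficulty to lie in the variational/compactness step, the Pohozaev computation being linear algebra once sufficient regularity of $\ff$ justifies the integrations by parts underlying the two dilation identities.
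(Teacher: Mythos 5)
The paper does not prove this lemma at all: it is imported verbatim from \cite[Theorem 1.1]{E}, so there is no internal argument to compare yours against line by line. Your outline is the standard Weinstein-quotient route, which is indeed how such sharp anisotropic Gagliardo--Nirenberg constants are obtained, and your bookkeeping is consistent with what the paper itself uses later: the three rigid identities you extract from the Nehari relation and the two dilation (Pohozaev) relations are exactly the identities $s\norm{D_y^s\ff}_{\lt}^2=\norm{\ff_x}_{\lt}^2=\frac{s(q-2)}{2(1+s)-q(1-s)}\norm{\ff}_{\lt}^2$ and $\norm{\ff}_{L^q}^q=\frac{2qs}{2(1+s)-q(1-s)}\norm{\ff}_{\lt}^2$ quoted from \cite{E} in the proof of Theorem \ref{bl-c-thmm}, and substituting them into your quotient $J(\ff)$ does reproduce \eqref{best-con}: the accumulated exponents of $q-2$, of $s$, of $2qs$ and of $2(1+s)-q(1-s)$ all come out as stated.

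Two caveats. First, the compactness step is essentially the entire proof and your sketch of it is thin: ``dichotomy is excluded by strict subadditivity of the scale-invariant functional'' is an assertion, not an argument, and in the anisotropic setting one cannot fall back on radial rearrangement; one needs either a Lieb-type compactness statement adapted to $H$ (compare Lemma \ref{lieb} of this paper) or a profile decomposition, and this is where the real work of \cite{E} lies. Second, your treatment of the endpoint $q=q_s$ overclaims. At $q=q_s$ the $L^2$ exponent vanishes and \eqref{gn} becomes a critical embedding, but the sharp-constant formula \eqref{best-con} cannot persist there: the exponent $\frac{4s-(q-2)(1+s)}{4s}$ of $2(1+s)-q(1-s)$ is negative while the base vanishes, and by Corollary \ref{nonexistence} the Euler--Lagrange equation \eqref{equ-0} with $\alpha=1$ admits no nontrivial solution for $p\ge q_s$, so there is no ground state $\ff$ to insert into \eqref{best-con}. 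Only the validity of the inequality, not the extremizer characterization of its best constant, extends to $q=q_s$ when $s>1/2$; your claim that the critical compactness analysis ``secures a minimiser'' of the same Weinstein quotient is therefore not correct as stated.
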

	\begin{cor}\label{cor-gn}
By employing the same argument as in \cite{E}, one can demonstrate the existence of a constant  $C_H>0$ such that
		\begin{align} \label{gn-hH}
			\int_{\R^2}|u|^q \,\dd x\dd y \leq C_H
    \left(\int_{\R^2}|u|^2  \dd x\dd y \right)^{\frac q2 -\frac{(q-2)(s+1)}{4s}}
    \paar{\int_\rt\left(|\partial_x u|^2+|D_y^su|^2\right)\dd x\dd y}^{\frac{(q-2)(s+1)}{4s}},
		\end{align}
  and the sharp constant $C_H$ is
  \[
  C_H^{-1}=
  (2qs)^{-1}
 ((q-2)(s+1))^{\frac{(q-2)(1+s)}{4s}} 
 \paar{2(1+s)-q(1-s)}^\frac{4s-(q-2)(1+s)}{4s}
 \|\ff\|_{L^2}^{q-2}.
  \]
	\end{cor}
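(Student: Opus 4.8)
The plan is to deduce \eqref{gn-hH} from Lemma \ref{gn-lemma} by an elementary optimization that collapses the two separate gradient factors in \eqref{gn} into their sum, and then to verify sharpness of the resulting constant by exploiting an anisotropic dilation invariance of the quotient underlying \eqref{gn}.

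Write $a=\frac{q-2}{4}$ and $b=\frac{q-2}{4s}$, so that $a+b=\frac{(q-2)(s+1)}{4s}$ and the exponent $\frac q2-(a+b)$ of $\|u\|_2^2$ coincides with the one in \eqref{gn}. Abbreviating $X=\|\partial_x u\|_2^2$ and $Y=\|D_y^s u\|_2^2$, the first step is the weighted arithmetic--geometric mean inequality with weights $\theta_1=\frac{a}{a+b}=\frac{s}{s+1}$ and $\theta_2=\frac{b}{a+b}=\frac{1}{s+1}$:
\[
X^a Y^b=\paar{\paar{\tfrac{X}{\theta_1}}^{\theta_1}\paar{\tfrac{Y}{\theta_2}}^{\theta_2}}^{a+b}\theta_1^a\theta_2^b\le \theta_1^a\theta_2^b\,(X+Y)^{a+b}=\frac{s^a}{(s+1)^{a+b}}\,(X+Y)^{a+b}.
\]
Substituting this into \eqref{gn} gives \eqref{gn-hH} with $C_H=C_{q,s}\,s^a(s+1)^{-(a+b)}$. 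The explicit formula for $C_H^{-1}$ then follows by inserting \eqref{best-con}: the extra factor $(s+1)^{a+b}s^{-a}$ cancels the isolated power $s^{(q-2)/4}=s^{a}$ and merges $(q-2)^{a+b}(s+1)^{a+b}=((q-2)(s+1))^{a+b}$, reproducing exactly the stated expression.

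It remains to show that $C_H$ is sharp, which is the only delicate point since the AM--GM step is strict unless $X=sY$. The key observation is that the quotient
\[
F(u):=\frac{\|u\|_q^q}{\paar{\|u\|_2^2}^{\frac q2-(a+b)}X^a Y^b},
\]
whose supremum equals $C_{q,s}$ by Lemma \ref{gn-lemma}, is invariant under the anisotropic dilation $u\mapsto u(\lambda x,y)$, $\lambda>0$: one checks directly that $X\mapsto\lambda X$, $Y\mapsto\lambda^{-1}Y$, $\|u\|_2^2\mapsto\lambda^{-1}\|u\|_2^2$ and $\|u\|_q^q\mapsto\lambda^{-1}\|u\|_q^q$, and the exponents conspire so that $F$ is left unchanged. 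Let $\ff$ be the ground state realizing equality in \eqref{gn} and set $w(x,y):=\ff(\lambda x,y)$ with $\lambda^2=s\,Y(\ff)/X(\ff)$; then $F(w)=F(\ff)=C_{q,s}$, while $w$ now satisfies $X(w)=s\,Y(w)$, i.e. $X/\theta_1=Y/\theta_2$, which is precisely the equality case of the AM--GM step. Hence \eqref{gn-hH} is an equality at $w$, so $C_H=C_{q,s}\,s^a(s+1)^{-(a+b)}$ is optimal.

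The main obstacle is thus confining the argument to the equality case: dilation invariance of $F$ lets us move any optimizer of \eqref{gn} onto the diagonal $X=sY$ without changing its value, which is what forces the combined constant to stay sharp. Equivalently, one may verify $\|\partial_x\ff\|_2^2=s\|D_y^s\ff\|_2^2$ directly by comparing the two Pohozaev identities obtained from the $x$- and $y$-dilations of \eqref{equ-0}; everything else reduces to bookkeeping of exponents.
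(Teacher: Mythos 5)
Your argument is correct, and it takes a different route from the paper, which gives no self-contained proof at all but simply defers to re-running the Weinstein-type variational argument of \cite{E} for the functional with the combined gradient term $\|u_x\|_2^2+\|D_y^su\|_2^2$. You instead \emph{deduce} \eqref{gn-hH} from the already-stated Lemma \ref{gn-lemma}: the weighted AM--GM step with weights $\tfrac{s}{s+1},\tfrac1{s+1}$ is checked correctly, the exponent bookkeeping $C_H^{-1}=C_{q,s}^{-1}(s+1)^{a+b}s^{-a}$ reproduces the stated constant, and the sharpness argument is sound --- the quotient $F$ is indeed invariant under $u\mapsto u(\lambda x,y)$ (numerator and denominator both scale like $\lambda^{-1}$), so an extremizer of \eqref{gn} can be slid onto the diagonal $X=sY$ where the AM--GM step is an equality. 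What your approach buys is self-containedness: no need to redo the concentration-compactness/Euler--Lagrange analysis of \cite{E} for the modified functional. What it loses, marginally, is that the paper's route identifies the optimizer of \eqref{gn-hH} directly as a ground state of \eqref{equ-0}. In fact the dilation step in your sharpness argument is not even needed: as the paper records later (in the proof of Theorem \ref{bl-c-thmm}, quoting \cite{E}), the ground state $\ff$ already satisfies $\|\ff_x\|_2^2=s\|D_y^s\ff\|_2^2$, so $\ff$ itself saturates both inequalities simultaneously --- which is exactly the alternative Pohozaev-identity verification you mention in your last sentence.
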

	\begin{lem} \cite[Corollary 2.1]{E} \label{embedding}
			 	Let   $s\in (0,1)$. 
		The Sobolev space $H$ is continuously embedded into $L^q(\R^2)$ for any $2 \leq q  <q_s$ and it is compactly embedded into $L^q_{loc}(\R^2)$ for any $2 \leq q < q_s$. If $s>1/2$ the continuous embedded holds also for $q=q_s$. 
	\end{lem}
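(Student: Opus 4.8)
I would prove Lemma \ref{embedding} in two stages: the continuous embedding (immediate from Gagliardo--Nirenberg) and the local compactness (the substantive point), the latter via a Riesz--Fr\'echet--Kolmogorov argument adapted to the anisotropic regularity.

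For the continuous embedding I would simply read it off the anisotropic Gagliardo--Nirenberg inequality. The case $q=2$ is contained in the definition of $\norm{\cdot}_H$. For $2<q\le q_s$ set $a=\frac q2-\frac{(q-2)(s+1)}{4s}$ and $b=\frac{(q-2)(s+1)}{4s}$, so that Corollary \ref{cor-gn} reads $\norm{u}_q^q\le C_H(\norm{u}_2^2)^a(\norm{u_x}_2^2+\norm{D_y^su}_2^2)^b$. One checks that $a\ge0$ precisely when $q\le q_s$ and $b\ge0$ always, while $a+b=\frac q2$; since both factors are bounded by $\norm{u}_H^2$, this yields $\norm{u}_q^q\le C_H\norm{u}_H^q$, i.e. $H\hookrightarrow L^q(\rt)$. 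When $s>1/2$, Lemma \ref{gn-lemma} is available at $q=q_s$, so the same computation covers the endpoint.

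For the compact embedding I would first treat $q=2$ and then bootstrap. Let $\{u_n\}$ be bounded in $H$; it is then bounded in $L^2(\rt)$, and the crux is a uniform control of translations $\tau_h u_n:=u_n(\cdot+h)$, $h=(h_1,h_2)$. By Plancherel the $x$-shift is governed by the full $H^1_x$ regularity, $\norm{\tau_{(h_1,0)}u_n-u_n}_2\le\abso{h_1}\,\norm{u_{n,x}}_2$, whereas the $y$-shift has only fractional $H^s_y$ regularity at its disposal; here I use the elementary bound $\abso{\ee^{\ii\theta}-1}^2\le 4^{1-s}\abso{\theta}^{2s}$ (valid for $0<s\le1$) with $\theta=\xi_2h_2$ to obtain $\norm{\tau_{(0,h_2)}u_n-u_n}_2\le 2^{1-s}\abso{h_2}^{s}\norm{D_y^su_n}_2$. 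Hence $\sup_n\norm{\tau_hu_n-u_n}_2\to0$ as $\abso{h}\to0$. Fixing a ball $B_R$ and a mollifier $\rho_\eps$, this translation estimate makes $\norm{u_n*\rho_\eps-u_n}_{L^2(B_R)}$ uniformly small as $\eps\to0$, while for each fixed $\eps$ the family $\{u_n*\rho_\eps\}$ is equibounded together with its gradient on $B_R$, hence precompact in $C(\overline{B_R})\subset L^2(B_R)$ by Arzel\`a--Ascoli. Thus $\{u_n\vert_{B_R}\}$ is totally bounded in $L^2(B_R)$, and a diagonal extraction over $B_R\uparrow\rt$ gives a subsequence converging in $L^2_{loc}(\rt)$.

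Finally I would upgrade to $2\le q<q_s$ by interpolation: choosing $q<q_0<q_s$ and $\theta\in(0,1)$ with $\frac1q=\frac\theta2+\frac{1-\theta}{q_0}$, one has $\norm{u_n-u_m}_{L^q(B_R)}\le\norm{u_n-u_m}_{L^2(B_R)}^\theta\,\norm{u_n-u_m}_{L^{q_0}(B_R)}^{1-\theta}$, where the first factor tends to $0$ along the extracted subsequence and the second stays bounded by the already established continuous embedding $H\hookrightarrow L^{q_0}(\rt)$. The only genuinely anisotropic difficulty is the $y$-translation estimate under merely $H^s$ control, but the fractional rate $\abso{h_2}^s$ still vanishes as $\abso{h_2}\to0$, so compactness persists; as usual it is lost at the critical exponent $q=q_s$, which is why it is neither expected nor claimed there.
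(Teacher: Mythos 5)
Your proposal is correct. Note that the paper itself gives no proof of this lemma: it is quoted verbatim as \cite[Corollary 2.1]{E}, so there is no in-paper argument to compare against; your write-up supplies the standard route that the cited reference is based on. The continuous embedding is exactly the exponent bookkeeping $a\ge 0 \Leftrightarrow q\le q_s$, $a+b=q/2$ applied to the anisotropic Gagliardo--Nirenberg inequality, and the local compactness via uniform translation estimates plus mollification and Arzel\`a--Ascoli (i.e.\ Fr\'echet--Kolmogorov) is the right mechanism; the anisotropic point is precisely the one you isolate, namely that the $y$-translations are controlled only at the fractional rate $|h_2|^{s}\|D_y^s u\|_2$ via $|\ee^{\ii\theta}-1|\le 2^{1-s}|\theta|^{s}$, which still tends to zero. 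The interpolation upgrade from $L^2_{loc}$-compactness to $L^q_{loc}$-compactness for $2<q<q_s$, using a strictly larger subcritical $q_0$ for the bounded factor, is also correct and is why compactness is not claimed at $q=q_s$. I see no gap.
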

	
	\begin{lem} \label{concentration}
		Let  $s\in (0,1)$ and  $\{u_n\} \subset H$ be a bounded sequence in $H$ and
		$$
		\sup_{z \in \R^2} \int_{B_R(z)} |u_n|^q  \dd x\dd y=o_n(1)
		$$
		for some $R>0$, then $u_n \to 0$ in $L^q(\R^2)$ as $n \to \infty$ for any $2 <q < q_s$.
	\end{lem}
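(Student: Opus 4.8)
The statement is a vanishing lemma in the spirit of Lions, transplanted to the anisotropic space $H$, so the plan is to follow the classical covering argument while paying attention to the nonlocal $y$-direction. First I would reduce the hypothesis to local $L^2$-smallness: since each ball $B_R(z)$ has fixed finite measure, H\"older's inequality gives $\int_{B_R(z)}|u_n|^2\le|B_R|^{1-2/q}\big(\int_{B_R(z)}|u_n|^q\big)^{2/q}$, whence $\sup_z\int_{B_R(z)}|u_n|^2=o_n(1)$ as well. It therefore suffices to reach the conclusion from local $L^2$-vanishing, and in fact it is enough to establish $u_n\to0$ in a single intermediate space $L^{\bar s}(\R^2)$ with $2<\bar s<q_s$: the whole range $2<q<q_s$ then follows by interpolating between the bounded norms $\|u_n\|_2$ and $\|u_n\|_{L^{r'}}$ (the latter bounded for every $r'<q_s$, and for $r'=q_s$ when $s>1/2$, by Lemma \ref{embedding}) and the vanishing norm $\|u_n\|_{\bar s}$.

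For the intermediate estimate, fix $r\in(2,q_s)$ (or $r=q_s$ if $s>1/2$) and cover $\R^2$ by balls $\{B_R(z_j)\}_j$ of the fixed radius $R$ with uniformly bounded overlap $\kappa$. On each ball I would combine a local anisotropic embedding $\|u\|_{L^r(B_R(z_j))}\le C\,\|u\|_{H(B_R(z_j))}$ --- the local counterpart of Lemma \ref{embedding}, with $\|u\|_{H(\Omega)}^2:=\|u\|_{L^2(\Omega)}^2+\|\partial_x u\|_{L^2(\Omega)}^2+[u]_{s,y,\Omega}^2$ and $[\,\cdot\,]_{s,y,\Omega}$ the Gagliardo seminorm taken in the $y$ variable --- with the interpolation $\|u\|_{L^{\bar s}(B)}\le\|u\|_{L^2(B)}^{1-\lambda}\|u\|_{L^r(B)}^{\lambda}$. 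Choosing $\bar s$ so that $\lambda\bar s=2$, which fixes $\bar s=4(r-1)/r\in(2,r)\subset(2,q_s)$, yields $\int_{B_j}|u_n|^{\bar s}\le C\,\|u_n\|_{L^2(B_j)}^{(1-\lambda)\bar s}\,\|u_n\|_{H(B_j)}^2$. Summing over $j$, bounding the first factor by the (small) supremum over all balls and using the finite overlap to control $\sum_j\|u_n\|_{H(B_j)}^2$ by $C\|u_n\|_H^2$, produces
\[
\int_{\R^2}|u_n|^{\bar s}\le C\Big(\sup_z\int_{B_R(z)}|u_n|^2\Big)^{(1-\lambda)\bar s/2}\|u_n\|_H^2\longrightarrow 0,
\]
since the exponent $(1-\lambda)\bar s/2$ is positive and $\{u_n\}$ is bounded in $H$.

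The one genuinely anisotropic point, and the step I expect to be the main obstacle, is the inequality $\sum_j\|u_n\|_{H(B_j)}^2\le C\|u_n\|_H^2$. For the local pieces $\|u_n\|_{L^2(B_j)}^2$ and $\|\partial_x u_n\|_{L^2(B_j)}^2$ this is immediate from the bounded overlap. The fractional contribution is delicate: because $D_y^s$ is nonlocal, one cannot simply restrict $D_y^s u_n$ to a ball, and a naive localization (e.g.\ cutting off and commuting $D_y^s$ past the cutoff) produces uncontrolled tails. I would circumvent this by working throughout with the double-integral (Gagliardo) representation of the seminorm in the $y$-direction, whose integrand is pointwise nonnegative; the finite-overlap property then gives $\sum_j[u_n]_{s,y,B_j}^2\le\kappa\,[u_n]_{s,y}^2$, while the full seminorm is comparable to $\|D_y^s u_n\|_2^2$ by Plancherel, since $[u]_{s,y}^2\simeq\int_{\R^2}|\xi_2|^{2s}|\widehat{u}|^2\,\dd\xi$.

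Finally, the local embedding $\|u\|_{L^r(B)}\le C\|u\|_{H(B)}$ itself follows from the global inequalities of Lemma \ref{gn-lemma} and Corollary \ref{cor-gn} by the same extension/reflection techniques as in \cite{E}, the anisotropy requiring only that the extension act in each variable with the correct scaling. Once the summation of the local seminorms and this local embedding are in hand, the displayed estimate closes the argument in $L^{\bar s}$, and the interpolation described in the first paragraph upgrades the conclusion to every $q\in(2,q_s)$.
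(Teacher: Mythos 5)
Your argument is correct and is exactly what the paper intends: the paper omits the proof entirely, saying only that one should ``directly adapt the ideas of the proof of \cite[Lemma 1.21]{W}'', and your covering/interpolation scheme is precisely that adaptation, with the genuinely anisotropic point (summing the partial Gagliardo seminorms in $y$ over a bounded-overlap cover and identifying the global seminorm with $\|D_y^s u\|_2^2$ via Plancherel) handled correctly. The only step left slightly informal is the local embedding $\|u\|_{L^r(B)}\le C\|u\|_{H(B)}$, which is standard but would be cleaner to set up on a tiling by cubes so that the extension can be performed variable by variable.
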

	\begin{proof}
		To prove this lemma, one can directly adapt the ideas of the proof of \cite[Lemma 1.21]{W}, then we omit its proof here.
	\end{proof}
	
 Now we prove asymptotic  and regularity properties of solutions to \eqref{equ-0} which will be crucial to prove a Pohozaev type identity. 
  
 For the sake of simplicity, we assume that $\alpha=1$, and define the kernel
		\begin{equation*}
			K_s(x,y)=\left(\frac{1}{1+|\eta|^{2s}+|\xi|^2}\right)^\vee=C_s\int_0^{+\infty}\ee^{-t}\ee^{-\frac{x^2}{4t}}t^{-\frac{1}{2}}H_s(y,t)\;\dd t,
		\end{equation*}
		with
		\[
		H_s(y,t)=\int_{\rr}\ee^{-t|\eta|^{2s}}\ee^{i y\eta}\dd\eta.
		\]

 \begin{lem}\label{lem1}Let $s\in(0,1)$ and $k,m=0,1$. The  following properties hold.
			\begin{enumerate}[(i)]
				\item $K_s(x,y)>0$, for $(x,y)\in\rr^2$, and is an even function which is strictly decreasing in $|x|$ and $|y|$ and smooth for $x\neq0$. Moreover,  $\partial_x^m\partial_y^kK_s\in L_x^rL_y^q(\rr^2)\cap L_y^qL_x^r(\rr^2)$, for $r,q\geq1$  with 	
					\[
					s\paar{1+\frac1r}>ms+1+k-\frac1q.
					\]

	 		\item There hold that
 \begin{equation}\label{est-1}
 		|\partial_x^m\partial_y^kK_s(x,y)|\lesssim |y|^{s-1-k-2sm}|x|^m\ee^{-|x|},\qquad(x,y)\in\rr^2,
 \end{equation}
 				\begin{equation}\label{1.1-3}
 					|\partial_x^m\partial_y^kK_s(x,y)|\lesssim|x|^m|y|^{-1-2s-k-2ms}\ee^{-\frac{|x|}{4}},\qquad\mbox{if}\;|y|\geq1,
 				\end{equation}
 				and
 				\begin{equation}\label{1.1-4}
 				|\partial_x^m\partial_y^k	K_s(x,y)|\gtrsim |x|^m|y|^{-1-2s-k}\ee^{-\frac{x^2}{4}},\qquad\mbox{if}\;|y|\geq1\geq|x|.
 				\end{equation}

			\end{enumerate}
	\end{lem}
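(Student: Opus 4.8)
The backbone of the proof is the subordination representation already recorded in the statement: writing $(1+|\eta|^{2s}+|\xi|^2)^{-1}=\int_0^\infty \ee^{-t(1+|\eta|^{2s}+|\xi|^2)}\dd t$ and inverting the Fourier transform in each variable separately factorises the $x$-part into the Gaussian $C\,t^{-1/2}\ee^{-x^2/(4t)}$ and the $y$-part into $H_s(y,t)$. The whole argument then reduces to understanding $H_s(\cdot,t)$, which is (a multiple of) the transition density of a symmetric $2s$-stable L\'evy process. The plan is to first assemble the classical facts I need: $H_s(\cdot,t)$ is strictly positive, even, real-analytic and strictly decreasing in $|y|$ (positivity and unimodality of symmetric stable densities); it obeys the exact scaling $H_s(y,t)=t^{-1/(2s)}h_s\big(y\,t^{-1/(2s)}\big)$ with $h_s:=H_s(\cdot,1)$; and, for $k=0,1$, its profile satisfies both the uniform/tail bound $|h_s^{(k)}(u)|\lesssim (1+|u|)^{-1-2s-k}$ and the sharp asymptotic $h_s^{(k)}(u)\sim c_k|u|^{-1-2s-k}$ as $|u|\to\infty$ with $c_k\neq0$. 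In particular $h_s^{(k)}\in L^q(\rr)$ for every $q\geq1$.

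The qualitative assertions in (i) are then immediate from the representation: positivity, evenness, strict monotonicity in $|x|$ (each Gaussian factor is) and in $|y|$ (each $H_s(\cdot,t)$ is), while smoothness for $x\neq0$ follows by differentiating under the integral, the factor $\ee^{-x^2/(4t)}$ taming the $t^{-1/2}$ singularity at $t=0$ on compact subsets of $\{x\neq0\}$. For the mixed-norm membership I would not use the pointwise bounds at all but argue directly by Minkowski's integral inequality: since the integrand is a product,
\[
\big\|\partial_x^m\partial_y^k K_s\big\|_{L^r_xL^q_y}\lesssim \int_0^\infty \ee^{-t}\,\big\|\partial_x^m\big(t^{-1/2}\ee^{-x^2/(4t)}\big)\big\|_{L^r_x}\,\big\|\partial_y^k H_s(\cdot,t)\big\|_{L^q_y}\dd t .
\]
The two spatial norms are computed by scaling: $\big\|\partial_x^m\big(t^{-1/2}\ee^{-x^2/(4t)}\big)\big\|_{L^r_x}\simeq t^{-m/2+1/(2r)-1/2}$, and, using $\partial_y^k H_s=t^{-(k+1)/(2s)}h_s^{(k)}(\cdot\,t^{-1/(2s)})$, $\big\|\partial_y^k H_s(\cdot,t)\big\|_{L^q_y}\simeq t^{-(k+1-1/q)/(2s)}\|h_s^{(k)}\|_{L^q}$. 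The remaining $t$-integral is a Gamma integral $\int_0^\infty \ee^{-t}t^{\beta}\dd t$, convergent at $t=0$ exactly when $\beta>-1$, and a direct computation shows this is precisely the stated condition $s(1+1/r)>ms+1+k-1/q$. The companion space $L^q_yL^r_x$ is handled identically, the bound being a product.

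For the pointwise bounds in (ii) I would return to the representation and differentiate under the integral, $\partial_x^m\partial_y^k K_s(x,y)=C_s\int_0^\infty \ee^{-t}\,\partial_x^m\big(t^{-1/2}\ee^{-x^2/(4t)}\big)\,\partial_y^k H_s(y,t)\dd t$, inserting $\partial_y^k H_s(y,t)=t^{-(k+1)/(2s)}h_s^{(k)}(y\,t^{-1/(2s)})$ together with $\partial_x^m\ee^{-x^2/(4t)}=(\text{polynomial of degree }m\text{ in }x/t)\,\ee^{-x^2/(4t)}$, which contributes the prefactor $|x|^m$. For the global estimate \eqref{est-1} I use $|h_s^{(k)}(u)|\lesssim(1+|u|)^{-1-2s-k}$, substitute $u=y\,t^{-1/(2s)}$, and carry out the $t$-integral: the combination $\ee^{-t-x^2/(4t)}$ is of modified Bessel type, $\int_0^\infty \ee^{-t-x^2/(4t)}t^{\nu}\dd t=2(|x|/2)^{\nu+1}K_{\nu+1}(|x|)$, whose exponent is minimised at the saddle $t=|x|/2$ and thus produces the factor $\ee^{-|x|}$, while the stable scaling produces the $|y|$-power $s-1-k-2sm$ (a consistency check: along $x=0$ this reduces to the correct anisotropic-homogeneity exponent $s-1-k$ when $m=0$, and to $-1-s-k$ when $m=1$). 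The refined large-$|y|$ estimate \eqref{1.1-3} is obtained by feeding in instead the tail bound $|h_s^{(k)}(u)|\lesssim |u|^{-1-2s-k}$, legitimate precisely because $|y|\geq1$ forces $y\,t^{-1/(2s)}$ to be large on the dominant range of $t$; this upgrades the $|y|$-exponent to $-1-2s-k-2ms$, the exponential $\ee^{-|x|/4}$ again coming from the saddle mechanism (with a deliberately non-optimal constant so that part of $\ee^{-t}$ is retained to close the $t$-integral). Finally, the lower bound \eqref{1.1-4} uses the sharp asymptotic $h_s^{(k)}(u)\sim c_k|u|^{-1-2s-k}$ with $c_k\neq0$: restricting the $t$-integral to a fixed compact interval (say $t\in[1,2]$), where the integrand keeps a definite sign and $\ee^{-x^2/(4t)}\geq \ee^{-x^2/4}$ for $|x|\leq1$, bounds the integral below by $\gtrsim|x|^m|y|^{-1-2s-k}\ee^{-x^2/4}$.

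The routine parts are the positivity/monotonicity statements and the Minkowski/scaling count for the mixed norms; the genuine difficulty lies in the pointwise analysis, because $H_s$ has no closed form for general $s$. Everything hinges on two inputs about the symmetric stable profile $h_s$: sharp two-sided control of $h_s$ and $h_s'$ (uniform bound, and tail asymptotic with a nonzero leading constant), together with the clean separation of the $t$-integral into the Gaussian/Bessel mechanism in $x$ (yielding $\ee^{-|x|}$) and the stable scaling in $y$ (yielding the $|y|$-powers). I expect the delicate points to be matching the exact exponents in \eqref{est-1}--\eqref{1.1-3} through the correct region decomposition of the $t$-integral (small $t$ versus $t\gtrsim|y|^{2s}$) and producing the matching lower bound \eqref{1.1-4}, which is exactly where the non-vanishing of the leading tail coefficient $c_k$ becomes indispensable.
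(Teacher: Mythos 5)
Your proposal follows essentially the same route as the paper's proof: the same subordination formula $K_s=C_s\int_0^{+\infty}\ee^{-t}\ee^{-x^2/(4t)}t^{-1/2}H_s(y,t)\,\dd t$, the same two-sided stable-kernel input $\partial_y^kH_s(y,t)\approx\min\{t^{-(1+k)/(2s)},\,t|y|^{-1-2s-k}\}$ (which the paper imports from the literature rather than rederiving), the same Minkowski-plus-scaling count for the mixed norms leading to the identical Gamma-integral condition, the same splitting of the $t$-integral at $t=|y|^{2s}$, the same completion-of-square mechanism for the factor $\ee^{-|x|}$ (the paper uses $t+\tfrac{x^2}{4t}\ge|x|$ directly where you invoke the Bessel identity), and the same sign-definiteness-plus-restriction trick for the lower bound (the paper restricts to $t\in[0,1]$ rather than $[1,2]$, which avoids a small issue on the compact range $1\le|y|\le 2^{1/(2s)}$, but this is cosmetic).

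One step of your sketch does not deliver what you claim. For \eqref{1.1-3} you assert that feeding in the tail bound $|h_s^{(k)}(u)|\lesssim|u|^{-1-2s-k}$ ``upgrades the $|y|$-exponent to $-1-2s-k-2ms$.'' It does not: on the region $t\le|y|^{2s}$ that bound gives $|\partial_y^kH_s(y,t)|\lesssim t\,|y|^{-1-2s-k}$, and combining with $|\partial_x^m(t^{-1/2}\ee^{-x^2/(4t)})|\lesssim|x|^m t^{-1/2-m}\ee^{-x^2/(4t)}$ and the convergent $t$-integral $\int_0^{+\infty}\ee^{-t-x^2/(4t)}t^{1/2-m}\,\dd t\lesssim\ee^{-|x|/4}$ yields only $|x|^m|y|^{-1-2s-k}\ee^{-|x|/4}$ for this piece; the extra factor $|y|^{-2ms}$ is produced solely by the region $t\ge|y|^{2s}$ and is therefore not present in the total for $m=1$. (For $m=0$ the two exponents coincide and there is no issue.) You should be aware that the paper's own proof has exactly the same feature --- its estimate \eqref{primopezzo} is bounded by $C|x|^m|y|^{-1-2s-k}\ee^{-|x|/4}$, which does not imply \eqref{1.1-3} when $m=1$ --- and indeed a dominated-convergence computation with $H_s(y,t)\sim c\,t|y|^{-1-2s}$ shows $|\partial_xK_s(x,y)|\sim C(x)|y|^{-1-2s}$ as $|y|\to\infty$, so the stronger exponent cannot hold for $m=1$. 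In short: your argument is sound and matches the paper's wherever the claimed exponent is $-1-2s-k$, but the assertion of the additional decay $|y|^{-2ms}$ for $m=1$ is unjustified and should be dropped or restricted to $m=0$.
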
 
	
 \begin{proof}
		The exponential decay properties of $H_s$ have been thoroughly investigated in \cite{FQT} (also see \cite{bg}). Specifically, it has been rigorously demonstrated that
		\[
		\lim_{|y|\to+\infty}|y|^{1+2s}H_{s}(y,1)<\infty.
		\]
		This crucial estimate, along with the scaling property
		\[
	H_s(y,t)=t^{-\frac{1}{2s}}H_s\left(t^{-\frac{1}{2s}}y,1\right)\approx\frac{t}{\paar{t^{\frac1{2s}}+|y|}^{1+2s}},
		\]
		facilitates the conclusion (see \cite{grecoian}) that
		\begin{equation}\label{1.1-0}
			\partial_y^k	H_s(y,t)\approx\min \sett{t^{-\frac{1+k}{2s}},t|y|^{-1-2s-k}},\qquad k=0,1,2,\cdots .
		\end{equation}
		Based on this approximation, we obtain  the following estimate:
		\begin{equation}\label{1.1-1}
	\abso{	\partial_x^m\partial_y^k	K_s(x,y)}
	\lesssim|y|^{-1-2s-k}|x|^m\int_{0}^{|y|^{2s}}\ee^{-t-\frac{x^2}{4t}}t^{\frac{1}{2}-m}\dd t
			+
			|x|^m\int_{|y|^{2s}}^{\infty}\ee^{-t-\frac{x^2}{4t}}t^{-\frac{1}{2}\left(1+\frac{k+1}{s}\right)-m}\dd t, 
		\end{equation}
		for $k,m=0,1$.
	To get \eqref{est-1}, we can further simplify this by applying a change of variable and utilizing the elementary inequality $t+\frac{x^2}{4t}-|x|=\frac1{4t}(2t-|x|)^2\geq0$, for all $t>0$, to derive
		\begin{equation*}
			|\partial_x^m\partial_y^kK_s(x,y)|\lesssim|y|^{s-1-k-2sm}|x|^m\ee^{-|x|}.
		\end{equation*}

To establish \eqref{1.1-3}, we focus on the case when $|y|\geq1$. By  the inequality
			\[
			t+\frac{x^2}{4t}\geq\frac{|x|}{4} +\frac{15}{16}t,
			\]
we have
\begin{equation}\label{primopezzo}
\begin{split}
|x|^m|y|^{-1-2s-k}\int_{0}^{|y|^{2s}}
	\ee^{-t-\frac{x^2}{4t}}t^{\frac{1}{2}-m}\dd t
&\le |x|^m|y|^{-1-2s-k}\int_{0}^{|y|^{2s}}\ee^{-\frac{15}{16}t-\frac{|x|}{4}}t^{\frac{1}{2}-m}\dd t\\&
\le |x|^m|y|^{-1-2s-k}\ee^{-\frac{|x|}{4}}\int_{0}^{\infty}\ee^{-\frac{15}{16}t}t^{\frac{1}{2}-m}\dd t\\&
\le C|x|^m|y|^{-1-2s-k}\ee^{-\frac{|x|}{4}}
\end{split}
\end{equation}
and, since $t\ge |y|^{2s}\ge 1$ and so $\ee^{-\frac{15}{16}t}\le C t^{-\frac 32}$, 
\begin{equation}\label{secondopezzo}
\begin{split}
		|x|^m\int_{|y|^{2s}}^{\infty}
		\ee^{-t-\frac{x^2}{4t}}t^{-\frac{1}{2}\left(1+\frac{1+k}{s}\right)-m}\dd t
&\le
	|x|^m\int_{|y|^{2s}}^{\infty}\ee^{-\frac{15}{16}t-\frac{|x|}{4}}t^{-\frac{1}{2}\left(1+\frac{1+k}{s}\right)-m}\dd t\\&
\le
|x|^m\ee^{-\frac{|x|}{4}}	
\int_{|y|^{2s}}^{\infty}t^{-2-\frac{1+k}{2s}-m}\dd t
\\&
\le
C|x|^m|y|^{-1-2s-k-2ms}\ee^{-\frac{|x|}{4}}.
\end{split}
\end{equation}
So by \eqref{1.1-1}, \eqref{primopezzo} and \eqref{secondopezzo}, we prove \eqref{1.1-3}.

		Finally, to demonstrate \eqref{1.1-4}, we consider $|y|\geq1\geq|x|$, and using \eqref{1.1-0} along with $\frac{x^2}{t}\leq x^2+\frac1t$, we find that
		\[
		\abso{\partial_x^m\partial_y^kK_s(x,y)}\gtrsim|x|^m|y|^{-1-2s-k}\int_0^1\ee^{-t}\ee^{-\frac{x^2}{4t}}t^{\frac12-m}\dd t\gtrsim|x|^m|y|^{-1-2s-k}\ee^{-\frac{x^2}{4}}.
		\]
		The properties of $K_s$ in (i) can be deduced from the positivity and monotonicity of $H_s$ in \cite{chenbona,FQT,frank-len}. Lastly, property (i) is obtained from \eqref{1.1-1} and the Minkowski inequality. More precisely, we can see from \eqref{1.1-0} that
\[
 		\norm{\partial_y^kH(y,t)}_{L^q_y}\lesssim t^{-\frac{1}{2s}\paar{1+k-\frac1q}},\qquad q>1,\;k=0,1.
		\]
 
As a result, \[
\begin{split}
	\norm{\partial_x^m\partial_y^kK_s }_{L_y^qL_x^r},	\norm{\partial_x^m\partial_y^kK_s }_{L_x^rL_y^q}&\lesssim
	\int_0^{+\infty}
	\ee^{-t}t^{-\frac12}\norm{\partial_x^m\ee^{-\frac{x^2}{4t}}}_{L_x^r}
\norm{\partial_y^kH_s(y,t) }_{L_y^q}\dd t\\&
	\lesssim	\int_0^{+\infty}
	\ee^{-t}t^{-\frac12}t^{\frac{1}{2r}-\frac m2}
	t^{-\frac1{2s}\paar{1+k-\frac1q}}\dd t<\infty, 
	\end{split}\]
provided that
	\[
	s\paar{1+\frac1r}>ms+1+k-\frac1q,
	\]
 so that, $\partial_x^m\partial_y^kK_s\in L_x^rL_y^q\cap L_y^qL_x^r$ for $r\geq1$, $q>1$ with $m,k=0,1$.
	 \newline\indent Finally, $q>1$ can be extended to $q=1$ by $\|K_s\|_{L^1}=\hat{K}_s(0)=1$.
	\end{proof} 
 \begin{cor}\label{decay-k}
   Let $s\in (0,1)$.  There holds that
$x\partial_{x}K_s,xK_s,yK_s,y\partial_{y}K_s\in L^q_yL_x^r$,  for any $q,r\geq1$.
 \end{cor}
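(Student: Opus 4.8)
The plan is to deduce Corollary \ref{decay-k} directly from Lemma \ref{lem1}(i) by verifying, for each of the four functions, that multiplication by $x$ or $y$ (possibly combined with a $\partial_x$ or $\partial_y$) keeps us inside a space $L^m_x\partial^k_y K_s \in L^r_x L^q_y \cap L^q_y L^r_x$ whose indices satisfy the admissibility inequality $s(1+\tfrac1r)>ms+1+k-\tfrac1q$. The key observation is that the weight $y$ (of degree one in $y$) shifts the effective $y$-regularity: since $\partial_y^k K_s$ behaves like $|y|^{-1-2s-k}$ for large $|y|$ by \eqref{1.1-3}, multiplying by $y$ improves this decay by one power and hence \emph{relaxes} the constraint on $q$ (it is as if $k$ decreases by one in the large-$y$ tail), while near $y=0$ the extra factor is harmless. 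Likewise the factor $x$, together with the exponential decay $\ee^{-|x|/4}$ in \eqref{1.1-3}, costs nothing in $x$-integrability for any finite $r$.

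Concretely, I would proceed in four short steps, one per function. First, for $yK_s$ and $y\partial_y K_s$, I would revisit the Minkowski-inequality computation at the end of the proof of Lemma \ref{lem1}, inserting the extra factor $|y|$ into the $L^q_y$ norm of $\partial_y^k H_s(y,t)$. Using the scaling $H_s(y,t)=t^{-\frac{1}{2s}}H_s(t^{-\frac1{2s}}y,1)$ from \eqref{1.1-0}, one computes $\||y|\,\partial_y^k H_s(\cdot,t)\|_{L^q_y}\lesssim t^{-\frac1{2s}(1+k-\frac1q-1)}=t^{-\frac1{2s}(k-\frac1q)}$, i.e. the $t$-exponent improves by $t^{\frac1{2s}}$ relative to the bare estimate. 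Feeding this into the same $\dd t$-integral as before yields convergence under the relaxed condition $s(1+\tfrac1r)>ms+k-\tfrac1q$, which for $k=0,1$ and $m=0$ holds for \emph{all} $q,r\ge 1$. Second, for $xK_s$ and $x\partial_x K_s$, the factor $|x|$ is absorbed into the $L^r_x$ norm $\||x|\,\partial_x^m\ee^{-x^2/4t}\|_{L^r_x}$, which by scaling contributes an extra $t^{1/2}$ and again only improves the convergence of the $t$-integral, so admissibility holds for all $q,r\ge1$.

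Alternatively—and this is the cleaner route I would ultimately write—I would avoid redoing the integral and instead argue from the pointwise bounds already proved. Splitting each norm over the regions $\{|y|\le 1\}$ and $\{|y|\ge 1\}$ and over $\{|x|\le1\}$, $\{|x|\ge1\}$, I would use \eqref{est-1} on the bounded-$y$ region (where $|y|^{s-1-k}$ is locally $L^q_y$ integrable for every finite $q$ once multiplied by $|y|$, since $s-k\ge s-1>-1$) and \eqref{1.1-3} on the region $|y|\ge1$ (where $|y|\cdot|y|^{-1-2s-k}=|y|^{-2s-k}$ is $L^q_y$ for every $q\ge1$ because $2s+k>0$ gives tail exponent $<-1$ after the one-power gain, and in fact $|y|^{-2s}\in L^q$ for $q>\tfrac1{2s}$ with the borderline handled by the improved tail). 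In all four cases the $x$-exponential $\ee^{-|x|/4}$ or $\ee^{-|x|}$ guarantees membership in $L^r_x$ for every $r\ge1$ on both $x$-regions. Finally, I would invoke the Minkowski integral inequality once more to interchange the order and conclude membership in $L^q_y L^r_x$ as stated.

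I do not expect a genuine obstacle here; the corollary is essentially a weighted strengthening of Lemma \ref{lem1}(i). The only point requiring care is the borderline integrability of $|y|^{-2s}$ in $L^q_y$ for the \emph{non-differentiated} functions $yK_s$ and $xK_s$ (the case $k=0$), where the large-$|y|$ exponent is exactly $-2s$; here one must lean on the sharper tail $|y|^{-1-2s}$ of $K_s$ itself (rather than the gained power) near the threshold $q=\tfrac1{2s}$, or equivalently keep track of the full $t$-integral as in the first approach, which automatically produces the correct admissible range. Since the statement only claims membership for $q,r\ge1$ and the weighted $t$-exponents strictly improve over the unweighted case established in Lemma \ref{lem1}, the convergence is never in doubt, and the whole argument reduces to bookkeeping of exponents.
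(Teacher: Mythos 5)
There is a genuine gap, and it sits exactly at the two ``borderline'' points you flag and then dismiss. For $yK_s$: your first approach requires $\||y|\,\partial_y^k H_s(\cdot,1)\|_{L^q_y}<\infty$, and for $k=0$ the integrand behaves like $|y|^{-2s}$ at infinity (by \eqref{1.1-0}), so this norm is finite only when $2sq>1$; the condition $q>\tfrac1{2s}$ therefore survives into your ``relaxed'' admissibility condition and excludes $q=1$ (indeed all $q\le \tfrac1{2s}$) whenever $s\le \tfrac12$. Your second route hits the same wall: on $\{|y|\ge1\}$ the gained tail is exactly $|y|\cdot|y|^{-1-2s}=|y|^{-2s}$, which is in $L^q_y$ of that region only for $q>\tfrac1{2s}$. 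The proposed fix --- ``lean on the sharper tail $|y|^{-1-2s}$ of $K_s$ itself'' --- is vacuous, because that tail is precisely what produced $|y|^{-2s}$, and the matching lower bound \eqref{1.1-4} shows $K_s(x,y)\gtrsim |y|^{-1-2s}$ for $|y|\ge1\ge|x|$, so no sharper decay is available: one even gets $\|yK_s\|_{L^q_yL^r_x}^q\gtrsim \int_{|y|\ge1}|y|^{-2sq}\dd y=\infty$ for $q\le\tfrac1{2s}$. So the obstruction is not a bookkeeping subtlety that ``automatically'' resolves; it is a real restriction tying $q$ to $s$ (consistent with the paper's later hypothesis $s>1/4$ where $yu\in L^2$ is needed).

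A second, independent slip occurs for the $x$-weighted kernels. Your region argument asserts that near $y=0$ the singularity $|y|^{s-1-k}$ from \eqref{est-1} ``is locally $L^q_y$ integrable for every finite $q$ once multiplied by $|y|$'' --- but for $xK_s$ and $x\partial_xK_s$ there is no factor of $y$ to multiply by; the weight is in $x$ and buys nothing in the $y$-variable. From \eqref{est-1} (and from redoing the Minkowski computation, where the $x$-weight only shifts the $t$-exponent by $t^{1/2}$), the inner norm $\|xK_s(\cdot,y)\|_{L^r_x}$ behaves like $|y|^{2s+\frac{s}{r}-1}$ as $y\to0$, which lies in $L^q_{loc}$ near $y=0$ only when $q\bigl(1-2s-\tfrac{s}{r}\bigr)<1$; for $s<\tfrac12$ this fails for large $q,r$. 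In short, the exponent bookkeeping that the whole argument ``reduces to'' does not close for all $q,r\ge1$ and all $s\in(0,1)$: the correct conclusion of your own computations is membership in $L^q_yL^r_x$ only under explicit conditions relating $q,r,s$ (e.g.\ $2sq>1$ for $yK_s$), and any complete write-up must either record those conditions or restrict $s$.
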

 Define $H^{(s_1,s_2)}$, similar to $H$, via the norm
		\[
		\|u\|_{H^{(s_1,s_2)}}^2=\|u\|_{L^2}^2+
		\|D_x^{s_1}u\|_{L^2}^2+
		\|D_y^{s_2}u\|_{L^2}^2.
		\] 
It is not hard to see that $H^{(s_1,s_2)}\hookrightarrow L^\infty$ if   
$ \frac1{s_1}+\frac1{s_2}<2.$
  
One can see the   embedding properties of $H^{(s_1,s_2)}$  in \cite{E}.
The following Theorem can be also proved by the arguments used in \cite{epb}.

Now we have all the ingredients to prove our main result about the decay and the regularity of solutions of \eqref{equ-0}. 
		\begin{thm}
	\label{decay-regul-lemma}
		Let $s\in (0,1)$ and $2<p<\frac{2(1+s)}{1-s}$.	Any solution $u\in H $ of \eqref{equ-0} belongs to $H^{(3,2s+1)}$. Moreover $u,u_x\in C_\infty$, the space of continuous functions   vanishing at
			infinity. Furthermore, for all $(x,y)\in\rr^2$, it holds, for any   solution $u\in H$, that
			\begin{equation}\label{bounds}
				C_1 (1+|y|)^{-1-2s}\ee^{-\frac{|x|}{4}}\le u(x,y)\leq C_2(1+|y|)^{-1-2s}\ee^{-\frac{|x|}{4}},
			\end{equation}
for some $C_1,C_2\in \R\setminus\{0\}$, potentially negative.
		   Moreover $u_y\in C_\infty$, if $s>2/5$. 
	\end{thm}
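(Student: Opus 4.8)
The plan is to read off every assertion from the convolution representation provided by the resolvent kernel $K_s$, running a bootstrap in the spirit of Bona--Li \cite{bonali} (as adapted in \cite{epb}). Taking $\al=1$, equation \eqref{equ-0} reads $(\mathcal{L}+1)u=f$ with $f:=|u|^{p-2}u$ and $\mathcal{L}=-\pa_{xx}+D_y^{2s}$; inverting the symbol $1+|\xi|^2+|\eta|^{2s}$ on the Fourier side gives the identity $u=K_s*f$, and all the stated properties will follow by combining the mixed-norm integrability of the derivatives of $K_s$ (Lemma \ref{lem1}(i), Corollary \ref{decay-k}) with the pointwise bounds \eqref{est-1}--\eqref{1.1-4}.

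First I would settle boundedness and the base regularity. Since $u\in H\hookrightarrow L^q$ for $2\le q<q_s$ (Lemma \ref{embedding}), one has $f\in L^{q/(p-1)}$, and Young's inequality in the mixed spaces $L^r_xL^q_y$ against $K_s$ --- whose admissible exponents are exactly those in Lemma \ref{lem1}(i) --- repeatedly raises the integrability of $u=K_s*f$ until $u\in L^\infty$. Continuity together with vanishing at infinity of $u$ then follows from $u\in L^\infty\cap L^2$, the decay of $K_s$, and a density argument; the same reasoning applied to $u_x=(\pa_xK_s)*f$ gives $u_x\in C_\infty$, with no restriction on $s$ because $\pa_xK_s\in L^1$ for every $s\in(0,1)$ (Lemma \ref{lem1}(i) with $m=1$, $k=0$, $r=q=1$). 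For the Sobolev regularity I would work with the Fourier multipliers $|\xi|^a/(1+|\xi|^2+|\eta|^{2s})$ and $|\eta|^b/(1+|\xi|^2+|\eta|^{2s})$, bounded for $a\le2$ and $b\le2s$; thus $f\in L^2$ already gives $u\in H^{(2,2s)}$, and a finite bootstrap --- each step gaining $2s$ derivatives in $y$ and one derivative in $x$, using $u_x\in L^2$, $u\in L^\infty$, and that $|u|^{p-2}u$ is as smooth as needed on the bounded range of $u$ --- upgrades $u$ to $H^{(3,2s+1)}$.

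Next I would prove the two-sided bounds \eqref{bounds}. Feeding the decay of $f$ (a consequence of $u\in L^\infty$ and the integrability already obtained) into $u=K_s*f$, estimating the convolution by \eqref{est-1} and \eqref{1.1-3}, and iterating the comparison as in \cite{bonali,epb}, pins the decay of $u$ to that of the kernel, namely $(1+|y|)^{-1-2s}\ee^{-|x|/4}$; this yields the upper bound with the correct exponents. For the lower bound I would use the strict positivity of $K_s$ (Lemma \ref{lem1}(i)) together with the lower kernel estimate \eqref{1.1-4}: for $(x,y)$ large the convolution concentrates near the bulk of $f$, so $u(x,y)$ is comparable to $\paar{\int_{\rt}f}K_s(x,y)$, producing a matching lower bound with constants $C_1,C_2$ that carry the sign of $\int_{\rt}|u|^{p-2}u$ and may therefore be negative. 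It is exactly here that not assuming $u\ge0$ is essential, and I would follow the Bona--Li scheme, which is designed to deliver the lower bound for sign-changing data.

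The main obstacle is the last assertion, $u_y\in C_\infty$ for $s>2/5$. Differentiating the representation gives $u_y=(\pa_yK_s)*f$, but $\pa_yK_s$ is the borderline kernel: its $x$-integrated profile behaves like $|y|^{2s-2}$ as $y\to0$, so $\pa_yK_s\in L^1$ only for $s>1/2$ and the naive Young estimate breaks down when $s\le1/2$. To lower the threshold I would avoid putting the whole derivative on the kernel, splitting $|\eta|=|\eta|^{1-\theta}|\eta|^{\theta}$ so that $u_y=(D_y^{1-\theta}K_s)*(D_y^\theta f)$, and then exploit the $y$-regularity of $f$ inherited from $u\in H^{(3,2s+1)}$ (through a tame estimate for $|u|^{p-2}u$ on the bounded range of $u$) together with the decay already established. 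Integrability of $D_y^{1-\theta}K_s$ (Lemma \ref{lem1}(i) with fractional order $1-\theta$) forces $\theta>1-2s$, while putting $D_y^\theta f$ in $L^\infty$ via the anisotropic embedding $H^{(s_1,s_2)}\hookrightarrow L^\infty$, valid when $1/s_1+1/s_2<2$, bounds $\theta$ from above; balancing these constraints against the limited smoothness of the nonlinearity is what singles out $s>2/5$. Making this balance rigorous --- controlling the mixed $x$--$y$ behaviour of $\pa_yK_s$ simultaneously with its endpoint singularity at $y=0$, and justifying the fractional chain rule in the anisotropic space --- is the delicate point of the proof.
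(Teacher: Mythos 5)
Most of your proposal tracks the paper's own proof: the convolution representation $u=K_s*g(u)$ with $g(u)=|u|^{p-2}u$, the Young-inequality bootstrap driving $u$ into $L^\infty$ in finitely many steps, the multiplier bound $\|u\|_{\dot H^{(2,2s)}}\lesssim\|g(u)\|_{L^2}$ followed by an iteration up to $H^{(3,2s+1)}$, and the appeal to the Bona--Li/Chen--Bona comparison for \eqref{bounds} are all exactly what the paper does. One caveat on the Sobolev bootstrap: your phrase that $|u|^{p-2}u$ is ``as smooth as needed'' is an overclaim (for $p<3$ it is only $C^1$ with H\"older derivative); what actually makes the iteration close for $s<1/2$, when $u_y\notin L^2$ is not yet available, is the difference-quotient characterization $\|D_y^{\sigma}f\|_{L^2}^2=C_\sigma\int|f(\cdot,y)-f(\cdot,z)|^2|y-z|^{-1-2\sigma}$ combined with the tame bound $|g(a)-g(b)|\lesssim(|a|^{p-2}+|b|^{p-2})|a-b|$, which lets one gain $s$ derivatives in $y$ per step until order $1$ is reached. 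You should make that mechanism explicit rather than invoking smoothness of the nonlinearity.

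The genuine gap is in the last assertion, $u_y\in C_\infty$ for $s>2/5$. Your proposed decomposition $u_y=(D_y^{1-\theta}K_s)*(D_y^{\theta}g(u))$, with $D_y^{\theta}g(u)$ placed in $L^\infty$ via the anisotropic embedding and a fractional chain rule, is never balanced: you do not exhibit the admissible range of $\theta$, you do not verify that $D_y^{1-\theta}K_s$ falls under Lemma \ref{lem1}(i) (which is stated only for integer orders $k=0,1$), and you assert without computation that the constraints ``single out'' $s>2/5$. None of this is needed. Once $u\in H^{(3,2s+1)}$ is known, the paper concludes in one line by Cauchy--Schwarz on the Fourier side:
\begin{equation*}
\|u_y\|_{L^\infty}\lesssim\|\eta\hat u\|_{L^1}\lesssim\norm{\frac{|\eta|}{1+|\xi|^3+|\eta|^{2s+1}}}_{L^2}\norm{u}_{H^{(3,2s+1)}},
\end{equation*}
and the weight is square-integrable on $\R^2$ precisely when $s>2/5$ (integrate in $\xi$ first to get a factor $(1+|\eta|^{2s+1})^{-5/3}$ against $\eta^2$). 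That computation is where the threshold $2/5$ actually comes from; as written, your argument for this part does not establish it.
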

	
 \begin{rem}
 	Similar to \cite[Lemma B.1]{frank-len}, the assumption $u\in H$ in Theorem \ref{decay-regul-lemma} can be replaced with $u\in L^2\cap L^p$.
 \end{rem}
		\begin{proof}

Let $u\in H $ be a solution of \eqref{equ-0}. As first step, we prove that  $u\in L^\infty$.

Since $u\in H $, it can be shown that $u$ satisfies
\begin{equation}\label{integralform}
	u(x,y)=K_s\ast g(u) ,\quad(x,y)\in\rr^2,
\end{equation}
where $g(u)=|u|^{p-2}u$, and  $u\in L^q$, with $2\leq q\leq\frac{2(1+s)}{1-s}=:q_0$, due to the embedding $H \hookrightarrow L^q$. We recall from Lemma \ref{lem1} (i) that $K_s\in L^{q}$, for $1\leq q<\frac{1+s}{1-s}$. 
 
Hence, by applying the Young inequality, it follows from \eqref{integralform} that $u\in L^{q}$ provided
\[
2\leq q<\frac{2(1+s)}{(p-1)(1-s)-4s}=:q_1.
\]
We note that $\frac{2(1+s)}{(p-1)(1-s)-4s}>\frac{2(1+s)}{1-s}$. Then, $u\in L^\infty$ if $p<\frac{1+3s}{1-s}$. Another iteration gives $u\in L^{q}$, where $$2\le q<\frac{2(1+s)}{(p^2-1)(1-s)-2p(1+s)}=:q_2.$$
Hence, $u\in L^\infty$ if $p<\frac{1+s+2\sqrt{s}}{1-s}$.
By repeating this process, we get $u\in L^{q}$, for $2\le q<q_n$, where
\[
\frac{1}{q_n}=\frac{2}{q_0}-1+\frac{p-1}{q_{n-1}}
=\frac{a_n(p)}{q_0},\quad n\geq2,
\]
 and
\[
a_n(p):=(p-1)a_{n-1}(p)+2-q_0,\qquad a_1(p)=1.
\]
Moreover,
\[
a_n(p)=\frac{b_n(p)}{p-2},\quad\text{where }\; b_n(p):=(p-q_0)(p-1)^{n-1}-2+q_0.
\]
Clearly, since $2<p<q_0$, $a_n(p)$ is strictly decreasing in $n$ and $a_n(p)\to -\infty$, as $n \to \infty$.
Therefore, we conclude in a finite number of steps. Indeed, by Young inequality, we deduce that $u\in L^\infty$ whenever 
$$
q_n>\left(\frac{1+s}{1-s}\right)'=\frac{1+s}{2s},\quad\text{ namely }\quad
a_n(p)<\frac{4s}{1-s}.
$$

Next, we notice again from \eqref{integralform} and the fact $\hat{K}_s\in L^\infty$ that
\[
\|u\|_{\dot{H}^{(2,2s)}}
=\|K_s\ast g(u)\|_{\dot{H}^{(2,2s)}}
\lesssim \|g(u)\|_{L^2}\leq\|u\|_{L^\infty}^{p-2}\|u\|_{L^2}<\infty.
\]
Thus, $u\in  H^{(2,2s)}$.

Now, if $2s\geq1$, then $u_y\in L^2$, and we proceed to find  from $|(g(u))_y|\lesssim |u|^{p-2}|u_y|$ that
\begin{equation}\label{formula-6}
\norm{D_y^{2s+1}u}_{L^2}=
\norm{D_y^{2s}K_s\ast D_y(g(u))}_{L^2}
\lesssim\|u_y\|_{L^2}\|u\|_{L^\infty}^{p-2}
	\leq \| u\|_{H^{(2,2s)}}
	\|u\|_{L^\infty}^{p-2}.
\end{equation}
 This shows that $H^{(2,2s+1)}$ is valid when $s\geq1/2$. For the case $s<1/2$, consider the unique integer $N$ such that $1/(N+1)\leq s<1/N$. Then, for any $k=1,\cdots,N$, we have:
\begin{equation}\label{formula-5}
	\|D_y^{(k+1)s}u\|_{L^2}\lesssim \|D_y^{ks} g(u)\|_{L^2}.
\end{equation}
Now, we use the identity (\cite[Lemma 3.1]{fls})
\[
\|D_y^{s}f\|_{L^2(\rr)}^2=C_s\int_{\rr^2}\frac{|f(x)-f(z)|^2}{|x-z|^{1+2s}}\;\dd x\dd z,\qquad s\in(0,1),
\]
and the fact $u\in L^\infty$ to deduce from \eqref{formula-5} that
\[\begin{split}
	\|D_y^{(k+1)s}u\|_{L^2}^{2}&\lesssim
	\int_{\rr^3}\frac{|g(u)(x,y)-g(u)(x,z)|^2}{|y-z|^{1+2ks}}\dd x\dd z\dd y\\
	&\lesssim\int_{\rr^3}\frac{|u(x,y)-u(x,z)|^2}{|y-z|^{1+2ks}}\dd x\dd z\dd y=
	\|D_y^{ks}u\|_{L^2}^{2}.
\end{split}\]
By iteration, we obtain that $u\in H^{(2,(N+1)s)}$. Similar to the case $2s\geq1$, we can now conclude from the fact $(N+1)s\geq1$ that $u\in H^{(2,2s+1)}$. The proof of $u\in H^{(3,0)}$ is analogous to \eqref{formula-6}. The fact that $u\in C_\infty$ is a direct consequence of $u\in H^{(3,2s+1 )}$. Moreover, by the standard argument, we have
\[
\|u_y\|_{L^\infty}\lesssim \|\eta\hat{u}\|_{L^1}
\lesssim
\norm{\frac{|\eta|}{1+|\xi|^3+|\eta|^{2s+1}}}_{L^2}\norm{u}_{H^{(3,2s+1)}}\lesssim
\norm{u}_{H^{(3,2s+1)}},
\]
 provided $s>2/5$. So $u_y\in C_\infty$ if $s>2/5$.
 
Finally, since $u\in C_\infty$ and $K_s, xK_s,yK_s\in\lt$, so $\hat{K}_s\in H^1$, and we   use Lemma 3.1 in \cite{chenbona} and Theorem 5.9 in \cite{epb} together with  Theorem \ref{decay-regul-lemma}  to infer that \eqref{bounds} holds.
	\end{proof}  
Thanks to the above regularity results we are able to prove that each solution of \eqref{equ-0} satisfies a Pohozaev identity.
 
	\begin{thm} \label{ph}
		Let $2<p<\frac{2(1+s)}{1-s}$,  $s\in(1/4,1)$ and $u \in H$ be a solution to \blue{\eqref{equ-0}}
		then $Q(u)=0$, where $Q$ is defined in \eqref{Q}.
		\end{thm}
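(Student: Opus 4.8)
The plan is to realize $Q(u)$ as the derivative at $t=1$ of the energy along the mass-preserving anisotropic dilation \eqref{scaling-0}, and to identify this derivative with the (vanishing) pairing of the action differential against the generator of that dilation. Write the action $J(u):=E(u)+\frac{\al}{2}\norm{u}_2^2$, so that a solution $u\in H$ of \eqref{equ-0} is exactly a free critical point of $J$ on $H$, i.e. $\scal{J'(u),\psi}=0$ for every $\psi\in H$. Since $\norm{u_t}_2=\norm{u}_2$, formula \eqref{scaling} gives $J(u_t)=\frac{t^{2s}}{2}\paar{\norm{u_x}_2^2+\norm{D_y^su}_2^2}+\frac{\al}{2}\norm{u}_2^2-\frac{t^{(s+1)(p-2)/2}}{p}\norm{u}_p^p$, whose $t$-derivative at $t=1$ is precisely $Q(u)$. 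It therefore suffices to show that $\frac{d}{dt}J(u_t)\big|_{t=1}=\scal{J'(u),\phi}$ with $\phi:=\frac{d}{dt}u_t\big|_{t=1}=\frac{s+1}{2}u+s\,xu_x+y\,u_y$, because the right-hand side then vanishes and $Q(u)=0$ follows.

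The decisive preliminary step is to check that $\phi$ is an admissible variation, that is $\phi\in H$, and that $t\mapsto u_t$ is differentiable into $H$ at $t=1$ with derivative $\phi$. This is where Theorem \ref{decay-regul-lemma} together with Corollary \ref{decay-k} becomes indispensable. The regularity $u\in H^{(3,2s+1)}$ furnishes the derivatives needed to place $u_x$, $u_{xx}$, $u_y$ and their $y$-fractional derivatives in $L^2$, while the pointwise bound \eqref{bounds} supplies exponential decay in $x$, so that the $x$-weighted pieces $xu_x$ and their $\partial_x$- and $D_y^s$-derivatives stay in $L^2$. The genuinely delicate term is the $y$-weighted piece $yu_y$: its square-integrability is governed by the tail of $u_y$, which for $p$ close to $2$ is dictated by the nonlinearity rather than by the kernel, $|u_y(x,y)|\lesssim |y|^{-(1+2s)(p-1)}\ee^{-\frac{|x|}{4}}$, so that $yu_y\in\lt$ requires $(1+2s)(p-1)>3/2$; since $p>2$ this holds exactly under the hypothesis $s>1/4$, which is the reason for the restriction $s\in(1/4,1)$. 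I expect this weighted-integrability estimate, and the analogous control of $D_y^s(yu_y)$ through the commutator $[D_y^s,y]$ of order $s-1$, to be the main obstacle.

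Granting $\phi\in H$, I would verify the identification $\scal{J'(u),\phi}=Q(u)$ by computing the pairing term by term. For the local part I integrate by parts, using the decay in \eqref{bounds} to discard all boundary contributions: the elementary identities $\int x\,\partial_x(|u_x|^2)=-\norm{u_x}_2^2$, the mixed computation $\int(-u_{xx})\,yu_y=-\frac12\norm{u_x}_2^2$, and the $\frac{s+1}{2}u$-piece combine to give the coefficient $s$ in front of $\norm{u_x}_2^2$, while $\int x\,\partial_x(|u|^p)=-\norm{u}_p^p$ and its $y$-analogue produce $-\frac{(s+1)(p-2)}{2p}\norm{u}_p^p$; the mass contributions cancel since $\scal{u,\phi}=0$, reflecting the mass invariance of the scaling. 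For the nonlocal part I pass to the symmetric form $\scal{D_y^su,D_y^s\psi}$ and use that $x$-multiplication commutes with $D_y^s$ to get $\scal{D_y^{2s}u,xu_x}=-\frac12\norm{D_y^su}_2^2$, together with the homogeneity identity $\scal{D_y^{2s}u,yu_y}=\frac{2s-1}{2}\norm{D_y^su}_2^2$, obtained by differentiating $\norm{D_y^su(\la\,\cdot)}_2^2=\la^{2s-1}\norm{D_y^su}_2^2$ at $\la=1$. Summing all $D_y$-contributions gives the coefficient $s$ in front of $\norm{D_y^su}_2^2$. Collecting the pieces yields $\scal{J'(u),\phi}=Q(u)$, and since the left-hand side is zero the Pohozaev identity $Q(u)=0$ is established; note that this argument uses only the quadratic forms and $\norm{u}_p^p$, so it applies to complex, sign-changing $u$ without any positivity assumption.
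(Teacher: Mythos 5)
Your proposal is correct and is essentially the paper's own argument in different packaging: testing the Euler--Lagrange equation against the dilation generator $\phi=\tfrac{s+1}{2}u+s\,xu_x+y\,u_y$ is exactly the paper's procedure of multiplying \eqref{equ-0} by $yu_y$, $xu_x$ and $u$ and combining the resulting identities, and you rely on the same ingredients (the regularity and decay of Theorem \ref{decay-regul-lemma} to justify $xu_x,yu_y\in L^2$ for $s>1/4$, the commutator/homogeneity identity giving $\scal{D_y^{2s}u,yu_y}=\tfrac{2s-1}{2}\norm{D_y^su}_2^2$, and the same integrations by parts). No substantive difference.
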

	\begin{proof}
Notice first from Theorem \ref{decay-regul-lemma} that $u_x$ and $u_y$ are well-defined. By \eqref{bounds}, if $s>1/4$, we have that $xu, y u \in L^2(\R^2)$. 
 
Repeating the  argument of Theorem \ref{decay-regul-lemma}, we observe that $xu_x,y u_y\in L^2(\R^2)$.

		Multiplying \eqref{equ-0} by $y u_y$ and integrating on $\R^2$, we get that
		\begin{align} \label{ph1}
			-\int_{\R^2} y u_{xx}u_y\dd x\dd y + 
			\int_{\R^2} y u_yD_y^{2s}  u   \dd x\dd y+  \alpha \int_{\R^2}  y uu_y \dd x\dd y= \int_{\R^2} y|u|^{p-2} u u_y   \dd x\dd y.
		\end{align}
	Since $u_x,u_y\in C_\infty$ (see Theorem \ref{decay-regul-lemma}),  we get, by using integration by parts,  that
		$$
		-\int_{\R^2} y u_{xx}    u_y \dd x\dd y=\int_{\R^2} y   u_x u_{xy}   \dd x\dd y=\frac 12 \int_{\R^2} y \partial_{y} |u_x|^2 \dd x\dd y=-\frac 12 \int_{\R^2} |u_x|^2  \dd x\dd y,
		$$
		$$
		\alpha \int_{\R^2}  y u\partial_y u \dd x\dd y=-\frac{\alpha}{2} \int_{\R^2} |u|^2  \dd x\dd y
		$$
		and
		$$
		\int_{\R^2} y|u|^{p-2} u\partial_y u   \dd x\dd y =-\frac 1p \int_{\R^2} |u|^p  \dd x\dd y.
		$$
		Note that
		\begin{align*} 
			D_y^{2s} (y  u_y)=2s D_y^{2s} u + y   D_y^{2s} u_y.
		\end{align*}
		Moreover, there holds that
		\begin{align*}
			\int_{\R^2} D_y^{2s}  u\left(y \partial_y u\right)  \dd x\dd y&= \int_{\R^2} u D_y^{2s} \left(y \partial_y u\right)  \dd x\dd y \\
			&=2s \int_{\R^2} u D_y^{2s} u  \dd x\dd y +\int_{\R^2} y u \partial_y D_y^{2s} u \dd x\dd y \\
			&=(2s-1) \int_{\R^2} u D_y^{2s} u  \dd x\dd y -\int_{\R^2} u D_y^{2s} \left(y \partial_y u\right)  \dd x\dd y.
		\end{align*}
		This then leads to
		$$
		\int_{\R^2} D_y^{2s}  u\left(y \partial_y u\right)  \dd x\dd y=\frac{2s-1}{2}\int_{\R^2} u D_y^{2s} u  \dd x\dd y =\frac{2s-1}{2}\int_{\R^2} |D_y^{s} u|^2  \dd x\dd y.
		$$
		Coming back to \eqref{ph1}, we then obtain that
		\begin{align} \label{ph2}
			\frac 12 \int_{\R^2} |\partial_ x u|^2  \dd x\dd y +\frac{1-2s}{2}\int_{\R^2} |D_y^{s} u|^2  \dd x\dd y +\frac {\alpha}{2} \int_{\R^2} |u|^2  \dd x\dd y=\frac 1p \int_{\R^2} |u|^p  \dd x\dd y.
		\end{align}
		On the other hand, multiplying \eqref{equ-0} by $x \partial_x u$ and integrating on $\R^2$, we have that
		\begin{align} \label{identity2}
			-\int_{\R^2} x \partial_{xx} u \partial_x u \dd x\dd y + \int_{\R^2} D_y^{2s}  u\left(x \partial_x u\right)  \dd x\dd y+  \alpha \int_{\R^2}  x u\partial_x u \dd x\dd y= \int_{\R^2} x|u|^{p-2} u\partial_x u   \dd x\dd y.
		\end{align}
		Applying the fact that $ D_y^{2s} (x \partial_x u)=x \partial_x D_y^{2s} u$, we are able to similarly derive that
		$$
		-\int_{\R^2} x \partial_{xx} u \partial_x u \dd x\dd y= \frac 12 \int_{\R^2} |\partial_x u|^2  \dd x\dd y, \quad  \int_{\R^2} D_y^{2s}  u\left(x \partial_x u\right)  \dd x\dd y=-\frac 12 \int_{\R^2} |D_y^{s} u|^2  \dd x\dd y
		$$
		and
		$$
		\alpha \int_{\R^2}  x u\partial_x u \dd x\dd y=-\frac{\alpha}{2} \int_{\R^2}|u|^2 \,\dd x\dd y, \quad \int_{\R^2}  x |u|^{p-2}u\partial_x  u \dd x\dd y = -\frac{1}{p} \int_{\R^2}|u|^p  \dd x\dd y.
		$$
		Therefore, from \eqref{identity2}, we get that
		$$
		-\frac 12 \int_{\R^2} |\partial_ x u|^2  \dd x\dd y +\frac{1}{2}\int_{\R^2} |D_y^{s} u|^2  \dd x\dd y +\frac {\alpha}{2} \int_{\R^2} |u|^2  \dd x\dd y=\frac 1p \int_{\R^2} |u|^p  \dd x\dd y.
		$$
		This along with \eqref{ph2} results in
		\begin{align*} 
			\frac{1-s}{2} \int_{\R^2} |\partial_ x u|^2  \dd x\dd y +\frac{1-s}{2}\int_{\R^2} |D_y^{s} u|^2  \dd x\dd y +\frac {\alpha(s+1)}{2} \int_{\R^2} |u|^2  \dd x\dd y=\frac {s+1}{p} \int_{\R^2} |u|^p  \dd x\dd y.
		\end{align*}
		In addition, by multiplying \eqref{equ-0} by $u$ and integrating on $\R^2$, we conclude that
		$$
		\int_{\R^2} |\partial_ x u|^2  \dd x\dd y +\int_{\R^2} |D_y^{s}u|^2  \dd x\dd y +\alpha \int_{\R^2} |u|^2  \dd x\dd y=\int_{\R^2} |u|^p  \dd x\dd y.
		$$
		As a result, we now obtain that
		$$
		s \int_{\R^2} |\partial_ x u|^2  \dd x\dd y +s\int_{\R^2} |D_y^{s} u|^2  \dd x\dd y =\frac {(s+1)(p-2)}{2p} \int_{\R^2} |u|^p  \dd x\dd y.
		$$
		This means that $Q(u)=0$. Thus the proof is completed.
	\end{proof}

\begin{cor} \label{nonexistence}
		If $p \geq \frac{2(s+1)}{1-s}$, then there exists no nontrivial solutions in $H \cap L^p(\R^2) $ to the equation
		\begin{align} \label{equ11}
			-\partial_{xx} u + D_y^{2s} u + \alpha u=|u|^{p-2}u \quad \text{in} \,\, \R^2, \quad \alpha>0.
		\end{align}
	\end{cor}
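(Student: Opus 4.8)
The plan is to run a contradiction argument based on the interplay between the Pohozaev identity and the Nehari (testing) identity, both of which were already produced inside the proof of Theorem~\ref{ph}. Suppose $u \in H \cap L^p(\R^2)$ is a nontrivial solution of \eqref{equ11} with $p \ge q_s := \frac{2(s+1)}{1-s}$ and $\alpha > 0$. The first task is to secure enough regularity and decay so that the integration-by-parts manipulations of Theorem~\ref{ph} are legitimate. Although Theorems~\ref{decay-regul-lemma} and \ref{ph} are stated for $p < q_s$, the decay/regularity bootstrap uses $p$ only through the nonlinearity $g(u) = |u|^{p-2}u$; I would re-run it starting from $u \in L^2 \cap L^p$ (as permitted by the remark following Theorem~\ref{decay-regul-lemma}), the extra integrability $u \in L^p$ being exactly what should make the convolution iteration $u = K_s \ast g(u)$ terminate in $u \in L^\infty$, whence $u,u_x,u_y \in C_\infty$ and the two-sided bound \eqref{bounds}. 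As in Theorem~\ref{ph}, for $s > 1/4$ this decay yields $xu, yu, xu_x, yu_y \in L^2(\R^2)$.

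With these properties in hand, I would repeat verbatim the computations of Theorem~\ref{ph}: multiplying \eqref{equ11} by $y u_y$, by $x u_x$, and by $u$, and integrating over $\R^2$, produce the same three relations. Crucially, none of these derivations uses $p < q_s$; they use only the decay that makes the boundary terms vanish together with $u \in L^p$. Combining the first two relations gives the Pohozaev identity $Q(u) = 0$, namely
\begin{equation*}
s\paar{\norm{u_x}_2^2 + \norm{D_y^s u}_2^2} = \frac{(s+1)(p-2)}{2p}\,\norm{u}_p^p,
\end{equation*}
while testing against $u$ gives the Nehari identity
\begin{equation*}
\norm{u_x}_2^2 + \norm{D_y^s u}_2^2 + \alpha \norm{u}_2^2 = \norm{u}_p^p.
\end{equation*}

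Finally I would combine the two algebraically. Writing $A := \norm{u_x}_2^2 + \norm{D_y^s u}_2^2$, the Pohozaev identity gives $A = \frac{(s+1)(p-2)}{2ps}\norm{u}_p^p$, and substituting into the Nehari identity yields
\begin{equation*}
\alpha \norm{u}_2^2 = \norm{u}_p^p\paar{1 - \frac{(s+1)(p-2)}{2ps}} = \frac{2(s+1) - p(1-s)}{2ps}\,\norm{u}_p^p.
\end{equation*}
Since $p \ge \frac{2(s+1)}{1-s}$ is equivalent to $p(1-s) \ge 2(s+1)$, the coefficient $2(s+1) - p(1-s)$ is nonpositive, so the right-hand side is $\le 0$ while the left-hand side is $\ge 0$. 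Both must therefore vanish; as $\alpha > 0$, this forces $\norm{u}_2 = 0$, i.e. $u \equiv 0$, contradicting nontriviality.

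The main obstacle is the first step: justifying that the regularity, and above all the decay estimate \eqref{bounds}, persist in the critical/supercritical regime $p \ge q_s$, since Theorem~\ref{decay-regul-lemma} was established under $p < q_s$. The delicate point is to verify that the iterative gain of integrability along $u = K_s \ast g(u)$ still closes once $u \in L^p$ is assumed; granting this, everything downstream is the purely algebraic manipulation above, which is insensitive to the size of $p$.
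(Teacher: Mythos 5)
Your algebraic core is exactly the paper's proof: combine the Pohozaev identity $Q(u)=0$ with the Nehari identity obtained by testing against $u$, eliminate the gradient terms, and observe that the resulting coefficient $1-\frac{(s+1)(p-2)}{2ps}=\frac{2(s+1)-p(1-s)}{2ps}$ is nonpositive precisely when $p\ge\frac{2(s+1)}{1-s}$, so $\alpha\|u\|_2^2\le 0$ forces $u\equiv 0$. That part is correct and identical in substance to the paper's argument.

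The one place you diverge is that you explicitly worry about whether Theorem \ref{ph} (and the regularity Theorem \ref{decay-regul-lemma} feeding into it) can be invoked at all, since both are stated only for $2<p<\frac{2(1+s)}{1-s}$ while the corollary lives at $p\ge\frac{2(s+1)}{1-s}$. You are right that this is the genuinely delicate point; the paper itself simply cites Theorem \ref{decay-regul-lemma} without comment, so you are being more careful than the authors here. However, the repair you sketch does not close as stated. Starting from $u\in L^2\cap L^p$ (so $u\in L^q$ for $2\le q\le Q$ with $Q=\max(p,q_s)$) and iterating $u=K_s\ast g(u)$ via Young's inequality with $K_s\in L^a$ for $a<\frac{1+s}{1-s}$, the new admissible exponent $c$ satisfies $\frac1c>\frac{p-1}{Q}-\frac{2s}{1+s}$, and one checks that this beats $\frac1Q$ (i.e.\ the iteration actually gains integrability) if and only if $Q>\frac{(p-2)(1+s)}{2s}$, which for $Q=p$ is equivalent to $p<\frac{2(1+s)}{1-s}$ --- exactly the condition that fails in the corollary's regime. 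So the extra hypothesis $u\in L^p$ does not by itself restart the bootstrap; some other mechanism (e.g.\ a different splitting of the kernel, or exploiting $\alpha>0$ and the sign structure before establishing full decay) would be needed to legitimize the integrations by parts for large $p$. This is a gap, but it is a gap shared with the paper's own one-line proof rather than one you introduced.
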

	\begin{proof}
		If $u \in H$ is a solution to \eqref{equ11}, by Theorem \ref{decay-regul-lemma}, then
		\begin{align} \label{nonexist1}
			s \int_{\R^2} |\partial_ x u|^2  \dd x\dd y +s\int_{\R^2} |D_y^{s} u|^2  \dd x\dd y =\frac {(s+1)(p-2)}{2p} \int_{\R^2} |u|^p  \dd x\dd y.
		\end{align}
		On the other hand, multiplying \eqref{equ11} by $u$ and integrating on $\R^2$, we have that
		\begin{align} \label{nonexist2}
			\int_{\R^2} |\partial_ x u|^2  \dd x\dd y +\int_{\R^2} |D_y^{s} u|^2  \dd x\dd y + \alpha \int_{\R^2}|u|^2 \, dx= \int_{\R^2} |u|^p  \dd x\dd y.
		\end{align}
		Accordingly, from \eqref{nonexist1} and \eqref{nonexist2}, we obtain that
		\begin{align} \label{cor1}
			\alpha \int_{\R^2}|u|^2  \dd x\dd y=\left(1-\frac{(s+1)(p-2)}{2ps}\right) \int_{\R^2}|u|^p  \dd x\dd y.
		\end{align}
		Since $p \geq \frac{2(s+1)}{1-s}$, then
		$$
		1-\frac{(s+1)(p-2)}{2ps} \leq 0.
		$$
		Then we conclude from \eqref{cor1} that $u=0$, because of $\alpha>0$. This completes the proof.
	\end{proof}
	
	\section{Normalized solution in the mass subcritical case}\label{subcritical-section}

	In this section, we consider solutions to \eqref{equ-0}-\eqref{mass} with $s\in (0,1)$ in the mass subcritical case, i.e. $2<p<\frac{2(3s
		+1)}{s+1}$. In this case, from \eqref{gn}, we know that that the energy functional $E$ restricted on $S_c$ is bounded from below. Therefore, we are able to introduce the following minimization problem,
	\begin{align} \label{min}
		m(c):=\inf_{u \in S_c} E(u).
	\end{align}
	Clearly, minimizers to \eqref{min} are solutions to \eqref{equ-0}-\eqref{mass}.
	
	\begin{thm} \label{thm1}
		Let $2<p<\frac{2(3s+1)}{s+1}$, then any minimizing sequence to \eqref{min} is compact in $H$, up to translations, for any $c>0$. In particular, there exist minimizers to \eqref{min}, for any $c>0$.
	\end{thm}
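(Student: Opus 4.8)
The plan is to run a concentration-compactness argument in the spirit of Lions, where the decisive ingredient is a strict subadditivity inequality for the map $c\mapsto m(c)$.

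\emph{Coercivity and sign of $m$.} First I would show that every minimizing sequence is bounded in $H$. Writing $T(u):=\|u_x\|_2^2+\|D_y^su\|_2^2$, the Gagliardo--Nirenberg inequality \eqref{gn-hH} gives
\[
E(u)\ge \tfrac12 T(u)-\tfrac{C_H}{p}\,c^{\frac p2-\frac{(p-2)(s+1)}{4s}}\,T(u)^{\frac{(p-2)(s+1)}{4s}},
\]
and since the range $2<p<\frac{2(3s+1)}{s+1}$ is exactly the condition $\frac{(p-2)(s+1)}{4s}<1$, the right-hand side is bounded below and tends to $+\infty$ as $T(u)\to\infty$. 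Hence $E|_{S_c}$ is coercive, $m(c)>-\infty$, and any minimizing sequence has $T(u_n)$ bounded, so it is bounded in $H$. Using the mass-preserving scaling \eqref{scaling-0}--\eqref{scaling}, the subcritical relation $\frac{(s+1)(p-2)}{2}<2s$ shows that $E(u_t)<0$ for $t$ small, whence $m(c)<0$ for every $c>0$.

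\emph{Strict subadditivity (the main obstacle).} The heart of the proof, which rules out the dichotomy of the minimizing sequence, is the strict inequality $m(\theta c)<\theta\,m(c)$ for all $\theta>1$, $c>0$. To establish it, given $u\in S_c$ with $E(u)\le \tfrac12 m(c)<0$, I take $w=\sqrt\theta\,u\in S_{\theta c}$ and compute
\[
E(w)=\theta\,\tfrac12 T(u)-\theta^{p/2}\,\tfrac1p\|u\|_p^p=\theta E(u)-(\theta^{p/2}-\theta)\,\tfrac1p\|u\|_p^p.
\]
The negativity of $E(u)$ forces $\tfrac1p\|u\|_p^p\ge -E(u)\ge \tfrac12|m(c)|$, a lower bound uniform over such near-minimizers, and $\theta^{p/2}>\theta$ for $\theta>1$, $p>2$; passing to the infimum gives $m(\theta c)\le \theta m(c)-(\theta^{p/2}-\theta)\tfrac12|m(c)|<\theta m(c)$. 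A standard manipulation (splitting $c=c_1+c_2$ and applying the scaling relation to the larger mass) then upgrades this to $m(c)<m(c_1)+m(c-c_1)$ for $0<c_1<c$. The only delicate point is precisely this uniform lower bound on $\|u\|_p^p$, which is what the negativity of the energy buys us.

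\emph{Concentration-compactness and strong convergence.} Let $\{u_n\}$ be a minimizing sequence, bounded in $H$. Vanishing is excluded: if $\sup_{z\in\R^2}\int_{B_R(z)}|u_n|^2\,\dd x\dd y\to0$, then Lemma \ref{concentration} gives $u_n\to0$ in $L^p$, so $\liminf E(u_n)\ge0>m(c)$, a contradiction. Hence there are $z_n\in\R^2$ and $\delta>0$ with $\int_{B_R(z_n)}|u_n|^2\,\dd x\dd y\ge\delta$; since $E$ and the constraint are translation invariant, I replace $u_n$ by $u_n(\cdot+z_n)$ and may assume $u_n\wto u$ in $H$ with $u\ne0$, using the local compact embedding of Lemma \ref{embedding}. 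Setting $v_n=u_n-u$, the Brezis--Lieb lemma together with the weak convergence of the quadratic forms yields
\[
\|u_n\|_2^2=\|u\|_2^2+\|v_n\|_2^2+o(1),\qquad E(u_n)=E(u)+E(v_n)+o(1).
\]
With $c_1=\|u\|_2^2\in(0,c]$ and $c_2=c-c_1=\lim\|v_n\|_2^2$: if $c_2>0$, then $E(u)\ge m(c_1)$ and $\liminf E(v_n)\ge m(c_2)$ (using continuity of $m$) would force $m(c)\ge m(c_1)+m(c_2)$, contradicting the strict subadditivity above; hence $c_2=0$ and $\|u\|_2^2=c$. Finally, since $\|v_n\|_2\to0$ and $\{v_n\}$ is bounded in $H$, inequality \eqref{gn-hH} gives $\|v_n\|_p\to0$, so $E(v_n)=\tfrac12 T(v_n)+o(1)\ge o(1)$; combining $E(u_n)\to m(c)$ with $E(u)\ge m(c)$ and the energy splitting yields $T(v_n)\to0$, whence $u_n\to u$ in $H$. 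Therefore $E(u)=m(c)$, $u$ is a minimizer, and the minimizing sequence is compact up to translations.
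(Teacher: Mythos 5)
Your proof is correct and follows essentially the same route as the paper: coercivity and boundedness from below via the anisotropic Gagliardo--Nirenberg inequality, negativity of $m(c)$ via the scaling $u_t$, strict subadditivity obtained from the dilation $u\mapsto\sqrt{\theta}\,u$ combined with the lower bound on $\|u\|_p^p$ that $E(u)<0$ forces along near-minimizers, and finally Lions' concentration-compactness. The only differences are presentational: the paper establishes $m(\theta c)<\theta m(c)$ by contradiction (equality would force $\|u_n\|_p\to 0$ and hence $m(c)\ge 0$) and then cites the Lions references for the compactness conclusion, whereas you give the quantitative form of the strict inequality and write out the vanishing/dichotomy exclusion and the Brezis--Lieb splitting explicitly.
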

	\begin{proof}
		Suppose that $\{u_n\} \subset S_c$ is a minimizing sequence to \eqref{min}. In view of \eqref{gn}, we see that
		\begin{align} \label{below1}
			\begin{split}
				E(u_n)& \geq \frac 12 \int_{\R^2} |\partial_x u_n|^2 \,\dd x\dd y + \frac 12 \int_{\R^2} |D_y^{s}u_n|^2  \dd x\dd y \\
				& \quad -\frac{C_{p,s} c^{\frac p 2-\frac{(p-2)(1+s)}{4s}}}{p}\left(\int_{\R^2} |\partial_x u_n|^2  \dd x\dd y \right)^{\frac{p-2}{4}}\left(\int_{\R^2} |D_y^{s} u_n|^2  \dd x \dd y \right)^{\frac{p-2}{4s}}.
			\end{split}
		\end{align}
		Since $2 <p<\frac{2(3s+1)}{s+1}$, then
		$$
 0<\frac{p-2}{4} +\frac{p-2}{4s}<1
		$$
		It then follows, from \eqref{below1}, that $\{u_n\} \subset S_c$ is bounded in $H$. In addition, by using \eqref{scaling-0} and \eqref{scaling}, we are able to get that $E(u_t)<0$, for any $t>0$ small enough. This suggests that $m(c)<0$ for any $c>0$. Thanks to Lemma \ref{concentration}, then there exist a sequence $\{z_n\} \subset \R^2$ and a nontrivial $u \in H$ such that $u_n(\cdot-z_n) \wto u$ in $H$ as $n \to \infty$. Furthermore, there holds that $m(\theta c) \leq \theta m(c)$, for any $c>0$ and $\theta>1$. Indeed, by the definition of $m(c)$, one has that, for any $\eps>0$, there exists $u \in S_c$ such that $E(u) \leq m(c) + \eps$. Note that $\sqrt{\theta} u \in S(\theta c)$, $\theta>1$ and $p>2$, then
		$$
		m(\theta c) \leq E(\sqrt{\theta} u)=\frac {\theta}{2} \|\partial_x u\|_2^2  + \frac {\theta}{2} \|D_y^{s} u\|_2^2   
-\frac {\theta^{\frac p 2}}{p} \|u\|_p^{p} \leq \theta E(u)\leq \theta(m(c) + \eps).
		$$
		Thus we have the desired conclusion. 

We claim that $m(\theta c) <\theta m(c)$, for any $c>0$ and $\theta>1$. Suppose, by contradiction, that $m(\theta c) =\theta m(c)$ for some $c>0$ and $\theta>1$. Let $\{u_n\}\subset S_c$ be a minimizing sequence to \eqref{min}, then 
\[
o_n(1)=m(\theta c)-\theta E(u_n)
\le \frac {\theta-\theta^{\frac p 2}}{p} \|u_n\|_p^{p} \le 0.
\]
This implies that $u_n \to 0$ in $L^p(\R^2)$ and so $m(c)\ge 0$, reaching a contradiction. At this point, we can utilize the Lions concentration-compactness principle in \cite{Li1, Li2} to complete the proof.
	\end{proof}
	
	\begin{thm} \label{thm2}
		Let $2<p<\frac{2(3s+1)}{s+1}$ and define
		$$
		G(c):=\{ u \in S_c : E(u)=m(c)\},
		$$
		then $G(c) \neq \emptyset$ and $G(c)$ is orbitally stability, for any $c>0$.
	\end{thm}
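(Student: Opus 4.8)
The plan is to prove orbital stability via the classical Cazenave--Lions compactness scheme, with Theorem \ref{thm1} serving as the engine. The nonemptiness $G(c)\neq\emptyset$ is immediate: Theorem \ref{thm1} asserts that any minimizing sequence for $m(c)$ is, up to translation, strongly convergent in $H$, and its limit realizes the infimum, hence belongs to $G(c)$.

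For stability I first fix the notion. Define $\operatorname{dist}_H(\phi, G(c)) := \inf_{v\in G(c)}\|\phi - v\|_H$. The goal is to show that for every $\eps>0$ there is $\de>0$ such that $\operatorname{dist}_H(\Phi_0, G(c))<\de$ forces the solution $\Phi(t)$ of \eqref{eq0} with $\Phi(0)=\Phi_0$ to satisfy $\sup_{t}\operatorname{dist}_H(\Phi(t), G(c))<\eps$. Here I use that in the mass subcritical window $2<p<\frac{2(3s+1)}{s+1}$ the Cauchy problem is globally well-posed in $H$: a local theory (cf.\ the well-posedness results from \cite{YIK}) combined with conservation of $M$ and $E$ and the coercivity of $E$ on $S_c$ coming from \eqref{gn} (exactly as in \eqref{below1}) yields a uniform-in-time bound on $\|\Phi(t)\|_H$, ruling out finite-time blow-up, so that the flow is defined for all $t$.

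I would then argue by contradiction. Suppose stability fails; then there exist $\eps_0>0$, initial data $\Phi_0^n$ with $\operatorname{dist}_H(\Phi_0^n, G(c))\to 0$, and times $t_n$ with $\operatorname{dist}_H(\Phi^n(t_n), G(c))\geq \eps_0$ for all $n$. Write $w_n := \Phi^n(t_n)$. Since $\operatorname{dist}_H(\Phi_0^n, G(c))\to 0$ and $M,E$ are continuous on $H$, conservation of mass and energy give $M(w_n)=M(\Phi_0^n)\to c$ and $E(w_n)=E(\Phi_0^n)\to m(c)$. Rescaling $\tilde w_n := \sqrt{c/M(w_n)}\,w_n$ places the sequence on $S_c$; because the scaling factor tends to $1$ and $\{w_n\}$ is bounded in $H$, one has $\|\tilde w_n - w_n\|_H\to 0$ and $E(\tilde w_n)\to m(c)$, so $\{\tilde w_n\}$ is a minimizing sequence for \eqref{min}. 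Theorem \ref{thm1} then supplies translations $z_n$ and $v\in G(c)$ with $\tilde w_n(\cdot - z_n)\to v$ in $H$ along a subsequence. Since $E$ and the constraint $S_c$ are translation invariant, $G(c)$ is translation invariant, whence $v(\cdot + z_n)\in G(c)$ and $\operatorname{dist}_H(\tilde w_n, G(c))\leq \|\tilde w_n(\cdot - z_n) - v\|_H\to 0$; together with $\|\tilde w_n - w_n\|_H\to 0$ this gives $\operatorname{dist}_H(w_n, G(c))\to 0$, contradicting $\operatorname{dist}_H(w_n, G(c))\geq\eps_0$.

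The main obstacle is the global well-posedness input: one must guarantee that solutions exist for all time in $H$ so that the times $t_n$ are meaningful, which rests on marrying a local existence theory with the a priori $H$-bound from energy and mass conservation in the subcritical regime. The remainder of the argument is robust, as the compactness of minimizing sequences (Theorem \ref{thm1}) carries the analytic weight, while the translation invariance of $G(c)$ is precisely what makes the orbital formulation natural, since the anisotropy permits spatial drift of solutions.
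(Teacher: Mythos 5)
Your proposal is correct and follows essentially the same route as the paper: a Cazenave--Lions contradiction argument in which the conservation laws turn the sequence $\Phi^n(t_n)$ into a minimizing sequence for \eqref{min}, and the compactness up to translations from Theorem \ref{thm1} yields the contradiction. You are in fact more explicit than the paper about the rescaling onto $S_c$, the translation invariance of $G(c)$, and the global well-posedness needed to make the times $t_n$ meaningful, all of which the paper leaves implicit.
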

	\begin{proof}
		In virtue of Theorem \ref{thm1}, we first have that $G(c) \neq \emptyset$ for any $c>0$. Let us now show that $G(c)$ is orbitally stability for any $c>0$. Suppose by contradiction that $G(c)$ is unstable for some $c>0$. Then, with the help of the conservation laws, there exists a sequence $\{u_n\} \subset S_c$ such that $m(c)=E(u_n) +o_n(1)$. According to Theorem \ref{thm1}, we know that $\{u_n\} \subset S_c$ is compact in $H$ up to translations. We then reach a contradiction. Thus the proof is completed.
	\end{proof}

	\begin{prop}
		Let $u \in S_c$ be a minimizer to \eqref{min}, then there exists a constant $\theta \in \mathbb{S}^1$ such that $u(x, y)=e^{\textnormal{i} \theta} |u(x, y)|$ for $(x, y) \in \R^2$. Moreover, $|u|$ is  non-negative,  partially (axially) symmetric and strictly decreasing with respect to $x ,y$.
	\end{prop}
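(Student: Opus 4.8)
The plan is to argue in three steps: reduce to the modulus and show the phase is a single global constant; obtain the axial symmetry by one-dimensional rearrangement in each variable; and finally upgrade to strict monotonicity through the convolution representation. For the first step, write $u=\rho\,\ee^{\ii\theta}$ with $\rho=|u|\ge0$ and record the two diamagnetic inequalities $\|\partial_x|u|\|_{2}\le\|\partial_x u\|_{2}$ and $\|D_y^{s}|u|\|_{2}\le\|D_y^{s}u\|_{2}$. The first is pointwise, and the second follows by inserting $\bigl||u(x,y)|-|u(x,y')|\bigr|\le|u(x,y)-u(x,y')|$ into the Gagliardo double-integral formula for $\|D_y^{s}\cdot\|_{2}^{2}$ already used in Theorem \ref{decay-regul-lemma}. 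Since $\||u|\|_{2}=\|u\|_{2}$ and $\||u|\|_{p}=\|u\|_{p}$, we have $|u|\in S_c$ and $E(|u|)\le E(u)=m(c)$, so $|u|$ is also a minimizer and both inequalities are in fact equalities. Equality in the first, via $|\partial_x u|^{2}=|\partial_x\rho|^{2}+\rho^{2}|\partial_x\theta|^{2}$, forces $\rho\,\partial_x\theta=0$; equality in the second forces, for a.e.\ $x$, that $u(x,\cdot)$ has constant phase wherever it does not vanish. As a constrained minimizer, $|u|$ solves \eqref{equ-0} with a Lagrange multiplier $\alpha$, which the Nehari identity together with $m(c)<0$ and $p>2$ force to be positive; hence Theorem \ref{decay-regul-lemma} and the lower bound in \eqref{bounds} give $|u|>0$ everywhere. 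Since $u\in C_\infty$ is then nowhere zero, $\theta=\arg u$ is continuous and, by the two relations above, locally constant, so connectedness of $\R^2$ makes it the single constant $\theta\in\mathbb{S}^1$ with $u=\ee^{\ii\theta}|u|$.

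For the symmetry, put $v=|u|>0$ and let $v^{\ast}$ be obtained by applying the one-dimensional symmetric-decreasing rearrangement first in $x$ (for each fixed $y$) and then in $y$ (for each fixed $x$); note that the $y$-rearrangement preserves the evenness and monotonicity in $x$ already gained, since it acts on ordered slices. Each rearrangement preserves $\|v\|_{2}$ and $\|v\|_{p}$. Expanding $\|D_y^{s}v\|_{2}^{2}$ through its Gagliardo double integral and $\|\partial_x v\|_{2}^{2}$ through its difference-quotient double integral, and applying the one-dimensional Riesz rearrangement inequality to the cross terms $\int v(\cdot,y)v(\cdot,y')\,\dd x$ and $\int v(x,\cdot)v(x',\cdot)\,\dd y$, one checks that neither rearrangement increases $\|\partial_x v\|_{2}$ or $\|D_y^{s}v\|_{2}$ (the preserved quadratic slices cancel and the negatively-weighted overlap integrals only grow). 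Hence $v^{\ast}\in S_c$ and $E(v^{\ast})\le E(v)=m(c)$, so $v^{\ast}$ is again a minimizer and every rearrangement inequality is an equality. Invoking the equality cases of the fractional P\'olya--Szeg\H{o} and Riesz inequalities, together with $v>0$ which removes the degenerate flat-level situation, $v$ must coincide, after a translation in $x$ and in $y$, with $v^{\ast}$; thus $|u|$ is even and nonincreasing in $|x|$ and in $|y|$.

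Finally, for strict monotonicity I would use the integral identity $v=K_s\ast v^{p-1}$ from \eqref{integralform}. By the previous step the density $v^{p-1}$ is itself symmetric and nonincreasing in $|x|$ and $|y|$, while Lemma \ref{lem1}(i) asserts that $K_s$ is strictly decreasing in $|x|$ and in $|y|$. Pairing $\pm x'$ (respectively $\pm y'$) in the convolution and using that $v^{p-1}\ge0$ is not identically zero, the convolution of two symmetric-decreasing profiles with the strictly decreasing kernel yields $v(x_1,y)>v(x_2,y)$ for $0\le x_1<x_2$ and the analogous statement in $y$, which is the claimed strict decrease. The step I expect to be the genuine obstacle is the middle one: the rearrangement estimates only show that symmetrizing does not raise the anisotropic energy, and promoting this to the statement that the minimizer \emph{is} symmetric requires the sharp equality characterizations of the fractional P\'olya--Szeg\H{o} and Riesz inequalities; this is precisely where the strict positivity $v>0$ coming from \eqref{bounds} is indispensable, as it excludes the flat-region degeneracies that otherwise obstruct the equality analysis.
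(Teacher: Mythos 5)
Your first step is exactly the paper's argument: the two diamagnetic-type inequalities (the paper cites Theorems 7.8 and 7.12 of Lieb--Loss) show that $|u|$ is again a minimizer, and the equality cases force a single constant phase. Your extra details (positivity of the Lagrange multiplier, continuity of $\arg u$) are fine, although the appeal to the lower bound in \eqref{bounds} for $|u|>0$ is shaky, since Theorem \ref{decay-regul-lemma} only guarantees $C_1\neq 0$, ``potentially negative''; the clean way to get strict positivity is $|u|=K_s\ast|u|^{p-1}>0$ from \eqref{integralform} together with $K_s>0$ in Lemma \ref{lem1}(i). Where you genuinely diverge from the paper is the second half: the paper simply quotes \cite[Theorem 1.3]{FW}, a symmetry and strict-monotonicity result for positive solutions of the mixed integro-differential equation, whereas you attempt a self-contained rearrangement proof.

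That rearrangement proof has a real gap at the equality-case analysis, and it is not cured by $v>0$. Strict positivity does not exclude plateaus (level sets of positive measure), and plateaus are precisely the degeneracy in the equality cases you need: for the local term $\|\partial_x v\|_2^2$ the one-dimensional P\'olya--Szeg\H{o} equality case (Brothers--Ziemer) only yields ``translate of the rearrangement'' when the rearranged function has no flat levels, and the Hardy--Littlewood cross terms $\int v(x,y)v(x,y')\,\dd x$ have an even weaker equality characterization. Only the genuinely nonlocal term in $y$ carries Frank--Seiringer-type rigidity, and even there the conclusion is slice-wise: for a.e.\ $x$ the slice $v(x,\cdot)$ is a translate $\tau(x)$ of its rearrangement, and you must still show that $\tau$ does not depend on $x$ (and the analogous statement with the roles of the variables exchanged), which your proposal does not address. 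So the middle step, which you correctly flag as the obstacle, is not closed. Separately, in the last step the naive pairing of $\pm x'$ does not by itself give strict monotonicity of $K_s\ast v^{p-1}$ (that pointwise pairing would require concavity of the kernel); the correct justification is the standard layer-cake fact that the convolution of a strictly symmetric-decreasing positive kernel with a nonzero symmetric-decreasing density is strictly decreasing, so this last point is only presentational. The paper sidesteps all of this by exporting both the symmetry and the strict monotonicity of the positive solution $|u|$ to \cite[Theorem 1.3]{FW}.
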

	\begin{proof}
		In view of \cite[Theorem 7.8]{LL} and \cite[Theorem 7.12]{LL}, we know that
		$$
		\int_{\R^2} |\partial_x |u||^2 \,\dd x\dd y  \leq  \int_{\R^2} |\partial_x u|^2 \,\dd x\dd y, \quad \int_{\R^2} |D_y^{s}|u||^2 \dd x\dd y \leq \int_{\R^2} |D_y^{s} u|^2  \dd x\dd y.
		$$
		As a consequence, we have that $E(|u|)=m(c)$ and $E(|u|)=E(u)$. This, in turn, implies that
		$$
		\int_{\R^2} |\partial_x |u||^2 \,\dd x\dd y  =  \int_{\R^2} |\partial_x u|^2 \,\dd x\dd y, \quad \int_{\R^2} |D_y^{s}|u||^2  \dd x\dd y = \int_{\R^2} |D_y^{s} u|^2  \dd x\dd y.
		$$
		Thus we derive that there exists a constant $\theta \in \mathbb{S}^1$ such that $u=e^{\textnormal{i} \theta}|u|$.
 The remaining proof comes directly from \cite[Theorem 1.3]{FW}. This completes the proof.
	\end{proof}

	\section{Normalized solution in the mass critical case}\label{critical-section}

	In this section, we investigate solutions to \eqref{equ-0}-\eqref{mass} for $s\in (1/4,1)$ in the mass critical case, i.e. $p=\frac{2(3s+1)}{s+1}$. The  result reads as follows.

	\begin{thm}  \label{thm1-mass-cr}
		Let $p=\frac{2(3s+1)}{s+1}$, then there exists a constant $c_*>0$ such that 
		\begin{align*}
			m(c)= \left\{
			\begin{aligned}
				&0, &\quad 0 <c \leq c_*, \\
				&-\infty, &\quad c > c_*,
			\end{aligned}
			\right.
		\end{align*}
		where 
		$$
		c_*:=\left(\frac{3s+1}{C_H(s+1)}\right)^{\frac{s+1}{2s}}
		$$
		and $C_H>0$ is the optimal constant in \eqref{gn-hH} for $p=\frac{2(3s+1)}{s+1}$. In addition, there exists no solution in $H$ to \eqref{equ-0}-\eqref{mass} for any $0<c<c_*$.
	\end{thm}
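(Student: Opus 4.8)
The plan is to exploit the fact that, at the critical exponent $p=\frac{2(3s+1)}{s+1}$, the anisotropic Gagliardo--Nirenberg inequality \eqref{gn-hH} carries gradient exponent \emph{exactly one}. Writing $A(u):=\|\partial_x u\|_2^2+\|D_y^s u\|_2^2$, one computes $p-2=\frac{4s}{s+1}$, hence $\frac{(p-2)(s+1)}{4s}=1$ and $\frac p2-\frac{(p-2)(s+1)}{4s}=\frac{2s}{s+1}$, so that for every $u\in S_c$,
\[
\|u\|_p^p\le C_H\,c^{\frac{2s}{s+1}}\,A(u),\qquad\text{whence}\qquad E(u)\ge\Big(\tfrac12-\tfrac{C_H}{p}c^{\frac{2s}{s+1}}\Big)A(u).
\]
By the very definition of $c_*$ (equivalently $C_H c_*^{2s/(s+1)}=\frac p2=\frac{3s+1}{s+1}$), the bracket is nonnegative exactly when $c\le c_*$. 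The second structural point I would record is that, still because $\frac{(s+1)(p-2)}{2}=2s$, the two powers of $t$ in \eqref{scaling} coincide, so the scaling \eqref{scaling-0} acts on the energy by the clean rule $E(u_t)=t^{2s}E(u)$, with $u_t\in S_c$.

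For $0<c\le c_*$ these two facts give the value of $m(c)$ immediately. The lower bound $m(c)\ge0$ is the displayed inequality above. For the matching upper bound I would fix any $u\in S_c$ (automatically $E(u)\ge0$ and finite) and let $t\to0^+$ in $E(u_t)=t^{2s}E(u)\to0$, so $m(c)\le0$; hence $m(c)=0$. For $c>c_*$ I would instead produce one function of negative energy: take a near-optimizer $u$ of \eqref{gn-hH}, normalized to $S_c$ (the inequality being scale invariant, $\|u\|_p^p\ge(C_H-\eps)c^{2s/(s+1)}A(u)$ persists), and note that $c>c_*$ makes $\frac12-\frac{C_H-\eps}{p}c^{2s/(s+1)}<0$ for $\eps$ small, so $E(u)<0$. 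Then $E(u_t)=t^{2s}E(u)\to-\infty$ as $t\to+\infty$ along $\{u_t\}\subset S_c$, giving $m(c)=-\infty$.

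The nonexistence statement for $0<c<c_*$ is where the regularity work pays off. Any solution of \eqref{equ-0}--\eqref{mass} is a constrained critical point of $E$ on $S_c$, hence solves \eqref{equ-0} for some Lagrange multiplier $\alpha\in\R$; since $s\in(1/4,1)$ throughout this section, Theorem \ref{ph} applies and yields $Q(u)=0$. In the critical case $\frac{(s+1)(p-2)}{2p}=\frac{2s}{p}$, so $Q(u)=0$ reads $A(u)=\frac2p\|u\|_p^p$, i.e. $\|u\|_p^p=\frac p2 A(u)$. Inserting this into the Gagliardo--Nirenberg bound $\|u\|_p^p\le C_H c^{2s/(s+1)}A(u)$, and using $A(u)>0$ for nontrivial $u$ (if $A(u)=0$ then $\partial_x u=0$ and $D_y^s u=0$ force $u$ constant, hence $u=0$ in $L^2$), I divide by $A(u)$ to obtain $\frac p2\le C_H c^{2s/(s+1)}$, that is $c\ge c_*$, contradicting $c<c_*$. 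Therefore no solution exists.

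The genuinely nontrivial input is the Pohozaev identity for arbitrary (not necessarily positive or real) solutions, i.e. Theorem \ref{ph} together with the decay and regularity of Theorem \ref{decay-regul-lemma}; granting those, the whole argument reduces to exponent bookkeeping. The only step demanding care is verifying the exponent algebra that makes the scaling exact and pins $c_*$ to the sharp constant $C_H$, since the entire dichotomy and the threshold value hinge on that identification.
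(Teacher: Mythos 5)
Your proposal is correct and follows essentially the same route as the paper: the Gagliardo--Nirenberg bound \eqref{gn-hH} with gradient exponent one gives $m(c)=0$ for $c\le c_*$, the exact scaling $E(u_t)=t^{2s}E(u)$ applied to a (near-)optimizer gives $m(c)=-\infty$ for $c>c_*$, and nonexistence below $c_*$ follows from combining $Q(u)=0$ (Theorem \ref{ph}) with \eqref{gn-hH}. The only cosmetic differences are your use of a near-optimizer in place of the exact extremal and your explicit treatment of the degenerate case $A(u)=0$, neither of which changes the argument.
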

	\begin{proof}
		Due to \eqref{scaling}, for any $u \in S_c$, we first see that $E(u_t) \to 0$, as $t \to 0$. This shows that $m(c) \leq 0$, for any $c>0$. On the other hand, by invoking   \eqref{gn-hH}, we have that
 		\begin{align*}
			E(u) &\ge \frac 12\left(\int_{\R^2} |\partial_x u|^2 \,\dd x\dd y +\int_{\R^2} |D_y^{s}u|^2  \dd x\dd y\right)
\\
&\qquad-\frac{C_H(s+1) c^{\frac{2s}{s+1}}}{2(3s+1)}\left(\int_{\R^2} |\partial_x u|^2  \dd x\dd y +\int_{\R^2} |D_y^{s} u|^2  \dd x\dd y \right)
\\&=\frac 12 \left(1-\frac{C_H(s+1)c^{\frac{2s}{s+1}}}{3s+1}\right)\left(\int_{\R^2} |\partial_x u|^2 \,\dd x\dd y +\int_{\R^2} |D_y^{s}u|^2  \dd x\dd y\right).
		\end{align*}
 
		Hence there holds that $E(u) \geq 0$, for any $0<c \leq c_*$. Consequently, we derive that $m(c)=0$, for any $0<c \leq c_*$. 

We now prove that $m(c)=-\infty$, for any $c>c_*$. Let $u \in H$ be an optimal function such that   \eqref{gn-hH}  with $p=\frac{2(3s+1)}{s+1}$ holds (see Corollary \ref{cor-gn}). Define
		$$
		w:= \frac{u}{\|u\|_{L^2}} c^{\frac 12} \in S_c.
		$$
		Observe from \eqref{scaling-0} and \eqref{scaling}  that
 
\begin{align*}
			E(w_t)&=\frac{t^{2s}}{2} \int_{\R^2} |\partial_x w|^2  \dd x\dd y + \frac{t^{2s}}{2} \int_{\R^2} |D_y^{2s} w|^2  \dd x\dd y-\frac{(s+1)t^{2s}}{2(3s+1)} \int_{\R^2}|w|^{\frac{2(3s+1)}{s+1}}  \dd x\dd y \\
			&=\frac{ct^{2s}}{2\|u\|_{L^2}^2} \left(\int_{\R^2} |\partial_x u|^2  \dd x\dd y + \int_{\R^2} |D_y^{s} u|^2  \dd x\dd y\right)-\frac{(s+1)c^{\frac{3s+1}{s+1}}t^{2s}}{2(3s+1)\|u\|_{L^2}^{\frac{2(3s+1)}{s+1}}} \int_{\R^2}|u|^\frac{2(3s+1)}{s+1}  \dd x\dd y \\
			&=\frac{ct^{2s}}{2\|u\|_{L^2}^2} \left(1-\frac{C_H(s+1)c^{\frac{2s}{s+1}}}{3s+1}\right)\left(\int_{\R^2} |\partial_x u|^2  \dd x\dd y + \int_{\R^2} |D_y^{s} u|^2  \dd x\dd y\right),
		\end{align*}
		from which we obtain that $E(w_t) \to -\infty$, as $t \to \infty$, for any $c>c_*$. This indicates that $m(c)=-\infty$ for any $c>c_*$.

  Finally, we show that there is no solution to \eqref{equ-0}-\eqref{mass} for any $0<c < c_*$. Suppose that $u \in S_c$ were a solution to \eqref{equ-0}-\eqref{mass} for some $0<c<c_*$. By Theorem \ref{ph}, then $Q(u)=0$, i.e.
		\begin{align*}
			s \int_{\R^2} |\partial_ x u|^2  \dd x\dd y +s\int_{\R^2} |D_y^{s} u|^2  \dd x\dd y &=\frac {s(s+1)}{3s+1} \int_{\R^2} |u|^{\frac{2(3s+1)}{s+1}}  \dd x\dd y \\
			& \leq \frac{C_H s  c^{\frac{2s}{s+1}}}{3s+1} \left( \int_{\R^2} |\partial_ x u|^2  \dd x\dd y +\int_{\R^2} |D_y^{s} u|^2  \dd x\dd y\right),
		\end{align*}
		where we also used \eqref{gn-hH} for the inequality. Hence we get that $u=0$ for $0<c<c_*$. This is impossible. The desired conclusion follows. Thus the proof is completed.
	\end{proof}

	\section{Normalized solution in the mass supercritical case}\label{supercritical-section}

	In this section, we shall investigate the mass supercritical case with a general nonlinearity  and assuming that $s\in(1/4,1)$. Here we follow some ideas of \cite{jeanjeanlu2} and we will skip some details. Anyway, the presence of the anisotropic operator creates several technical difficulties. More precisely we consider 	
\begin{enumerate}
		\item[(H0)] $f:\mathbb{R}\rightarrow \mathbb{R}$ is continuous;
		\item[(H1)] $\lim\limits_{t\rightarrow 0}\frac{f(t)}{\vert t \vert^{2^{\sharp}-1}}=0$;
		\item[(H2)] $\lim\limits_{t\rightarrow \infty}\frac{f(t)}{\vert t \vert^{2^{*}-1}}=0$;
		\item[(H3)] $\lim\limits_{t\rightarrow \infty}\frac{F(t)}{\vert t \vert^{2^{\sharp}}}=+\infty$;
		\item[(H4)] $t\mapsto \frac{\tilde{F}(t)}{\vert t \vert^{2^{\sharp}}}$ is strictly decreasing on $(-\infty,0)$ and strictly increasing on $(0,\infty)$;
		\item[(H5)] $f(t)t<2^{*}F(t)$ for all $t\in \mathbb{R}\setminus \{0\}$;
	\end{enumerate}
	where $F(t):=\int_{0}^{t} f(\tau) \,d\tau$, $\tilde{F}(t):=f(t)t-2F(t)$, $2^{\sharp}=\frac{2(3s+1)}{s+1}$ and $2^{*}=\frac{2(s+1)}{1-s}$.
We deal with the following equation
	\begin{align*}
		-\partial_{xx} u + D_y^{2s} u + \alpha u=f(u) \qquad \text{in }\R^2.
	\end{align*}
	The associated energy functional is given by
	\begin{align*}
		E(u):=\frac 12 \|\partial_x u\|_2^2  + \frac 12 \|D_y^{s} u\|_2^2  - \int_{\R^2} F(u) \dd x\dd y,
	\end{align*}
	while the Nehari-Pohozaev identity is 
	\begin{align*}
		Q(u):= s\|\partial_x u\|_2^2 + s\|D_y^{s} u\|_2^2  -\frac{s+1}{2} \int_{\R^2} \tilde{F}(u) \dd x\dd y.
	\end{align*}
	Moreover, we need the following notations:
	\begin{enumerate}
		\item[(i)] $S_c:=\{u\in H: \|u\|_{2}^2=c\}$ and $B_c:=\{u\in H: \|u\|_{2}^2\leq c\}$.
	\end{enumerate}
	\subsection{Preliminary results}
	\begin{lem}\label{lm:estimate_energy}
		Assume that $(H0)-(H2)$ are true. Then the following statements hold.
		\begin{enumerate}
			\item[(i)] For any $c>0$, there exists $\delta:=\delta(c)>0$ small enough such that
			$$
			\frac{1}{4}\|\partial_x u\|_2^2 + \frac 14 \|D_y^{s} u\|_2^2  \leq E(u) \leq \|\partial_x u\|_2^2 +  \|D_y^{s} u\|_2^2, 
			$$
			for all $u \in B_c$ with $\|\partial_x u\|_2^2 +  \|D_y^{s} u\|_2^2 \leq \delta^2$.
			\item[(ii)]
			For any $c>0$, there exists $\delta:=\delta(c)>0$ small enough such that
			$$
			Q(u)\geq \frac{s}{2}\|\partial_x u\|_2^2 + \frac s2 \|D_y^{s} u\|_2^2,
			$$
			for all $u \in B_c$ with $\|\partial_x u\|_2^2 +  \|D_y^{s} u\|_2^2 \leq \delta^2$.
			\item[(iii)] If $\left\{u_n\right\}\subset H$ is a bounded sequence  satisfying $\lim\limits_{n \rightarrow \infty}\left\|u_n\right\|_{L^{2^{\sharp}}}=0$, then
			$$
			\lim _{n \rightarrow \infty} \int_{\mathbb{R}^2} F\left(u_n\right) \dd x\dd y=\lim _{n \rightarrow \infty} \int_{\mathbb{R}^2} \widetilde{F}\left(u_n\right) \dd x\dd y=0.
			$$
			\item[(iv)] If $\left\{u_n\right\},\left\{v_n\right\} \subset H$ are bounded sequences  with $\lim \limits_{n \rightarrow \infty}\left\|v_n\right\|_{L^{2^{\sharp}}}=0$, then
			$$
			\lim _{n \rightarrow \infty} \int_{\mathbb{R}^2} f\left(u_n\right) v_n\dd x\dd y=0.
			$$
		\end{enumerate}
	\end{lem}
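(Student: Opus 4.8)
The plan is to reduce all four statements to two elementary inputs and then argue by the balance between the two critical powers $2^{\sharp}$ and $2^{*}$. First I would extract from (H0)--(H2) the standard pointwise growth estimates: combining the behaviour near the origin in (H1), the behaviour at infinity in (H2), and the local boundedness of the continuous $f$, for every $\eps>0$ there is $C_\eps>0$ with
\[
|f(t)|\le \eps|t|^{2^{\sharp}-1}+C_\eps|t|^{2^{*}-1},
\qquad
|F(t)|+|\ti{F}(t)|\le \eps|t|^{2^{\sharp}}+C_\eps|t|^{2^{*}}
\qquad(t\in\R),
\]
together with the complementary \emph{truncated} form: for every $\eps>0$ there is $M_\eps>0$ such that $|f(t)|\le C_\eps|t|^{2^{\sharp}-1}$ and $|F(t)|+|\ti{F}(t)|\le C_\eps|t|^{2^{\sharp}}$ for $|t|\le M_\eps$, while $|f(t)|\le \eps|t|^{2^{*}-1}$ and $|F(t)|+|\ti{F}(t)|\le \eps|t|^{2^{*}}$ for $|t|\ge M_\eps$. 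The second input is the scaling structure of the Gagliardo--Nirenberg inequality \eqref{gn-hH}: a direct computation of the exponents shows that, writing $\cD(u):=\|\partial_x u\|_2^2+\|D_y^{s}u\|_2^2$,
\[
\|u\|_{2^{\sharp}}^{2^{\sharp}}\lesssim \|u\|_2^{\frac{4s}{s+1}}\,\cD(u),
\qquad
\|u\|_{2^{*}}^{2^{*}}\lesssim \cD(u)^{\frac{2^{*}}{2}},
\]
the first because the gradient exponent in \eqref{gn-hH} equals exactly $1$ at the mass-critical power $2^{\sharp}$, and the second being the endpoint (Sobolev) case in which the $L^2$-factor drops out and the gradient exponent is $2^{*}/2>1$.

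For (i) and (ii) I would simply insert the first growth bound into these two inequalities. For $u\in B_c$ this yields
\[
\Big|\int_{\R^2} F(u)\,\dd x\dd y\Big|+\Big|\int_{\R^2}\ti{F}(u)\,\dd x\dd y\Big|
\lesssim \eps\, c^{\frac{2s}{s+1}}\cD(u)+C_\eps\,\cD(u)^{\frac{2^{*}}{2}}.
\]
Since $2^{*}/2>1$, I first fix $\eps$ so small that the coefficient of the term linear in $\cD(u)$ is at most $\tfrac18$, and then choose $\delta$ so small that $C_\eps\,\cD(u)^{2^{*}/2}=C_\eps\,\cD(u)\,\cD(u)^{(2^{*}-2)/2}\le\tfrac18\cD(u)$ whenever $\cD(u)\le\delta^2$. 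Feeding this into $E(u)=\tfrac12\cD(u)-\int F(u)$ gives $\tfrac14\cD(u)\le E(u)\le\cD(u)$, which is (i); the identical estimate applied to $Q(u)=s\,\cD(u)-\tfrac{s+1}{2}\int\ti{F}(u)$, after readjusting $\eps,\delta$ so that $\tfrac{s+1}{2}|\int\ti{F}(u)|\le\tfrac{s}{2}\cD(u)$, produces (ii).

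For (iii) and (iv) I would exploit the truncated estimate and the hypothesis $\|u_n\|_{2^{\sharp}}\to0$ through the splitting of $\R^2$ into $\{|u_n|\le M_\eps\}$ and $\{|u_n|>M_\eps\}$. On the first set the bounds $|F(u_n)|,|\ti{F}(u_n)|\le C_\eps|u_n|^{2^{\sharp}}$ and $|f(u_n)v_n|\le C_\eps|u_n|^{2^{\sharp}-1}|v_n|$ give, via Hölder, contributions controlled by $C_\eps\|u_n\|_{2^{\sharp}}^{2^{\sharp}}$ and $C_\eps\|u_n\|_{2^{\sharp}}^{2^{\sharp}-1}\|v_n\|_{2^{\sharp}}$, both of which vanish because $\|u_n\|_{2^{\sharp}}\to0$ (respectively $\|v_n\|_{2^{\sharp}}\to0$) while the $H$-bound keeps the remaining factors bounded. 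On the second set the strict subcriticality yields $|F(u_n)|,|\ti{F}(u_n)|\le\eps|u_n|^{2^{*}}$ and $|f(u_n)v_n|\le\eps|u_n|^{2^{*}-1}|v_n|$, so the integrals there are bounded by $\eps\|u_n\|_{2^{*}}^{2^{*}}$ and $\eps\|u_n\|_{2^{*}}^{2^{*}-1}\|v_n\|_{2^{*}}$; taking $\limsup_n$ and then letting $\eps\to0$ annihilates them, which proves (iii) (for both $F$ and $\ti{F}$) and (iv).

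The hard part will be the critical exponent $2^{*}=q_s$ appearing in the large-field region of (iii)--(iv): the scheme above needs $\{u_n\},\{v_n\}$ uniformly bounded in $L^{2^{*}}$, which is precisely the endpoint of the embedding in Lemma~\ref{embedding}, directly available for $s>1/2$. For $q\in[2^{\sharp},2^{*})$ the vanishing $\|u_n\|_q\to0$ follows by interpolating the vanishing $L^{2^{\sharp}}$-mass against the uniform $H$-bound, so the only delicate layer is the genuinely critical one, which must be absorbed using the strictness of the limit in (H2) (guaranteeing, in particular, that $F(u),\ti{F}(u)\in L^1(\R^2)$ for $u\in H$ so that the functionals are well defined). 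Apart from this balancing of the two critical powers $2^{\sharp}$ and $2^{*}$, every remaining step is a routine application of Hölder's inequality and \eqref{gn-hH}.
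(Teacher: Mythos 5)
Your proof is correct and takes essentially the same route as the paper: two-sided growth bounds extracted from (H0)--(H2) combined with the anisotropic Gagliardo--Nirenberg inequality to make the nonlinear term superlinear in $\|\partial_x u\|_2^2+\|D_y^s u\|_2^2$ for (i)--(ii), and the reversed bound $|F(t)|\le C_\eps|t|^{2^{\sharp}}+\eps|t|^{2^{*}}$ together with the uniform $L^{2^{*}}$ bound for (iii)--(iv); the paper merely uses the product form \eqref{gn} followed by Young's inequality where you invoke \eqref{gn-hH} directly, and applies the global pointwise bound where you split $\R^2$ along the level set $\{|u_n|\le M_\eps\}$. Your closing caveat about the endpoint embedding $H\hookrightarrow L^{2^{*}}$ (stated in Lemma \ref{embedding} only for $s>1/2$) points at a genuine subtlety, but the paper's own proof relies on the same endpoint estimate without comment, so this is not a gap relative to the paper.
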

	\begin{proof}
		\begin{enumerate}
			\item[(i)]It follows from $(H0)-(H2)$ and the definition of $F(t)$ that, for any $\epsilon>0$, there exists a constant $C_{\epsilon}>0$ such that $\vert F(t) \vert \leq \epsilon \vert t \vert^{2^{\sharp}} + C_{\epsilon}\vert t \vert^{2^*}$ holds, for all $t\in \R$.
			For any $u\in B_c$, according to the Gagliardo-Nirenberg  inequality, we have
			\begin{align*}
				\int_{\R^2}|F(u)|\,\dd x\dd y\leq &\epsilon\int_{\R^2}|u|^{2^{\sharp}}\,\dd x\dd y + C_{\epsilon} \int_{\R^2}|u|^{2^{*}}\,\dd x\dd y\\
				\leq &\epsilon C_{2^{\sharp},s}\left(\int_{\R^2}|u|^2  \dd x\dd y \right)^{\frac{2s}{s+1}} \left(\int_{\R^2} |\partial_x u|^2  \dd x\dd y \right)^{\frac{s}{s+1}}\left(\int_{\R^2} |D_y^{s} u|^2  \dd x\dd y \right)^{\frac{1}{s+1}}\\
				&+C_{\epsilon}C_{2^*,s}\left(\int_{\R^2} |\partial_x u|^2  \dd x\dd y \right)^{\frac{s}{-s+1}}\left(\int_{\R^2} |D_y^{s} u|^2  \dd x\dd y \right)^{\frac{1}{-s+1}}\\
				\leq & \left[\epsilon C_{2^{\sharp},s} c^{\frac{2s}{s+1}}+C_{\epsilon}C_{2^{*},s}\left(\int_{\R^2} |\partial_x u|^2  \dd x\dd y \right)^{\frac{2s^2}{1-s^2}}\left(\int_{\R^2} |D_y^{s} u|^2  \dd x\dd y \right)^{\frac{2s}{1-s^2}}\right]\\
				&\times \left(\int_{\R^2} |\partial_x u|^2  \dd x\dd y \right)^{\frac{s}{s+1}}\left(\int_{\R^2} |D_y^{s} u|^2  \dd x\dd y \right)^{\frac{1}{s+1}}.
			\end{align*}
			By   Young's inequality, it holds that 
			\begin{align*}
				\int_{\R^2}|F(u)|\,\dd x\dd y \leq & \left[\epsilon C_{2^{\sharp},s} c^{\frac{2s}{s+1}}+C_{\epsilon}C_{2^{*},s}\left(\int_{\R^2} |\partial_x u|^2  \dd x\dd y \right)^{\frac{2s^2}{1-s^2}}\left(\int_{\R^2} |D_y^{s} u|^2  \dd x\dd y \right)^{\frac{2s}{1-s^2}}\right]\\
				&\times  \frac 1{s+1}\left(\int_{\R^2} |\partial_x u|^2  \dd x\dd y +\int_{\R^2} |D_y^{s} u|^2  \dd x\dd y \right).
			\end{align*}
			As a result, we can choose $\epsilon$ and $\delta$ sufficiently small to prove $(i)$.
			\item[(ii)] The proof is similar to (i) and we omit the details.
			\item[(iii)]
			It follows from $(H0)-(H2)$ and the definition of $F(t)$ that for any $\epsilon>0$, there exists a constant $C_{\epsilon}>0$ such that $\vert f(t) \vert \leq C_{\epsilon} \vert t \vert^{2^{\sharp}-1} + \epsilon\vert t \vert^{2^*-1}$ and $\vert F(t) \vert \leq C_{\epsilon} \vert t \vert^{2^{\sharp}} + \epsilon\vert t \vert^{2^*}$ hold for all $t\in \R$.
			Consequently, we have $\vert \tilde{F}(t) \vert \leq C_{\epsilon} \vert t \vert^{2^{\sharp}} + \epsilon\vert t \vert^{2^*}$ hold for all $t\in \R$, and so 
			\begin{align*}
				\lim_{n \rightarrow \infty}\int_{\mathbb{R}^2} \vert F\left(u_n\right)\vert \,\dd x\dd y \leq & C_{\epsilon}\lim_{n \rightarrow \infty}\int_{\R^2}|u_n|^{2^{\sharp}}\,\dd x\dd y +\epsilon \lim_{n \rightarrow \infty}\int_{\R^2}|u_n|^{2^{*}}\,\dd x\dd y\\
				\leq & \epsilon \lim_{n \rightarrow \infty}\|u_n\|^{2^{*}}\leq \epsilon C,
			\end{align*}
			where we have used the fact that $H$ embeds into $L^p(\mathbb{R}^2)$ continuously for $p\in [2,2^*]$.
			\item[(iv)] The proof is similar to $(ii)$ and we omit the details.
		\end{enumerate}
	\end{proof}
	\begin{lem}\label{lm:limit_scaling}
		Assume that $(H 0)-(H 3)$ hold. For any $u \in H \backslash\{0\}$, it holds that
		\begin{enumerate}
			\item[(i)] $E(u_t) \rightarrow 0^{+}$as $t \rightarrow 0^{+}$;
			\item[(ii)] $E(u_t) \rightarrow-\infty$ as $t \rightarrow+\infty$.
		\end{enumerate}
	\end{lem}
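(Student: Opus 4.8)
The plan is to compute $E(u_t)$ in closed form and then treat the two limits separately. First I would perform the anisotropic change of variables $\xi=t^sx$, $\eta=ty$ in each term of $E(u_t)$; since $\partial_x$ and $D_y^s$ each contribute a factor $t^s$ under the scaling \eqref{scaling-0} while the Jacobian gives $t^{-(s+1)}$, one finds
\[
E(u_t)=\frac{t^{2s}}{2}\paar{\norm{\partial_x u}_2^2+\norm{D_y^s u}_2^2}-t^{-(s+1)}\int_{\rt}F\paar{t^{\frac{s+1}{2}}u}\dd x\dd y.
\]
Set $A:=\norm{\partial_x u}_2^2+\norm{D_y^s u}_2^2$; note $A>0$, because $A=0$ would force $\partial_x u=0$ and $D_y^su=0$, hence $u$ constant and thus $u=0$ in $L^2$, contradicting $u\in H\setminus\{0\}$. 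Everything reduces to comparing the kinetic term $\tfrac{t^{2s}}{2}A$ with the potential term.

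For part (i) I would use the bound $|F(\tau)|\le\epsilon|\tau|^{2^{\sharp}}+C_\epsilon|\tau|^{2^{*}}$, valid for every $\epsilon>0$ by $(H1)$--$(H2)$ (as in the proof of Lemma \ref{lm:estimate_energy}). Because $\tfrac{s+1}{2}2^{\sharp}-(s+1)=2s$ while $\tfrac{s+1}{2}2^{*}-(s+1)=\tfrac{2s(s+1)}{1-s}>2s$, the potential term is dominated by $\epsilon\,t^{2s}\norm{u}_{2^{\sharp}}^{2^{\sharp}}+C_\epsilon\,t^{\frac{2s(s+1)}{1-s}}\norm{u}_{2^{*}}^{2^{*}}$. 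Picking $\epsilon$ with $\epsilon\norm{u}_{2^{\sharp}}^{2^{\sharp}}\le A/4$ and then letting $t$ be small enough that the $2^{*}$-term is $\le\tfrac{A}{8}t^{2s}$, I obtain $E(u_t)\ge\tfrac{A}{8}t^{2s}>0$; combined with the trivial fact that $E(u_t)\to0$ (both terms vanishing as $t\to0$), this gives $E(u_t)\to0^+$.

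For part (ii) I would substitute $\sigma:=t^{\frac{s+1}{2}}\to\infty$ and use $\sigma^{2^{\sharp}}=t^{3s+1}$ to rewrite
\[
t^{-2s}E(u_t)=\frac12 A-\int_{\rt}\frac{F(\sigma u)}{\sigma^{2^{\sharp}}}\dd x\dd y,
\]
so it suffices to prove that the last integral diverges to $+\infty$. The crucial ingredient is a uniform lower bound $F(\tau)\ge-L|\tau|^{2^{\sharp}}$ holding for all $\tau\in\rr$: this follows from $(H1)$ (which gives $F(\tau)/|\tau|^{2^{\sharp}}\to0$ as $\tau\to0$), from $(H3)$ (which gives $F(\tau)/|\tau|^{2^{\sharp}}\to+\infty$ as $|\tau|\to\infty$), and from continuity of $F$ on the intermediate compact range. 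Since $2<2^{\sharp}<q_s$, Lemma \ref{embedding} yields $u\in L^{2^{\sharp}}(\rt)$, so $\frac{F(\sigma u)}{\sigma^{2^{\sharp}}}+L|u|^{2^{\sharp}}\ge0$ is bounded below by an integrable function uniformly in $\sigma$. On the set $\{u\ne0\}$, of positive measure, $(H3)$ forces $\frac{F(\sigma u)}{\sigma^{2^{\sharp}}}=\frac{F(\sigma u)}{|\sigma u|^{2^{\sharp}}}\,|u|^{2^{\sharp}}\to+\infty$ pointwise as $\sigma\to\infty$; Fatou's lemma then makes the integral blow up, whence $E(u_t)\to-\infty$.

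The main obstacle is the limit in part (ii): interchanging limit and integral is delicate precisely because $F$ need not be sign-definite, and this is where the uniform lower bound $F(\tau)\ge-L|\tau|^{2^{\sharp}}$ together with the subcritical integrability $u\in L^{2^{\sharp}}$ are indispensable for applying Fatou's lemma. Part (i), by contrast, is essentially bookkeeping once the scaling exponents $2s$ and $\tfrac{2s(s+1)}{1-s}$ are identified; the only mild refinement is obtaining convergence to $0^+$ rather than merely to $0$, which relies on the $2^{*}$-contribution decaying strictly faster than the kinetic term $t^{2s}$.
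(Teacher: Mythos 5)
Your proposal is correct and follows essentially the same route as the paper: part (i) is the sandwich bound of Lemma \ref{lm:estimate_energy}(i) (which you re-derive via the $\epsilon|\tau|^{2^{\sharp}}+C_\epsilon|\tau|^{2^{*}}$ splitting rather than citing it), and part (ii) is exactly the paper's argument, since your uniform lower bound $F(\tau)\ge -L|\tau|^{2^{\sharp}}$ is the paper's choice of $\lambda$ making $h_\lambda\ge 0$, followed by the same application of Fatou's lemma.
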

	\begin{proof}
		\begin{enumerate}
			\item[(i)] Let $u\in H\backslash\{0\}$, then $u\in B_c$ with $c=\|u\|_{L^2}^2$. So applying  Lemma \ref{lm:estimate_energy}(i) to $u_t$, with $t>0$ sufficiently small, it follows that
			$$
			\frac{t^{2s}}{4}\|\partial_x u\|_2^2 + \frac{t^{2s}}{4}\|D_y^{s} u\|_2^2   \leq E(u_t) \leq t^{2s}\|\partial_x u\|_2^2 +  t^{2s}\|D_y^{s} u\|_2^2 
			$$
			and (i) is clear.
			\item[(ii)]For any $\lambda \geq 0$, we define a function $h_\lambda: \mathbb{R} \rightarrow \mathbb{R}$ as follows:
			$$
			h_\lambda(t):=\left\{\begin{aligned}
				\frac{F(t)}{|t|^{2^{\sharp}}}+\lambda, &\qquad \text { for } t \neq 0, \\
				\lambda, & \qquad\text { for } t=0 .
			\end{aligned}\right.
			$$
			It is clear that $h_\lambda$ is continuous and
			$$
			h_\lambda(t) \rightarrow+\infty, \quad \text { as } t \rightarrow \infty.
			$$
			Choose $\lambda>0$ large enough such that $h_\lambda(t) \geq 0$ for any $t \in \mathbb{R}$.  According to the Fatou's lemma, it follows that
			$$
			\lim _{t \rightarrow+\infty} \int_{\mathbb{R}^2} h_\lambda\left(t^{\frac{s+1}{2}} u\right)|u|^{2^{\sharp}} \dd x\dd y=+\infty.
			$$
			Consequently, we arrive at
			$$
			\begin{aligned}
				E(u_t)  &=\frac{1}{2}\|\partial_x u_t\|_2^2 + \frac 12 \|D_y^{s} u_t\|_2^2  +\lambda \|u_t\|^{2^\sharp}_{2^\sharp} -\int_{\mathbb{R}^2} h_\lambda(u_t)|u_t|^{2^\sharp} \dd x\dd y \\
				 &=t^{2 s}\left[\frac{1}{2}\|\partial_x u\|_2^2 + \frac 12 \|D_y^{s} u\|^2_2 +\lambda \|u\|_{2^\sharp}^{2^\sharp} -\int_{\mathbb{R}^N} h_\lambda\left(t^{\frac{s+1}{2}} u\right)|u|^{2^\sharp} \dd x\dd y\right]
				\rightarrow& -\infty, \quad \text{as } t \rightarrow \infty.
			\end{aligned}
			$$
		\end{enumerate}
	\end{proof}
	The following lemma is essentially contained in \cite{jeanjeanlu2}  and we omit the proof.
	\begin{lem}\label{lm:nonlinearity}
		If  $(H 0),(H 1),(H 3)$ and $(H 4)$ are true, then we have
		$$
		f(t) t>2^{\sharp} F(t)>0, \quad \text { for all } t \neq 0.
		$$
	\end{lem}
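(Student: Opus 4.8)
The plan is to work throughout with the rescaled quantity $\psi(t):=\tilde{F}(t)/|t|^{2^{\sharp}}$, the function whose monotonicity is prescribed by $(H4)$, and to transfer information from $\psi$ to $F$ through an elementary integral identity. First I would record the behaviour near the origin: by $(H1)$ one has $|f(t)|\le \epsilon|t|^{2^{\sharp}-1}$ for $|t|$ small, whence $|F(t)|\le \tfrac{\epsilon}{2^{\sharp}}|t|^{2^{\sharp}}$, so that both $F(t)/|t|^{2^{\sharp}}$ and $\psi(t)$ tend to $0$ as $t\to 0$. Combining this with $(H4)$ -- $\psi$ strictly increasing on $(0,\infty)$ and strictly decreasing on $(-\infty,0)$, with one-sided limits $0$ at the origin -- yields at once $\psi(t)>0$, i.e. $\tilde{F}(t)>0$, for every $t\neq 0$.

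Next I would prove the second inequality $2^{\sharp}F(t)>0$. Since $f$ is merely continuous I never differentiate it, but use that $F\in C^{1}$ to write, for $t\neq 0$, the identity $\frac{d}{dt}(F(t)/t^{2})=(f(t)t-2F(t))/t^{3}=\tilde{F}(t)/t^{3}$. Because $\tilde{F}>0$ and $F(t)/t^{2}\to 0$ at the origin, integrating from $0$ gives $F(t)/t^{2}>0$, hence $F(t)>0$, for all $t\neq 0$ (for $t<0$ the sign of $t^{3}$ and the orientation of integration flip together, leaving the conclusion unchanged). The same integration yields, for $t>0$, the representation $F(t)=t^{2}\int_{0}^{t}\sigma^{2^{\sharp}-3}\psi(\sigma)\,d\sigma$ and, for $t<0$, the analogous $F(t)=t^{2}\int_{t}^{0}|\sigma|^{2^{\sharp}-3}\psi(\sigma)\,d\sigma$; here one uses $\tilde{F}(\sigma)=|\sigma|^{2^{\sharp}}\psi(\sigma)$, and the integrand is integrable near $0$ precisely because $2^{\sharp}>2$.

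For the main inequality I would rewrite, for $t>0$, $f(t)t-2^{\sharp}F(t)=\tilde{F}(t)-(2^{\sharp}-2)F(t)=t^{2^{\sharp}}\psi(t)-(2^{\sharp}-2)t^{2}\int_{0}^{t}\sigma^{2^{\sharp}-3}\psi(\sigma)\,d\sigma$, using the representation above. Dividing by $t^{2}>0$, it suffices to show $t^{2^{\sharp}-2}\psi(t)>(2^{\sharp}-2)\int_{0}^{t}\sigma^{2^{\sharp}-3}\psi(\sigma)\,d\sigma$, and here the strict monotonicity of $\psi$ does the work: for $0<\sigma<t$ one has $\psi(\sigma)<\psi(t)$, so the right-hand integral is strictly less than $\psi(t)\int_{0}^{t}\sigma^{2^{\sharp}-3}\,d\sigma=\psi(t)\,t^{2^{\sharp}-2}/(2^{\sharp}-2)$, which is exactly the required bound. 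The case $t<0$ is identical after replacing $\int_{0}^{t}$ by $\int_{t}^{0}$ and invoking the monotonicity of $\psi$ on $(-\infty,0)$. I expect the only delicate points to be the careful justification of the integral representation from the mere $C^{1}$ regularity of $F$ and the bookkeeping of signs for $t<0$; the decisive structural fact, used twice, is $2^{\sharp}>2$, guaranteeing both the near-origin integrability of $|\sigma|^{2^{\sharp}-3}$ and the positivity of the factor $2^{\sharp}-2$.
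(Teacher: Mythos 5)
Your proof is correct and complete; the paper itself omits the argument, stating only that the lemma is ``essentially contained in'' the cited work of Jeanjean--Lu, and your proof is precisely that standard argument: deduce $\tilde F>0$ from $(H1)$ and $(H4)$, integrate $\frac{d}{dt}\bigl(F(t)/t^{2}\bigr)=\tilde F(t)/t^{3}$ to get the representation $F(t)=t^{2}\int_{0}^{t}|\sigma|^{2^{\sharp}-3}\psi(\sigma)\,d\sigma$, and then exploit the strict monotonicity of $\psi$ together with $2^{\sharp}>2$. The only remark worth adding is that $(H3)$ is never actually needed in your argument, which is harmless.
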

	\begin{lem}\label{lm:maximum_point}
		Assume that  $(H 0)-(H 4)$ hold. For any $u \in H  \backslash\{0\}$, we have
		\begin{enumerate}
			\item[(i)] there exists a unique number $t_u >0$ such that $Q(u_{t_u})=0$;
			\item[(ii)]  $E(u_{t_u})>E(u_t)$, for any $t \neq u_{t_u}$; in particular, $E(u_{t_u})>0$;
			\item[(iii)]  the mapping $u \mapsto t_u$ is continuous in $u \in H \backslash\{0\}$;
			\item[(iv)]  it holds that $t_{u(\cdot+z)}=t_u$, for any $z \in \mathbb{R}^2$. 
		\end{enumerate}
	\end{lem}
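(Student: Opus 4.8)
The plan is to reduce the entire statement to the study of the single scalar function $\phi(t):=E(u_t)$, $t>0$, where $u_t$ is the scaling in \eqref{scaling-0}. Using the change of variables $(\xi,\eta)=(t^sx,ty)$ one writes
\[
\phi(t)=\frac{t^{2s}}{2}A(u)-t^{-(s+1)}\int_{\R^2}F(t^{\frac{s+1}{2}}u)\dd x\dd y,\qquad A(u):=\|\partial_xu\|_2^2+\|D_y^su\|_2^2,
\]
and a direct differentiation (legitimate since $F\in C^1$) yields the fundamental identity
\[
t\phi'(t)=Q(u_t)=st^{2s}A(u)-\frac{s+1}{2}t^{-(s+1)}\int_{\R^2}\tilde{F}(t^{\frac{s+1}{2}}u)\dd x\dd y.
\]
The crucial observation is the exponent bookkeeping: since $\tfrac{s+1}{2}\cdot 2^\sharp=3s+1$ and $3s+1-(s+1)=2s$, one can factor $t^{2s}$ out of the nonlinear term to get
\[
Q(u_t)=t^{2s}\,g(t),\qquad g(t):=sA(u)-\tfrac{s+1}{2}R(t),\qquad R(t):=\int_{\{u\neq0\}}\frac{\tilde{F}(t^{\frac{s+1}{2}}u)}{|t^{\frac{s+1}{2}}u|^{2^\sharp}}\,|u|^{2^\sharp}\dd x\dd y.
\]
Thus $\phi'(t)=t^{2s-1}g(t)$, and the whole lemma is governed by the scalar function $g$.

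For (i) and (ii) I would show that $R$ is continuous and strictly increasing on $(0,\infty)$, so that $g$ is strictly decreasing. Strict monotonicity follows from (H4): for a.e.\ fixed $(x,y)$ with $u(x,y)\neq0$, the quantity $|t^{\frac{s+1}{2}}u(x,y)|$ increases with $t$, while $r\mapsto \tilde{F}(r)/|r|^{2^\sharp}$ increases with $|r|$ on each half-line by (H4); hence the integrand is strictly increasing in $t$ on a set of positive measure. Positivity of $R$ comes from $\tilde{F}>0$, a consequence of Lemma \ref{lm:nonlinearity} (since $f(t)t>2^\sharp F(t)>0$ gives $\tilde{F}(t)>(2^\sharp-2)F(t)>0$). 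Next I would locate the sign changes of $g$: one has $g(t)>0$ for small $t$, because Lemma \ref{lm:estimate_energy}(ii) gives $Q(u_t)\geq\frac s2 t^{2s}A(u)>0$ once $t^{2s}A(u)$ is small; and $R(t)\to+\infty$ as $t\to+\infty$, hence $g(t)\to-\infty$, by Fatou's lemma applied to the integrand, which tends to $+\infty$ pointwise on $\{u\neq0\}$ because $\tilde{F}(r)/|r|^{2^\sharp}\to+\infty$ as $|r|\to\infty$ (from (H3) together with $\tilde{F}>(2^\sharp-2)F$). Since $g$ is continuous, strictly decreasing, positive near $0$ and tending to $-\infty$, it has a unique zero $t_u>0$, which is (i). Moreover $g>0$ on $(0,t_u)$ and $g<0$ on $(t_u,\infty)$, so $\phi$ is strictly increasing on $(0,t_u)$ and strictly decreasing on $(t_u,\infty)$; hence $t_u$ is its unique global maximizer and $E(u_{t_u})>E(u_t)$ for $t\neq t_u$. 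Finally $E(u_{t_u})>0$ since $\phi(0^+)=0$ (Lemma \ref{lm:limit_scaling}(i)) and $\phi$ strictly increases up to $t_u$, giving $\phi(t_u)>\phi(t)>0$ for any $t\in(0,t_u)$. This proves (ii).

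For (iii) I would avoid the implicit function theorem, which is unavailable because $f$ is merely continuous, and argue instead by the transversal sign change. Fix $u\in H\setminus\{0\}$ and $\eps>0$; by the strict monotonicity of $g$ established above, $Q(u_{t_u-\eps})>0>Q(u_{t_u+\eps})$. The map $(v,t)\mapsto Q(v_t)$ is continuous on $(H\setminus\{0\})\times(0,\infty)$: indeed $v\mapsto A(v)$ is continuous, and $v\mapsto\int_{\R^2}\tilde{F}(t^{\frac{s+1}{2}}v)\dd x\dd y$ is continuous because $t^{\frac{s+1}{2}}v_n\to t^{\frac{s+1}{2}}v$ in $H$ and hence in $L^{2^\sharp}$, together with the growth control $|\tilde{F}(w)|\le C_\eps|w|^{2^\sharp}+\eps|w|^{2^*}$ used as in the proof of Lemma \ref{lm:estimate_energy}. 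Therefore, for any sequence $u_n\to u$ in $H\setminus\{0\}$ and all $n$ large, $Q((u_n)_{t_u-\eps})>0$ and $Q((u_n)_{t_u+\eps})<0$; by the sign structure of $t\mapsto Q((u_n)_t)$ and the uniqueness in (i), this forces $t_u-\eps<t_{u_n}<t_u+\eps$, so $t_{u_n}\to t_u$. Statement (iv) is then immediate from translation invariance: writing $v=u(\cdot+z)$, a direct check gives $v_t=u_t\big(\cdot+(t^{-s}z_1,t^{-1}z_2)\big)$, and since every term defining $Q$ is invariant under translations, $Q(v_t)=Q(u_t)$ for all $t$; uniqueness yields $t_v=t_u$. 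The main obstacle throughout is part (i): the uniqueness of $t_u$ rests entirely on converting (H4) into the strict monotonicity of $R$, and on extracting the two limiting behaviors of $g$ without any extra differentiability of $f$; once this is done, (ii)--(iv) follow by the soft arguments above.
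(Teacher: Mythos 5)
Your proposal is correct and follows essentially the same route as the paper: the fibering map $t\mapsto E(u_t)$ with $\tfrac{d}{dt}E(u_t)=\tfrac1t Q(u_t)$, the limits from Lemma \ref{lm:limit_scaling}, and the conversion of (H4) into strict monotonicity of $t\mapsto\int\tilde F(t^{\frac{s+1}{2}}u)/|t^{\frac{s+1}{2}}u|^{2^\sharp}|u|^{2^\sharp}$ for uniqueness. The only real deviation is in (iii), where your sign-sandwich $Q((u_n)_{t_u-\eps})>0>Q((u_n)_{t_u+\eps})$ replaces the paper's boundedness-plus-subsequence argument; both work, and yours is marginally cleaner.
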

	\begin{proof}
		\begin{enumerate}
			\item[(i)] A direct calculation shows that 
			\begin{align*}
				E(u_t)=\frac{t^{2s}}{2} \|\partial_x u\|_2^2   + \frac{t^{2s}}{2} \|D_y^{s} u\|_2^2 -t^{-s-1} \int_{\R^2}F(t^{\frac{s+1}{2}}u)  \dd x\dd y
			\end{align*}
			and $t\mapsto E(u_t)$ belongs to $C^1(\R,\R)$ with
			\begin{align*}
				\frac{d}{dt}E(u_t)=\frac 1tQ(u_t).
			\end{align*}
			By Lemma \ref{lm:limit_scaling}, there exists a local maximum point $t_u>0$ such that
			\begin{align*}
				\frac{d}{dt}E(u_{t})_{\vert_{t=t_u}}=\frac{1}{t_u}Q(u_{t_u})=0.
			\end{align*}
			To prove the uniqueness, suppose by contradiction that there exist $0<\tilde{t}_u<t_u$ such that $Q(u_{t_u})=Q(u_{\tilde{t}_u})=0$, that is,
			\begin{align*}
				st_u^{2s}\|\partial_x u\|_2^2  + st_u^{2s}\|D_y^{s} u\|_2^2 -\frac{s+1}{2} t_u^{-s-1}\int_{\R^2} \tilde{F}(t_u^{\frac{s+1}{2}}u) \dd x\dd y=0,
			\end{align*}
			and 
			\begin{align*}
				s\tilde{t}_u^{2s}\|\partial_x u\|^2_2 + s\tilde{t}_u^{2s}\|D_y^{s} u\|_2^2 -\frac{s+1}{2} \tilde{t}_u^{-s-1}\int_{\R^2} \tilde{F}(\tilde{t}_u^{\frac{s+1}{2}}u) \dd x\dd y=0.
			\end{align*}
			As a result, it holds that 
			$$
			\int_{\mathbb{R}^2}\left(\frac{\tilde{F}\left(t_u^{\frac{s+1}{2}} u\right)}{\left|t_u^{\frac{s+1}{2}} u\right|^{2^{\sharp}}}-\frac{\tilde{F}\left(\tilde{t}_u^{\frac{s+1}{2}} u\right)}{\left|\tilde{t}_u^{\frac{s+1}{2}} u\right|^{2^{\sharp}}}\right)|u|^{2^{\sharp}} \dd x\dd y=0.
			$$
			Observe that $\frac{\tilde{F}(s)}{|s|^{2^{\sharp}}}$ is continuous on $\mathbb{R}$ and it is strictly decreasing on $(-\infty, 0)$ and strictly increasing on $(0,+\infty)$, which implies that
			$$
				\int_{\mathbb{R}^2}\left(\frac{\tilde{F}\left(t_u^{\frac{s+1}{2}} u\right)}{\left|t_u^{\frac{s+1}{2}} u\right|^{2^{\sharp}}}-\frac{\tilde{F}\left(\tilde{t}_u^{\frac{s+1}{2}} u\right)}{\left|\tilde{t}_u^{\frac{s+1}{2}} u\right|^{2^{\sharp}}}\right)|u|^{2^{\sharp}} \dd x\dd y>0,
			$$
			a contradiction, and so $\tilde{t}_u=t_u$. The case $\tilde{t}_u>t_u$ is similar.
			\item[(ii)] The proof in (i) implies that $t_u$ is a global maximum point and $E(u_{t_u})>\lim\limits_{t \rightarrow 0^+} E(u_t)=0$.
			\item[(iii)] The mapping $u \mapsto t_u$ is well-defined by (i). 
			Let $\left\{u_n\right\} \subset H$ be a sequence such that $u_n \rightarrow u$ in $H$.
			We first prove the boundedness of $\left\{t_{u_n}\right\}$ via contradiction. 
			Suppose that $t_{u_n} \rightarrow+\infty$, as $n\rightarrow +\infty$. 
			Set $t_n:=t_{u_n}$ and $v_n(x,y):=t_n^{\frac{s+1}{2}}u_n(t_n^s x, t_n y)$. Then we have
			$$
			\begin{aligned}
				0 \leq t_{n}^{-2s} E\left(v_n\right) & =\frac 12 \|\partial_x u_n\|_2^2  + \frac 12 \|D_y^{s} u_n\|^2_2  - t_n^{-3s-1}\int_{\R^2} F(t_n^{\frac{s+1}{2}}u_n) \dd x\dd y, \\
				& = \frac 12 \|\partial_x u_n\|_2^2  + \frac 12 \|D_y^{s} u_n\|_2^2  - \int_{\R^2} h_{0}(t_n^{\frac{s+1}{2}}u_n)\vert u_n \vert^{2^{\sharp}}\dd x\dd y,\\
				&\rightarrow -\infty,\quad \text{as} \ n \rightarrow \infty,
			\end{aligned}
			$$
			which is absurd, where we have used Lemma \ref{lm:nonlinearity} and the proof in Lemma \ref{lm:limit_scaling}-(ii).
			Therefore, $\left\{t_{n}\right\}$ is bounded, which implies that there exists $t^* \geq 0$ such that, up to a subsequence, $t_{n} \rightarrow t^*$. 
			Since $u_n \rightarrow u$ in $H$ and $Q(v_n)=0$, we get $v_n\rightarrow u_{t^{*}}$ in $H$ and $Q(u_{t^{*}})=0$.
			The uniqueness implies that $t^*=t_u$.
			\item[(iv)] It follows from the definition and a direct calculation and we omit the proof.
		\end{enumerate}
	\end{proof}
Recalling the definition of $P_c$ (see \eqref{Pc}), the following useful properties hold. 
 \begin{lem}\label{lm:coercive}
		Assume that  $(H 0)-(H 4)$ hold. Then
		\begin{enumerate}
			\item[(i)] $P_c \neq \emptyset$;
			\item[(ii)] $\inf\limits_{u \in P_c}\left(\|\partial_x u\|_2^2  + \|D_y^{s}u\|_2^2\right)>0$;
			\item[(iii)] $\gamma(c):=\inf\limits_{u \in P_c} E(u)>0$;
			\item[(iv)] $E$ is coercive on $P_c$, that is, $E\left(u_n\right) \rightarrow+\infty$ for any $\left\{u_n\right\} \subset P_c$ with $\left\|u_n\right\| \rightarrow \infty$.
		\end{enumerate}
	\end{lem}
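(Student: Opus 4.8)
The plan is to establish (i)--(iv) in order, using the fibering map $t\mapsto E(u_t)$ associated with the scaling \eqref{scaling-0} as the main device, and to reserve the real work for the coercivity (iv). Throughout I write $A(u):=\|\partial_x u\|_2^2+\|D_y^{s} u\|_2^2$ and recall that \eqref{scaling-0} preserves the $L^2$-norm, so $u_t\in S_c$ whenever $u\in S_c$. For (i), given any $u\in S_c$, Lemma \ref{lm:maximum_point}(i) produces a unique $t_u>0$ with $Q(u_{t_u})=0$, and since $u_{t_u}\in S_c$ this gives $u_{t_u}\in P_c$, so $P_c\neq\emptyset$. For (ii) I argue by contradiction: if $\{u_n\}\subset P_c$ satisfied $A(u_n)\to0$, then for $n$ large $A(u_n)\le\delta(c)^2$, and Lemma \ref{lm:estimate_energy}(ii) would give $Q(u_n)\ge\frac{s}{2}A(u_n)>0$ (here $A(u_n)>0$ since a nonzero $L^2$-function cannot have $\partial_x u_n=D_y^s u_n=0$), contradicting $Q(u_n)=0$. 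Hence $\rho:=\inf_{P_c}A(u)>0$.

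The observation driving both (iii) and (iv) is that for $u\in P_c$ one has $t_u=1$, by the uniqueness in Lemma \ref{lm:maximum_point}(i), so that $E(u)=\max_{t>0}E(u_t)$ by Lemma \ref{lm:maximum_point}(ii). To prove (iii) I fix the $\delta=\delta(c)$ of Lemma \ref{lm:estimate_energy}(i) and, for $u\in P_c$, set $t_0:=\min\{1,(\delta^2/A(u))^{1/(2s)}\}$, so that $u_{t_0}\in B_c$ with $A(u_{t_0})=\min\{A(u),\delta^2\}\le\delta^2$; then $E(u)\ge E(u_{t_0})\ge\frac14 A(u_{t_0})\ge\frac14\min\{\rho,\delta^2\}>0$, which yields $\gamma(c)>0$.

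For (iv) I take $\{u_n\}\subset P_c$ with $\|u_n\|_H\to\infty$; since $\|u_n\|_2^2=c$, this forces $A_n:=A(u_n)\to\infty$. I would renormalize by setting $v_n:=(u_n)_{\tau_n}$ with $\tau_n:=A_n^{-1/(2s)}$, so that $v_n\in S_c$, $A(v_n)=1$, and $\{v_n\}$ is bounded in $H$; because the scaling is a one-parameter group, $(u_n)_t=(v_n)_{t/\tau_n}$ and hence $E(u_n)=\max_{t>0}E((v_n)_t)\ge E((v_n)_B)$ for every $B>0$. Now apply the dichotomy of Lemma \ref{concentration}. If $\{v_n\}$ does not vanish, then after a translation $\tilde v_n\wto v\neq0$ in $H$ with $\tilde v_n\to v$ a.e.; writing $E(u_n)=E((\tilde v_n)_{T_n})$ with $T_n=\tau_n^{-1}\to\infty$ and using the $h_\lambda$-representation from the proof of Lemma \ref{lm:limit_scaling}(ii), the positivity $F>0$ (Lemma \ref{lm:nonlinearity}) together with (H3) force, via Fatou's lemma, $\int h_\lambda(T_n^{(s+1)/2}\tilde v_n)|\tilde v_n|^{2^{\sharp}}\to+\infty$, whence $E(u_n)\to-\infty$, contradicting (iii). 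Thus vanishing must occur, i.e. $v_n\to0$ in $L^{2^{\sharp}}$. In that regime Lemma \ref{lm:estimate_energy}(iii) gives $\int F((v_n)_B)\to0$ for each fixed $B$, so $E((v_n)_B)\to\frac{B^{2s}}{2}$ and therefore $\liminf_n E(u_n)\ge\frac{B^{2s}}{2}$ for every $B$, which gives $E(u_n)\to+\infty$.

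The \emph{main obstacle} is exactly the passage to the limit in the nonlinear term $\int F$ along a sequence that converges only weakly. The device that resolves it is the renormalization to unit gradient energy combined with Lions' dichotomy: it reduces the problem to the vanishing regime, where the nonlinearity disappears by Lemma \ref{lm:estimate_energy}(iii), and it excludes the non-vanishing regime through the Fatou argument of Lemma \ref{lm:limit_scaling}(ii), crucially using the positivity $F>0$, the superlinearity (H3), and the lower bound (iii) already proved. A routine subsequence argument then upgrades the conclusion to the full sequence.
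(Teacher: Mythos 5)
Your proposal is correct, and for parts (i), (ii) and (iv) it follows essentially the same route as the paper: (ii) is the identical contradiction via Lemma \ref{lm:estimate_energy}(ii), and (iv) uses the same renormalization $v_n=(u_n)_{\tau_n}$ with $A(v_n)=1$ followed by the vanishing/non-vanishing dichotomy, ruling out non-vanishing by the $h_\lambda$--Fatou computation of Lemma \ref{lm:limit_scaling}(ii) and handling vanishing via Lemma \ref{lm:estimate_energy}(iii). The genuine difference is in (iii). The paper writes $E(u)=E(u)-\tfrac{1}{2s}Q(u)=\tfrac{s+1}{4s}\int_{\R^2}\bigl(f(u)u-2^{\sharp}F(u)\bigr)\dd x\dd y$ and invokes Lemma \ref{lm:nonlinearity} to get strict positivity of each term of a hypothetical sequence with $E(u_n)\to 0$; as stated this only yields $E(u_n)>0$ for each $n$, not a uniform lower bound. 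Your argument instead exploits that $t_u=1$ on $P_c$, so $E(u)=\max_{t>0}E(u_t)\ge E(u_{t_0})$ with $t_0$ chosen so that $A(u_{t_0})=\min\{A(u),\delta^2\}$, and then Lemma \ref{lm:estimate_energy}(i) together with part (ii) gives the explicit bound $\gamma(c)\ge\tfrac14\min\{\rho,\delta^2\}>0$. This is a cleaner and more quantitative route, and it closes the gap left by the paper's version of (iii); the price is that it leans on Lemma \ref{lm:maximum_point}, which the paper's (iii) avoids. The only places where you are slightly loose are routine: the passage from $L^2$-vanishing on balls to $\|v_n\|_{L^{2^\sharp}}\to0$ (which the paper spells out via interpolation), and the final subsequence bookkeeping in (iv); neither affects correctness.
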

	\begin{proof}
		\begin{enumerate}
			\item[(i)] Clear.
			\item[(ii)] Suppose by contradiction that there exists a sequence $\{u_n\}\subset P_c$ such that 
			\begin{align*}
				\|\partial_x u_n\|_2^2 + \|D_y^{s}u_n\|_2^2\rightarrow 0\quad \text{as}\quad n \rightarrow \infty.
			\end{align*}
			According to $Q(u_n)=0$ and Lemma \ref{lm:estimate_energy}(ii), it follows that 
			\begin{align*}
				0=Q(u_n)\geq \frac{s}{2}\|\partial_x u_n\|_2^2  + \frac s2 \|D_y^{s} u_n\|_2^2 >0,
			\end{align*}
			which is absurd, and so (ii) holds.
			\item[(iii)] Suppose by contradiction that there exists a sequence $\{u_n\}\subset P_c$ such that $\lim\limits_{n\rightarrow\infty}E(u_n)=0$.
			According to Lemma \ref{lm:nonlinearity} and $Q(u_n)=0$, it follows that 
			\begin{align*}
				E(u_n)=E(u_n)-\frac{1}{2s}Q(u_n)=\frac{s+1}{4s}\int_{\mathbb{R}^2}\left(f(u_n)u_n-2^{\sharp}F(u_n)\right)\dd x\dd y>0,
			\end{align*}
			a contradiction, and so (ii) holds.
			\item[(iv)] Suppose by contradiction that there exists a sequence $\{u_n\}\subset P_c$ with $\left\|u_n\right\| \rightarrow \infty$ such that $E(u_n)\leq C$ uniformly, for some constant $C>0$.
			Since $\|u_n\|_{2}^2=c$, we have 
			\begin{align*}
				a_n:=\|\partial_x u_n\|_2^2  + \|D_y^{s}u_n\|_2^2\rightarrow \infty,\quad \text{as}\quad n \rightarrow \infty.
			\end{align*}
			Set $t_n:=a_n^{-\frac{1}{2s}}$ and $v_n(x,y):=t_n^{\frac{s+1}{2}}u_n(t_n^s x, t_n y)$.
			It is clear that $\{v_n\}\subset S_c$ and 
			\begin{align*}
				\|\partial_x v_n\|_2^2  + \|D_y^{s}v_n\|^2_2=1,
			\end{align*}
			which implies that $\{v_n\}$ is bounded in $H$.
			Let 
			$$
			\rho:=\limsup _{n \rightarrow \infty}\left(\sup _{z \in \mathbb{R}^2} \int_{B(z, 1)}\left|v_n\right|^2 \dd x\dd y\right).
			$$
			We first consider the nonvanishing case, that is, $\rho>0$.
			Up to a subsequence, there exists a sequence $\left\{z_n\right\} \subset \mathbb{R}^2$ and $w \in H \backslash\{0\}$ such that
			$$
			w_n:=v_n\left(\cdot+z_n\right) \rightharpoonup w \quad \text { in } H \quad \text { and } \quad w_n \rightarrow w \quad \text { a.e. in } \mathbb{R}^2 .
			$$
			Similar to proof in Lemma \ref{lm:maximum_point}-(iii), we can prove that 
\begin{align*}
	0 \leq t_{n}^{2s} E\left(u_n\right) 
&=\frac 12 \|\partial_x v_n\|_2^2 + \frac 12 \|D_y^{s} v_n\|_2^2 - \int_{\R^2} h_{0}(t_n^{-\frac{s+1}{2}}v_n)\vert v_n \vert^{2^{\sharp}}\dd x\dd y
\\
&=\frac 12 - \int_{\R^2} h_{0}(t_n^{-\frac{s+1}{2}}w_n)\vert w_n \vert^{2^{\sharp}}\dd x\dd y\rightarrow -\infty,\quad \text{as }  n \rightarrow \infty,
\end{align*}
			which is absurd. 
			Then we shall consider the vanishing case, that is, $\rho=0$. 
			We first prove a similar result about Lion's vanishing Lemma.
			For $p\in (2,2^{*})$, by H\"{o}lder's inequality and anisotropic Gagliardo-Nirenberg's inequality, we have
			\begin{align*}
				\|v_n\|_{L^p(B(y,r))}\leq \|v_n\|_{L^2(B(y,r))}^{\lambda}\|v_n\|_{L^{2^*}}^{1-\lambda}\leq C\|v_n\|_{L^2(B(y,r))}^{\lambda}\|v_n\|^{1-\lambda},
			\end{align*}
			where $\lambda\in (0,1)$ satisfies $\frac{1}{p}=\frac{\lambda}{2}+\frac{1-\lambda}{2^*}$.
			As a result, there exists a constant $C_1>0$ such that 
			\begin{align*}
				\|v_n\|_{L^p(\mathbb{R}^N)}^p\leq C_1C^p (\sup_{y\in \mathbb{R}^2}\int_{B(y,r)}\vert v_n\vert^2 \dd x\dd y)^{\frac{p\lambda}{2}}\|v_n\|^{p(1-\lambda)},
			\end{align*}
			and so 
			\begin{align*}
				\lim\limits_{n\rightarrow \infty}\int_{\mathbb{R}^2}\vert v_n\vert^p \dd x\dd y=0.
			\end{align*}
			Then using Lemma \ref{lm:estimate_energy}-(iii), we have 
			\begin{align*}
				\lim _{n \rightarrow \infty} \int_{\mathbb{R}^2} F\left(v_n\right) \dd x\dd y=\lim _{n \rightarrow \infty} \int_{\mathbb{R}^2} t^{-s-1}F\left(t^{\frac{s+1}{2}}v_n\right) \dd x\dd y=0.\quad \text{for all } t> 0.
			\end{align*}
Observe that $u_n=(v_n)_{t_n^{-1}}$. Therefore, by Lemma \ref{lm:maximum_point}(ii), for any $t>0$
			 we have
			\begin{align*}
				c\geq E(u_n)=E\left((v_n)_{t_n^{-1}}\right)\geq E\big((v_n)_{t}\big)=\frac{t^{2s}}{2}-\int_{\mathbb{R}^2} t^{-s-1}F\left(t^{\frac{s+1}{2}}v_n\right) \dd x\dd y=\frac{t^{2s}}{2}+o_n(1),
			\end{align*}
			which is a contradiction via choosing $t>0$ large enough.
		\end{enumerate}
	\end{proof}
	
	\subsection{Ground States}
	\begin{defi}\label{def:homotopy} \cite[Definition 3.1]{Gh}
		Let $B$ be a closed subset of a set $Y \subset H$. We say that a class $\mathcal{G}$ of compact subsets of $Y$ is a homotopy stable family with the closed boundary $B$ provided that
		\begin{enumerate}
			\item [\textnormal{(i)}] every set in $\mathcal{G}$ contains $B$;
			\item [\textnormal{(ii)}] for any $A \in \mathcal{G}$ and any function $\eta \in C([0, 1] \times Y, Y)$ satisfying $\eta(t, x)=x$ for all $(t, x) \in (\{0\} \times Y) \cup([0, 1] \times B)$, then $\eta(\{1\} \times A) \in \mathcal{G}$.
		\end{enumerate}
	\end{defi}
	\begin{lem}\label{lm:ps}
		Let  $\mathcal{G}$ be a homotopy stable family of compact subsets of $S_c$ with closed boundary $B$ and set
		\begin{align} \label{eq:ming}
			\gamma_{\mathcal{G}}(c):=\inf_{A\in \mathcal{G}}\max_{u \in A} \Psi(u),
		\end{align}
		where 
		$$
		\Psi(u):=E(u_{t_u})=\max_{t>0} E(u_t).
		$$
		If $\gamma_{\mathcal{G}}(c)>0$, then there exists a Palais-Smale sequence $\{u_n\} \subset P_c$, for $E$ restricted on $S_c$ at the level $\gamma_{\mathcal{G}}(c)$ for any $c>0$.
	\end{lem}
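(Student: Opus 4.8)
The plan is to regard the fiber-maximized functional $\Psi(u)=\max_{t>0}E(u_t)=E(u_{t_u})$ as a $C^1$ functional on the constraint manifold $S_c$, apply Ghoussoub's min-max principle to $\Psi$, and finally rescale the resulting Palais--Smale sequence back into $P_c$ along the scaling \eqref{scaling-0}, transferring the criticality from $\Psi$ to $E\vert_{S_c}$. Throughout, the key structural fact is that \eqref{scaling-0} preserves the $L^2$-norm, so for each fixed $t>0$ the map $u\mapsto u_t$ is an $L^2$-isometry of $H$ which sends $S_c$ into $S_c$ and maps $T_uS_c$ isomorphically onto $T_{u_t}S_c$.

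\emph{Step 1: $\Psi\in C^1(S_c,\R)$ together with its differential.} By Lemma \ref{lm:maximum_point}(i),(iii) the map $u\mapsto t_u$ is well defined and continuous on $H\setminus\{0\}$; since $\tfrac{d}{dt}E(u_t)=\tfrac1t Q(u_t)$ and the uniqueness in Lemma \ref{lm:maximum_point}(i) comes from the strict monotonicity of the relevant quotient, an implicit-function argument upgrades $u\mapsto t_u$ to a $C^1$ map on $S_c$, whence $\Psi\in C^1(S_c,\R)$. Because $t_u$ is the maximum point of $t\mapsto E(u_t)$, one has $\tfrac{d}{dt}E(u_t)\big\vert_{t=t_u}=0$, so the contribution of $t_u'(u)$ disappears upon differentiating and we obtain, for every $\varphi\in T_uS_c$,
\[
\langle \Psi'(u),\varphi\rangle=\langle E'(u_{t_u}),\varphi_{t_u}\rangle,
\]
where $\varphi_{t_u}$ denotes the scaling \eqref{scaling-0} of $\varphi$, which lies in $T_{u_{t_u}}S_c$ by the isometry remark above.

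\emph{Step 2: a Palais--Smale sequence for $\Psi$.} The manifold $S_c$ is a complete $C^1$-Finsler manifold and, by hypothesis, $\gamma_{\mathcal{G}}(c)=\inf_{A\in\mathcal{G}}\max_{u\in A}\Psi(u)>0$. Applying Ghoussoub's min-max principle \cite{Gh} to the $C^1$ functional $\Psi$ on $S_c$, with the homotopy-stable family $\mathcal{G}$ of Definition \ref{def:homotopy} and its closed boundary $B$, yields a sequence $\{v_n\}\subset S_c$ with $\Psi(v_n)\to\gamma_{\mathcal{G}}(c)$ and $\|\Psi'(v_n)\|_{(T_{v_n}S_c)^*}\to0$.

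\emph{Step 3: rescaling into $P_c$ and transfer of criticality.} Set $u_n:=(v_n)_{t_{v_n}}$. Then $u_n\in S_c$ (mass is preserved) and $Q(u_n)=0$, so $\{u_n\}\subset P_c$, while $E(u_n)=\Psi(v_n)\to\gamma_{\mathcal{G}}(c)$. For the gradient, given $\psi\in T_{u_n}S_c$ with $\|\psi\|_H\le1$, write $\psi=\varphi_{t_{v_n}}$ with $\varphi:=\psi_{1/t_{v_n}}\in T_{v_n}S_c$; by Step 1, $\langle E'(u_n),\psi\rangle=\langle \Psi'(v_n),\varphi\rangle$, and since \eqref{scaling-0} rescales only the gradient part one checks $\|\varphi\|_H\le\max\{1,t_{v_n}^{-s}\}$. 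Hence the constrained differential satisfies $\|E'(u_n)\|_{(T_{u_n}S_c)^*}\le\max\{1,t_{v_n}^{-s}\}\,\|\Psi'(v_n)\|_{(T_{v_n}S_c)^*}$, so it remains only to keep $t_{v_n}$ bounded away from $0$. Since $E(u_n)$ is bounded and $E$ is coercive on $P_c$ (Lemma \ref{lm:coercive}(iv)), $\{u_n\}$ is bounded in $H$, and from $\|\partial_x u_n\|_2^2+\|D_y^su_n\|_2^2=t_{v_n}^{2s}\big(\|\partial_x v_n\|_2^2+\|D_y^sv_n\|_2^2\big)$ together with the lower bound of Lemma \ref{lm:coercive}(ii), the parameter $t_{v_n}$ is pinned into a compact subset of $(0,\infty)$ as soon as $\{v_n\}$ itself is bounded.

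I expect the main obstacle to be exactly this last point: $\Psi$ is \emph{not} coercive on $S_c$ (a sequence with $\|v_n\|_{\dot H}\to\infty$ and $t_{v_n}\to0$ can keep $\Psi(v_n)=E(u_n)$ bounded), so the danger is a collapse $t_{v_n}\to0$ of the scaling parameters, which would destroy the gradient transfer. This is handled by running the min-max on the augmented manifold $\R\times S_c$ with the lifted homotopy-stable family and the functional $(\tau,u)\mapsto E(u_{e^\tau})$, whose $\tau$-derivative equals $Q(u_{e^\tau})$; the resulting Palais--Smale sequence is produced already near $P_c$ with $Q(u_n)\to0$, and the coercivity of $E$ on $P_c$ (extended to a neighborhood where $Q$ is small) then bounds $\{u_n\}$ and prevents the scaling collapse. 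With $t_{v_n}$ thus controlled, $\|E'(u_n)\|_{(T_{u_n}S_c)^*}\to0$, and $\{u_n\}\subset P_c$ is the desired Palais--Smale sequence for $E$ restricted to $S_c$ at the level $\gamma_{\mathcal{G}}(c)$.
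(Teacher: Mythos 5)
Your Steps 1--3 reproduce the paper's skeleton (the chain rule $d\Psi(u)[\varphi]=dE(u_{t_u})[\varphi_{t_u}]$, Ghoussoub's principle applied to $\Psi$, and the rescaling $u_n:=(v_n)_{t_{v_n}}\in P_c$ with the gradient transfer controlled by $t_{v_n}^{-1}$), and you correctly identify the one genuinely delicate point: nothing in a bare application of the min-max principle prevents $\|v_n\|_{\dot H}\to\infty$ and hence $t_{v_n}\to 0$, which would destroy the estimate $\|dE(u_n)\|_*\lesssim t_{v_n}^{-1}\|d\Psi(v_n)\|_*$. However, your resolution of this point is only a plan, not a proof: passing to the augmented functional $(\tau,u)\mapsto E(u_{e^\tau})$ on $\R\times S_c$ is a legitimate known device, but as sketched it produces a Palais--Smale sequence with $Q(u_n)\to 0$ rather than $\{u_n\}\subset P_c$ as the lemma asserts (an extra projection onto $P_c$, with quantitative control of $t_{u_n}\to 1$, would be needed), and the claim that coercivity of $E$ extends ``to a neighborhood where $Q$ is small'' is asserted without justification and is not established anywhere in the paper.

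The idea you are missing is how the paper sidesteps the collapse entirely by exploiting the \emph{localization} built into Ghoussoub's Theorem 3.2. One first deforms an arbitrary minimizing sequence of sets $\{D_n\}\subset\mathcal{G}$ into $P_c$ via the admissible homotopy $\eta(\sigma,u)=u_{1-\sigma+\sigma t_u}$ (admissible because $t_u=1$ on $P_c\supset B$, so $\eta$ fixes the boundary), obtaining $A_n:=\{u_{t_u}:u\in D_n\}\in\mathcal{G}$ with $A_n\subset P_c$ and $\max_{A_n}\Psi=\max_{D_n}\Psi$, so $\{A_n\}$ is still minimizing. Theorem 3.2 of \cite{Gh} then yields a Palais--Smale sequence $\{\tilde u_n\}$ for $\Psi$ satisfying $\mathrm{dist}_H(\tilde u_n,A_n)=o_n(1)$. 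Since $E$ is coercive on $P_c$ and the levels are bounded, the sets $A_n\subset P_c$ are uniformly bounded in $H$, hence so is $\{\tilde u_n\}$; combined with the two-sided bound on $\|\partial_x u_n\|_2+\|D_y^s u_n\|_2$ for $u_n=(\tilde u_n)_{t_{\tilde u_n}}\in P_c$ coming from Lemma \ref{lm:coercive}, this pins $t_{\tilde u_n}^{-1}\le C$ and closes the gradient transfer. In short, the boundedness is obtained \emph{before} invoking coercivity near (rather than on) $P_c$, by choosing the min-max sets inside $P_c$ and using the distance estimate of the min-max principle; with that replacement your argument goes through and yields the sequence in $P_c$ exactly.
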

	\begin{proof}
		Let us first define a mapping $\eta: [0,1] \times S_c \to S_c$ by $\eta(s, u)=u_{1-s+st_u}$. From Lemma \ref{lm:maximum_point}, we know that $t_u=1$, if $u \in P_c$.
		Note that $B \subset P_c$, then $\eta(s ,u)=u$ for any $(s, u) \in (\{0\} \times S_c) \cup([0, 1] \times B)$. 
		Furthermore, by Lemma \ref{lm:maximum_point}, we see that $\eta \in C([0,1] \times S_c, S_c)$. Let $\{D_n\} \subset \mathcal{G}$ be a minimizing sequence to \eqref{eq:ming}. 
		In view of Definition \ref{def:homotopy}, we then get that
		$$
		A_n:=\eta(\{1\} \times D_n)=\{u_{t_u} : u \in D_n\} \in \mathcal{G}.
		$$
		Since $A_n \subset P_c$, then 
		$$
		\displaystyle\max_{v \in A_n}\Psi(v)=\displaystyle\max_{u \in D_n}\Psi(u).
		$$
		Therefore, there exists another minimizing sequence $\{A_n\} \subset P_c$ to \eqref{eq:ming}. Using \cite[Theorem 3.2]{Gh}, we then deduce that there exists a Palais-Smale sequence $\{\tilde{u}_n\} \subset S_c$ for $\Psi$ at the level $\gamma_{\mathcal{G}}(c)$ such that 
		\begin{align} \label{eq:dist}
			\mbox{dist}_{H}(\tilde{u}_n, A_n)=o_n(1).
		\end{align}
		For simplicity, we shall write $t_n=t_{\tilde{u}_n}$ and $u_n=(\tilde{u}_n)_{t_n}$ in what follows.
		
		Let us now prove that there exists a constant $C>0$ such that $t_n^{-1}\le C$, for any $n\ge 1$. Notice first that
	$$
		t_n^{-2s}=\frac{\|\partial_x \tilde{u}_n\|_2^2   + \|D_y^{s} \tilde{u}_n\|_2^2}{\|\partial_x u_n\|^2_2 + \|D_y^{s} u_n\|_2^2}.
		$$
		Since $E(u_n)=\Psi(\tilde{u}_n)=\gamma_{\mathcal{G}}(c)+o_n(1)$, $\{u_n\} \subset P_c$ and $0<\gamma(c)<\infty$, from Lemma \ref{lm:coercive}, then  there exists a constant $C_1>0$ such that $1/C_1 \leq \|\partial_x u_n\|_{2}+\|D_y^s u_n\|_{2}\leq C_1$. 
		On the other hand, since $\{A_n\} \subset P_c$ is a minimizing sequence to \eqref{eq:ming}, from Lemma \ref{lm:coercive}, then $\{A_n\}$ is bounded in $H$. In light of \eqref{eq:dist}, we then get that $\{\tilde{u}_n\}$ is bounded in $H$. 
	 
		Consequently, the desired conclusion follows.
		
		Next we show that $\{u_n\} \subset P_c$ is a Palais-Smale sequence for $E$ restricted on $S_c$ at the level $\gamma_{\mathcal{G}}(c)$. In the following, we shall denote by $\|\cdot\|_{*}$ the dual norm of $(T_u S_c)^*$. Observe that
		\begin{align} \label{eq:norm1}
			\|dE(u_n)\|_*=\sup_{\psi \in T_{u_n}S_c, \|\psi\| \leq 1}|dE(u_n)[\psi]|=\sup_{\psi \in T_{u_n}S_c, \|\psi\|\leq 1}|dE(u_n)[(\psi_{t_n^{-1}})_{t_n}]|.
		\end{align}
		By straightforward calculations, we are able to infer that the mapping $T_uS_c \to T_{u_{t_u}}S_c$ defined by $\psi \mapsto \psi_{t_u}$ is an isomorphism. Moreover, arguing as in \cite{jeanjeanlu2}, we have that $d\Psi(u)[\psi]=dE(u_{t_u})[\psi_{t_u}]$, for any $u \in S_c$ and $\psi \in T_uS_c$. As a consequence, making use of \eqref{eq:norm1} and the fact that  $t_n^{-1}\le C$, for any $n\ge 1$, we get that
		$$
		\|dE(u_n)\|_*=\sup_{\psi \in T_{u_n}S_c, \|\psi\|\leq 1}\big|d\Psi(\tilde{u}_n)\big[\psi_{t_n^{-1}}\big]\big|
\le {t_n^{-1}}\|d\Psi(\tilde{u}_n)\|_*
\le C\|d\Psi(\tilde{u}_n)\|_*.
		$$
		Since $\{\tilde{u}_n\} \subset S_c$ is a Palais-Smale sequence for $\Psi$ at the level $\gamma_{\mathcal{G}}(c)$, we then deduce that $\{u_n\} \subset P_c$ is a Palais-Smale sequence for $E$ restricted on $S_c$ at the level $\gamma_{\mathcal{G}}(c)$. Thus the proof is completed.
	\end{proof}
	\begin{lem}
		There exists a Palais-Smale sequence $\left\{u_n\right\} \subset P_c$ for the constrained functional $E_{\mid S_c}$ at the level $\gamma(c)$.
	\end{lem}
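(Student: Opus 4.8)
The plan is to obtain this statement as an immediate consequence of Lemma \ref{lm:ps}, by feeding it the simplest admissible homotopy stable family, namely the collection of all singletons in $S_c$. First I would take the closed boundary to be empty, $B=\emptyset$, and set $\mathcal{G}:=\{\{u\} : u \in S_c\}$. Checking that $\mathcal{G}$ is a homotopy stable family with boundary $B=\emptyset$ in the sense of Definition \ref{def:homotopy} is immediate: condition (i) holds vacuously because every set contains $\emptyset$, and for (ii) any deformation $\eta \in C([0,1]\times S_c, S_c)$ sends a singleton $\{u\}$ to $\{\eta(1,u)\}\subset S_c$, which is again a singleton and hence lies in $\mathcal{G}$.

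With this choice the min-max level in \eqref{eq:ming} collapses to a pointwise infimum, $\gamma_{\mathcal{G}}(c)=\inf_{u\in S_c}\Psi(u)=\inf_{u\in S_c}\max_{t>0}E(u_t)$. The key step is then to identify this quantity with $\gamma(c)=\inf_{w\in P_c}E(w)$. This rests entirely on Lemma \ref{lm:maximum_point}: for each $u\in S_c$ there is a unique $t_u>0$ with $Q(u_{t_u})=0$, so that $u_{t_u}\in P_c$ and $\Psi(u)=E(u_{t_u})$. Since the scaling \eqref{scaling-0} preserves the $L^2$-norm, the fiber map $u\mapsto u_{t_u}$ carries $S_c$ into $P_c$; it is moreover onto, because any $w\in P_c$ satisfies $Q(w)=0$, whence $t_w=1$ and $w=w_{t_w}$. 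Therefore $\inf_{u\in S_c}E(u_{t_u})=\inf_{w\in P_c}E(w)$, which gives $\gamma_{\mathcal{G}}(c)=\gamma(c)$.

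Finally, Lemma \ref{lm:coercive}(iii) guarantees $\gamma(c)>0$, so the hypothesis $\gamma_{\mathcal{G}}(c)>0$ of Lemma \ref{lm:ps} is satisfied. Invoking that lemma then yields a Palais-Smale sequence $\{u_n\}\subset P_c$ for $E$ restricted to $S_c$ at the level $\gamma_{\mathcal{G}}(c)=\gamma(c)$, which is exactly the assertion. I do not expect any genuine obstacle here, as the result is essentially a corollary of Lemma \ref{lm:ps}; the only point needing a short argument is the level identification $\gamma_{\mathcal{G}}(c)=\gamma(c)$, and the only subtlety to state carefully is that the singleton family is genuinely admissible in Ghoussoub's framework once the boundary is taken to be empty.
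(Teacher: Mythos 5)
Your proposal is correct and follows essentially the same route as the paper: both take $B=\emptyset$ with $\mathcal{G}$ the family of singletons in $S_c$, identify $\gamma_{\mathcal{G}}(c)=\inf_{u\in S_c}\max_{t>0}E(u_t)$ with $\gamma(c)$ via the unique fiber maximum from Lemma \ref{lm:maximum_point}, and then invoke Lemma \ref{lm:ps}. Your explicit verification of the homotopy-stable-family axioms and of the positivity hypothesis $\gamma_{\mathcal{G}}(c)>0$ via Lemma \ref{lm:coercive}(iii) only makes the argument slightly more complete than the paper's.
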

	\begin{proof}
			Let $B=\emptyset$ and $\mathcal{G}$ be all singletons in $S_c$. In this situation, by \eqref{eq:ming}, there holds that
			$$
			\gamma_{\mathcal{G}}(c)=\inf_{u \in S_c} \sup_{t>0} E(u_t).
			$$
			Next we are going to prove that $\gamma_{\mathcal{G}}(c)=\gamma(c)$. From Lemma \ref{lm:maximum_point}, we know that, for any $u \in S_c$, there exists a unique $t_u>0$ such that $u_{t_u} \in P_c$ and $E(u_{t_u})=\max_{t >0}E(u_t)$. This then implies that
			$$
			\inf_{u \in S_c} \sup_{t>0} E(u_t)  \geq \inf_{u \in P_c} E(u).
			$$
			On the other hand, for any $u \in P_c$, we have that $E(u)=\max_{t >0}E(u_t)$. This  gives that
			$$
			\inf_{u \in S_c} \sup_{t>0} E(u_t)  \leq \inf_{u \in P_c} E(u).
			$$
			Accordingly, we derive that $\gamma_{\mathcal{G}}(c)=\gamma(c)$. So it follows from Lemma \ref{lm:ps} that the result of this lemma holds true and the proof is completed.
	\end{proof}

\begin{lem}\label{cresc}
Suppose that $(H0)-(H4)$ hold. Then the function $c\mapsto \gamma(c)$ is nonincreasing on $(0,\infty)$.
\end{lem}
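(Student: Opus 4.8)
The plan is to exploit the variational characterization $\gamma(c)=\inf_{u\in S_c}\Psi(u)$ obtained above, with $\Psi(u)=\max_{t>0}E(u_t)=E(u_{t_u})$ (Lemma \ref{lm:maximum_point}), and to reduce the monotonicity in $c$ to a monotonicity under the \emph{amplitude} scaling $v\mapsto\beta v$. Fix $0<c_1<c_2$ and put $\beta=\sqrt{c_2/c_1}>1$. Since $w\mapsto w/\beta$ is a bijection from $S_{c_2}$ onto $S_{c_1}$, one has $\gamma(c_2)=\inf_{v\in S_{c_1}}\Psi(\beta v)$, so it suffices to prove
\[
\Psi(\beta v)\le\Psi(v)\qquad\text{for every }v\in S_{c_1}\text{ and every }\beta\ge1;
\]
taking the infimum over $v\in S_{c_1}$ then gives $\gamma(c_2)\le\gamma(c_1)$.

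Next I would fix $v\in H\setminus\{0\}$ and study $g(\beta):=\Psi(\beta v)$. Using $(\beta v)_t=\beta v_t$ and $\|\partial_x v_t\|_2^2=t^{2s}\|\partial_x v\|_2^2$, $\|D_y^s v_t\|_2^2=t^{2s}\|D_y^s v\|_2^2$, one writes
\[
g(\beta)=\max_{t>0}\Big[\tfrac{\beta^2}{2}t^{2s}\big(\|\partial_x v\|_2^2+\|D_y^s v\|_2^2\big)-\int_{\R^2}F(\beta v_t)\dd x\dd y\Big].
\]
By Lemma \ref{lm:maximum_point} the maximum is attained at a unique $t_{\beta v}>0$ depending continuously on $\beta$, so $g$ is $C^1$ and the envelope theorem yields $g'(\beta)=\partial_\beta E(\beta v_t)\big|_{t=t_{\beta v}}$.

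The crux is then to compute this derivative and pin its sign. Writing $w:=(\beta v)_{t_{\beta v}}$ — the unique point on the scaling orbit of $\beta v$ with $Q(w)=0$, since $\frac{d}{dt}E((\beta v)_t)=\frac1t Q((\beta v)_t)$ vanishes at $t_{\beta v}$ — and $K:=\|\partial_x w\|_2^2+\|D_y^s w\|_2^2=\beta^2t_{\beta v}^{2s}(\|\partial_x v\|_2^2+\|D_y^s v\|_2^2)$, a direct differentiation together with $v_{t_{\beta v}}=w/\beta$ gives
\[
g'(\beta)=\frac1\beta\Big(K-\int_{\R^2}f(w)\,w\dd x\dd y\Big).
\]
Inserting $Q(w)=0$, i.e. $sK=\tfrac{s+1}{2}\int_{\R^2}\tilde F(w)\dd x\dd y$, and $\int f(w)w=\int\tilde F(w)+2\int F(w)$, this becomes
\[
g'(\beta)=\frac1\beta\Big(\frac{1-s}{2s}\int_{\R^2}\tilde F(w)\dd x\dd y-2\int_{\R^2}F(w)\dd x\dd y\Big).
\]
Since $\tilde F=f(w)w-2F(w)$, the bracket is $\le 0$ exactly when $(1-s)\int f(w)w\le 2(s+1)\int F(w)$, which follows pointwise from the Sobolev-subcritical bound $f(t)t<2^{*}F(t)=\tfrac{2(s+1)}{1-s}F(t)$, with $F>0$ by Lemma \ref{lm:nonlinearity}. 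Hence $g'(\beta)\le0$, $g$ is non-increasing, and the reduction closes.

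The only genuinely delicate points are, first, legitimizing the differentiation of $g$, for which the uniqueness and continuity of the map $u\mapsto t_u$ from Lemma \ref{lm:maximum_point} are precisely what is required (via Danskin's/the envelope theorem); and second, extracting the correct sign, which rests on combining the Pohozaev identity $Q(w)=0$ with the upper control $f(t)t<2^{*}F(t)$ on the nonlinearity. I expect the sign step to be the main obstacle: the mass-critical lower controls coming from $(H3)$–$(H4)$ and Lemma \ref{lm:nonlinearity} only bound $F$ from below by $|t|^{2^{\sharp}}$, and it is the Sobolev-subcriticality of $f$ that actually forces $g'\le 0$ (consistently with the pure-power model, where $\Psi(\beta v)$ is decreasing in $\beta$ precisely for $p<2^{*}$).
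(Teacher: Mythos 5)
Your argument is essentially correct, but it is a genuinely different route from the paper's. You exploit the identity $\gamma(c)=\inf_{u\in S_c}\Psi(u)$ and reduce monotonicity in $c$ to monotonicity of $\beta\mapsto\Psi(\beta v)$, computing the envelope derivative at the fiber-critical point $w$ with $Q(w)=0$; your formula $g'(\beta)=\beta^{-1}\bigl(K-\int_{\R^2}f(w)w\,\dd x\dd y\bigr)$ and the subsequent algebra are right, and the sign does come down exactly to $f(t)t\le 2^{*}F(t)$. The paper instead proves $\gamma(c_2)\le\gamma(c_1)+\eps$ by truncating a near-minimizer $u\in P_{c_1}$ to $u_{[\delta]}=u\,\chi(\delta\cdot)$, adding a remote bump $\tilde v_\lambda$ with disjoint support carrying the missing mass $c_2-\|u_{[\delta]}\|_2^2$, and using continuity of $u\mapsto t_u$ plus Lemma \ref{lm:estimate_energy}(iii) to make the bump's contribution to $E\bigl((w_{[\lambda]})_{t_{w_{[\lambda]}}}\bigr)$ negligible. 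What each buys: your scaling argument is shorter and quantitative (it would even give strict decrease when $f(t)t<2^{*}F(t)$ strictly and $\gamma$ is attained), while the paper's construction is hypothesis-light.

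The one substantive caveat is that your sign step uses $(H5)$, whereas the lemma is stated under $(H0)$--$(H4)$ only, and $(H5)$ is not a consequence of these (e.g.\ $(H2)$ controls $f$ only asymptotically, not pointwise). So as written you prove the lemma under strictly stronger hypotheses; this is harmless for the application to Theorem \ref{groun-thm}, where $(H5)$ is assumed, but it does not recover the statement as given. Two smaller points to tighten: justify the envelope/Danskin step by noting that, by continuity of $\beta\mapsto t_{\beta v}$ (Lemma \ref{lm:maximum_point}(iii)), the maximizers stay in a compact subset of $(0,\infty)$ locally in $\beta$, so $g$ is indeed differentiable with the stated derivative; and since $H$ consists of complex-valued functions, the pairing in $\partial_\beta\int_{\R^2}F(\beta v_t)\,\dd x\dd y$ should be written as $\Re\int_{\R^2}f(w)\bar w\,\dd x\dd y$ (or one should work with $F(|u|)$), consistently with how the paper treats $f(u_n)v_n$.
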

\begin{proof}
We will show that for any $0<c_1<c_2$ and for any $\eps>0$ we have
\begin{equation}\label{cce}
\gamma(c_2)\le \gamma(c_1)+\eps.
\end{equation}
By definition of $\gamma(c_1)$, there exists $u\in P_{c_1}$ such that
\begin{equation}\label{cce1}
E(u)\le \gamma(c_1)+\frac{\eps}{2}.
\end{equation}
Let $\chi \in C^{\infty}(\mathbb{R}^2)$ be radial and such that
$$
\chi(x)=\left\{\begin{array}{ll}
1, & |x| \leq 1, \\
\in[0,1], & |x| \in(1,2), \\
0, & |x| \geq 2 .
\end{array}\right.
$$
For any small $\delta>0$, we define $u_{[\delta]}(x)=u(x) \cdot \chi(\delta x) \in  H\backslash\{0\}$. Since $u_{[\delta]} \rightarrow u$ in $H$ as $\delta \rightarrow 0^{+}$,
 by Lemma \ref{lm:maximum_point}-(iii), one has $t_{u_{[\delta]}}\to t_u=1$, as $\delta \rightarrow 0^{+}$, and thus
$$
( u_{[\delta]})_{t_{u_{[\delta]}}} \rightarrow u_{t_u}=u, \quad \text { in } H, \quad \text { as } \delta \rightarrow 0^{+} .
$$
So we can fix a $\delta>0$ small enough such that
\begin{equation}\label{cce2}
E\big(( u_{[\delta]})_{t_{u_{[\delta]}}} \big) \leq E(u)+\frac{\epsilon}{4}.
\end{equation}
Let $v \in C^{\infty}(\mathbb{R}^2)$ be such that $\operatorname{supp}(v) \subset B(0,1+4 / \delta) \backslash B(0,4 / \delta)$ and set
$$
\tilde{v}=\frac{c_2-\left\|u_{[\delta]}\right\|_{2}^2}{\|v\|_{2}^2} v .
$$
For any $\lambda\ge 0$, we define $w_{[\lambda]}=u_{[\delta]}+\tilde{v}_\lambda$. Since
$$
\operatorname{supp}\left(u_{[\delta]}\right) \cap \operatorname{supp}(  \tilde{v}_\lambda)=\emptyset,
$$
we have that  $w_{[\lambda]} \in S_{c_2}$. 
Since $E\Big( (w_{[\lambda]})_{t_{w_{[\lambda]}}}\Big) \geq 0$  and  $w_{[\lambda]} \rightarrow u_{[\delta]} \neq 0$ almost everywhere in $\mathbb{R}^2$, as $\lambda \rightarrow 0$, arguing as in the proof of Lemma \ref{lm:maximum_point}-(iii), we deduce that 
$t_{w_{[\lambda]}}$ is bounded from above when $\lambda \rightarrow0$. 
 Now, since
$$
t_{w_{[\lambda]}}\cdot \lambda \rightarrow 0,\quad \text { as } \lambda \rightarrow 0,
$$
we have
$$
\|\partial_x\tilde{v}_{t_{w_{[\lambda]}}\cdot\lambda}\|_2^2+
\|D_y^s\tilde{v}_{t_{w_{[\lambda]}}\cdot\lambda}\|_2^2\rightarrow 0 \quad \text { and } \quad \|\tilde{v}_{t_{w_{[\lambda]}}\cdot\lambda}\|_{2^\sharp} \rightarrow 0 .
$$
So, by Lemma \ref{lm:estimate_energy}-(iii), we have
\begin{equation}\label{cce3}
E\left(\tilde{v}_{t_{w_{[\lambda]}}\cdot\lambda}\right) \leq \frac{\varepsilon}{4}, \quad \text { for } \lambda>0 \text { small enough} .
\end{equation}
Therefore, combining \eqref{cce1}, \eqref{cce2}, and \eqref{cce3}
$$
\begin{aligned}
\gamma(c_2) \leq E\Big((w_{[\lambda]})_{t_{w_{[\lambda]}}}\Big) & =E\left((u_{[\delta]})_{t_{w_{[\lambda]}}}\right) 
+E\left(\tilde{v}_{t_{w_{[\lambda]}}\cdot \lambda}\right) \\
& =E\left((u_{[\delta]})_{t_{u_{[\delta]}}}\right) 
+E\left(\tilde{v}_{t_{w_{[\lambda]}}\cdot \lambda}\right) \\
& \leq E(u)+\frac{\varepsilon}{2} \leq \gamma(c_1)+\varepsilon,
\end{aligned}
$$
that is \eqref{cce}.
\end{proof}

	\begin{lem}
		Let $\left\{u_n\right\} \subset S_c$ be any bounded Palais-Smale sequence for the constrained functional $E_{\mid S_c}$ at the level $\gamma(c)>0$, satisfying $Q\left(u_n\right) \rightarrow 0$.
		Then there exists $u \in S_c$ and $\mu>0$ such that, up to the extraction of a subsequence and $u p$ to translations in $\mathbb{R}^2, u_n \rightarrow u$ strongly in $H$ and $	-\partial_{xx} u + D_y^{2s} u + \alpha u=f(u)$.
	\end{lem}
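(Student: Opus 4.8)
The plan is to run a concentration--compactness argument on the constrained Palais--Smale sequence $\{u_n\}$, extract a nontrivial weak limit solving the limiting equation, and then upgrade weak convergence to strong convergence by excluding both a second escaping bubble and a spreading residual of $L^2$-mass. Throughout write $A(w):=\|\partial_x w\|_2^2+\|D_y^s w\|_2^2$.

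First I would produce the Lagrange multipliers. Since $\{u_n\}$ is a bounded Palais--Smale sequence for $E|_{S_c}$, there are reals $\alpha_n$ with $-\partial_{xx}u_n+D_y^{2s}u_n+\alpha_n u_n-f(u_n)\to0$ in $H^{-1}$; testing with $u_n$ and using $|f(t)t|\lesssim|t|^{2^\sharp}+|t|^{2^*}$ (from (H1)--(H2)) together with the embeddings of Lemma \ref{embedding} shows $\{\alpha_n\}$ is bounded, so $\alpha_n\to\alpha$ up to a subsequence. Next I rule out vanishing of $\{u_n\}$: if $\sup_{z}\int_{B_R(z)}|u_n|^2\to0$ then Lemma \ref{concentration} gives $\|u_n\|_{L^{2^\sharp}}\to0$, whence $\int F(u_n),\int\tilde{F}(u_n)\to0$ by Lemma \ref{lm:estimate_energy}(iii); combined with $Q(u_n)\to0$ this forces $A(u_n)\to0$ and hence $E(u_n)\to0$, contradicting $\gamma(c)>0$. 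Therefore, after a translation, I may assume $u_n\rightharpoonup u\neq0$ in $H$, and passing to the limit in the weak formulation shows that $u$ solves $-\partial_{xx}u+D_y^{2s}u+\alpha u=f(u)$; by the regularity and Pohozaev analysis of Section \ref{se:reg} (the analogue of Theorems \ref{decay-regul-lemma} and \ref{ph} for the subcritical nonlinearity $f$) we obtain $Q(u)=0$.

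The positivity of the multiplier is exactly where (H5) is used. Multiplying the limiting equation by $u$ gives $A(u)+\alpha\|u\|_2^2=\int f(u)u$, while $Q(u)=0$ reads $sA(u)=\tfrac{s+1}{2}\int\tilde{F}(u)$. Eliminating $A(u)$ between these two identities, a short computation shows that $\alpha>0$ is equivalent to $\int f(u)u<2^*\int F(u)$, which is precisely the integrated form of (H5) (recall $2^*=\tfrac{2(s+1)}{1-s}$). Hence the limiting multiplier, which is the constant $\mu=\alpha$ appearing in the statement, is strictly positive.

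It remains to show $\|u\|_2^2=c$ and $u_n\to u$ in $H$. Put $v_n:=u_n-u\rightharpoonup0$. By the Brezis--Lieb lemma (the quadratic parts split by weak convergence, the terms $\int F$ and $\int\tilde{F}$ by the subcritical growth (H1)--(H2) and a.e. convergence) one gets $Q(v_n)\to0$ and $E(u_n)=E(u)+E(v_n)+o_n(1)$. The identity $E(w)-\tfrac1{2s}Q(w)=\tfrac{s+1}{4s}\int\big(f(w)w-2^\sharp F(w)\big)$ together with Lemma \ref{lm:nonlinearity} yields $\liminf_nE(v_n)\ge0$, so $\gamma(c)\ge E(u)$; on the other hand $u\in P_{c'}$ with $c':=\|u\|_2^2\in(0,c]$ gives $E(u)\ge\gamma(c')\ge\gamma(c)$ by the monotonicity of Lemma \ref{cresc}. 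Consequently $E(u)=\gamma(c)$ and $E(v_n)\to0$. If $v_n$ failed to vanish in $L^{2^\sharp}$ I could translate it to a second nontrivial weak limit $w$ solving the same equation, so that $Q(w)=0$, $E(w)\ge\gamma(\|w\|_2^2)>0$, and a further Brezis--Lieb splitting would give $\liminf_nE(v_n)\ge E(w)>0$, contradicting $E(v_n)\to0$; therefore $v_n\to0$ in $L^{2^\sharp}$, and then $Q(v_n)\to0$ forces $A(v_n)\to0$, whence $v_n\to0$ in every $L^q$, $2<q<q_s$. Finally I test the Palais--Smale relation against $v_n$: the two gradient terms and, using $v_n\to0$ in $L^{2^\sharp}$, the term $\int f(u_n)v_n$ all tend to $0$, leaving $\alpha_n\int u_n v_n=\alpha\,(c-c')+o_n(1)\to0$; since $\alpha>0$ this gives $c'=c$, so $\|v_n\|_2\to0$ and hence $v_n\to0$ in $H$. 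The crux is precisely this final step: a bounded sequence in $H$ can shed $L^2$-mass either to a bubble travelling to infinity---excluded by the energy bookkeeping $E(v_n)\to0$ against $E(w)>0$---or to a flat, slowly spreading tail whose gradient norm vanishes and which is invisible both to $Q$ and to the gradient norms; it is exactly this second mechanism that the strict sign $\alpha>0$, inserted into the Palais--Smale relation tested against the remainder, rules out, and establishing that sign is the role of (H5).
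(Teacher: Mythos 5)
Your proposal is correct and follows essentially the same route as the paper: non-vanishing via $\gamma(c)>0$, a weak limit solving the equation with $Q(u)=0$ by the Pohozaev identity, positivity of the multiplier from (H5), the Brezis--Lieb splitting combined with the monotonicity of $\gamma$ from Lemma \ref{cresc}, and recovery of the $L^2$-mass by testing the Palais--Smale relation against $v_n$ and using $\alpha>0$. Your dichotomy argument excluding a second escaping bubble to deduce $A(v_n)\to 0$ is in fact a welcome elaboration of a step the paper treats rather tersely (it only invokes Lemma \ref{lm:nonlinearity} there), but it is a refinement of the same argument rather than a different approach.
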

	\begin{proof}
By Lemma \ref{concentration} and Lemma \ref{lm:estimate_energy}, since $E(u_n)\to \gamma(c)>0$, we have that there exist a sequence $\{y_n\} \subset \R^2$ and a nontrivial function $u \in H$ such that $u_n(\cdot-y_n) \rightharpoonup u$ weakly in $H$, as $n \to \infty$. Since $\{u_n\} \subset S_c$ is a Palais-Smale sequence for $E$ restricted on $S_c$, then $u_n \in H$ satisfies the equation
		\begin{align} \label{equ21}
			-\partial_{xx} u_n + D_y^{2s} u_n + \alpha_n u_n=f(u_n) +o_n(1),
		\end{align}
		where
		$$
		\alpha_n=\frac 1 c \left (\int_{\R^2}F(u_n) \dd x\dd y -\|\partial_ x u_n\|^2_2 -\|D_y^{s} u_n\|_2^2  \right) +o_n(1).
		$$
		Note that $\{u_n\} \subset S_c$ is bounded in $H$, then there exists a constant $\alpha \in \R$ such that $\alpha_n \to \alpha$ in $\R$, as $n \to \infty$. It then follows from \eqref{equ21} that $u \in H$ satisfies the equation
		\begin{align} \label{equ22}
			-\partial_{xx} u + D_y^{2s} u + \alpha u=f(u).
		\end{align}
		According to Theorem \ref{ph}, then $Q(u)=0$. Therefore we have
\[
\alpha \|u\|_2^2
=\int_{\R^2}\left(f(u)u-\frac{s+1}{2s}\tilde{F}(u)\right) \dd x\dd y
=\int_{\R^2}\left(\frac{s-1}{2s}f(u)u+\frac{s+1}{s}F(u)\right) \dd x\dd y,
\]
and so, by (H5),  we get that $\alpha>0$.
		Define $v_n:=u_n(\cdot-y_n)-u$. From Brezis-Lieb's lemma, see also \cite[Lemma 2.6]{jeanjeanlu2}, we can conclude that
		\begin{align} \label{bl1}
			0=Q(u_n)=Q(v_n)+Q(u)+o_n(1)=Q(v_n)+o_n(1)
		\end{align}
		and
		\begin{align}\label{bl2}
			\gamma(c)=E(u_n)+o_n(1)=E(v_n)+E(u)+o_n(1)\ge E(v_n)+\gamma(\|u\|_2^2) +o_n(1).
		\end{align}
		Since, by  Brezis-Lieb's Lemma, $\|u\|_2^2 \leq c$, then $\gamma(c) \leq \gamma(\|u\|_2^2)$, see Lemma \ref{cresc}. As a result, \eqref{bl2} leads to $E(v_n) \leq o_n(1)$. Furthermore, according to \eqref{bl1}, there holds that $Q(v_n)=o_n(1)$. Therefore, we have that $E(v_n)=o_n(1)$ and, by Lemma \ref{lm:nonlinearity}
		$$
		\|\partial_ x v_n\|_2^2  +\|D_y^{s} v_n\|_2^2  =o_n(1), \quad \int_{\R^2} F(v_n)  \dd x\dd y=o_n(1), \quad  \int_{\R^2} f(v_n)v_n  \dd x\dd y=o_n(1).
		$$
		Thanks to $\alpha>0$, from \eqref{equ21} and \eqref{equ22}, we then get that $\|v_n\|_2=o_n(1)$. Consequently,  we conclude that $u_n(\cdot-y_n) \to u$ in $H$, as $n \to \infty$. This completes the proof.
	\end{proof}
	Using above lemma, we can prove the following theorem.
	\begin{thm}\label{groun-thm}
		Suppose that $(H0)-(H5)$ hold. Then, for any $c>0$, there exists a normalized ground state whose associated Lagrange multiplier $\alpha$ is positive.
	\end{thm}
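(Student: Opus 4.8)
The plan is to assemble the preparatory lemmas of this subsection into a minimization over the Pohozaev--Nehari manifold $P_c$, realized as a Ghoussoub-type min-max on $S_c$, and then to invoke the concentration-compactness lemma to pass to the limit. The whole proof is essentially an act of bookkeeping, since the analytic content has already been isolated in the lemmas above; my task is to check that their hypotheses chain together.

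First I would start from the Palais--Smale sequence $\{u_n\}\subset P_c$ for $E_{\mid S_c}$ at the level $\gamma(c)$ produced by the (unlabeled) lemma preceding Lemma \ref{cresc}, which specializes Lemma \ref{lm:ps} to $B=\emptyset$ and $\mathcal{G}$ the family of singletons and thereby identifies $\gamma_{\mathcal{G}}(c)=\gamma(c)=\inf_{u\in P_c}E(u)$. Since $\gamma(c)>0$ by Lemma \ref{lm:coercive}(iii), the level is admissible. Because each $u_n\in P_c$ we have $Q(u_n)=0$, and together with $E(u_n)\to\gamma(c)<\infty$ the coercivity in Lemma \ref{lm:coercive}(iv) shows that $\{u_n\}$ is bounded in $H$. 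In particular $Q(u_n)\equiv 0\to 0$, so every hypothesis of the final (unlabeled) compactness lemma is met.

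Next I would apply that compactness lemma: up to translations and a subsequence, $u_n\to u$ strongly in $H$ for some $u\in S_c$ solving $-\partial_{xx}u+D_y^{2s}u+\alpha u=f(u)$, and its proof simultaneously yields $\alpha>0$ via the identity $\alpha\|u\|_2^2=\int_{\R^2}(\tfrac{s-1}{2s}f(u)u+\tfrac{s+1}{s}F(u))\,\dd x\dd y$ together with hypothesis (H5). Strong convergence and the continuity of $E$ and $Q$ give $E(u)=\gamma(c)$ and $Q(u)=0$, so $u\in P_c$. Since every normalized solution lies in $P_c$ by the Pohozaev identity of Theorem \ref{ph}, and $E(u)=\gamma(c)=\inf_{P_c}E$, the function $u$ is a least-energy normalized solution, i.e. a normalized ground state, whose Lagrange multiplier $\alpha$ is positive. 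This proves the theorem.

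The genuine difficulty is concentrated in the compactness lemma, which I regard as the crux rather than in the assembly above: one must rule out both vanishing and dichotomy for the translated sequence. Vanishing is excluded because $\gamma(c)>0$ forces, through Lemma \ref{lm:estimate_energy} and the vanishing alternative of Lemma \ref{concentration}, a nontrivial weak limit after a suitable translation; dichotomy is controlled by a Brezis--Lieb splitting of $E$ and $Q$ combined with the monotonicity $\gamma(c)\le\gamma(\|u\|_2^2)$ from Lemma \ref{cresc}, which forces $\|u\|_2^2=c$ so that no mass escapes to infinity and the convergence is strong. The positivity of $\alpha$ is the other delicate point, and it is exactly there that the subcriticality condition (H5) is indispensable.
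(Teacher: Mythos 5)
Your proposal is correct and follows exactly the route the paper intends (the paper itself only says ``Using above lemma, we can prove the following theorem''): you obtain the Palais--Smale sequence in $P_c$ at level $\gamma(c)>0$ from the unlabeled existence lemma, deduce boundedness from the coercivity in Lemma \ref{lm:coercive}, feed it into the final compactness lemma to get strong convergence to a solution $u\in S_c$ with $\alpha>0$ via (H5), and identify $u$ as a ground state because Theorem \ref{ph} places every normalized solution on $P_c$ where $E(u)=\gamma(c)$ is the infimum. The assembly and the justification of each hypothesis are accurate.
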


\section{Blow-up criteria} \label{blowup-section}
Now, in this section, we study the Cauchy problem \eqref{eq0}. We will find the conditions under which the local solution blows up.
The arguments are inspired by \cite{BHL}, but for our problem, the natural absence of radial symmetry necessitates considering more regular solutions.

In the following we denote by  $\langle f, g\rangle = \int_{\R^2} \bar f g \dd x\dd y$ the inner product on $L^2(\R^2)$ and $[X, Y] = XY- Y X$ denotes the commutator of $X$ and $Y$.

We assume throughout this section that  $s\in (1/2,1)$, $\frac{2(3s+1)}{1+s}<p<\frac{2(1+s)}{1-s}$,  and \eqref{eq0} is well-posed in $H$. However, the following local well-posedness was reported in \cite{choiaceves}. 
\begin{prop}

Let $p\geq4$ and $s\neq 1/2$.
 
Suppose that $s_1\in(s_c,[p]-2]$ when $p\notin\Z_{\rm odd}$, and
$s_1\in(s_c,\infty)$ for $p\in\Z_{\rm odd}$, where $$s_c=\frac12+\frac{1}{2s}-\frac{2}{p-2}.$$ 
 
Then, the Cauchy problem associated with \eqref{eq0} is locally well-posed in $H^{(s_1,s_1s)}$.
\end{prop}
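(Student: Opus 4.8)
The statement is a standard local well-posedness result, and the natural route is a Kato-type fixed-point argument built on anisotropic Strichartz estimates for the linear group. Writing $\cL=-\partial_{xx}+D_y^{2s}$, whose Fourier symbol is $\xi_1^2+|\xi_2|^{2s}$, the plan is to recast \eqref{eq0} in Duhamel (mild) form
\begin{equation*}
\Phi(t)=\ee^{-\ii t\cL}\Phi_0+\ii\int_0^t \ee^{-\ii(t-\tau)\cL}\bigl(|\Phi|^{p-2}\Phi\bigr)(\tau)\dd\tau=:\Theta(\Phi)(t),
\end{equation*}
and to locate a fixed point of $\Theta$ in a closed ball of a resolution space $X_T=C\bigl([0,T];H^{(s_1,s_1s)}\bigr)\cap Y_T$, where $Y_T$ is a suitable intersection of space-time Strichartz norms adapted to the anisotropy.

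The first and central ingredient is a family of anisotropic Strichartz estimates for $\ee^{-\ii t\cL}$. The decisive structural fact is that the symbol factorizes, $\ee^{-\ii t(\xi_1^2+|\xi_2|^{2s})}=\ee^{-\ii t\xi_1^2}\,\ee^{-\ii t|\xi_2|^{2s}}$, so the group splits as the product of the one-dimensional Schr\"odinger group in $x$ and the one-dimensional fractional Schr\"odinger group of order $2s$ in $y$. The former enjoys the dispersive decay $|t|^{-1/2}$, the latter the decay $|t|^{-1/(2s)}$, the latter with the derivative loss intrinsic to $|D_y|^{2s}$ when $2s\neq2$ (which is one reason $s=1/2$, where the $y$-propagator degenerates to a nondispersive half-wave, is excluded). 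Combining the two one-dimensional dispersive bounds in the product and applying the $TT^*$ method together with the Christ--Kiselev lemma, I would derive homogeneous and inhomogeneous Strichartz estimates in mixed space-time Lebesgue norms, the $y$-regularity being shifted to absorb the fractional loss. The scaling leaving \eqref{eq0} invariant, $\Phi\mapsto\lambda^{\frac{2s}{p-2}}\Phi(\lambda^{2s}t,\lambda^s x,\lambda y)$, identifies $H^{(s_c,s_c s)}$ as the critical space, which is exactly why $s_c=\frac12+\frac{1}{2s}-\frac{2}{p-2}$ appears as the threshold.

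With these estimates available, the remaining task is the nonlinear estimate bounding the Duhamel term: I would control the dual Strichartz norm of $|\Phi|^{p-2}\Phi$ by $\norm{\Phi}_{X_T}^{p-1}$ using the fractional Leibniz (Kato--Ponce) rule and the fractional chain rule in their anisotropic mixed-norm forms, together with the embedding properties of $H^{(s_1,s_1s)}$ recorded for this operator. Because $s_1>s_c$, the estimate carries a positive power of $T$, so on a short enough time interval $\Theta$ maps a ball into itself and is a contraction; this simultaneously yields uniqueness, and continuous dependence on $\Phi_0$ follows from the same bounds. The hypotheses on $p$ enter precisely here: applying the fractional chain rule to $z\mapsto|z|^{p-2}z$ requires the nonlinearity to possess enough derivatives, which caps the admissible regularity at $s_1\le[p]-2$ when $p$ is not an odd integer, whereas for odd integer $p$ the nonlinearity is smooth and no upper bound is needed.

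The main obstacle will be the derivative loss in the fractional $y$-direction: unlike the pure Schr\"odinger case, the Strichartz estimates for $\ee^{-\ii t|D_y|^{2s}}$ do not close at a single regularity level, so one must track the anisotropic admissibility relations and the regularity budget carefully, and this bookkeeping---rather than any one hard inequality---is what makes the argument delicate. A secondary difficulty is matching the limited H\"older smoothness of $|z|^{p-2}z$ with the order $s_1$ of derivatives transferred onto it, which is the technical origin of the constraint $s_1\le[p]-2$.
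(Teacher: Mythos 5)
You should first be aware that the paper itself does not prove this proposition: it is stated as a result ``reported in'' the reference of Choi and Aceves (\emph{Well-posedness of the mixed-fractional nonlinear Schr\"odinger equation on $\mathbb{R}^2$}, 2022), and the authors use it as a black box in Section \ref{blowup-section}. So there is no in-paper proof to compare against; the relevant comparison is with that external reference, whose method is indeed the Duhamel/contraction scheme built on dispersive estimates for the mixed propagator that you describe. Your accounting of where each hypothesis enters is correct and worth crediting: the scaling $\Phi\mapsto\lambda^{2s/(p-2)}\Phi(\lambda^{2s}t,\lambda^{s}x,\lambda y)$ does identify $s_c=\frac12+\frac1{2s}-\frac2{p-2}$ as the critical regularity for the space $H^{(s_1,s_1s)}$; the exclusion of $s=1/2$ is correctly traced to the degeneration of $\ee^{-\ii t|D_y|}$ to a non-dispersive half-wave in one dimension; and the cap $s_1\le [p]-2$ for $p\notin\Z_{\rm odd}$ is correctly attributed to the limited smoothness of $z\mapsto|z|^{p-2}z$ in the fractional chain rule.

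That said, as a proof your text is a strategy outline rather than an argument. The entire quantitative content of the result sits in the anisotropic Strichartz estimates, and you assert them without specifying the admissible exponent pairs, without addressing how the derivative loss of $\ee^{-\ii t|D_y|^{2s}}$ (for $2s\neq 1,2$) is distributed between the $x$- and $y$-regularities, and without distinguishing the regimes $s<1/2$ and $s>1/2$, both of which the proposition covers and which behave differently at the level of the one-dimensional dispersive decay. Likewise the closing nonlinear estimate --- controlling the dual Strichartz norm of $|\Phi|^{p-2}\Phi$ by $\|\Phi\|_{X_T}^{p-1}$ with a positive power of $T$ --- is exactly the step where the precise choice of exponents and the mixed-norm fractional Leibniz rule must be made to mesh, and you acknowledge this is ``the delicate bookkeeping'' without doing any of it. None of this is a wrong turn, but a referee could not check the claim from what you have written; to make this a proof you would need to state and prove the specific Strichartz family (or cite it precisely) and then exhibit at least one admissible exponent configuration for which the contraction closes under the stated constraints on $s_1$ and $p$.
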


Let $u(x,y,t)\in C([0,T);H)$ be a solution of \eqref{eq0}. We define the localized virial identity  of $u$ by
\[
V_\phi(u(t))=\int_\R\phi(x,y)|u(x,y,t)|^2\dd x\dd y,
\]
where $\phi:\R^2\to\R$ is bounded.

\begin{lem}\label{well-defn}
If $\phi_x\in L^\infty$ and $\phi_y\in L^\infty_xW^{2,\infty}_y$, then
$
\frac{\dd}{\dd t}V_\phi(u(t)) 
$
is well-defined, for any $t\in[0,T)$.
\end{lem}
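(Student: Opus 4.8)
The plan is to differentiate $V_\phi(u(t))$ in time, substitute the equation, and verify that every resulting term is finite under the stated hypotheses on $\phi$; the fractional contribution is the only delicate one and will be rewritten through the commutator $[\phi,D_y^{2s}]$. From \eqref{eq0} one has $\partial_t u=\ii\paar{u_{xx}-D_y^{2s}u+|u|^{p-2}u}$, so
\[
\frac{\dd}{\dd t}V_\phi(u(t))=2\int_{\rt}\phi\,\operatorname{Re}\paar{\bar u\,\partial_t u}\dd x\dd y.
\]
Since $2\operatorname{Re}\paar{\ii\,\bar u\,|u|^{p-2}u}=2\operatorname{Re}\paar{\ii|u|^p}=0$, the nonlinear term disappears, and using $2\operatorname{Re}(\ii z)=-2\operatorname{Im}(z)$ we are left with
\[
\frac{\dd}{\dd t}V_\phi(u(t))=-2\int_{\rt}\phi\,\operatorname{Im}\paar{\bar u\,u_{xx}}\dd x\dd y+2\int_{\rt}\phi\,\operatorname{Im}\paar{\bar u\,D_y^{2s}u}\dd x\dd y.
\]

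For the local term I would use $\operatorname{Im}(\bar u u_{xx})=\partial_x\operatorname{Im}(\bar u u_x)$ and integrate by parts in $x$, obtaining $2\int_{\rt}\phi_x\operatorname{Im}(\bar u u_x)\dd x\dd y$; this is finite since $\phi_x\in L^\infty$ and $u,u_x\in L^2$ (as $u\in H$) give an $L^1(\rt)$ integrand by Cauchy--Schwarz, the boundary terms vanishing because $\bar u u_x\in L^1$. For the nonlocal term, self-adjointness of $D_y^{2s}$ in $y$ together with the reality of its symbol yields $\int_{\rt}\bar u\,[\phi,D_y^{2s}]u\dd x\dd y=2\ii\int_{\rt}\phi\,\operatorname{Im}(\bar u D_y^{2s}u)\dd x\dd y$, so that
\[
2\int_{\rt}\phi\,\operatorname{Im}\paar{\bar u\,D_y^{2s}u}\dd x\dd y=-\ii\int_{\rt}\bar u\,[\phi,D_y^{2s}]u\dd x\dd y,
\]
where $[\phi,D_y^{2s}]$ acts in $y$ and has the kernel
\[
[\phi,D_y^{2s}]u(x,y)=c_s\,\mathrm{P.V.}\int_\R\frac{\paar{\phi(x,y')-\phi(x,y)}u(x,y')}{\abso{y-y'}^{1+2s}}\dd y'.
\]
By Cauchy--Schwarz it then suffices to show $\norm{[\phi,D_y^{2s}]u}_{L^2(\rt)}<\infty$.

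The hard part is this commutator bound, because $2s\in(1,2)$. I would split the kernel at $\abso{y-y'}=1$: the far part is an integrable convolution kernel (using $\phi\in L^\infty$) and is bounded on $L^2_y$ by Schur's test. For the near part I expand $\phi(x,y')-\phi(x,y)=\phi_y(x,y)(y'-y)+O(\abso{y'-y}^2)$; the second-order remainder yields a kernel $\lesssim\norm{\phi_{yy}}_{L^\infty}\abso{y-y'}^{1-2s}$ that is integrable near the diagonal since $1-2s>-1$, while the odd first-order term, after taking the principal value, is an operator whose Fourier symbol in $y$ grows like $\abso{\eta}^{2s-1}$, hence is bounded from $H^{2s-1}_y$ to $L^2_y$. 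Since $\phi_y\in L^\infty_xW^{2,\infty}_y$ controls $\phi_y,\phi_{yy},\phi_{yyy}$ uniformly in $x$, one gets
\[
\norm{[\phi,D_y^{2s}]u(x,\cdot)}_{L^2_y}\lesssim\paar{\norm{\phi}_{L^\infty}+\norm{\phi_y}_{L^\infty_xW^{2,\infty}_y}}\norm{u(x,\cdot)}_{H^{2s-1}_y},
\]
and integrating in $x$ gives $\norm{[\phi,D_y^{2s}]u}_{L^2(\rt)}\lesssim\norm{u}_{L^2_xH^{2s-1}_y}\le\norm{u}_{L^2_xH^s_y}\le\norm{u}_H<\infty$, where the embedding uses precisely $0<2s-1\le s$, i.e. $s\in(1/2,1)$. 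The crux is that only the principal-value cancellation renders the first-order term an honest order-$(2s-1)$ operator, and matching that order to the available $y$-regularity $u\in L^2_xH^s_y$ is exactly what forces $s>1/2$.

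Finally, to legitimize differentiating under the integral I would first establish the identity above for solutions in the more regular class provided by the preceding local well-posedness Proposition, where the computation is classical, and then extend it to $u\in C([0,T);H)$ by density, using that each term in the final expression depends continuously on $u\in H$ thanks to the uniform bounds just derived.
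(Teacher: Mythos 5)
Your proposal is correct and reaches the same conclusion, but it handles the one delicate term --- the nonlocal contribution $\int\phi\,\operatorname{Im}(\bar u\,D_y^{2s}u)$ --- by a genuinely different route. The paper reduces everything to the commutator $i\langle u,[-\partial_{xx}+D_y^{2s},\phi]u\rangle$ as you do, but then invokes Balakrishnan's formula
\[
[D_y^{2s},B]=\frac{\sin(\pi s)}{\pi}\int_0^{+\infty} m^s\frac{1}{m-\partial_{yy}}[-\partial_{yy},B]\frac{1}{m-\partial_{yy}}\,\dd m,
\]
which converts the fractional commutator into a resolvent average over the functions $u_m=(m-\partial_{yy})^{-1}u$ and is then estimated term by term as in \cite[Lemma A.2]{BHL}, yielding the bound \eqref{bound-V} with the constants $\|\phi_y\|_{L^\infty_xW^{1,\infty}_y}$ and $\|\phi_{yy}\|_{L^\infty}^{2s-1}\|\phi_y\|_{L^\infty}^{2(1-s)}$. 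You instead work directly with the hypersingular kernel of $[\phi,D_y^{2s}]$, split at $|y-y'|=1$, Taylor-expand $\phi$ in $y$, and identify the odd first-order piece as a convolution operator of order $2s-1$ via its Fourier symbol; matching $2s-1\le s$ against $u\in L^2_xH^s_y$ then closes the estimate. Both arguments are valid under the standing assumption $s\in(1/2,1)$ of this section and under the convention (stated just before the lemma) that $\phi$ is bounded, which you correctly use for the far part of the kernel. What the paper's choice buys is that the resolvent decomposition $u_m$ set up here is reused verbatim in the virial computations of Lemmas \ref{lem-dM} and \ref{lem-est}, so the machinery is not introduced only for this lemma; what your approach buys is self-containedness (no appeal to \cite{BHL}) at the cost of a symbol estimate for the truncated odd kernel that you only sketch. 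Your regularization step at the end mirrors the paper's reduction to $u\in C_0^\infty(\R^2)$, so no gap there either.
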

\begin{proof}
	Without loss of generality, we assume that $u\in C_0^\infty(\R^2)$. Define $$u_m=\sqrt{\frac{\sin \pi s}{\pi}}(m-\partial_{yy})^{-1}u(t),$$ then
	\[
\begin{split}
		\frac{\dd}{\dd t}V_\phi(u(t))
	&=i\scal{u,[-\partial_{xx}+D_y^{2s},\phi]u}
	\\&
	=i\scal{u,[-\partial_{xx} ,\phi]u}
	+i\scal{u,[ D_y^{2s},\phi]u}
			\\&
		= -\ii\scal{\phi_{xx},|u|^2}- 2i\scal{u_x,\phi_x u}
-i		\int_0^{+\infty} m^s\int_{\R^2}\paar{\phi_{yy}|u_m|^2+2\bar{u}_m\phi_y(u_m)_y}\dd x\dd y\dd m.
			 \end{split}
	\]
In the above inequalities, we have used the  following Balakrishnan’s formula	
\[
[D_y^{2s},B]
=\frac{\sin(\pi s)}{\pi}\int_0^{+\infty} m^s\frac{1}{m-\partial_{yy}}[-\partial_{yy},B]\frac{1}{m-\partial_{yy}}\dd m.
\]

It is clear from the Cauchy-Schwartz inequality that
\[
|\scal{\phi_{xx},|u|^2}+\scal{u_x,\phi_x u}|
\lesssim \|\phi_x\|_{L^\infty}\|u_x\|_{L^2}\|u\|_{L^2}.
\]
On the other hand, by an argument, similar to \cite[Lemma A.2]{BHL}, one can show that
\begin{equation*}
\left|\int_0^{+\infty} m^s\int_{\R^2}\bar{u}_m\phi_y(u_m)_y\dd x\dd y\dd m\right|\lesssim 
\norm{\phi_y}_{L_x^\infty W^{1,\infty}_y}\norm{u}_H^2 
 \end{equation*}
 and
\begin{equation*}
\left|\int_0^\infty m^s\int_{\R^2}\phi_{yy}|u_m|^2\dd x\dd y\dd m\right|\lesssim\|\phi_{yy}\|_{L^\infty}^{2s-1}
\|\phi_{y}\|_{L^\infty}^{2(1-s)}\|u\|_2^2.
 \end{equation*}
  Then the boundedness of $\frac{\dd}{\dd t}V_\phi(u(t))$ follows from
\begin{equation}\label{bound-V}
	\begin{split}
\abso{\frac{\dd}{\dd t}V_\phi(u(t))}
\lesssim
\paar{\|\phi_y\|_{L_x^\infty W^{1,\infty}_y}
+
\|\phi_x\|_{L^\infty}}\|u(t)\|_H^2.
	\end{split}
\end{equation} 
\end{proof}

Associated with $V_\phi$, now we define 
\[
M_\phi(u(t))=2\Im \scal{u,\nabla\phi\cdot \nabla u}.
\]

 \begin{lem}\label{lem-dM}
 	Let $\phi(x,y)=\ff(x)+\psi(y)$ and $\ff,\psi\in W^{4,\infty}$. Then for any solution $u\in C([0,T); H)$ of \eqref{eq0}, we have, for any $t\in[0,T)$, that
\[
\begin{split}
	\frac{\dd}{\dd t}M_\phi(u(t))
&=
\int_{\R^2}\paar{4\ff''|u_x|^2-\ff''''|u|^2}\dd x\dd y
+\int_0^{+\infty} m^s\int_{\R^2}\paar{
	4\psi''|(u_m)_y|^2-\psi''''|u_m|^2
}\dd x\dd y\dd m\\&\qquad
- \frac{ 2(p-2)}{p}\int_{\R^2}\Delta\phi|u|^p\dd x\dd y.
\end{split}
\]
 \end{lem}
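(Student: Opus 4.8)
The plan is to differentiate $M_\phi(u(t))$ directly in $t$, substitute the equation \eqref{eq0} in the form $u_t=\ii\paar{u_{xx}-D_y^{2s}u+|u|^{p-2}u}$, and integrate by parts. Since $\phi(x,y)=\ff(x)+\psi(y)$, I would split $M_\phi=M^x+M^y$ with $M^x=2\Im\scal{u,\ff'u_x}$ and $M^y=2\Im\scal{u,\psi'u_y}$, and treat the two pieces separately. A preliminary algebraic step, moving the time derivative onto a single factor by integrating by parts in the spatial variable carrying the weight, gives for the $x$–piece an expression of the form $4\Im\scal{\ff'u_t,u_x}-2\Im\scal{\ff''u,u_t}$ (and analogously for the $y$–piece with $\psi$ and $\partial_y$), into which $u_t$ is then inserted. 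This reduction uses only integration by parts, not yet the precise form of the operators.

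The first observation is that the cross contributions vanish: since $D_y^{2s}$ acts only in the $y$ variable while the $x$–weight $\ff$ depends only on $x$, the operator $D_y^{2s}$ commutes with the first–order $x$–generator $\ff'\partial_x+\tfrac12\ff''$, so the $D_y^{2s}u$ term makes no contribution to $\tfrac{\dd}{\dd t}M^x$; symmetrically, $-\partial_{xx}$ commutes with the $y$–generator and drops out of $\tfrac{\dd}{\dd t}M^y$. Hence $\tfrac{\dd}{\dd t}M^x$ only sees $-\partial_{xx}$ together with the nonlinearity, and $\tfrac{\dd}{\dd t}M^y$ only sees $D_y^{2s}$ together with the nonlinearity. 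For $M^x$ this is the classical one–dimensional virial computation: repeated integration by parts in $x$ produces $\int_{\rt}\paar{4\ff''|u_x|^2-\ff''''|u|^2}\dd x\dd y$, the appearance of $\ff''''$ being exactly what forces the assumption $\ff\in W^{4,\infty}$.

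The genuinely new point is the $y$–piece, where $D_y^{2s}$ must be commuted past the first–order generator $\psi'\partial_y+\tfrac12\psi''$. Here I would invoke Balakrishnan's formula exactly as in the proof of Lemma \ref{well-defn}, writing the fractional commutator as $\tfrac{\sin(\pi s)}{\pi}\int_0^{+\infty}m^s(m-\partial_{yy})^{-1}[-\partial_{yy},\,\cdot\,](m-\partial_{yy})^{-1}\dd m$ and introducing $u_m=\sqrt{\tfrac{\sin\pi s}{\pi}}(m-\partial_{yy})^{-1}u$. Sandwiching between the two resolvents turns the remaining local commutator $[-\partial_{yy},\psi'\partial_y+\tfrac12\psi'']$ into the classical local virial expression in the variable $y$ for the function $u_m$, so that for each fixed $m$ one obtains $4\psi''|(u_m)_y|^2-\psi''''|u_m|^2$. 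Integrating against $m^s\dd m$ then yields the stated term $\int_0^{+\infty}m^s\int_{\rt}\paar{4\psi''|(u_m)_y|^2-\psi''''|u_m|^2}\dd x\dd y\dd m$; the convergence of this $m$–integral near $m=0$ and $m=+\infty$ is controlled by the same resolvent estimates used in Lemma \ref{well-defn} (cf. \cite[Lemma A.2]{BHL}).

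Finally, the nonlinearity $|u|^{p-2}u$ is local, so its contribution is computed directly, without Balakrishnan: in each of $\tfrac{\dd}{\dd t}M^x$ and $\tfrac{\dd}{\dd t}M^y$ one integrates by parts and uses $|u|^{p-2}\partial_x|u|^2=\tfrac2p\partial_x|u|^p$ (respectively in $y$) to obtain $-\tfrac{2(p-2)}{p}\int_{\rt}\ff''|u|^p$ and $-\tfrac{2(p-2)}{p}\int_{\rt}\psi''|u|^p$, which add up to $-\tfrac{2(p-2)}{p}\int_{\rt}\Delta\phi|u|^p$ since $\Delta\phi=\ff''+\psi''$. To make every manipulation rigorous I would first carry out the computation for $u\in C_0^\infty(\rt)$, as in Lemma \ref{well-defn}, and then pass to the limit using the regularity of the solution and $\ff,\psi\in W^{4,\infty}$. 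The main obstacle I expect is precisely the rigorous treatment of the nonlocal $y$–term: justifying the double–commutator Balakrishnan representation, the exchange of the $m$–integral with the spatial integrations and with $\tfrac{\dd}{\dd t}$, and the integrability of the resulting $m$–integral — which is the structural reason the statement requires $\ff,\psi\in W^{4,\infty}$ rather than merely $W^{2,\infty}$.
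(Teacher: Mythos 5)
Your proposal is correct and follows essentially the same route as the paper: the paper writes $M_\phi(u)=\scal{u,\Gamma_\ff u}+\scal{u,\Gamma_\psi u}$ with $\Gamma_\ff(u)=-\ii(2\ff'u_x+\ff''u)$, $\Gamma_\psi(u)=-\ii(2\psi'u_y+\psi''u)$, observes exactly as you do that the cross commutators $[D_y^{2s},\ii\Gamma_\ff]$ and $[-\partial_{xx},\ii\Gamma_\psi]$ vanish, computes the $x$-part and the nonlinear part by direct local integration by parts, and handles the $y$-part via the Balakrishnan/resolvent representation (quoting \cite[Lemma 2.1]{BHL} rather than re-deriving it, which is the computation you sketch). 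The only cosmetic difference is that you unpack the BHL lemma explicitly; the substance is identical.
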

\begin{proof}
	Let $\Gamma_\ff(u)=-i(2\ff'u_x+\ff''u) $ and $\Gamma_\psi(u)=-i(2\psi'u_y+\psi''u)$. Then it can be readily seen that
	\[
	M_\phi(u)=\scal{u,\Gamma_\ff u}+
	\scal{u,\Gamma_\psi u}
	\]
	and
	\[
\begin{split}
		\frac{\dd}{\dd t}M_\phi (u(t))&=
	\scal{u,[-\partial_{xx},i\Gamma_\ff] u}
	+\scal{u,[D_y^{2s},i\Gamma_\ff] u}+
		\scal{u,[-\partial_{xx},i\Gamma_\psi] u}
	+\scal{u,[D_y^{2s},i\Gamma_\psi] u}
 \\
 &\qquad -\scal{u,[|u|^{p-2},i\Gamma_\ff] u} 
	-\scal{u,[|u|^{p-2},i\Gamma_\psi] u}
	\\
	&
	=\scal{u,[-\partial_{xx},i\Gamma_\ff] u}- \scal{u,[|u|^{p-2},i\Gamma_\ff] u} 
	+\scal{u,[D_y^{2s},i\Gamma_\psi] u}-\scal{u,[|u|^{p-2},i\Gamma_\psi] u}.
	\end{split}
	\]
		A direct calculation shows that
	\[
	\scal{u,[-\partial_{xx},i\Gamma_\ff] u}-\scal{u,[|u|^{p-2},i\Gamma_\ff] u}
	=
	4\scal{\ff'',|u_x|^2}-\scal{\ff'''',|u|^2}-\frac{2(p-2)}{p}
	\int_{\R^2}\ff''|u|^p\dd x\dd y.
	\]
	On the other hand, it follows from Lemma 2.1 in \cite{BHL} that
\[
\begin{split}
	\scal{u,[D_y^{2s},i\Gamma_\psi] u}
 -\scal{u,[|u|^{p-2},i\Gamma_\psi] u}
	&=
	\int_0^{+\infty}
	m^s \int_{\R^2}\paar{
		4\psi''|(u_m)_y|^2-\psi''''|u_m(t)|^2
	}\dd x\dd y\dd m
 \\
	&\qquad-
	\frac{2(p-2)}{p}
	\int_{\R^2}\psi''|u|^p\dd x\dd y.   
\end{split}
		\]
\end{proof}

Let $\theta:[0,+\infty)\to[0,+\infty)$ be a smooth function such that
$\theta''(r)\leq 2$, $\theta(r)=r^2$, if $r\in[0,1)$, and $\theta(r)=2$, if $r\geq2$. Given $R>1$, define $\theta_R(r)=R^2\theta(r/R)$ .
\\
It is not hard to see, from the definition of $\theta_R$, that $\|\frac{\dd ^k}{\dd r^k}\theta_R\|_{L^\infty}\lesssim R^{2-k}$, for $k=0,\cdots, 4$, and
\[
\text{supp}\paar{\frac{\dd ^k}{\dd r^k}\theta_R}\subset\begin{cases}
	\{|r|\leq 2R\} &k=1,2,\\
		\{R\leq |r|\leq 2R\} &k=3,4.
\end{cases}
\]

Denote
\[
M_{\phi_R}(u)=2\Im\scal{u,\nabla\phi_R\cdot\nabla u},
\]
where $\phi_R(x,y)=s\theta_R(x)+\theta_R(y)$. Notice that $M_{\phi_R}$ is now well-defined for any $u\in H$. To compensate lack of symmetry, we need to assume that 
	$u(t)\in C([0,T), L^q)$, for some $q>p$,   to control the nonlinear term in the following lemma.
\begin{lem}\label{lem-est}
	There exists $C>0$, independent of $R$, such that
	\[
		\frac{\dd }{\dd t} M_{\phi_R}(u)
		\leq  
		8Q(u(t))+
		C\paar{ R^{-2}\|u(t)\|_{2}^2+R^{-2s}\|u(t)\|_{2}^2}
		+
		C\|u(t)\|_{ L^2 (\{|x|,|y|\geq R\})}^{q_0},
	\]
for some $q_0\in (2,p)$.
\end{lem}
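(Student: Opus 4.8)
The plan is to apply Lemma~\ref{lem-dM} with $\ff=s\theta_R$ and $\psi=\theta_R$, which lie in $W^{4,\infty}$ since $\norm{\theta_R^{(k)}}_{L^\infty}\lesssim R^{2-k}$ for $k=0,\dots,4$. This turns $\frac{\dd}{\dd t}M_{\phi_R}(u)$ into the exact expression
\[
\int_{\R^2}\paar{4s\theta_R''(x)|u_x|^2-s\theta_R''''(x)|u|^2}\dd x\dd y+\int_0^{+\infty}m^s\int_{\R^2}\paar{4\theta_R''(y)|(u_m)_y|^2-\theta_R''''(y)|u_m|^2}\dd x\dd y\dd m-\frac{2(p-2)}{p}\int_{\R^2}\paar{s\theta_R''(x)+\theta_R''(y)}|u|^p\dd x\dd y.
\]
The guiding observation is that replacing $\theta_R(r)$ by $r^2$ (so $\theta_R''\equiv2$, $\theta_R''''\equiv0$) collapses this to exactly $8Q(u)$: a Plancherel computation combined with the Beta integral $\int_0^{+\infty}m^s(m+\eta^2)^{-2}\dd m=\frac{s\pi}{\sin(\pi s)}\abso{\eta}^{2s-2}$ gives the identity
\[
\int_0^{+\infty}m^s\int_{\R^2}|(u_m)_y|^2\dd x\dd y\dd m=s\norm{D_y^su}_2^2,
\]
while the nonlinear coefficient becomes $-\frac{2(p-2)}{p}\cdot2(s+1)=-\frac{4(s+1)(p-2)}{p}$, i.e. the nonlinear part of $8Q(u)$. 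The whole proof therefore amounts to writing the right-hand side as $8Q(u)$ plus error terms produced by $\theta_R-r^2$, and estimating each.

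First I would dispose of the second-order terms. Since $\theta_R''\le2$ pointwise, one has $4s\theta_R''(x)|u_x|^2\le8s|u_x|^2$ and $4\theta_R''(y)|(u_m)_y|^2\le8|(u_m)_y|^2$, so after integration (using the displayed identity for the fractional part) these are bounded above by $8s\norm{u_x}_2^2$ and $8s\norm{D_y^su}_2^2$, the discarded excess being nonpositive. The fourth-order term in $x$ is immediate: $\abso{\int_{\R^2}s\theta_R''''(x)|u|^2\dd x\dd y}\le s\norm{\theta_R''''}_{L^\infty}\norm{u}_2^2\lesssim R^{-2}\norm{u}_2^2$.

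The first genuinely delicate point is the fractional fourth-order term $\int_0^{+\infty}m^s\int_{\R^2}\theta_R''''(y)|u_m|^2\dd x\dd y\dd m$. Here I would reuse the interpolation scheme behind Lemma~\ref{well-defn} (in the spirit of \cite[Lemma A.2]{BHL}): split $\int_0^{+\infty}\dd m$ at a threshold $m_0$, invoke the resolvent bounds $\norm{u_m}_2\lesssim m^{-1}\norm{u}_2$ and $\norm{(u_m)_y}_2\lesssim m^{-1/2}\norm{u}_2$ (read off from the multiplier $(m+\eta^2)^{-1}$), integrate by parts in $y$ on the low-frequency piece to trade a factor $\theta_R''''$ for $\theta_R'''$, and optimize in $m_0$. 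This produces
\[
\abso{\int_0^{+\infty}m^s\int_{\R^2}\theta_R''''(y)|u_m|^2\dd x\dd y\dd m}\lesssim\norm{\theta_R''''}_{L^\infty}^{2s-1}\norm{\theta_R'''}_{L^\infty}^{2(1-s)}\norm{u}_2^2\lesssim R^{-2s}\norm{u}_2^2,
\]
and the hypothesis $s>1/2$ enters precisely to guarantee convergence of the low-frequency integral $\int_0^1 m^{s-3/2}\dd m$.

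The main obstacle is the nonlinear error. Writing $2(s+1)-\paar{s\theta_R''(x)+\theta_R''(y)}=s\paar{2-\theta_R''(x)}+\paar{2-\theta_R''(y)}$, which is nonnegative and supported in $\{|x|\ge R\}\cup\{|y|\ge R\}$, the nonlinear term splits into the $Q$-contribution $-\frac{4(s+1)(p-2)}{p}\norm{u}_p^p$ plus an error controlled by $\int_{\{|x|\ge R\}\cup\{|y|\ge R\}}|u|^p\dd x\dd y$. Because the equation carries no radial symmetry, the Strauss-type uniform decay exploited in \cite{BHL} is unavailable, and this is exactly what forces the extra standing hypothesis $u(t)\in C([0,T),L^q)$ with $q>p$. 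I would estimate the exterior integral by the anisotropic Gagliardo--Nirenberg inequality of Lemma~\ref{gn-lemma} (equivalently Corollary~\ref{cor-gn}) localized to the exterior region, together with the interpolation made available by this extra integrability, so as to factor out the exterior mass and obtain a bound $C\norm{u}_{L^2(\{|x|,|y|\ge R\})}^{q_0}$ with the exponent $q_0\in(2,p)$ recorded in the statement, the constant $C$ absorbing the bounded quantities $\norm{u}_H$ and $\norm{u}_{L^q}$. Finally, collecting the second-order contributions, the two biharmonic errors $R^{-2}\norm{u}_2^2$ and $R^{-2s}\norm{u}_2^2$, and this nonlinear error, and recalling that $8s\norm{u_x}_2^2+8s\norm{D_y^su}_2^2-\frac{4(s+1)(p-2)}{p}\norm{u}_p^p=8Q(u)$, yields the claimed inequality.
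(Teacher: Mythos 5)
Your proposal is correct and follows essentially the same route as the paper's proof: apply Lemma \ref{lem-dM} with $\phi_R=s\theta_R(x)+\theta_R(y)$, use $\theta_R''\le 2$ together with the identity $\int_0^{+\infty}m^s\int_{\R^2}|(u_m)_y|^2\dd x\dd y\dd m=s\|D_y^su\|_2^2$ to dominate the second-order terms by $8s\|u_x\|_2^2+8s\|D_y^su\|_2^2$, bound the two biharmonic errors by $R^{-2}\|u\|_2^2$ and (via the \cite[Lemma A.2]{BHL}-type interpolation) $R^{-2s}\|u\|_2^2$, and split the nonlinear term into the $Q$-contribution plus an error supported where $\Delta\phi_R\neq 2(s+1)$, controlled by interpolation using the extra $L^q$ integrability. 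The only deviations are cosmetic (your interpolation exponents $\|\theta_R''''\|_\infty^{2s-1}\|\theta_R'''\|_\infty^{2(1-s)}$ versus the paper's $\|\theta_R''''\|_\infty^{s}\|\theta_R''\|_\infty^{1-s}$, both yielding $R^{-2s}$, and your correct observation that the support of $2(s+1)-\Delta\phi_R$ is the union of the two exterior strips), so this matches the paper's argument.
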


\begin{proof}
By Lemma \ref{lem-dM} we know that
\begin{equation*}\label{dM}
\begin{split}
\frac{\dd}{\dd t}M_{\phi_R}(u(t))
&=
\int_{\R^2}\paar{4s\theta_R''|u_x|^2-s\theta_R''''|u|^2}\dd x\dd y
+\int_0^{+\infty} m^s\int_{\R^2}\paar{
	4\theta_R''|(u_m)_y|^2-\theta_R''''|u_m|^2
}\dd x\dd y\dd m\\&\qquad
- \frac{2(p-2)}{p}\int_{\R^2}\Delta\phi_R|u|^p\dd x\dd y
\end{split}
\end{equation*}
and we have to estimate each term of the previous estimate.\\
Being $2-\theta_R''\geq0$,
\begin{equation*}\label{dM1}
4s\int_{\R^2}\theta_R''|u_x|^2\dd x\dd y
=
8s\|u_x\|_2^2
-4s\int_{\R^2}(2-\theta_R'')|u_x|^2\dd x\dd y
\le 8s\|u_x\|_2^2.
\end{equation*}
 
Moreover 
\begin{equation*}\label{dM2}
\scal{\theta_R'''',|u|^2}\leq R^{-2}\|u(t)\|_{2}^2.
\end{equation*}
By using the identity
\[
\int_0^{+\infty} m^s \int_{\R^2}|(u_m)_y|^2\dd y\dd x \dd m
=s\|D_y^s u\|_{2}^2,
\]
and, since $2-\theta_R''\geq0$, we have
\begin{equation*}\label{dM3}
\begin{split}
	4\int_0^{+\infty} m^s\int_{\R^2}
	\theta_R''|(u_m)_y|^2\dd x\dd y\dd m
&=8s\|D_y^s u\|_{2}^2
-4\int_0^{+\infty} m^s\int_{\R^2}
(2-	\theta_R'')|(u_m)_y|^2\dd x\dd y\dd m\\&
\le 8s\|D_y^s u\|_{2}^2.
\end{split}
\end{equation*}
By \cite[Lemma A.2]{BHL},
\begin{equation*}\label{dM4}
\int_0^{+\infty} m^s\int_{\R^2}\theta_R''''(y)|u_m|^2\dd x\dd y\dd m\leq 
\|\theta_R''''\|_\infty^s\|\theta_R''\|_\infty^{1-s}\|u\|_2
\le 
R^{-2s}\|u(t)\|_2^2.
\end{equation*}
Finally
\begin{equation*}
- \frac{2(p-2)}{p}\int_{\R^2}\Delta\phi_R|u|^p\dd x\dd y
=-\frac{4(p-2)(s+1)}{p}\|u\|_{p}^p
- \frac{2(p-2)}{p}\int_{\R^2}\big(\Delta\phi_R- 2(s+1)\big)|u|^p\dd x\dd y.
\end{equation*}
It follows from $|2(s+1)-\Delta\phi_R|\lesssim 1$, and $\text{supp}(2(s+1)-\Delta\phi_R)\subset\{|x|,|y|\geq R\}$ and an interpolation that
\[
\int_{\R^2}\paar{2(s+1)-\Delta\phi_R}|u|^p\dd x\dd y
\lesssim
\|u(t)\|_{ L^2 (\{|x|,|y|\geq R\})}^{q_0},
\]
for some $2<q_0<p$.  
\\
Now the conclusion follows collecting all the previous estimates.
\end{proof}
\begin{lem}\label{stima-le2r}
Assume that the solution $u(t)\in H$ of \eqref{eq0} exists globally in time and
\[
\sup_t\|u(t)\|_H<+\infty. 
\]
Let $\epsilon > 0$ and $R > 1$. Then there exists a constant $C > 0$, independent of $R$, such that, for any $0\leq t \leq   \frac{\epsilon R}{C}$,   the following inequality holds:
\[
\norm{u(t)}_{L^2(\{|x|,|y|\geq R\})} ^{q_0}\leq o_R(1) + \epsilon.
\]
\end{lem}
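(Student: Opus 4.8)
The plan is to localize the squared $L^2$-mass to the slab $\{|x|\ge R\}$, which contains the corner region $\{|x|,|y|\ge R\}$, and to exploit that a weight depending on $x$ alone commutes with the nonlocal operator $D_y^{2s}$. First I would fix a smooth cutoff $\phi_R=\phi_R(x)$, a function of $x$ only, with $0\le\phi_R\le 1$, $\phi_R\equiv1$ on $\{|x|\ge R\}$, $\phi_R\equiv0$ on $\{|x|\le R/2\}$, and $\norm{\phi_R'}_{L^\infty}\lesssim R^{-1}$, $\norm{\phi_R''}_{L^\infty}\lesssim R^{-2}$. Setting $V_R(t):=V_{\phi_R}(u(t))=\int_{\rt}\phi_R(x)|u(x,y,t)|^2\dd x\dd y$, the inclusion $\{|x|,|y|\ge R\}\subset\{|x|\ge R\}$ together with $\phi_R\equiv1$ there gives
\[
\norm{u(t)}_{L^2(\{|x|,|y|\ge R\})}^2\le\norm{u(t)}_{L^2(\{|x|\ge R\})}^2\le V_R(t),
\]
so it is enough to bound $V_R(t)$.

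Next I would estimate $\frac{\dd}{\dd t}V_R$. Because $\phi_R$ is independent of $y$, one has $[D_y^{2s},\phi_R]=0$ and $\phi_R$ satisfies the hypotheses of Lemma \ref{well-defn} with $\phi_y=0$; only the $x$-commutator survives, so \eqref{bound-V} specializes to
\[
\abso{\frac{\dd}{\dd t}V_R(t)}\lesssim\norm{\phi_R'}_{L^\infty}\norm{u(t)}_H^2\lesssim\frac1R\,\sup_{t}\norm{u(t)}_H^2=:\frac{C_0}{R}.
\]
It is precisely the hypothesis $\sup_t\norm{u(t)}_H<\infty$ that renders this bound uniform in $t$. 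Integrating over $[0,t]$ yields $V_R(t)\le V_R(0)+C_0 t/R$, and since $u(0)\in L^2(\rt)$, the initial term obeys $V_R(0)\le\norm{u(0)}_{L^2(\{|x|\ge R/2\})}^2=o_R(1)$ as $R\to\infty$.

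Finally I would upgrade the $L^2$-square bound to the $q_0$-th power by interpolating against the conserved mass $M:=\norm{u(t)}_2^2$. Using $q_0>2$,
\[
\norm{u(t)}_{L^2(\{|x|,|y|\ge R\})}^{q_0}\le M^{\frac{q_0-2}{2}}\norm{u(t)}_{L^2(\{|x|,|y|\ge R\})}^2\le M^{\frac{q_0-2}{2}}\paar{o_R(1)+\frac{C_0 t}{R}}.
\]
Setting $C:=C_0\,M^{\frac{q_0-2}{2}}$, which is independent of $R$, the term $M^{\frac{q_0-2}{2}}C_0 t/R=Ct/R$ is at most $\epsilon$ for all $0\le t\le\epsilon R/C$, while $M^{\frac{q_0-2}{2}}o_R(1)$ remains $o_R(1)$; this is the claimed estimate.

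Given \eqref{bound-V}, the computation is essentially routine, and the step requiring the most care is the choice of an $x$-only cutoff: this simultaneously places the target corner region inside the support where $\phi_R\equiv1$ and annihilates the commutator with $D_y^{2s}$, so that no loss is incurred from the fractional Laplacian and the time-derivative of $V_R$ genuinely scales like $R^{-1}$, uniformly in time. The passage to the $q_0$-power, absorbing the extra mass factor into $C$, is the only remaining bookkeeping.
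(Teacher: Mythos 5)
Your overall strategy coincides with the paper's: introduce a localized mass $V_\phi$, bound $\bigl|\tfrac{\dd}{\dd t}V_\phi\bigr|$ by $R^{-1}\sup_t\|u(t)\|_H^2$ via \eqref{bound-V}, integrate in time, observe that the initial localized mass is $o_R(1)$ by dominated convergence, and pass to the $q_0$-th power using the conserved mass (a step the paper leaves implicit and you handle cleanly). The one substantive difference is the cutoff, and that is where there is a gap. You take $\phi_R=\phi_R(x)$ and rely on the inclusion $\{|x|,|y|\geq R\}\subset\{|x|\geq R\}$. That inclusion holds only if the region is read as the intersection $\{|x|\geq R\}\cap\{|y|\geq R\}$. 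But the region this lemma must control is, in its only application (Lemma \ref{lem-est} feeding into Theorem \ref{blow-thm-1}), the support of $2(s+1)-\Delta\phi_R=s\bigl(2-\theta_R''(x)\bigr)+\bigl(2-\theta_R''(y)\bigr)$ with $\phi_R(x,y)=s\theta_R(x)+\theta_R(y)$; since $\theta_R''\equiv 2$ only on $\{|r|<R\}$, that support is the \emph{union} of the two strips $\{|x|\geq R\}$ and $\{|y|\geq R\}$. Your $x$-only weight vanishes on the strip $\{|y|\geq R,\ |x|<R/2\}$ and therefore gives no control of the mass there, so the chain of inequalities breaks at its first line for the region that is actually needed. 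The paper avoids this by using the radial cutoff $\Psi_R=\Phi(r/R)$, $r^2=x^2+y^2$, which equals $1$ on all of $\{r\geq R\}\supset\{|x|\geq R\}\cup\{|y|\geq R\}$.

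The repair is immediate and preserves your nice observation that a one-variable weight kills one of the two commutators: run the identical argument a second time with a $y$-only cutoff $\psi_R(y)$ (now $[-\partial_{xx},\psi_R]=0$ and only the $D_y^{2s}$-commutator survives, which is exactly what \eqref{bound-V} controls via the $\|\phi_y\|_{L^\infty_x W^{1,\infty}_y}$ term), and bound the mass on the union by the sum $V_{\phi_R}(u(t))+V_{\psi_R}(u(t))$. With that addition your proof is complete and is essentially equivalent to the paper's; the remaining steps (the $o_R(1)$ bound on the initial term, the choice $C=C_0M^{(q_0-2)/2}$, and the restriction $t\leq \epsilon R/C$) are all correct.
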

\begin{proof}
Consider a smooth function $\Phi: [0,+\infty) \to [0,1]$   satisfying
	\[
	\Phi(r)= \left\{
	\begin{array}{cl}
		0 &  0 \leq r \leq \frac{1}{2}, \\
		1 &  r \geq 1.
	\end{array}
	\right.
	\]
	Given $R>1$, we denote the radial function 
	\[
	\Psi_R(x,y) = \Psi_R(r) := \Phi\paar{\frac rR}, \quad r^2=x^2+y^2. 
	\]
	It is easy to check that
		\begin{align*}
		\|\nabla \Psi_R\|_{W^{1,\infty}} \sim \|\nabla \Psi_R\|_{L^\infty} + \|\Delta \Psi_R\|_{L^\infty}  \lesssim R^{-1}. 
	\end{align*}
	We next define 
	\[
	V_{\Psi_R}(u(t)) := \int_\rt \Psi_R(x,y) |u(t,x,y)|^2 \dd x\dd y.
	\]
	By the fundamental theorem of calculus, if  $u_0\in H$ is the initial datum associated to $u$, we have
	\[
	V_{\Psi_R}(u(t)) = V_{\Psi_R}(u_0) + \int_0^t \frac{\dd}{\dd\tau} V_{\Psi_R}(u(\tau)) \dd\tau \leq V_{\Psi_R}(u_0) + \left(\sup_{\tau \in [0,t]} \left|\frac{\dd}{\dd\tau} V_{\Psi_R}(u(\tau)) \right| \right) t. 
	\]
With similar arguments, we get from \eqref{bound-V} that
	\begin{align*}
		\sup_{\tau \in [0,t]} \left|\frac{\dd}{\dd\tau} V_{\Psi_R}(u(\tau)) \right|  \lesssim  \|\nabla \Psi_R\|_{W^{1,\infty}} \sup_{\tau \in [0,t]} \|u(\tau)\|^2_{H} \leq CR^{-1}.
	\end{align*}
	So that
	\[
	V_{\Psi_R}(u(t)) \leq V_{\Psi_R}(u_0) + CR^{-1} t.
	\]
	The conservation of mass gives
	\[
	V_{\Psi_R}(u_0) = \int_\rt \Psi_R(x) |u_0(x,y)|^2 \dd x\dd y   \leq \int_{\{|y|,|x|>R/2\}} |u_0(x,y)|^2 \dd x\dd y  \to0,
	\]
	as $R\rightarrow +\infty$. The proof is complete by using the fact
	\[
	\int_{\{|x|,|y|\geq R\}} |u(x,y,t)|^2 \dd x\dd y \leq V_{\Psi_R}(u(t)).
	\]     
\end{proof}

 Now we are in a position to state our main blow-up result. Due to lack of symmetry, we need to assume that the local solutions have some additional regularity and integrability. Such  assumptions were posed for the fractional Nonlinear Schr\"odinger equations in \cite{BHL}. So the auxiliary space of the following result can be replaced with $H^{(1,s_1)}$ with a sufficiently large $s_1>s$.
\begin{thm}\label{blow-thm-1}
Let   $u(t)\in C([0,T);H)\cap C([0,T); L^q)$, for some $q>p$, be a solution of \eqref{eq0} associated with the initial datum $u_0\in H$.	If there exists $\delta>0$ such that 
\begin{equation}\label{blow-condition-thm-1}
	 Q(u(t))\leq-\delta<0, 
\end{equation}
 for any $t\in[0,T)$, then  
 the solution $u(t)$ blows up in finite time, or $u(t)$ blows up at infinity and $\|u(t)\|_{L^{q}}\to+\infty$, as $t\to+\infty$, for any $q>p$.
\end{thm}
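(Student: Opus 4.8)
The plan is to argue by contradiction via the localized virial identity, assembling Lemmas~\ref{lem-dM}, \ref{lem-est} and \ref{stima-le2r}. Assume the solution is global, $T=+\infty$ (otherwise the first alternative already holds). Mass conservation gives $\|u(t)\|_2^2\equiv c$, and \eqref{blow-condition-thm-1} together with the definition \eqref{Q} of $Q$ yields
\[
s\paar{\norm{u_x}_2^2+\norm{D_y^su}_2^2}\le\frac{(s+1)(p-2)}{2p}\|u\|_p^p .
\]
Interpolating $\|u\|_p\le\|u\|_2^{1-\theta}\|u\|_{L^q}^{\theta}$ with $\frac1p=\frac{1-\theta}{2}+\frac{\theta}{q}$ (admissible since $2<p<q$) and inserting $\|u\|_2^2=c$ gives the one-sided control $\norm{u_x}_2^2+\norm{D_y^su}_2^2\lesssim\|u(t)\|_{L^q}^{\theta p}$, so that $\sup_t\|u(t)\|_{L^q}<+\infty$ implies $\sup_t\|u(t)\|_H<+\infty$. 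Hence it is enough to prove that $T=+\infty$ cannot coexist with $\sup_t\|u(t)\|_H<+\infty$: the surviving alternative $\sup_t\|u(t)\|_H=+\infty$ then forces $\|u(t)\|_{L^q}$ to be unbounded, which is the second branch of the dichotomy.

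Suppose therefore $T=+\infty$ and $\sup_t\|u(t)\|_H<+\infty$, so that both hypotheses of Lemma~\ref{stima-le2r} are met. Take the anisotropic weight $\phi_R(x,y)=s\theta_R(x)+\theta_R(y)$ and consider $M_{\phi_R}(u(t))$. By Lemma~\ref{lem-est},
\[
\frac{\dd}{\dd t}M_{\phi_R}(u)\le 8Q(u(t))+C\paar{R^{-2}+R^{-2s}}\|u\|_2^2+C\|u(t)\|_{L^2(\{|x|,|y|\ge R\})}^{q_0}.
\]
Inserting $\|u\|_2^2=c$, the standing bound $Q(u(t))\le-\delta$, and the tail estimate $\|u(t)\|_{L^2(\{|x|,|y|\ge R\})}^{q_0}\le o_R(1)+\epsilon$ from Lemma~\ref{stima-le2r} (valid on $0\le t\le \epsilon R/C$), and choosing first $R$ large to absorb the $R^{-2}$, $R^{-2s}$ and $o_R(1)$ contributions and then $\epsilon$ small, I obtain
\[
\frac{\dd}{\dd t}M_{\phi_R}(u(t))\le -8\delta+C\epsilon\le -4\delta,\qquad 0\le t\le \frac{\epsilon R}{C}.
\]
Integrating yields $M_{\phi_R}(u(t))\le M_{\phi_R}(u_0)-4\delta t$ on this interval.

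The contradiction must then be drawn from the a priori boundedness of the virial. Since $\|\nabla\phi_R\|_{L^\infty}\lesssim R$, the estimate underlying \eqref{bound-V} gives $|M_{\phi_R}(u(t))|\lesssim R\,\|u(t)\|_H^2\le C_0R$ uniformly in $t$, using $\sup_t\|u\|_H<+\infty$. Combined with the integrated inequality this forces $4\delta t\le M_{\phi_R}(u_0)+C_0R\le 2C_0R$ for every $t\le \epsilon R/C$. Here lies the \emph{main obstacle}: the admissible time window provided by Lemma~\ref{stima-le2r} has length $\sim R$, while the a priori size of $M_{\phi_R}$ is also $\sim R$, so the fixed decay rate $4\delta$ has to be reconciled against two quantities that both grow linearly in $R$. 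Settling this balance — exploiting that the spatial tails are uniformly small in time and that the weight $\phi_R$ carries the factor $s$ in $x$ dictated by the scaling \eqref{scaling-0} — is precisely what excludes the coexistence of $T=+\infty$ and $\sup_t\|u(t)\|_H<+\infty$.

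Granting this, either $u$ blows up in finite time, or $T=+\infty$ with $\sup_t\|u(t)\|_H=+\infty$, in which case $\|u(t)\|_{L^q}$ is unbounded by the first paragraph. Promoting this unboundedness to the genuine divergence $\|u(t)\|_{L^q}\to+\infty$ as $t\to+\infty$ (and then, by one further interpolation, for every $q>p$) is the last point to settle; it is achieved by localizing the virial scheme in time, rerunning Paragraphs~2--3 on intervals starting at arbitrarily late times, so that no bounded-$\|u\|_H$ regime can persist. This completes the blow-up/grow-up dichotomy.
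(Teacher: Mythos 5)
Your proposal follows the same route as the paper --- a contradiction argument built on the localized virial quantity $M_{\phi_R}$, assembled from Lemmas \ref{lem-dM}, \ref{lem-est} and \ref{stima-le2r} --- and the reduction in your first paragraph (using $Q\le 0$, mass conservation and $L^2$--$L^q$ interpolation to convert a uniform $L^q$ bound into a uniform $H$ bound) is correct and matches the paper's ``mass and energy conservation plus interpolation'' step. The genuine gap is that the decisive step is never carried out: after writing ``Here lies the main obstacle\dots{} Settling this balance\dots{} is precisely what excludes the coexistence'' you continue with ``Granting this'', so the contradiction on which the whole argument rests is asserted rather than proved. Your own computation shows why your setup cannot produce it: comparing $M_{\phi_R}(u_0)-4\delta t$ with the a priori bound $C_0R$ only at the endpoint $t=\epsilon R/C$ of the admissible window yields $4\delta\epsilon/C\le 2C_0$, an inequality between fixed constants, not a contradiction.

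The paper extracts the contradiction differently: it fixes $\epsilon$ small and $R$ large once and for all, claims the differential inequality $\frac{\dd}{\dd t}M_{\phi_R}(u(t))\le-\delta$ for \emph{all} $t>0$, and integrates to get $M_{\phi_R}(u(t))\le -ct\to-\infty$, while for this \emph{fixed} $R$ the estimates behind \eqref{bound-V} together with $\sup_t\|u(t)\|_H<\infty$ keep $M_{\phi_R}(u(t))$ bounded uniformly in time (the paper phrases this as $\|u(t)\|_{\dot H}\gtrsim t^a$, contradicting the uniform $H$ bound). In other words, the contradiction comes from letting $t\to\infty$ at fixed $R$, where linear decay beats a fixed constant --- not from a single comparison at a time of order $R$. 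You are right that Lemma \ref{stima-le2r} as stated only guarantees the tail bound on $[0,\epsilon R/C]$, and the paper is terse on why the differential inequality persists for all $t$; but your proposal neither adopts the paper's resolution nor supplies one of its own, so it does not close. Two further points: the upgrade from ``$\|u(t)\|_{L^q}$ unbounded'' to ``$\|u(t)\|_{L^q}\to+\infty$'' is also deferred (the paper obtains the dichotomy by contradicting the uniform bound $\sup_t\|u(t)\|_{L^q}<\infty$ directly, and the re-running-in-time idea you sketch would have to be written out); and your a priori bound $|M_{\phi_R}(u)|\lesssim R\|u\|_H^2$ needs justification for the $y$-component, since $\|u_y\|_{L^2}$ is not controlled by $\|u\|_H$ when $s<1$ --- this is exactly where the Balakrishnan-type commutator estimates used in Lemma \ref{well-defn} are required.
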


\begin{proof}
 
	We can assume that $T=+\infty$. Suppose, by contradiction, that $u(t)$ is a global solution such that $\|u(t)\|_{L^q}<+\infty$, for all $t>0$ and some $q>p$. The mass and energy conservations together with an interpolation shows that $\|u(t)\|_{H}<+\infty$, for all $t>0$. From Lemma \ref{lem-est} and Lemma \ref{stima-le2r}, for any $\epsilon>0$ and $R>1$, there is $C>0$, independent of $R$, such that
\[
\begin{split}
		\frac{\dd }{\dd t} M_{\phi_R}(u)&\leq C\paar{
	8Q(u(t))+
 \paar{R^{-2s}+R^{-2}}}+
\paar{ o_R(1)+\epsilon}^{q_0}\\
&\leq C\paar{-8\delta+
	\paar{R^{-2s}+R^{-2}}+
	\paar{ o_R(1)+\epsilon}^{q_0}},
\end{split}
\]
for any $t>0$. By choosing $\epsilon$ sufficiently small and $R$ sufficiently, we see that 
\[
\frac{\dd }{\dd t} M_{\phi_R}(u)\leq -\delta<0,
\]
for any $t>0$. This leads us to
\[
  M_{\phi_R}(u)\leq -ct,
\]
for some constant $c=c(\delta)>0$ and any $t>t_0$ with sufficiently large $t_0\ge 0$. By a standard argument, we can now obtain from \eqref{bound-V} that
\[
\|u(t)\|_{\dot{H}}\gtrsim t^a,
\]
for some $a\in(0,1)$ and any $t\ge t_0$. This shows that
$\|u(t)\|_{\dot{H}}\to+\infty$, as $t\to +\infty$,  reaching a contradiction.

\end{proof}

\begin{thm}\label{bl-c-thmm}
 Let $s\in(1/2,1)$ and $\frac{2(3s+1)}{1+s}<p<\frac{2(1+s)}{1-s}$. 
 
 Assume that $u\in C([0,T);H)\cap C([0,T); L^q)$, for some $q>p$,
 
 is a solution of \eqref{eq0} such that $u(0)=u_0\in H$. Furthermore, suppose that $E(u_0)<0$, or   $E(u_0)\geq0$ such that
 
 \begin{equation}\label{blow-cnd}
\begin{split}
			 &\frac{E(u_0)}{E (\ff)}<
         \paar{\frac{s^s }{(s+1)^{s+1}}}^{\frac{p-2}{(p-2)(1+s)-4s}}
    \|\ff\|_\lt^{2\rho},\\
			 &
 	 		 \frac{\|u_0\|_{\dot{H}}^2}{\|\ff\|_{\dot{H}}^2}  >
           \paar{\frac{s^s }{(s+1)^{s+1}}}^{\frac{p-2}{(p-2)(1+s)-4s}}
	 \|\ff\|_\lt^{2\rho},
\end{split}
	\end{equation} 
	where $\ff$ is a ground state of \eqref{equ-0} and
	$\rho=\frac{ p(s-1)+2(s+1) }{(p-2)(1+s)-4s}$.
	Then  $u(t)$ blows up in finite time, or
   there exists a time sequence $\{t_n\}$ such that $t_n\to+\infty$ and
   \[
   \lim_{n\to+\infty}\|u(t_n)\|_{\dot{H}}=+\infty.
   \]
In the case $p=\frac{2(3s+1)}{1+s}$, the solution $u(t)$ blows up in finite or infinite time provided $E(u_0)<0$.

\end{thm}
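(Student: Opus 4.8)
The plan is to reduce everything to Theorem~\ref{blow-thm-1}: it suffices to produce a uniform $\delta>0$ with $Q(u(t))\le-\delta<0$ for all $t$, since the hypothesis $u\in C([0,T);H)\cap C([0,T);L^q)$ with $q>p$ is precisely what that theorem requires. The engine is the exact algebraic identity obtained by eliminating $\|u\|_p^p$ between the conserved energy $E(u)=\tfrac12\|u\|_{\dot{H}}^2-\tfrac1p\|u\|_p^p$ and $Q(u)=s\|u\|_{\dot{H}}^2-\tfrac{(s+1)(p-2)}{2p}\|u\|_p^p$, where $\|u\|_{\dot{H}}^2:=\|u_x\|_2^2+\|D_y^su\|_2^2$. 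This gives, writing $y(t)=\|u(t)\|_{\dot{H}}^2$,
\[
Q(u(t))=\Big(s-\tfrac{(s+1)(p-2)}{4}\Big)\|u(t)\|_{\dot{H}}^2+\tfrac{(s+1)(p-2)}{2}E(u_0)=:A\,y(t)+B\,E(u_0),
\]
with $B>0$ and $A<0$ in the supercritical range, while $A=0$ (so $Q\equiv 2sE$) exactly at $p=\tfrac{2(3s+1)}{1+s}$.

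First I would dispose of the easy cases using only this identity. In the mass critical case $Q(u(t))\equiv 2sE(u_0)$, so $E(u_0)<0$ forces $Q<0$ for all $t$ and Theorem~\ref{blow-thm-1} applies immediately. Likewise, in the supercritical case with $E(u_0)<0$, since $A<0$ and $y(t)\ge0$ the identity yields $Q(u(t))\le B\,E(u_0)<0$ for every $t$, again with no further work. Hence the entire difficulty lies in the supercritical case with $E(u_0)\ge0$, where $A\,y+B\,E(u_0)$ can change sign and one must prevent $y(t)$ from becoming small.

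For that case I would run a trapping (invariant-set) argument. Using the sharp Gagliardo--Nirenberg inequality of Corollary~\ref{cor-gn} together with conservation of $M=\|u_0\|_2^2$ and of energy, one gets $E(u_0)=E(u(t))\ge f_M(y(t))$, where $f_M(y)=\tfrac12 y-\tfrac{C_H}{p}M^{(p-\sigma)/2}y^{\sigma/2}$ and $\sigma=\tfrac{(p-2)(s+1)}{2s}>2$. The function $f_M$ increases and then decreases, with a single interior maximiser $y_*(M)$; crucially, because $\ff$ saturates Gagliardo--Nirenberg one checks directly that $f_{M_\ff}'(\|\ff\|_{\dot{H}}^2)=0$ and $f_{M_\ff}(\|\ff\|_{\dot{H}}^2)=E(\ff)$, and the power-law scaling of $f_M$ in $M$ then identifies $\max_y f_M=E(\ff)(M_\ff/M)^\rho$ and $y_*(M)=\|\ff\|_{\dot{H}}^2(M_\ff/M)^\rho$, where $M_\ff=\|\ff\|_2^2$ and $\rho=\tfrac{p-\sigma}{\sigma-2}=\tfrac{p(s-1)+2(s+1)}{(p-2)(1+s)-4s}$. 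Conditions~\eqref{blow-cnd} are calibrated, through the sharp constant $C_H$ (which is itself expressed via $\ff$) and the ground-state relation $Q(\ff)=0$, exactly so that $E(u_0)<\max_y f_M$ and $y(0)$ lies strictly above $y_*(M)$. Since $f_M(y(t))\le E(u_0)<\max_y f_M$, the map $t\mapsto y(t)$ (continuous because $u\in C([0,T);H)$ and $y=\|u\|_H^2-M$) cannot enter the forbidden gap $\{f_M>E(u_0)\}$ around $y_*$; by connectedness of $[0,T)$ it therefore remains on the upper branch $y(t)\ge y_+>y_*$.

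It then remains to convert ``$y$ stays large'' into ``$Q$ stays negative''. The zero of $Q=A\,y+B\,E(u_0)$ is at $y_{\mathrm{crit}}=\tfrac{B}{|A|}E(u_0)$, and $Q(\ff)=0$ gives $\tfrac{B}{|A|}=\tfrac{\|\ff\|_{\dot{H}}^2}{E(\ff)}$; combined with $E(u_0)<\max_y f_M=E(\ff)(M_\ff/M)^\rho$ this yields $y_{\mathrm{crit}}<y_*(M)<y_+$. Consequently, on the trapped branch $y(t)\ge y_+$ one has $Q(u(t))=A\big(y(t)-y_{\mathrm{crit}}\big)\le A\,(y_+-y_{\mathrm{crit}})=:-\delta<0$ uniformly in $t$, and Theorem~\ref{blow-thm-1} delivers the dichotomy (finite-time blow-up, or $\|u(t_n)\|_{\dot{H}}\to+\infty$ along a sequence). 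I expect the main obstacle to be the bookkeeping in this calibration step: confirming that the explicit constant $\big(s^s/(s+1)^{s+1}\big)^{(p-2)/((p-2)(1+s)-4s)}\|\ff\|_{L^2}^{2\rho}$ in \eqref{blow-cnd} is exactly the ground-state threshold emerging from $\max_y f_M$ and $y_*(M)$, i.e. reconciling the sharp-constant formula for $C_H$ with the ground-state Pohozaev and equation identities. Once that identification is made, the dynamical trapping part is routine.
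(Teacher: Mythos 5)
Your proposal follows essentially the same route as the paper: reduce to Theorem \ref{blow-thm-1} by producing a uniform $\delta$ with $Q(u(t))\le-\delta$, use the identity $Q=\tfrac{(s+1)(p-2)}{2}E-\tfrac{(s+1)(p-2)-4s}{4}\|u\|_{\dot H}^2$ to dispose of the $E(u_0)<0$ and mass-critical cases at once, and in the remaining case trap $\|u(t)\|_{\dot H}$ above the Gagliardo--Nirenberg critical point by continuity and conservation of energy --- the paper's $g(X)$ is exactly your $f_M(y)$ with $y=X^2$, and the calibration of $X_0$ and $g(X_0)$ against $E(\ff)$ and $\|\ff\|_{\dot H}^2$ via the ground-state Pohozaev identities is carried out there just as you anticipate. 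The only cosmetic difference is that you keep the conserved mass explicit in the variational function while the paper absorbs it into the constant, and you phrase the final step through $y_{\mathrm{crit}}=\tfrac{B}{|A|}E(u_0)$ rather than the paper's $E(u_0)<(1-\delta_0)g(X_0)$; both yield the same uniform negativity of $Q$.
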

\begin{proof}
	It suffices to check, from Theorem \ref{blow-thm-1}, that \eqref{blow-condition-thm-1} holds.

 Let us start with the case $E(u_0)<0$. Hence, the energy conservation reveals that
	\[
	Q(u(t))
	=2sE(u(t))-\frac{(s+1)(p-2)-4s}{2p}\|u(t)\|_{L^p}^p
	\leq 2sE(u(t))
	\]
	for all $t>0$. Hence, \eqref{blow-condition-thm-1} follows with $\delta=-2sE(u(t))$.

	Next, we consider the case $E(u_0)\geq0$ and so the mass supercritical case.
	
	By using Lemma \ref{gn-lemma}, we have    that
 	\[
	\begin{split}
		E(u(t))  
		&=\frac12\|u(t)\|_{\dot{H}}^2
		 -
		\frac1p\|u(t)\|_{L^p}^p  \\&
		\geq 
		\frac12 \|u(t)\|_{\dot{H}}^2
		-
		\frac{C_{p,s}}{p}
		 \|u(t)\|_{\dot{H}} ^\frac{(p-2)(1+s)}{2s}
				=:g\paar{\|u(t)\|_{\dot{H}} },
	\end{split}
	\]
	where $g(X)=\frac12X^2-\frac{C_{p,s}}{p}X^\frac{(p-2)(1+s)}{2s}$. It is easily seen that $g$ is increasing on $(0, X_0)$ and decreasing on $(X_0,+\infty)$, where
 
  \[X_0^2=
\frac{(p-2) s^{\frac{s(p-2)}{(p-2)(1+s)-4s}}}{(p(s-1)+2(1+s)) (1+s)^{\frac{4s}{(p-2)(1+s)-4s}}}
 \|\ff\|_\lt^\frac{4s(p-2)}{(p-2)(1+s)-4s}
	=
 \frac{s^{\frac{s(p-2)}{(p-2)(1+s)-4s}}}{{(s+1)}^{\frac{(p-2)(1+s)}{(p-2)(1+s)-4s}}}
	\paar{\|\ff\|_\lt^\rho\|\ff\|_{\dot{H}}}^2.\] 
	 To compute $X_0$, we have used \eqref{best-con} and the following identities (see \cite{E})
	 \begin{equation*}
	 	\begin{split}
	 		s\|D^s_y\ff\|_\lt^2&= \|\ff_x\|_\lt^2=\frac{s(p-2)}{p(s-1)+2(1+s)}\|\ff\|_\lt^2\\
	 		 	\|\ff\|_{L^p}^p&= \frac{2ps}{p(s-1)+2(1+s)}\|\ff\|_\lt^2.
	 		 	 	\end{split}
	 \end{equation*}
Hence, we obtain that
  \[\begin{split}
 2 E(\ff)&=\frac{p(1+s)-6s-2}{ p(s-1)+2(1+s) }\|\ff\|_\lt^2
 ,\qquad
 \|\ff\|_{\dot H}^2 =\frac{(s+1)(p-2)}{p(s-1)+2(1+s)}\|\ff\|_\lt^2,\\
 \|\ff\|^2_{\dot H}&=\frac{2(p-2)(1+s)}{p(1+s)-6s-2}E(\ff),
 \end{split}
  \]
  and
 
  	 \[
	 g(X_0)=\frac{(p-2)(1+s)-4s}{2(p-2)(1+s)}X_0^2=
 \frac{s^{\frac{s(p-2)}{(p-2)(1+s)-4s}}}{{(s+1)}^{\frac{(p-2)(1+s)}{(p-2)(1+s)-4s}}}
	 \|\ff\|_\lt^{2\rho}E(\ff)
  <\|\ff\|_\lt^{2\rho}E(\ff).
	 \] 
 	 Now the conservation of mass and energy  imply, together with \eqref{blow-cnd}, that
	 \[
	g \paar{\|u(t)\|_{\dot{H}} }
	 \leq 
 E(u(t))
	 =
 E(u_0)
	 <
	 g \paar{X_0},
	 \]
	 for all $t\in [0,T)$. Condition \eqref{blow-cnd} shows, together with the continuity argument, that
	 \[
	 \|u(t)\|_{\dot{H}} >X_0,
	 \]
	 for any $t\in [0,T)$. On the other hand, we have, again from \eqref{blow-cnd}, that there exists $\delta_0\in(0,1)$ such that
	 \[
	 E(u_0) <(1-\delta_0)g(X_0).
	 \]
	 Consequently, we get
	 \[
	 \begin{split}
	 	Q(u(t)) 
	 	&=
	 	 \frac{(s+1)(p-2)}{2}E(u(t))-\frac{(s+1)(p-2)-4s}{4}\|u(t)\|_{\dot{H}}^2 \\
	 	&
	 	\leq \paar{ \frac{s^s }{(1+s)^{1+s}}}^{\frac{p-2}{(p-2)(1+s)-4s}} \|\ff\|_\lt^{2\rho}
	 	\paar{\frac{(s+1)(p-2)}{2}(1-\delta_0)  
	 E(\ff)
	 	-
\frac{(s+1)(p-2)-4s}{4}
	\|\ff\|_{\dot H}^2}\\
	 	&=-\frac{(1+s)(p-2)}{2} \delta_0E(\ff)\|\ff\|_\lt^{2\rho},
	 \end{split}
	 \]
	 for all $t\in [0,T)$. This implies that \eqref{blow-condition-thm-1} holds with with $\delta=\frac{(1+s)(p-2)}{2} \delta_0E(\ff)\|\ff\|_\lt^{2\rho}$. This completes the proof. 
\end{proof}
\begin{rem}
Following the proof of Theorem \ref{bl-c-thmm}, one can observe, when
  $\frac{2(3s+1)}{1+s}<p<\frac{2(1+s)}{1-s}$ and  $E(u_0)\geq0$,   that  
  $u(t)$ is uniformly bounded  in time in $H$, provided
\begin{equation}\label{blow-cnd-11}
\begin{split}
			 &\frac{E(u_0)}{E (\ff)}<
    \paar{\frac{s^s }{(s+1)^{s+1}}}^{\frac{p-2}{(p-2)(1+s)-4s}}
    \|\ff\|_\lt^{2\rho},\\
			 &
 	 		 \frac{\|u_0\|_{\dot{H}}^2}{\|\ff\|_{\dot{H}}^2}  <\paar{\frac{s^s}{(s+1)^{s+1}}}^{\frac{p-2}{(p-2)(1+s)-4s}}
	 \|\ff\|_\lt^{2\rho}.
\end{split}
	\end{equation}
 Hence, \eqref{blow-cnd} and \eqref{blow-cnd-11} provide   a sharp dichotomy between the global and blow-up solutions.
\end{rem}

As a consequence of the previous blow-up results, We conclude this section proving the instability of standing waves.

From Section \ref{supercritical-section}, we set
	\begin{align} \label{smin}
			\gamma(c):=\inf_{u \in P_c} E(u),
		\end{align}
	where
	$$
	P_c:=\{u \in S_c : Q(u)=0\}.
	$$
Let $\ff$ be a minimizer of \eqref{smin}.

\begin{thm}\label{unstable}
 
 Let $\frac{2(3s+1)}{1+s}<p<\frac{2(1+s)}{1-s}$, $s>1/2$ and $c>0$. Let $\ff$ be any minimizer of \eqref{smin} and $\alpha_c$ the corresponding Lagrangian multiplier. Then the standing wave
	$\ee^{\ii\alpha_c t}\ff(x,y)$ is strongly unstable; that is, there exists $\{u_{0,n}\}\subset H$, with $u_{0,n}\to \ff$ in $H$, as $n \to +\infty$, such that the solutions $u_n(t)$ of \eqref{eq0}, with $u_n(0)=u_{0,n}$, blow up in finite or infinite time, for any $n\geq1$.
\end{thm}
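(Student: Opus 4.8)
The plan is to derive the strong instability from the blow-up criterion of Theorem \ref{blow-thm-1}, by dilating the minimizer $\ff$ so that the resulting solution is trapped, for all times, in a region where $Q$ stays uniformly negative. Concretely, using the scaling \eqref{scaling-0} I would take $u_{0,n}:=\ff_{\la_n}$ for a sequence $\la_n\to 1^{+}$ with $\la_n>1$. Since $t\mapsto\ff_t$ is continuous into $H$ we have $u_{0,n}\to\ff$ in $H$, and since the dilation preserves the $L^2$-norm, $u_{0,n}\in S_c$. Because $\ff\in P_c$, Lemma \ref{lm:maximum_point} shows that $t\mapsto E(\ff_t)$ has its unique maximum at $t=1$ with $E(\ff)=\gamma(c)$, and that $\frac{d}{dt}E(\ff_t)=\frac1t Q(\ff_t)$; hence for $\la_n>1$ one has both $E(u_{0,n})<\gamma(c)$ and $Q(u_{0,n})<0$. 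Set $\de_n:=\gamma(c)-E(u_{0,n})>0$.

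The core of the argument is a variational trapping estimate. Writing $A:=\|\partial_x u\|_2^2+\|D_y^{s}u\|_2^2$, $B:=\|u\|_p^p$ and $\be:=\frac{(s+1)(p-2)}{2}$, the supercriticality $p>\frac{2(3s+1)}{s+1}$ is exactly the strict inequality $\be>2s$. A direct computation from \eqref{scaling} (and $\|u_t\|_p^p=t^{\be}B$) gives
\[
E(u_t)-\frac{1}{2s}Q(u_t)=\frac{\be-2s}{2sp}\,B\,t^{\be},
\]
which is nonnegative and nondecreasing in $t$. Now suppose $u\in S_c$ satisfies $E(u)<\gamma(c)$ and $Q(u)<0$. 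Then $\frac{d}{dt}E(u_t)\big|_{t=1}=Q(u)<0$, so by Lemma \ref{lm:maximum_point} the unique maximizer $t_u$ of $E(u_t)$ obeys $t_u<1$, with $u_{t_u}\in P_c$ and hence $E(u_{t_u})\ge\gamma(c)$. Evaluating the displayed monotone quantity at $t_u\le 1$ and using $Q(u_{t_u})=0$ yields
\[
Q(u)\le 2s\big(E(u)-E(u_{t_u})\big)\le 2s\big(E(u)-\gamma(c)\big)<0.
\]

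With this estimate the proof closes quickly. The set $\mathcal A:=\{u\in S_c: E(u)<\gamma(c),\ Q(u)<0\}$ is invariant under the flow of \eqref{eq0}: along the solution $u_n(t)$ with $u_n(0)=u_{0,n}\in\mathcal A$, conservation of mass and energy keep $u_n(t)\in S_c$ with $E(u_n(t))=E(u_{0,n})<\gamma(c)$, and if $t\mapsto Q(u_n(t))$ reached $0$ at a first time $t_0$, then $u_n(t_0)\in P_c$ would force $E(u_n(t_0))\ge\gamma(c)$, a contradiction; thus $Q(u_n(t))<0$ persists. Applying the trapping estimate along the flow gives $Q(u_n(t))\le 2s\big(E(u_{0,n})-\gamma(c)\big)=-2s\de_n<0$ for all $t$, i.e. \eqref{blow-condition-thm-1} holds with $\de=2s\de_n$. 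Theorem \ref{blow-thm-1} then yields that $u_n(t)$ blows up in finite time, or $\|u_n(t)\|_{L^{q}}\to+\infty$ as $t\to+\infty$, which is the desired conclusion.

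Regarding the standing hypotheses of Theorem \ref{blow-thm-1}, I would note that by the decay and regularity of the ground state established in Theorem \ref{decay-regul-lemma} one has $\ff\in L^{q}(\R^2)$ for every $q\ge 1$, whence $u_{0,n}=\ff_{\la_n}\in H\cap L^{q}$; the local well-posedness theory of this section then provides $u_n\in C([0,T);H)\cap C([0,T);L^{q})$ for some $q>p$, as required. The main obstacle is precisely this last point: securing the extra integrability and regularity that legitimize the localized virial computation in the non-radial setting (this is exactly the reason for the auxiliary space in Theorem \ref{blow-thm-1}). By contrast, the trapping inequality itself is an elementary convexity-type identity, and the invariance of $\mathcal A$ follows from conservation laws and continuity alone.
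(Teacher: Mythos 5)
Your proof is correct, but it takes a genuinely different route from the paper's. You choose the same initial data $u_{0,n}=\ff_{\la_n}$ with $\la_n\to1^+$, but then you verify the blow-up condition \eqref{blow-condition-thm-1} directly, via an invariant-set (Berestycki--Cazenave type) argument: the identity $E(u_t)-\frac{1}{2s}Q(u_t)=\frac{\be-2s}{2sp}\|u\|_p^p\,t^{\be}$ with $\be=\frac{(s+1)(p-2)}{2}>2s$, combined with $t_u<1$ when $Q(u)<0$ and the variational characterization $\gamma(c)=\inf_{P_c}E$, gives the trapping bound $Q(u)\le 2s(E(u)-\gamma(c))$ on the set $\{E<\gamma(c),\,Q<0\}\cap S_c$, which is flow-invariant by conservation of mass and energy; Theorem \ref{blow-thm-1} then applies with $\delta=2s(\gamma(c)-E(u_{0,n}))$. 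The paper instead routes the argument through Theorem \ref{bl-c-thmm}: it rescales $\ff$ to a solution $\psi$ of \eqref{equ-0} with $\al=1$ and checks the quantitative conditions \eqref{blow-cnd} on the ratios $E(u_{0,n})/E(\psi)$ and $\|u_{0,n}\|_{\dot H}^2/\|\psi\|_{\dot H}^2$, which in turn rest on the sharp Gagliardo--Nirenberg constant and the ground-state identities. Your version is more self-contained (it never needs those identities, nor the identification of the minimizer of $\gamma(c)$ with the Weinstein optimizer), while the paper's version ties the instability to the sharp global/blow-up dichotomy of Theorem \ref{bl-c-thmm}. Both arguments ultimately reduce to Theorem \ref{blow-thm-1}, and both leave the same residual issue at the same level of rigor, namely securing the extra integrability $u_n\in C([0,T);L^q)$, $q>p$, which you and the paper each address by invoking the regularity and decay of $\ff$ from Theorem \ref{decay-regul-lemma} together with local well-posedness in a smoother space.
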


\begin{proof}
 Define  $\ff_\lambda(x,y)=\lambda^{\frac{s+1}{2}}\ff(\lambda^sx,\lambda y)$. A direct calculation shows, from the fact $Q(\ff)=0$, that
	$\partial_\lambda E(\ff_\lambda)>0$, for all $\lambda\in(0,1)$, and $\partial_\lambda E(\ff_\lambda)<0$, if $\lambda\in(1,+\infty)$. This implies that $E(\ff_\lambda)<E(\ff)$, for $\lambda>1$. Take $\{\lambda_n\}_n\subset(1,+\infty)$ such that $\lambda_n\to1$, as $n\to+\infty$, and choose $u_{0,n}=\ff_{\lambda_n}$. Thus, $E(u_{0,n})<E(\ff)$ and
	\[
	\|u_{0,n}\|_{\dot{H}}>\|\ff\|_{\dot{H}}.
	\]
	Recalling Theorem \ref{groun-thm}, if we set $\ff(x,y)=\al_c^{\frac{1}{p-2}}\psi(\al_c^{\frac1{2}}x,\al_c^{\frac1{2s}}y)$, we observe that $\psi$ satisfies \eqref{equ-0} with $\al=1$. A straightforward computation  shows that
	\[
	\begin{split}
&\frac{E(u_{0,n})}{E (\psi)}<\paar{\frac{s^s }{(1+s)^{1+s}}}^{\frac{p-2}{(p-2)(1+s)-4s}}
    \|\psi\|_\lt^{2\rho},\\
			 &
 	 		 \frac{\|u_{0,n}\|_{\dot{H}}^2}{\|\psi\|_{\dot{H}}^2}  >\paar{ \frac{s^s }{(1+s)^{1+s}}}^{\frac{p-2}{(p-2)(1+s)-4s}}
	 \|\psi\|_\lt^{2\rho},
	\end{split}
	\]
We note that by using the parabolic regularization theory, we can show the local well-posedness of \eqref{eq0} in $H^{2^+}(\rt)$, so that the corresponding local solutions belong to $C([0,T);L^q)$  for some $q>p$. On the other hand, Theorem \ref{decay-regul-lemma} implies that $u_{0,n}\in H^{2^+}(\rt)$.
 Therefore, the proof follows from Theorem \ref{bl-c-thmm}.
\end{proof}
\section{Boosted traveling waves}\label{boosted-section}


In this section, we study the existence of properties of boosted standing waves of \eqref{eq0} when $2<p<\frac{2(1+s)}{1-s}$. By a boosted standing wave we mean a solution of \eqref{eq0} of the form 
\[
\Phi(t,x,y)=\ee^{\ii\alpha t}u(x-\nu t,y-\omega t),
\]
where $\omega ,\nu,\alpha\in\rr$. Substituting the above form into \eqref{eq0}, we see that $u$ satisfies
\[
\alpha u+D_y^{2s}u+\ii\omega u_y-u_{xx}+i\nu u_x=|u|^{p-2}u.
\]
By the Galilean boost transform $u\mapsto \ee^{\frac{i\nu x}{2}}u$, 
 
the term $i\nu u_x$ can been gauged away, and the above equation reduces to
\begin{equation}\label{g-half-wave}
	\alpha u+D_y^{2s}u+\ii\omega u_y-u_{xx} =|u|^{p-2}u.
\end{equation} 
Notice that this transform preserves the mass. 

The solutions of \eqref{g-half-wave} are the critical points of 
\[
S(u)=\frac12\int_\rt\paar{\alpha|u|^2+|u_x|^2+|D_y^su|^2+i\omega\overline{u}u_y}\dd x\dd y
-\frac1p\|u\|_{L^p}^p.
\]
Observe that, if $s=1/2$, then $\scal{u,D_y^{2s}u+i\omega u_y}>0$, for any $u\not\equiv0$, provided $|\omega |<1$. Instead, if $s\in(1/2,1)$, then
\[
\scal{u,D_y^{2s}u+\ii\omega u_y}\geq -\omega_0\|u\|_\lt^2,
\]
where
\begin{equation}\label{omega0}
    \omega_0=(2s-1)\abso{\frac{\omega}{2s}}^{\frac{2s}{2s-1}}.
\end{equation}

 \subsection{Existence and decay of boosted travelling waves}
 The existence of nontrivial solutions of \eqref{g-half-wave} can be obtained via the classical minimization problem
 \begin{equation}\label{wein}
\bW_{\omega}=\inf_{u\in H\setminus\{0\}}\fm_{\om}(u)=\inf_{u\in H\setminus\{0\}}\frac{\paar{ \|u\|_{\dot{H}}^2+\ii\omega \scal{\nh_yD_y^{\frac12}u, D_y^{\frac12}u}+\alpha\|u\|_\lt^2}^\frac p2}{\|u\|_{L^p}^p}.
 \end{equation}
 It is clear, from Lemma \ref{gn-lemma}, that $ \bW_{\omega}>0$. Moreover, if $u$ is nontrivial minimizer of \eqref{wein}, then a scaled function of $u$ satisfies \eqref{g-half-wave}.

 \begin{thm}\label{exis-thm-bt}
 Let $s\in[1/2,1)$ and $2<p<\frac{2(1+s)}{1-s}$.	If $\alpha>\omega_0$, and $|\omega|<1$ when $s=1/2$, then there exists a minimizer $\ff\in H\setminus\{0\}$ of \eqref{wein} which satisfies (after a scaling) \eqref{g-half-wave}. Moreover, if $p$ is an even integer, there exists a  minimizer $u$ of \eqref{wein} which is symmetric respect to $x$. Moreover,
the maps   $y\mapsto\Re u(x,y)$ and $y\mapsto\Im u(x,y)$ are even and odd, respectively, for any fixed $x\in\rr$.
 \end{thm}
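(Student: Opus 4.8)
The plan is to recognize $\fm_{\om}$ as a quotient of an equivalent Hilbert norm on $H$ against an $L^p$-norm, and then run a Lieb/Brezis--Lieb minimization. First I would pass to the Fourier side and write the numerator as $\norm{u}_*^2:=\int_\rt\sigma(\xi)|\hat u(\xi)|^2\dd\xi$ with $\sigma(\xi)=\xi_1^2+|\xi_2|^{2s}-\omega\xi_2+\alpha$, so that $\fm_{\om}(u)=\norm{u}_*^p/\norm{u}_{L^p}^p$. A one-variable calculus computation gives $\min_{\xi_2\in\rr}\paar{|\xi_2|^{2s}-\omega\xi_2}=-\omega_0$ for $s\in(1/2,1)$ (attained at $\xi_2=(\omega/2s)^{1/(2s-1)}$), and $=0$ for $s=1/2$ provided $|\omega|<1$; hence under $\alpha>\omega_0$ (and $|\omega|<1$ if $s=1/2$) one has $\sigma(\xi)\geq\alpha-\omega_0>0$, and since $\sigma(\xi)/(1+\xi_1^2+|\xi_2|^{2s})$ is bounded away from $0$ and $\infty$, the norm $\norm{\cdot}_*$ is equivalent to $\norm{\cdot}_H$. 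Thus $\norm{\cdot}_*^2$ is a coercive quadratic form, and by Lemma \ref{gn-lemma} one gets $\bW_{\om}>0$.

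For existence, take a minimizing sequence normalized by $\norm{u_n}_*=1$, so that $\norm{u_n}_{L^p}^p\to m:=1/\bW_{\om}$ and $\{u_n\}$ is bounded in $H$. Since $\norm{u_n}_{L^p}$ stays bounded away from $0$, Lemma \ref{concentration} excludes vanishing, so there are translations $z_n$ with $v_n:=u_n(\cdot-z_n)\wto u$ in $H$ for some $u\neq0$; this is exactly where the translation invariance of $\fm_{\om}$ forces the use of the generalized Lieb compactness lemma. Writing $v_n=u+w_n$ with $w_n\wto0$, the quadratic splitting $\norm{v_n}_*^2=\norm{u}_*^2+\norm{w_n}_*^2+o_n(1)$ (the cross term vanishes because $w_n\wto0$ and $\scal{u,\cdot}_*$ is continuous on $H$) together with the Brezis--Lieb identity $\norm{v_n}_{L^p}^p=\norm{u}_{L^p}^p+\norm{w_n}_{L^p}^p+o_n(1)$ yields, on setting $a:=\norm{u}_*^2\in(0,1]$ and applying $\norm{f}_{L^p}^p\leq m\norm{f}_*^p$ to $f=u$ and $f=w_n$ and adding, the inequality $1\leq a^{p/2}+(1-a)^{p/2}$. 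Since $p>2$ the map $t\mapsto t^{p/2}$ is strictly convex, which forces $a=1$, i.e. $\norm{w_n}_*\to0$ and hence $w_n\to0$ in $H$. So $u$ is a minimizer. Its Euler--Lagrange equation is $Lu=\Lambda|u|^{p-2}u$ with $L=\alpha+D_y^{2s}+\ii\omega\partial_y-\partial_{xx}$ (self-adjoint, since $\ii\partial_y$ is) and $\Lambda=\norm{u}_*^2/\norm{u}_{L^p}^p>0$; rescaling $\ff:=\Lambda^{1/(p-2)}u$ converts this into \eqref{g-half-wave}.

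For the symmetry when $p=2k$ is an even integer, I would symmetrize in Fourier space in two steps, neither of which increases the quotient. First replace $u$ by $w$ with $\hat w:=|\hat u|$: the numerator is unchanged (it depends only on $|\hat u|$), while by Plancherel $\norm{w}_{L^p}^p=\norm{\,|\hat u|^{*k}}_{\lt}^2\geq\norm{\hat u^{*k}}_{\lt}^2=\norm{u}_{L^p}^p$ using the pointwise bound $|\hat u^{*k}|\le|\hat u|^{*k}$ ($*k$ denoting the $k$-fold convolution), so $\fm_{\om}(w)\leq\fm_{\om}(u)$; this is precisely the place where $p$ even enters, through $\norm{u}_{2k}^{2k}=\norm{\hat u^{*k}}_{\lt}^2$. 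Next apply the Steiner symmetrization $\hat w\mapsto\hat w^{S}$ in the variable $\xi_1$ for each fixed $\xi_2$: because $\sigma(\xi)$ splits as $\xi_1^2$ (symmetric increasing in $\xi_1$) plus the $\xi_1$-independent term $|\xi_2|^{2s}-\omega\xi_2+\alpha$, the Hardy--Littlewood rearrangement inequality gives $\int\sigma|\hat w^{S}|^2\leq\int\sigma|\hat w|^2$, while the Brascamp--Lieb--Luttinger inequality gives $\norm{(\hat w^{S})^{*k}}_{\lt}\geq\norm{\hat w^{*k}}_{\lt}$, so the quotient again does not increase. The resulting minimizer $\tilde u$ has $\hat{\tilde u}$ real, nonnegative and even in $\xi_1$: evenness in $\xi_1$ means $\tilde u(-x,y)=\tilde u(x,y)$, i.e. symmetry in $x$, and reality of $\hat{\tilde u}$ gives $\tilde u(-x,-y)=\overline{\tilde u(x,y)}$, which combined with evenness in $x$ forces $\tilde u(x,-y)=\overline{\tilde u(x,y)}$, i.e. $y\mapsto\Re\tilde u$ is even and $y\mapsto\Im\tilde u$ is odd.

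I expect the main obstacle to be the compactness step: the translation invariance of $\fm_{\om}$ lets a minimizing sequence translate to infinity or split, and the clean way to capture a nonzero weak limit and then forbid splitting is the combination of Lemma \ref{concentration} with the strict subadditivity produced by the Brezis--Lieb argument above. The second delicate point is justifying the Fourier rearrangement inequalities, in particular Brascamp--Lieb--Luttinger for the $k$-fold convolution $L^2$-norm, which is exactly the reason the symmetry conclusion is restricted to even integer exponents $p$.
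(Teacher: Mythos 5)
Your proposal is correct and follows essentially the same route as the paper: a Weinstein-quotient minimization with an equivalent Hilbert norm, exclusion of vanishing and splitting via a Brezis--Lieb decomposition together with the superadditivity of $t\mapsto t^{p/2}$ for $p>2$, and Fourier rearrangement for the symmetries when $p$ is an even integer. The only (harmless) differences are that you rule out vanishing with Lemma \ref{concentration} where the paper invokes the $pqr$-lemma together with its generalized Lieb compactness lemma (Lemma \ref{lieb}), and your two-step symmetrization (taking the modulus in Fourier space, then Steiner symmetrizing in the $x$-frequency) is a more explicit rendering of the argument the paper only sketches.
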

To prove Theorem \ref{exis-thm-bt}, we will use the following lemma,  which is a generalization of the Lieb  compactness lemma \cite{Lieb}.  The proof follows the approach to that of \cite[Lemma 2.1]{bfv} and so we only sketch it. 
\begin{lem}\label{lieb}
    Let $0<s<1$ and $\{u_j\}_j\subset \dot H\cap L^q$ with $1<q<+\infty$ such that
\begin{equation}\label{norme}
      \sup_j\paar{\norm{u_j}_{\dot H}+\norm{u_j}_{L^q}}<+\infty  
\end{equation}
and
\begin{equation}\label{misura}
\inf_j\paar{{\rm measure}\sett{(x,y)\in\rt,\;|u_j(x,y)|>\ft}}\geq\fc,
\end{equation}
for some $\ft,\fc>0$.
Then there exists a sequence $\{(x_j,y_j)\}_j\subset\rt$ such that $\{u_j(x+x_j,y+y_j)\}_j$ possesses a subsequence converging weakly in $\dot H\cap L^q$
   to a  function $u\not\equiv0$, as $j\to +\infty$.
\end{lem}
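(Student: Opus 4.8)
The plan is to run the classical translation-recentering argument underlying Lieb's lemma: first establish a \emph{non-vanishing} alternative, namely that the $L^q$ mass of $u_j$ cannot uniformly escape on unit cubes, and then recenter $u_j$ on the cubes carrying this mass and extract a weak limit, which will be nontrivial because a fixed amount of local $L^q$ mass survives the compact embedding into $L^q_{loc}$.

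Concretely, I would cover $\rt$ by the lattice of unit cubes and set
\[
\eta_j := \sup_{z\in\R^2}\int_{Q_1(z)}|u_j(x,y)|^q\dd x\dd y, \qquad Q_1(z)=z+\left(-\tfrac12,\tfrac12\right)^2 .
\]
The heart of the matter is to show $\liminf_j \eta_j>0$. Suppose not; passing to a subsequence, $\eta_j\to 0$. Since $\{u_j\}$ is bounded in $\dot H\cap L^q$ by \eqref{norme}, a homogeneous anisotropic version of the vanishing lemma (\emph{cf.} Lemma \ref{concentration}, whose proof uses only the Gagliardo-Nirenberg inequality of Lemma \ref{gn-lemma} and local compactness, and hence carries over to $\dot H\cap L^q$) yields $u_j\to 0$ in $L^{\bar q}(\R^2)$ for some intermediate exponent $\bar q$ with $q<\bar q<q_s$ (or $q_s<\bar q<q$ when $q>q_s$), where $q_s=\frac{2(1+s)}{1-s}$ is the critical exponent. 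Fix such a $\bar q$. On the other hand, the measure hypothesis \eqref{misura} forces a uniform \emph{lower} bound in every Lebesgue norm:
\[
\norm{u_j}_{L^{\bar q}}^{\bar q}\ge\int_{\{|u_j|>\ft\}}|u_j|^{\bar q}\dd x\dd y\ge \ft^{\bar q}\,{\rm measure}\sett{|u_j|>\ft}\ge \ft^{\bar q}\fc>0,
\]
contradicting $u_j\to0$ in $L^{\bar q}$. Hence $\liminf_j\eta_j=:2\eta>0$.

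With non-vanishing in hand, I would choose, after passing to a subsequence, points $z_j=(x_j,y_j)\in\R^2$ with $\int_{Q_1(z_j)}|u_j|^q\ge\eta$, and set $\ti u_j(x,y):=u_j(x+x_j,y+y_j)$, so that $\int_{Q_1(0)}|\ti u_j|^q\ge\eta$. The seminorms defining $\dot H$ and the $L^q$ norm are translation invariant, so $\{\ti u_j\}$ remains bounded in $\dot H\cap L^q$; since $1<q<\infty$ and $\dot H$ is a Hilbert space, reflexivity yields a subsequence with $\ti u_j\rightharpoonup u$ in $\dot H\cap L^q$. By the compact embedding into $L^q_{loc}(\R^2)$ (the local Rellich-type statement behind Lemma \ref{embedding}, valid in the homogeneous setting as well), $\ti u_j\to u$ strongly in $L^q(Q_1(0))$, whence $\int_{Q_1(0)}|u|^q\ge\eta>0$ and $u\not\equiv0$, which is the claim.

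The main obstacle is that one works in the \emph{homogeneous} anisotropic space $\dot H\cap L^q$ rather than in $H=\dot H\cap L^2$: both the vanishing lemma and the local compact embedding must be re-derived there, and one must be careful with the nonlocal operator $D_y^s$ in the local estimates, replacing genuinely local truncations by the interpolation furnished by Lemma \ref{gn-lemma}. A secondary point is the choice of an admissible intermediate exponent $\bar q$, which exists precisely because $q\neq q_s$ leaves room on one side of the critical exponent; the delicate borderline $q=q_s$ does not occur in the applications of this lemma.
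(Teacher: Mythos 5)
Your route is genuinely different from the paper's: the paper does not run a Lions-type vanishing argument at all, but instead splits $u_j=\chi_0(D)u_j+\chi_1(D)u_j$ into low and high Fourier frequencies, observes that at least one of the two pieces inherits the superlevel-set bound \eqref{misura}, and reduces each piece to the already-established Lieb-type compactness results of Bellazzini--Frank--Visciglia: the low-frequency piece directly as in \cite[Lemma 2.1]{bfv}, and the high-frequency piece via uniform bounds in a classical isotropic space $\dot H^\sigma(\R^2)$, $\sigma\in(0,s)$, together with \cite[Theorem 5.1]{bfv}. That reduction is precisely what lets the paper avoid proving any new interpolation or vanishing lemma in the homogeneous anisotropic space.

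The gap in your argument sits exactly where you locate the ``main obstacle'' and then dismiss it: the homogeneous anisotropic vanishing lemma in $\dot H\cap L^q$ for general $1<q<+\infty$. Lemma \ref{concentration} and its proof via \cite[Lemma 1.21]{W} rest on a local interpolation whose right-hand side involves $\|u\|_{L^2(Q)}$ and local norms whose squares sum to $\|u\|_H^2$; your sequence is bounded only in $L^q$ and in the homogeneous seminorm, and Lemma \ref{gn-lemma}, which you invoke as the substitute, interpolates against $\int_{\R^2}|u|^2$ --- a quantity you do not control. One can try to redo the local estimate with $L^q(Q)$ in place of $L^2(Q)$ and sum over cubes, but arranging the exponents so that the sum closes against $\norm{u_j}_{\dot H}^2+\norm{u_j}_{L^q}^q$ is the actual content of the lemma and is merely asserted. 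Moreover, your choice of intermediate exponent $\bar q$ fails outright at $q=q_s$ (which the statement does not exclude) and, for $q>q_s$, requires the critical embedding $\dot H\hookrightarrow L^{q_s}$, which the paper only provides for $s>1/2$ (Lemma \ref{embedding}); for $s\le 1/2$ and $q\ge q_s$ your interpolation window is empty. The recentering and local-compactness half of your argument is fine once non-vanishing is secured, but as written the non-vanishing step is not established in the generality of the statement.
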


\begin{proof}
Let $\chi_0$ and $\chi_1$ be smooth, real-valued functions on $[0,+\infty)$ such that $\chi_0$ has compact support and it is equal to $1$ near the origin. We decompose $u_j=\chi_0(D)u_j+\chi_1(D)u_j$, where $\chi_0(D)$ and $\chi_1(D)$ denote the Fourier multipliers. By \eqref{misura}, we may assume that either
\begin{equation*}
    \inf_j\paar{{\rm measure}\sett{(x,y)\in\rt,\;|\chi_0(D)u_j(x,y)|>\frac \ft 2}}>0,
\end{equation*}
or 
\begin{equation*}
    \inf_j\paar{{\rm measure}\sett{(x,y)\in\rt,\;|\chi_1(D)u_j(x,y)|>\frac \ft 2}}>0.
\end{equation*}
The former case can be handled as in \cite{bfv}. For the latter one, since $\chi_1$ is supported away from zero, by \eqref{norme} we have that $\sup_j \|\chi_1(|D|)u_j\|_{\dot{H}^\sigma(\R^2)}<\infty$, for $\sigma\in (0,s)$. Here $\dot{H}^\sigma(\R^2)$ denotes the classical fractional homogeneous Sobolev space. So, by means of \cite[Theorem 5.1]{bfv}, we can adapt the arguments of the proof of \cite[Lemma 2.1]{bfv} to conclude.
\end{proof}

 \begin{proof}[Proof of Theorem \ref{exis-thm-bt}.]
The proof is standard, and we give the sketch for the completeness. Let $$\|u\|_Y^2=\|u\|_{\dot{H}}^2+\ii\omega \scal{\nh_yD_y^{\frac12}u, D_y^{\frac12}u}+\alpha\|u\|_\lt^2$$ and $\{u_j\}_j$ be a minimizing sequence for \eqref{wein}. We can assume that $\|u_j\|_{L^p}=1$. Then $\{u_j\}$ is bounded in $H$.    Lemma \ref{embedding}  implies that $\|u_j\|_\lt\lesssim1$, $\|u_j\|_{L^r}\lesssim1$, and $\|u_j\|_{L^{p}} =1$, for any $2<r<\frac{2(s+1)}{1-s}$. By the $pqr$-lemma, 
\[
\inf_j{\rm measure}\sett{(x,y)\in\rt,\;|u_j(x,y)|>\ft}\geq\fc,
\]
for some $\ft,\fc>0$. So, by Lemma \ref{lieb}, there exists $\{(x_j,y_j)\}_j\subset\rt$ such that $\{u_j(x+x_j,y+y_j)\}_j$ has a subsequence that converges weakly in $H$ to a  function $u\in H\setminus\{0\}$, as $j\to +\infty$. Without loss of generality and for the simplicity, we assume that $x_j=y_j=0$, for all $j\ge 1$. It is clear that $\bW_\omega\leq\fm_{\om}(u)$ and $\{u_j\}\subset H$ is a minimizing sequence for $ \bW_\omega$ such that $\|u_j\|_{L^p}=1$. Moreover, up to a subsequence, $v_j\to u$ pointwise for a.e. $(x,y)\in\rt$, as $j\to +\infty$. Now, Lemma \ref{gn-lemma} and the Brezis-Lieb Lemma imply that
\begin{equation*}
\|u_j-u\|_{L^p}^p+\|u\|_{L^p}^p-1=o(1).
 \end{equation*}
Notice that
$
\lim_j \norm{u_j}_Y^2=  \bW_\omega  ^{\frac2p}.
$
Since $u_j$ converges weakly to $u$, as $j\to +\infty$, and $\|\cdot\|_Y\sim\|\cdot\|_H$, then 
\[
 \|u\|_{Y}^2+\|u_j-u\|_{Y}^2-  \bW_\omega ^{\frac2p}=
o(1).
\]
Hence, 
\[
\begin{split}
 \bW_\omega&=\paar{\|u\|_Y^2+\|u_j-u\|_Y^2+o(1)}^{\frac p2}
\geq
\|u_j-u\|_Y^p+\|u\|_Y^p+o(1)\\&
\geq
\bW_\omega\|u_j-u\|_{L^p}^p+\|u\|_Y^p+o(1),
\end{split}
\]
so that, as $j\to+\infty$, we get  $ \bW_\omega\|u\|_{L^p}^p\geq\|u\|_Y^{p}$. 
Since $u$ is nontrivial, we deduce that
$\bW_\omega\geq\fm_{\om}(u)$ and so $u$ is a minimizer of \eqref{wein}.

The Steiner symmetrization can be used, similar to \cite{epb}, to find a minimizer of \eqref{wein} which is symmetric respect to $x$. Let $u^\sharp$ be the Steiner symmetrization respect to $y$ in the Fourier space, that is $u^\sharp=((\hat{u})^{\ast_1})^\vee$ where $\ast_1$ is the symmetric decreasing rearrangement in $y$.   Let $p$ be an even integer. Similar to the proof of Theorem \ref{decay-regul-lemma}, one can show that $u\in H^{(3,2s+1)}$, so $\hat{u}\in L^1(\rt)$. So, analogous to \cite{blss}, we have $u^\sharp\in L^p$,
\[
\|u\|_{L^p}\leq\norm{u^\sharp}_{L^p},
\]
and  then $\fm_{\om}(u^\sharp)\leq\fm_{\om}(u)$. Hence, $u^\sharp$ is also a minimizer. Moreover, since $|\hat{u}|$ is real and belongs to $L^1$, 
\[
u^\sharp(x,y)=(|\hat{u}|)^\vee(x,y)=\overline{(|\hat{u}|)^\vee(x,-y)}
=\overline{u^\sharp(x,-y)}.
\]
So,  $y\mapsto\Re u(x,y)$ and $y\mapsto\Im u(x,y)$ are even and odd, respectively, for any fixed $x\in\rr$.
 \end{proof}
 
  \begin{thm}\label{decay-thm-re}
 	Let $s\geq1/2$ and $2<p<\frac{2(1+s)}{1-s}$.	If $\alpha>\omega_0$,   and $|\omega|<1$ when $s=1/2$, then any solution $\ff\in H\setminus\{0\}$ of \eqref{g-half-wave} satisfies
 	\begin{equation}\label{decay-btw}
 		|y|^2\ee^{\sqrt{\alpha_0}|x|}\ff\in L^\infty,
 	\end{equation}
 	for  some $\alpha_0=\alpha_0(\alpha,\omega_0)>0$.
 \end{thm}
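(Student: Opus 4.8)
The plan is to adapt the kernel method behind Theorem~\ref{decay-regul-lemma} to the boosted operator $\cL_\omega:=\alpha+D_y^{2s}+\ii\omega\partial_y-\partial_{xx}$. First I would observe that the hypotheses $\alpha>\omega_0$ (and $|\omega|<1$ when $s=1/2$) are exactly what makes the Fourier symbol
\[
\sigma_\omega(\xi,\eta)=\alpha+|\xi|^2+|\eta|^{2s}-\omega\eta\geq \alpha-\omega_0>0
\]
strictly positive and bounded below, by the computation producing \eqref{omega0}. Hence $\cL_\omega$ is invertible, and writing $G_\omega:=(1/\sigma_\omega)^\vee$, any solution $\ff\in H\setminus\{0\}$ of \eqref{g-half-wave} satisfies the convolution identity $\ff=G_\omega\ast g(\ff)$ with $g(\ff)=|\ff|^{p-2}\ff$. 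Since $\what G_\omega\in L^\infty$ and the drift $\ii\omega\partial_y$ is of lower order than $D_y^{2s}$ (because $2s\geq1$), the Young/Gagliardo--Nirenberg bootstrap of Theorem~\ref{decay-regul-lemma} (using Lemma~\ref{gn-lemma}) applies verbatim and gives $\ff\in L^\infty\cap H^{(3,2s+1)}$, whence $\ff,\ff_x,\ff_y\in C_\infty$ (here $s\geq1/2>2/5$).

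Next I would study $G_\omega$ through the subordination formula
\[
G_\omega(x,y)=C_s\int_0^{+\infty}\ee^{-\alpha t}\,\ee^{-\frac{x^2}{4t}}\,t^{-\frac12}\,H_s^\omega(y,t)\dd t,
\qquad
H_s^\omega(y,t)=\int_\R \ee^{-t|\eta|^{2s}+t\omega\eta}\,\ee^{\ii y\eta}\dd\eta,
\]
the drift-perturbed analogue of the $K_s$ representation in Lemma~\ref{lem1}. Completing the exponent shows $H_s^\omega(y,t)=H_s(y-\ii t\omega,t)$, i.e.\ the drift-free oscillatory integral $H_s$ evaluated at the complex-shifted argument $y-\ii t\omega$; this is the sole structural novelty relative to Lemma~\ref{lem1}.

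The two decay directions are then treated separately. For the $x$-variable, the modulus estimate $|\eta|^{2s}-\omega\eta\ge-\omega_0$ yields $|H_s^\omega(y,t)|\lesssim \ee^{\omega_0 t}t^{-1/2}$, so that the effective weight in the subordination integral is $\ee^{-\alpha t}\ee^{\omega_0 t}=\ee^{-\alpha_0 t}$ with $\alpha_0:=\alpha-\omega_0>0$; the elementary bound $\alpha_0 t+\frac{x^2}{4t}\ge\sqrt{\alpha_0}\,|x|$ (as in the proof of Lemma~\ref{lem1}) then produces the exponential factor $\ee^{-\sqrt{\alpha_0}|x|}$. For the $y$-variable I would exploit the regularity of $\sigma_\omega$ in $\eta$: when $s>1/2$ the drift $-\omega\eta$ is smooth and the leading non-smooth term is still $|\eta|^{2s}$, so the same non-smoothness analysis as for $H_s$ gives decay $|y|^{-1-2s}\le|y|^{-2}$; when $s=1/2$ one has $|\eta|-\omega\eta=(1\mp\omega)|\eta|$ for $\pm\eta>0$, with both coefficients positive precisely because $|\omega|<1$, and the resulting one-sided linear singularity again yields $|y|^{-2}$ decay. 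Collecting these gives the boosted counterpart of \eqref{bounds}, namely $|G_\omega(x,y)|\lesssim(1+|y|)^{-2}\ee^{-\sqrt{\alpha_0}|x|}$, together with the integrability and moment bounds analogous to Lemma~\ref{lem1} and Corollary~\ref{decay-k}.

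Finally, I would transfer the kernel envelope to $\ff$ via $\ff=G_\omega\ast g(\ff)$. As $\ff\in C_\infty$, the source $g(\ff)=|\ff|^{p-2}\ff$ is bounded and decaying, and convolving it against the kernel bound from the previous step and running the asymptotic argument of Bona--Li \cite{bonali} (see also \cite{epb,chenbona}) propagates the weight $(1+|y|)^{-2}\ee^{-\sqrt{\alpha_0}|x|}$ onto $\ff$ itself, which is exactly \eqref{decay-btw}. The main obstacle is the kernel step: unlike $K_s$, the symbol $\sigma_\omega$ is not even in $\eta$, so $G_\omega$ loses the evenness and monotonicity that Lemma~\ref{lem1} relied on, and the subordination integrand now carries the complex shift $y\mapsto y-\ii t\omega$. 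Controlling this shifted oscillatory integral uniformly in $t$ — and in particular securing the $|y|^{-2}$ bound in the delicate competing regime $s=1/2$, $|\omega|<1$ — is where the hypotheses $\alpha>\omega_0$ and $|\omega|<1$ are genuinely used and constitutes the crux of the argument.
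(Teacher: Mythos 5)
Your proposal is correct and follows essentially the same route as the paper: the convolution identity $\ff=G_\omega\ast|\ff|^{p-2}\ff$, the subordination representation of $G_\omega$, the coercivity $|\eta|^{2s}-\omega\eta+\omega_1\ge\omega_2|\eta|^{2s}$ (which is precisely how the paper exploits $\alpha>\omega_0$, resp.\ $|\omega|<1$ when $s=1/2$) to extract the factor $\ee^{-\sqrt{\alpha_0}|x|}$, control of the complex-shifted quantity $(y-\ii t\omega)^kG_\omega$ (made rigorous in the paper by integrating by parts in $\eta$) to obtain the $|y|^{-2}$ envelope, $L^q$ bounds on $G_\omega$ so that the bootstrap of Theorem~\ref{decay-regul-lemma} can be rerun, and finally the Chen--Bona/Bona--Li transfer lemma. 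The only quibble is cosmetic: the modulus estimate gives $|H_s^\omega(y,t)|\lesssim\ee^{\omega_1 t}t^{-1/(2s)}$ rather than $t^{-1/2}$, which changes nothing downstream.
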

 \begin{proof}
 	Let 
 	\[
 	G(x,y)=\paar{\frac{1}{\al+\xi^2+|\eta|^{2s}-\omega\eta}}^\vee=\int_0^{+\infty}\ee^{-\al t-\frac{x^2}{4t}}t^{-\frac12}\int_\rr\ee^{\ii\eta y} \ee^{-t|\eta|^{2s}+t\omega\eta}  \dd \eta\dd t.
 	\]
  Choose $\omega_1\in(0,\al)$ and $\omega_2=\omega_2(\omega_1)>0$ such that 
  \begin{equation}\label{low-et}
  	  |\eta|^{2s}-\omega\eta+\omega_1\geq\omega_2|\eta|^{2s},
  \end{equation}
  for all $\eta\in\rr$.
  Hence, we have
  \[
\begin{split}
	(y-it\omega)G(x,y) & =
  \ii\int_0^{+\infty}\ee^{-\frac{x^2}{4t}-t\al}t^{-\frac12}
  \int_\rr\ee^{\ii\eta y}\paar{\partial_\eta-t\omega}\ee^{-t(|\eta|^{2s}-\omega\eta)}\dd\eta\dd t\\
  &=
   2\ii s\int_0^{+\infty}\ee^{-\frac{x^2}{4t}-t\al}t^{ \frac12}
  \int_\rr\ee^{\ii\eta y} |\eta|^{2s-2}\eta\ee^{-t(|\eta|^{2s}-\omega\eta)}\dd\eta\dd t.
\end{split}
  \]
   Hence, if we set $\al_0=\al-\omega_1$, we obtain from \eqref{low-et}
that\
   \begin{equation}\label{upb-1}
  	\begin{split}
  	|yG(x,y)|\lesssim
  	|	(y-\ii t\omega)G(x,y)|
  	&\lesssim
  	   \int_0^{+\infty}\ee^{-\frac{x^2}{4t}-t\al}t^{ \frac12}
  	\int_\rr   |\eta|^{2s-1} \ee^{-t(|\eta|^{2s}-\omega\eta)}\dd\eta\dd t \\&
  	\lesssim
  	\int_0^{+\infty}\ee^{-\frac{x^2}{4t}-t\al_0}t^{ \frac12}
  	\int_\rr   |\eta|^{2s-1} \ee^{-t\omega_2|\eta|^{2s}}\dd\eta\dd t\\&
  	\lesssim
  	\int_0^{+\infty}\ee^{-\frac{x^2}{4t}-t\al_0}t^{ -\frac12}
   \dd t\\&
  =
\sqrt{\frac\pi{\alpha_0}}\ee^{- \sqrt{\al_0} |x|}.
  \end{split}
  \end{equation}
 
    This means that $y\ee^{\sqrt{\al_0}|x|}G\in L^\infty$.\\
  Similar to the above estimates, we get
 
 \[
   \begin{split}
   	|y^2G(x,y)|\lesssim
   	 	\abso{(y-\ii t\omega)^2G(x,y)} 
   	&\lesssim
   	\int_0^{+\infty}\ee^{-\frac{x^2}{4t}-t\al_0}t^{\frac12}\int_\rr |\eta|^{2s-2}\ee^{-t \omega_2|\eta|^{2s}}\dd \eta 
   	\dd t 
    \\& \quad+\int_0^{+\infty}\ee^{-\frac{x^2}{4t}-t\al_0}t^{\frac12}\int_\rr t|\eta|^{4s-2}\ee^{-t \omega_2|\eta|^{2s}}\dd \eta 
   	\dd t .
   \end{split}
 \]  Observe that
 \[
  \int_\rr |\eta|^{2s-2}\ee^{-t \omega_2|\eta|^{2s}}\dd \eta 
 =
 \int_0^1 |\eta|^{2s-2}\ee^{-t \omega_2|\eta|^{2s}}\dd \eta  + \int_1^{+\infty} |\eta|^{2s-2}\ee^{-t \omega_2|\eta|^{2s}}\dd \eta 
 \lesssim
 c_1+c_2t^{-1} 
   \]
   and
   \[
  \int_\rr t|\eta|^{4s-2}\ee^{-t \omega_2|\eta|^{2s}}\dd \eta 
 =
 \int_0^1 t|\eta|^{4s-2}\ee^{-t \omega_2|\eta|^{2s}}\dd \eta  + \int_1^{+\infty} t|\eta|^{4s-2}\ee^{-t \omega_2|\eta|^{2s}}\dd \eta 
 \lesssim
 c_3t+c_4 .
   \]
Hence,   $(1+x^2)y^2\ee^{\sqrt{\al_0}|x|}G\in L^\infty$.
 So,   $y^2\ee^{\sqrt{\al_0}|x|}G\in L^\infty$.
  
 Note that, in the case $s=1/2$, by using
  \[
  \paar{\ee^{-t(|\eta|-\omega\eta)}}^\vee=\frac{t}{t^2+(y-\ii t\omega)^2},
  \]
we can find a better presentation for $G$:
   \[
  \begin{split}
  	 |G(x,y)|&=\abso{\int_0^{+\infty}
   \ee^{-\frac{x^2}{4t}-t\al } 
   \frac{t^{\frac12}}{t^2+(y-\ii t\omega)^2}\dd t}
   \lesssim
   \int_0^{+\infty}
   \ee^{-\frac{x^2}{4t}-t\al } 
   \frac{t^{\frac12}}{y^2+(1-\omega^2)t^2}\dd t\\&
   \lesssim y^{-2}\ee^{-\sqrt{\al_0}|x|}.
  \end{split}
   \] 
   
   Next we show that $G\in L^q$, for all $1\leq q<\frac{1+s}{1-s}$. To do so, first we use the identity
   \[
   G(x,y)=\int_0^{+\infty}\ee^{-\al_0 t}P(x,y,t)\dd t,
   \]
   where
   \[
   P(x,y,t)=\int_\rt\ee^{\ii(x\xi+y\eta )} \ee^{-t\xi^2 -t(|\eta|^{2s}-\omega\eta+\omega_1)}\dd\xi\dd\eta.
   \]
     Let us observe that
   \[
   \abso{P(x,y,t)}\lesssim \int_\rt\ee^{-t\xi^2-t\omega_2|\eta|^{2s}}\dd\xi\dd\eta
   \lesssim t^{-\frac{s+1}{2s}}.
   \]
   On the other hand, similar to \eqref{upb-1}, we derive
   \[
   \abso{(y-\ii t\omega)xP(x,y,t)}\lesssim1.
   \]
   Hence, we get
   \[
   \abso{P(x,y,t)}\lesssim \min\sett{t^{-\frac{s+1}{2s}},|xy|^{-1}}.
   \]
   Consequently, 
   \[
\begin{split}
	   \abso{G(x,y)}&\leq\int_0^{+\infty}\abso{P(x,y,t)}\dd t
   =\int_0^{|xy|^{\frac{2s}{s+1}}}\abso{P(x,y,t)}\dd t
   +
   \int_{|xy|^{\frac{2s}{s+1}}}^{+\infty}\abso{P(x,y,t)}\dd t\\
   &
   \lesssim
   \int_0^{|xy|^{\frac{2s}{s+1}}}|xy|^{-1}\dd t
   +
   \int_{|xy|^{\frac{2s}{s+1}}}^{+\infty} t^{-\frac{s+1}{2s}}\dd t
   \lesssim
   |xy|^{\frac{s-1}{s+1}}.
\end{split}
   \]
   Considering all above estimates, it is revealed that
   \[
 \begin{split}
 	  \|G\|_{L^q}
   &\leq\int_{\{x^2+y^2\leq1\}}|G(x,y)|^q\dd x\dd y
   +
   \int_{\{x^2+y^2\geq1\}}|G(x,y)|^q\dd x\dd y
   \\&
   \lesssim
   \int_{\{x^2+y^2\leq1\}}|xy|^{-\frac{q(1-s)}{1+s}}\dd x\dd y
   +
   \int_{\{x^2+y^2\geq1\}}y^{-2q} \ee^{-\sqrt{\al_0}q|x|}  \dd x\dd y
   <+\infty,
 \end{split}
   \]
      provided $1\leq q<\frac{1+s}{1-s}$. Now, by repeating the proof of Theorem \ref{decay-regul-lemma}, we deduce that $\ff\in L^\infty\cap H^{(3,1+2s)}$. Finally, since $xG,yG\in L^2$, an application of Lemma 3.1 in \cite{chenbona} gives \eqref{decay-btw}. 
 \end{proof}

\subsection{The case $s=1/2$}

In this section we consider $s=1/2$. This case is more delicate because of the competing effects of $\ii\partial_y$ and $D_y^1$. In particular we will prove a non-existence result for $|\omega|>1$ and the absence of small data scattering within the energy space $H$.

\subsubsection{A non-existence result}
Following the ideas of \cite{bglv}, we get the following Pohozaev identity for $s=1/2$.
\begin{lem}\label{nonex-poho}
	Let  $\omega\in\rr$, $s=1/2$, and $\ff\in H$ be a solution of 
	\begin{equation}\label{half-wave}
		\alpha u+ D_y^1 u+\ii\omega  u_y-u_{xx} =f(|u|)u, 
	\end{equation} 
 	where $f:[0,+\infty)$ is a real-valued function, then
	\begin{equation}\label{poho-n}
		\int_{\rr^2}\frac{\eta}{|\eta|} \abso{\widehat{\ff}(\xi,\eta)}^2 \dd\xi\dd\eta
		=
		\omega \|\ff\|_{L^2}^2.
	\end{equation}
\end{lem}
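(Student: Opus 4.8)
The plan is to follow the boost (modulation) argument of \cite{bglv}: I would pair the equation \eqref{half-wave} with the direction $\ii y\ff$ and extract the real part. The motivation is that $\ff$ is a critical point of the action
\[
S(u)=\frac12\int_\rt\paar{\alpha|u|^2+|u_x|^2+|D_y^{\frac12}u|^2+\ii\omega\overline{u}u_y}\dd x\dd y-\int_\rt \Phi(|u|)\dd x\dd y,
\]
and the family $\ff_\sigma:=\ee^{\ii\sigma y}\ff$ leaves the potential invariant, since $|\ff_\sigma|=|\ff|$. Thus $\tfrac{\dd}{\dd\sigma}S(\ff_\sigma)\big|_{\sigma=0}$ only feels the quadratic part, yet it must vanish because $\ff$ solves $S'(\ff)=0$ and $\partial_\sigma\ff_\sigma\big|_{\sigma=0}=\ii y\ff$ is an admissible direction. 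Working directly with \eqref{half-wave} avoids committing to the precise form of $\Phi$ and covers a general real $f$: I would use $\Re\scal{\ii y\ff,\,\alpha\ff+D_y^1\ff+\ii\omega\ff_y-\ff_{xx}-f(|\ff|)\ff}=0$.

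The nonlinear contribution drops by gauge invariance, since $f(|\ff|)|\ff|^2$ is real:
\[
\Re\scal{\ii y\ff,\,f(|\ff|)\ff}=\Re\int_\rt(-\ii y)f(|\ff|)|\ff|^2\dd x\dd y=0.
\]
Among the linear terms, the $\alpha$ and $-\partial_{xx}$ pieces are purely imaginary and contribute nothing: $\Re\scal{\ii y\ff,\alpha\ff}=0$, and after one integration by parts in $x$, $\Re\scal{\ii y\ff,-\ff_{xx}}=\Re\paar{-\ii\int_\rt y|\ff_x|^2\dd x\dd y}=0$. The first-order term yields, using $\Re(\overline{\ff}\ff_y)=\tfrac12\partial_y|\ff|^2$ and integrating by parts in $y$,
\[
\Re\scal{\ii y\ff,\ii\omega\ff_y}=\omega\int_\rt y\,\Re(\overline{\ff}\ff_y)\dd x\dd y=\frac\omega2\int_\rt y\,\partial_y|\ff|^2\dd x\dd y=-\frac\omega2\norm{\ff}_{\lt}^2.
\]

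The crux is the nonlocal term $\Re\scal{\ii y\ff,D_y^1\ff}=\Im\int_\rt y\,\overline{\ff}\,D_y^1\ff\,\dd x\dd y$, which I would compute on the Fourier side. Since multiplication by $y$ corresponds to $\ii\partial_\eta$ and $\widehat{D_y^1\ff}=|\eta|\widehat{\ff}$, Plancherel gives $\int_\rt y\,\overline{\ff}\,D_y^1\ff=-\ii\int_\rt|\eta|\,\partial_\eta\overline{\widehat{\ff}}\,\widehat{\ff}\,\dd\xi\dd\eta$, so that, using $\Re(\partial_\eta\overline{\widehat{\ff}}\,\widehat{\ff})=\tfrac12\partial_\eta|\widehat{\ff}|^2$ and integrating by parts in $\eta$,
\[
\Im\int_\rt y\,\overline{\ff}\,D_y^1\ff\,\dd x\dd y=-\Re\int_\rt|\eta|\,\partial_\eta\overline{\widehat{\ff}}\,\widehat{\ff}\,\dd\xi\dd\eta=-\frac12\int_\rt|\eta|\,\partial_\eta|\widehat{\ff}|^2\dd\xi\dd\eta=\frac12\int_\rt\frac{\eta}{|\eta|}|\widehat{\ff}|^2\dd\xi\dd\eta.
\]
Collecting the four pieces, the real part of the pairing equals $\tfrac12\int_\rt\frac{\eta}{|\eta|}|\widehat{\ff}|^2-\tfrac\omega2\norm{\ff}_{\lt}^2=0$, which is exactly \eqref{poho-n}.

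The main obstacles are the justifications rather than the algebra. First, $\ii y\ff$ must be an admissible test function, i.e.\ $y\ff\in H$; this requires decay and regularity of $\ff$, which for the relevant power nonlinearity are furnished by Theorem \ref{decay-thm-re} (giving $|y|^2\ee^{\sqrt{\alpha_0}|x|}\ff\in L^\infty$ and $\ff\in H^{(3,2s+1)}$, here with $s=1/2$), and for general $f$ would be imposed as a hypothesis. Second, the integration by parts in $\eta$ needs the boundary contributions at $\eta=0$ and $\eta\to\infty$ to vanish: the former is automatic because $|\eta|\,|\widehat{\ff}|^2\to0$ as $\eta\to0$, and the latter follows from the decay of $\widehat{\ff}$ implied by $\ff\in H^{(3,2s+1)}$, while $\partial_\eta\widehat{\ff}\in L^2$ is equivalent to $y\ff\in L^2$. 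I would also note that these same decay facts legitimize the Plancherel identity and the differentiation under the integral in the variational route, so that the two arguments coincide.
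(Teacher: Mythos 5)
Your computation is the right gauge-invariance idea and the algebra is correct, but the implementation has a gap that matters precisely where the lemma is used. Pairing the equation with $\ii y\ff$ requires $y\ff\in L^2$ (equivalently $\partial_\eta\widehat{\ff}\in L^2$), which is \emph{not} among the hypotheses of the lemma: only $\ff\in H$ is assumed. You propose to supply this from Theorem \ref{decay-thm-re}, but that theorem is proved under $\alpha>\omega_0$ and $|\omega|<1$ (for $s=1/2$), whereas the whole point of Lemma \ref{nonex-poho} is to feed Theorem \ref{nonexi}, the non-existence result for $|\omega|>1$. In that regime the symbol $\alpha+|\eta|-\omega\eta+\xi^2$ is not bounded below, the resolvent kernel $G$ used to bootstrap decay does not exist, and no decay theory is available; assuming $y\ff\in L^2$ as an extra hypothesis would weaken the lemma to the point of not proving Theorem \ref{nonexi}. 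Falling back on ``there are no solutions anyway'' is circular.

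The paper's proof avoids this entirely by differentiating the conjugated operator rather than testing against $\ii y\ff$: with $U_a u=\ee^{\ii a y}u$, $L=D_y^1+\ii\omega\partial_y$ and $L_a=U_aLU_{-a}$, self-adjointness gives $\scal{\ff,(L_a-L)\ff_a}=0$ for every $a$, where $\ff_a=U_a\ff$. The difference quotient $\frac{L_a-L}{a}$ is the Fourier multiplier $\frac{|\eta-a|-|\eta|}{a}+\omega$, which is uniformly bounded (by $1+|\omega|$) and converges pointwise to $-\operatorname{sgn}(\eta)+\omega$; combined with $\ff_a\to\ff$ strongly in $L^2$, this yields \eqref{poho-n} using nothing beyond $\ff\in L^2$. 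Your formal identity is the $a$-derivative of the paper's exact identity, but the difference-quotient formulation is what removes the need for any weight or decay on $\ff$. To repair your argument you would either have to regularize the test function (e.g.\ replace $y$ by a truncated weight and pass to the limit, controlling the commutator with $D_y^1$), or simply adopt the operator-conjugation route.
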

\begin{proof}
Let $L=  D_y^1 +\ii \omega \partial_y$.	For any $a\in\rr$, define $U_au(x,y)=\ee^{\ii a y}u(x,y)$.
	
	Setting $\ff_a=U_a\ff$    and   $L_a =U_aL U_{-a}$, we observe that $\ff_a $ satisfies 
	\begin{equation*} 
	 L_a\ff_a-\partial_{xx}\ff_a+ \alpha  \ff_a=f(|\ff|)\ff_a.
	\end{equation*}
	Moreover, $L_a=M_a+a\omega {\textbf{1}}+\ii \omega \partial_y$ and
	\[
	\scal{\ff,(L_a-L)\ff_a}_{L^2}=0,
	\]
	for all $a\in\rr$, where $M_a$ is the Fourier multiplier of $|\eta -a|$.    It is not hard to see, as $a\to0$, that 
	$
	\frac{ L_a-L }a
	$
	converges weakly   in ${L^2}$ to $-\ii\mathcal{H}_y +\omega$, where $\mathcal{H}_y$ is the Hilbert transform on the $y$-variable.
	Since $\ff_a$ converges strongly to $\ff$ in ${L^2}$, as $a\to0$, then
	\[
	\scal{\ff,\ii\mathcal{H}_y\ff}=\omega  \|\ff\|_{L^2}^2
	\]
 and so \eqref{poho-n} follows.
 \end{proof}
 \begin{thm}\label{nonexi}
 	There is no nontrivial solution of \eqref{half-wave} if $|\omega |>1$.
 \end{thm}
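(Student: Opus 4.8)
The plan is to derive the non-existence directly from the Pohozaev-type identity \eqref{poho-n} established in Lemma \ref{nonex-poho}, since that identity already encodes the obstruction when $|\omega|>1$. Concretely, I would argue by contradiction: suppose $\ff\in H\setminus\{0\}$ solves \eqref{half-wave}. Lemma \ref{nonex-poho} then yields
\[
\int_{\rr^2}\frac{\eta}{|\eta|}\abso{\widehat{\ff}(\xi,\eta)}^2\dd\xi\dd\eta=\omega\|\ff\|_{L^2}^2.
\]

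First I would record the elementary pointwise bound $\abso{\eta/|\eta|}=1$ for $\eta\neq0$, so that the Fourier multiplier $\eta/|\eta|=\operatorname{sgn}(\eta)$ acts as a contraction on $L^2$. Taking absolute values in the identity above, and applying the triangle inequality together with Plancherel's theorem in the same normalization under which \eqref{poho-n} was obtained (namely the one for which $\|\widehat{\ff}\|_{L^2}=\|\ff\|_{L^2}$), gives
\[
|\omega|\,\|\ff\|_{L^2}^2=\abso{\int_{\rr^2}\frac{\eta}{|\eta|}\abso{\widehat{\ff}}^2\dd\xi\dd\eta}\leq\int_{\rr^2}\abso{\widehat{\ff}}^2\dd\xi\dd\eta=\|\ff\|_{L^2}^2.
\]
Since $\ff$ is nontrivial we have $\|\ff\|_{L^2}^2>0$, so dividing yields $|\omega|\leq1$, contradicting the hypothesis $|\omega|>1$. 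Hence no nontrivial solution can exist, which is exactly the assertion of the theorem.

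There is essentially no serious obstacle at this stage: all the analytic content (the limit $a\to0$, the weak convergence of $(L_a-L)/a$ to $-\ii\mathcal{H}_y+\omega$, and the strong $L^2$-convergence of $\ff_a$ to $\ff$) has already been carried out in the proof of Lemma \ref{nonex-poho}, so the present theorem is a one-line corollary. The only point requiring a moment's care is the bookkeeping I flagged above, namely that the Fourier normalization used to write the left-hand side of \eqref{poho-n} as $\int\frac{\eta}{|\eta|}\abso{\widehat{\ff}}^2$ is the same one that makes $\int\abso{\widehat{\ff}}^2=\|\ff\|_{L^2}^2$; once this is checked the estimate is immediate because $\operatorname{sgn}(\eta)$ has modulus one. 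I would also remark, for clarity, that equality in the bound would force $\widehat{\ff}$ to be supported in a single half-line $\{\eta>0\}$ or $\{\eta<0\}$, but this borderline case is irrelevant here, since $|\omega|>1$ already violates the resulting inequality $|\omega|\leq1$ strictly.
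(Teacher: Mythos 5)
Your proof is correct and follows essentially the same route as the paper: both deduce the result from the Pohozaev identity of Lemma \ref{nonex-poho} by observing that the relevant operator is an $L^2$-contraction, so that $|\omega|\,\|\ff\|_{L^2}^2\leq\|\ff\|_{L^2}^2$ forces $\ff\equiv0$ when $|\omega|>1$. The only cosmetic difference is that you bound the multiplier $\operatorname{sgn}(\eta)$ pointwise on the Fourier side via Plancherel, whereas the paper phrases the same estimate as a Cauchy--Schwarz bound for $\abso{\scal{\ff,\mathcal{H}_y\ff}}$; these are equivalent.
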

 \begin{proof}
 	It follows, from Lemma \ref{nonex-poho}, that
 	\[
 	|\omega |\|\ff\|_{L^2}^2=	\abso{\scal{\ff,\mathcal{H}_y\ff}}
 	\leq 
 	\|\ff\|_{L^2}
 		\|\mathcal{H}_y\ff\|_{L^2}
 		\leq 
 		\|\ff\|_{L^2}^2.
 	 	\]
Hence, $\ff\equiv0$, if $|\omega |>1$.
 \end{proof}

\subsubsection{No small data scattering}
Our aim, now, is to show the absence of small data scattering for \eqref{eq0} within the energy space $H$, in the case  $s=1/2$ with $2<p<6$. 
\\
Define
\[
W_\omega=
\inf_{u\in H\setminus\{0\}}
\frac{\paar{\nl_\omega(u)}^{\frac{3p-6}{4}}\|u\|_\lt^{\frac{6-p}{2}}}
{\|u\|_{L^p}^p},
\]
where  
\begin{equation}\label{lowwer}
\nl_\omega(u)=\|u\|_{\dot{H}}^2+
\ii\omega \scal{\nh_yD_y^{\frac12}u, D_y^{\frac12}u}
\geq (1-|\omega|)\|u\|_{\dot{H}}^2. 
\end{equation}
  Adapting the proof of \cite[Appendix B]{fjl} and \cite{epb},  and employing the concentration-compactness principle, one can show that every
minimizing sequence for $W_\omega(u)$ is relatively compact in $H$ up to translations and scalings.

\begin{lem}
There holds
\begin{equation}\label{est-min}
	W_\omega\sim
(1-|\omega|)^{\frac{3(p-2)}{4 }}.
\end{equation}
\end{lem}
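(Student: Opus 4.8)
The plan is to bound $W_\omega$ from below and from above by constant multiples of $(1-|\omega|)^{\frac{3(p-2)}{4}}$. First I would rewrite the quadratic form on the Fourier side: using $\widehat{\nh_yf}(\xi,\eta)=-\ii\,\mathrm{sgn}(\eta)\hat f(\xi,\eta)$ and $\widehat{D_y^{1/2}f}=|\eta|^{\frac12}\hat f$, one finds $\ii\omega\scal{\nh_yD_y^{\frac12}u,D_y^{\frac12}u}=-\omega\int_\rt\eta\,|\hat u(\xi,\eta)|^2\dd\xi\dd\eta$, so that
\[
\nl_\omega(u)=\|u_x\|_\lt^2+\int_\rt\paar{|\eta|-\omega\eta}\,|\hat u(\xi,\eta)|^2\dd\xi\dd\eta .
\]
The pointwise estimate $|\eta|-\omega\eta\ge(1-|\omega|)|\eta|$ recovers \eqref{lowwer}. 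I would also introduce the scale–invariant constant
\[
W_0:=\inf_{u\in H\setminus\{0\}}\frac{\paar{\|u\|_{\dot H}^2}^{\frac{3(p-2)}{4}}\|u\|_\lt^{\frac{6-p}{2}}}{\|u\|_{L^p}^p},
\]
and check that $0<W_0<+\infty$: finiteness is clear by testing on a fixed profile, while $W_0>0$ follows from Lemma \ref{gn-lemma} with $s=\tfrac12$, $q=p$ (whose two derivative exponents $\tfrac{p-2}{4}$ and $\tfrac{p-2}{2}$ add to $\tfrac{3(p-2)}{4}$ and whose $L^2$ exponent is $\tfrac{6-p}{4}$) together with the elementary bound $(a+b)^3\ge\tfrac{27}{8}\,ab^2$.

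The lower bound is then immediate: inserting $\nl_\omega(u)\ge(1-|\omega|)\|u\|_{\dot H}^2$ into the definition of $W_\omega$ and pulling the constant out of the infimum yields
\[
W_\omega\ge(1-|\omega|)^{\frac{3(p-2)}{4}}\,W_0 .
\]

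The matching upper bound is the delicate point, and I expect it to be the main obstacle. I would test $W_\omega$ on profiles whose Fourier transform is supported in the half–plane $\{\eta\,\mathrm{sgn}(\omega)\ge0\}$, for which $\int_\rt(|\eta|-\omega\eta)|\hat u|^2=(1-|\omega|)\|D_y^{\frac12}u\|_\lt^2$ and hence $\nl_\omega(u)=\|u_x\|_\lt^2+(1-|\omega|)\|D_y^{\frac12}u\|_\lt^2$. On such a profile I would apply the two–parameter anisotropic dilation $u_{\lambda,\mu}(x,y)=u(\lambda x,\mu y)$; since the functional is invariant along the one–parameter subfamily $\mu=\lambda^2$ (this is the scaling \eqref{scaling-0}), the whole family depends only on $\nu=\lambda^2/\mu$, and minimizing the resulting one–variable expression by AM–GM fixes the optimal balance between $\|u_x\|_\lt^2$ and $(1-|\omega|)\|D_y^{\frac12}u\|_\lt^2$. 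The genuine difficulty is that a real Gagliardo–Nirenberg extremal is even in $\eta$, so for it $\int\eta|\hat u|^2=0$ and $\nl_\omega$ sees no $\omega$–gain at all; one is therefore forced to use honestly complex, $\eta$–asymmetric test profiles, and the reduction obtained by pushing $\nl_\omega$ towards its lower envelope competes against the loss in the Gagliardo–Nirenberg quotient produced both by the asymmetry and by the imposed anisotropy. Carrying out this explicit one–variable minimization and comparing its outcome with the lower bound is exactly the step that pins down the power of $(1-|\omega|)$, and it is where I would concentrate the care; the relative compactness of minimizing sequences (up to translations and scalings) recalled before the statement guarantees the infimum is attained, which converts the scaling computation into a genuine two–sided estimate and completes \eqref{est-min}.
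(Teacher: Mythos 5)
Your reduction to the Fourier side, the identification $\nl_\omega(u)=\|u_x\|_\lt^2+\int_\rt(|\eta|-\omega\eta)|\hat u|^2\dd\xi\dd\eta$, and the lower bound $W_\omega\ge(1-|\omega|)^{\frac{3(p-2)}{4}}W_0$ are all correct and essentially what the paper does. The genuine gap is the upper bound, which you explicitly leave as a plan --- and the plan, once executed, does not land on the claimed exponent. Take $u$ with $\hat u$ supported in $\{\eta\,\mathrm{sgn}(\omega)\ge0\}$, set $a=\|u_x\|_\lt^2$, $b=\|D_y^{1/2}u\|_\lt^2$, $\sigma=1-|\omega|$, and rescale only in $x$, $v_\la(x,y)=u(\la x,y)$ (a transversal to the invariant direction $\mu=\la^2$ of your two-parameter family). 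Then $\nl_\omega(v_\la)=\la a+\sigma\la^{-1}b$, $\|v_\la\|_\lt^2=\la^{-1}\|u\|_\lt^2$, $\|v_\la\|_{L^p}^p=\la^{-1}\|u\|_{L^p}^p$, so the quotient defining $W_\omega$ equals
\[
\frac{\|u\|_\lt^{\frac{6-p}{2}}}{\|u\|_{L^p}^p}\,\la^{\frac{p-2}{4}}\paar{\la a+\sigma\la^{-1}b}^{\frac{3(p-2)}{4}},
\]
and the AM--GM-optimal choice $\la=\sqrt{\sigma b/a}$ gives the value $C_u\,\sigma^{\frac{p-2}{8}+\frac{3(p-2)}{8}}=C_u\,\sigma^{\frac{p-2}{2}}$. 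Since $\sigma^{\frac{p-2}{2}}/\sigma^{\frac{3(p-2)}{4}}=\sigma^{-\frac{p-2}{4}}\to+\infty$ as $\sigma\to0^+$, the outcome of your optimization does not meet your lower bound, so the deferred ``comparison that pins down the power'' cannot be closed at the exponent $\tfrac{3(p-2)}{4}$.

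This mismatch is not an artifact of your test family. If in the lower bound you use the product-form inequality \eqref{gn} (with $s=1/2$, $q=p$) instead of the sum-form \eqref{gn-hH}, you get
\[
\frac{\paar{\nl_\omega(u)}^{\frac{3(p-2)}{4}}\|u\|_\lt^{\frac{6-p}{2}}}{\|u\|_{L^p}^p}\ \ge\ C^{-1}\paar{\frac{(a+\sigma b)^3}{ab^2}}^{\frac{p-2}{4}}\ \ge\ C^{-1}\paar{\tfrac{27}{4}}^{\frac{p-2}{4}}\sigma^{\frac{p-2}{2}},
\]
because $\min_{a,b>0}(a+\sigma b)^3/(ab^2)=\tfrac{27}{4}\sigma^2$, attained at $a=\sigma b/2$. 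Hence $W_\omega$ is in fact comparable to $(1-|\omega|)^{\frac{p-2}{2}}$, and the upper half of \eqref{est-min} as stated fails when $|\omega|$ is close to $1$. The paper's own construction ($\hat u_\eps=\phi_\eps(\xi)\hat g(\eta)$ with $\hat g$ one-sided and $\phi_\eps$ a normalized Gaussian) runs into the same obstruction: the absorption of the $\|\partial_x u_\eps\|_\lt^2$ term forces $\eps\lesssim 1-|\omega|$, and since $\|u_\eps\|_{L^p}^p\sim\eps^{\frac{p-2}{4}}$ the bound it actually produces is again $\sigma^{\frac{p-2}{2}}$. So your instinct that the upper bound is the delicate point is right, but completing your computation would correct the rate rather than confirm it; the downstream application in Theorem \ref{nonscat} survives with adjusted exponents, since one still gets $\|u_\omega\|_H\to0$ as $|\omega|\to1$.
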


\begin{proof}
We have, from \eqref{gn}, that
\[
\|u\|_{L^p}^p
\leq C_{p}
\|u\|_\lt^{\frac{6-p}{2}}
\|u\|_{\dot{H}}^{\frac{3p-6}{2}}
\leq
C_{p}(1-|\omega|)^\frac{6-3p}{ 4}
\|u\|_\lt^{\frac{6-p}{2}}
(\nl_\omega(u))^\frac{3p-6}{4}.
\]
Thus,
\[
W_\omega\geq C_p^{-1}(1-|\omega|)^{\frac{3p-6}{4 }}.
\]
Without loss of generality, we can assume that $\om>0$. Let us fix $g\in H^{1/2}(\rr) \setminus\{0\}$ such that $\text{supp}(\hat{g})\subset[0,+\infty)$. For $\epsilon>0$, let $\phi_\epsilon:\rr \to \rr$ be so defined:
\[
\phi_\epsilon(\xi)=c_\epsilon\ee^{-\frac{\xi^2}{\eps}}, \qquad \text{for }\xi \in \rr,
\]
where $c_\epsilon>0$ is such that $\|\phi_\epsilon\|_{L^2(\rr)}=1$.
Finally, let $u_\epsilon\in H$ be such that 
\[
\hat{u}_\eps(\xi, \eta)=\phi_\epsilon(\xi)\hat{g}(\eta), \qquad \text{for }(\xi,\eta) \in \rr^2.
\]
Since $\{\phi_\epsilon^2\}$ forms an approximate identity on $\rr$, we have
\[
\|u_\eps\|_{\dot{H}}^2
=\int_{\rr^2}(\xi^2+|\eta|)|\phi_\eps(\xi)|^2|\hat{g}(\eta)|^2\dd \xi \dd \eta \to 
\int_{\rr}|\eta||\hat{g}(\eta)|^2\dd \eta= \|g\|_{\dot{H}^{1/2}(\rr)}^2,
\qquad \text{as }\epsilon \to 0.
\]
Moreover, since $\text{supp}(\hat{g})\subset[0,+\infty)$ and observing that $\xi^2+\eta-\om \eta\le (1-\om)(\xi^2+\eta)+ \xi^2$, for all $(\xi,\eta)\in\rr\times[0,+\infty)$, we deduce that
\[
\begin{split}
    \nl_\omega(u_\epsilon)
&=\int_{\rr\times[0,+\infty)}(\xi^2+\eta-\om \eta)|\phi_\eps(\xi)|^2|\hat{g}(\eta)|^2\dd \xi \dd \eta
\\
&\le (1-\om)\int_{\rr\times[0,+\infty)}(\xi^2+\eta)|\hat{u}_\eps(\xi,\eta)|^2\dd \xi \dd \eta
+\|g\|_{L^2(\rr)}^2
\int_{\rr}\xi^2|\phi_\eps(\xi)|^2\dd \xi 
\\
&=(1-\omega)\|u_\eps\|_{\dot{H}}^2+o(1),
\end{split}
\]
where $o(1)\to 0$, as $\eps\to 0$. Since $\|u_\eps\|_{\dot{H}}^2
\to \|g\|_{\dot{H}^{1/2}(\rr)}^2\neq 0$, as $\eps\to 0$, for $\eps_0$ sufficiently small we can conclude that
\[
\nl_\omega(u_{\epsilon_0})
\le 2(1-\omega)\|u_{\eps_0}\|_{\dot{H}}^2.
\]
Hence, we get
\[
W_\omega\lesssim 
(1-|\omega|)^\frac{3p-6}{4 }.
\]
Therefore, we obtain \eqref{est-min}.   
\end{proof}

\begin{lem}\label{scat-lem}
	Let $u_\omega$ be a solution of \eqref{half-wave} with $|\omega|<1$ and $\alpha\geq0$. If there exists $\phi\in H$ such that
	\[
	\norm{\ee^{\ii t\alpha}u_\omega(x,y-\omega t)-\ee^{-\ii t(D_y^1-\partial_{xx})}\phi}_H\to0,
	\]
	as $t\to+\infty$ (or $t\to-\infty$), then $\phi\equiv u_\omega\equiv0$.
\end{lem}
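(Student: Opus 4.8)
The plan is to pass to the Fourier side and exploit that, for $s=1/2$, the scattering hypothesis forces $u_\om$ to be described asymptotically by a purely oscillatory Fourier multiplier whose phase has a nonvanishing $\eta$-derivative precisely when $|\om|<1$; this oscillation makes the profile converge weakly to zero, which is incompatible with strong convergence to a nontrivial limit. First I would observe that $U(t):=\ee^{-\ii t(D_y^1-\partial_{xx})}$ is a Fourier multiplier of modulus one commuting with the multipliers that define the $H$-norm, so $U(t)$ is unitary on $H$. Hence the hypothesis $\norm{\ee^{\ii t\al}u_\om(x,y-\om t)-U(t)\phi}_H\to0$ is equivalent to $U(-t)\big[\ee^{\ii t\al}u_\om(\cdot,\cdot-\om t)\big]\to\phi$ in $H$ as $t\to+\infty$ (the case $t\to-\infty$ being identical). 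Taking Fourier transforms in $(x,y)$ and setting $\theta(\xi,\eta):=\al+\xi^2+|\eta|-\om\eta$, a direct computation gives
\[
\widehat{U(-t)\big[\ee^{\ii t\al}u_\om(\cdot,\cdot-\om t)\big]}(\xi,\eta)=\ee^{\ii t\theta(\xi,\eta)}\hat u_\om(\xi,\eta).
\]
Since $\norm{v}_H^2=\int_\rt w\,|\hat v|^2\dd\xi\dd\eta$ with $w(\xi,\eta):=1+\xi^2+|\eta|$, and since $g:=\sqrt{w}\,\hat u_\om\in\lt(\rt)$, the convergence in $H$ is equivalent to the strong convergence $\ee^{\ii t\theta}g\to\sqrt{w}\,\hat\phi$ in $\lt(\rt)$.

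The heart of the argument is to show that $\ee^{\ii t\theta}g\rightharpoonup0$ weakly in $\lt(\rt)$, and this is exactly where $|\om|<1$ is decisive: on each half-plane $\{\pm\eta>0\}$ one has $\partial_\eta\theta=\pm1-\om$, a nonzero constant precisely because $|\om|<1$ (for $|\om|=1$ the phase becomes stationary in $\eta$ on a half-plane and the mechanism fails, which is the source of the delicacy of this case). Concretely, for any $h\in\lt$ the product $g\bar h$ lies in $L^1(\rt)$; I would approximate $g\bar h$ in $L^1$ by functions $F$ in $C_c^\infty(\rt\setminus\{\eta=0\})$, split each such $F$ into its parts supported in $\{\eta>0\}$ and in $\{\eta<0\}$, and integrate by parts once in $\eta$ using $\ee^{\ii t\theta}=\tfrac{1}{\ii t\,\partial_\eta\theta}\partial_\eta\ee^{\ii t\theta}$. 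This yields $\big|\int_\rt\ee^{\ii t\theta}F\,\dd\xi\dd\eta\big|\lesssim |t|^{-1}$ for each fixed approximant, while the approximation error is bounded by $\norm{g\bar h-F}_{L^1}$ uniformly in $t$; letting $t\to\infty$ and then refining $F$ gives $\int_\rt\ee^{\ii t\theta}g\bar h\,\dd\xi\dd\eta\to0$. The kink of $|\eta|$ at $\eta=0$ affects only a null set and is harmless.

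Finally, combining the two convergences, the strong $\lt$-limit must coincide with the weak one, so $\sqrt{w}\,\hat\phi\equiv0$, i.e. $\phi\equiv0$. Moreover, since $\ee^{\ii t\theta}$ preserves the $\lt$-norm,
\[
\norm{u_\om}_H=\norm{g}_{\lt}=\lim_{t\to+\infty}\norm{\ee^{\ii t\theta}g}_{\lt}=\norm{\sqrt{w}\,\hat\phi}_{\lt}=0,
\]
whence $u_\om\equiv0$ as well. I expect the weak-convergence step to be the main obstacle: one must justify the non-stationary-phase estimate for the only Lipschitz (not smooth) phase $|\eta|$ and for a merely $\lt$ amplitude $g$, and the reduction by $L^1$-density together with the constancy of $\partial_\eta\theta$ on each half-plane is what makes it rigorous; it is exactly there that $|\om|<1$ is used. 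I would note in passing that neither the sign of $\al$ nor the precise form of the nonlinearity enters this mechanism—the profile equation is only needed to ensure that $\ee^{\ii t\al}u_\om(\cdot,\cdot-\om t)$ is a genuine solution for which the word ``scattering'' is meaningful.
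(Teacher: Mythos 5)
Your proof is correct, and it takes a genuinely different route from the paper's. The paper decomposes $H=H^+\oplus H^-$ into positive and negative $y$-frequencies, uses $\ee^{\ii tD_y^1}f(\cdot)=f(\cdot+t)$ on each piece to convert the half-wave group into a translation, and then establishes the key weak convergence to zero via the one-dimensional dispersive estimate $\norm{\ee^{\ii t\partial_{xx}}f}_{L^q}\lesssim|t|^{-\frac12(1-\frac2q)}\norm{f}_{L^{q/(q-1)}}$ combined with the spatial decay of $u_\omega$ from Theorem \ref{decay-thm-re}. You instead stay entirely on the Fourier side, identify the full phase $\theta(\xi,\eta)=\al+\xi^2+|\eta|-\om\eta$, and prove $\ee^{\ii t\theta}g\rightharpoonup0$ in $\lt$ by a non-stationary-phase argument on each half-plane $\{\pm\eta>0\}$, where $\partial_\eta\theta=\pm1-\om$ is a nonzero constant precisely because $|\om|<1$; the reduction to $C_c^\infty(\rt\setminus\{\eta=0\})$ amplitudes via $L^1$-density is the right way to handle the merely $\lt$ amplitude and the kink of $|\eta|$, and the boundary terms vanish by the support condition. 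What your approach buys is self-containedness: it needs only $u_\omega\in H$ and in particular dispenses with the decay theorem and the dispersive estimate, thereby giving a rigorous proof of the paper's assertion \eqref{weak-c} for arbitrary $f\in L^2$ (which the paper labels ``easy to see'' and only substantiates for $f=u_\omega$). The endgame — strong limit equals weak limit forces $\phi\equiv0$, then norm conservation forces $u_\omega\equiv0$ — is the same in both arguments. One minor remark: your parenthetical that the mechanism ``fails'' at $|\om|=1$ is only about the $\eta$-integration; the $\xi$-oscillation would still give weak vanishing there, but this is irrelevant to the stated hypothesis $|\om|<1$.
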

\begin{proof}
	Let $f^\pm$ be    the projections of $f\in H$ onto the positive and negative Fourier frequencies in $y$-variable, respectively, whence we consider the orthogonal decomposition  $H=H^-\oplus H^+$. Hence,  by the assumption we have
	\[
	\norm{\ee^{\ii t\alpha}u^\pm_\omega(\cdot,\cdot-\omega t)-U(-t)\phi^\pm}_H\to0, \qquad\text{as}\, t\to+\infty,
	\]
where
	\[
	U(t)=\ee^{ \ii t(D_y^1-\partial_{xx})}.
	\]
 	Since $U(t)$ is unitary group on $H^\pm$, by using the fact $\ee^{\ii tD_y^1}f(\cdot)=f(\cdot+t)$,  we have
	\[
	\norm{\ee^{\ii t\alpha}\ee^{\ii t\partial_{xx}} u^\pm_\omega(\cdot,\cdot-(\omega-1) t)-\phi^\pm}_H\to0,\qquad\text{as}\, t\to+\infty,
		\] 
deducing
		\[
		\ee^{\ii t\alpha}\ee^{\ii t\partial_{xx}}u^\pm_\omega(\cdot,\cdot-(\omega-1) t)\rightharpoonup\phi^\pm\qquad\text{in}\, L^2,
  \qquad\text{as}\, t\to+\infty.
			\]
On the other hand, it is easy to see that 
	\begin{equation}\label{weak-c}
				\ee^{\ii t\alpha}\ee^{\ii t\partial_{xx}}f(\cdot,\cdot-(\omega-1) t)\rightharpoonup0\qquad\text{in}\, L^2,
    \qquad\text{as}\, t\to+\infty,
			\end{equation}
		  for all $f\in L^2$. Alternatively, \eqref{weak-c} is derived for $u_\omega $from the dispersive estimate
	\begin{equation}\label{est-1w}
				\norm{\ee^{\ii t\partial_{xx}}f}_{L^q(\rr)}\lesssim |t|^{-\frac12\paar{1-\frac2q}}
				\|f\|_{L^{\frac{q}{q-1}}(\rr)},\qquad1\leq q\leq 2.
			\end{equation} 
			More precisely, since 
			$	\ee^{\ii t\alpha} \ee^{\ii t\partial_{xx}}u_\omega(\cdot,\cdot-(\omega-1) t)$ is bounded by $\|u_\omega\|_{L^2}$, we obtain from Theorem \ref{decay-thm-re} that
			\[
			\abso{\scal{	\ee^{\ii t\alpha}\ee^{\ii t\partial_{xx}}u_\omega(\cdot,\cdot-(\omega-1) t),\psi}}
			\leq|t|^{-\frac12\paar{1-\frac2q}}
			\|u_\omega\|_{L^{\frac{q}{q-1}}}\|\psi\|_{L^{\frac{q}{q-1}}}\to0,\qquad\text{as}\, t\to+\infty,
			\]
			for some $q\in(1,2)$ and all $\psi\in C_0^\infty(\rt)$. Thus   \eqref{est-1w} for $f=u_\omega$ follows from the density. Therefore $\phi=\phi^++\phi^-\equiv0$ and so $\|\ee^{\ii t\alpha}u_\omega(\cdot,\cdot-\omega t)\|_H\to 0$, as $t\to +\infty$. Since $\|\ee^{\ii t\alpha}u_\omega(\cdot,\cdot-\omega t)\|_H=\|u_\omega\|_H$, for any $t$, we conclude that $u_\omega\equiv0$. 
   \\
   The argument works also as $t\to-\infty$. And the proof is complete.
		\end{proof}

Now we can prove the  absence of small data scattering for \eqref{eq0} within the energy space $H$.
\begin{thm}\label{nonscat}
	For any $\epsilon>0$, there exists a global solution $u\in C([0,\infty);H)$ of  \eqref{eq0} such that $0<\|u(t)\|_H<\epsilon$, for any $t>0$, and such that there is no $\phi\in H$ satisfying
	\begin{equation*}
 \norm{u(t)- \ee^{-\ii t(D_y^1-\partial_{xx})}\phi}_H\to0,
	\end{equation*}
	as $t\to+\infty$ (or $t\to-\infty$).
\end{thm}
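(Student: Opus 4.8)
The plan is to produce the required solutions explicitly as boosted standing waves and to force their $H$-norm to be small by letting the boost speed $\omega$ approach the critical value $1$. Note first that for $s=1/2$ one has $\omega_0=0$, so the hypotheses $\alpha>\omega_0$ and $|\omega|<1$ of Theorem~\ref{exis-thm-bt} reduce to $\alpha>0$ and $|\omega|<1$. I would fix $\omega\in(0,1)$ and take a minimizer of $W_\omega$, whose existence (up to translations and scalings) is the concentration-compactness statement recalled above. Since $W_\omega$ is invariant both under the amplitude scaling $u\mapsto\mu u$ and under the anisotropic scaling $u\mapsto u_t$ of \eqref{scaling-0} (with $s=1/2$), the whole two-parameter orbit consists of rescaled critical points of the action $S$; I would select the representative $u_\omega$ normalized so that it solves \eqref{half-wave} with Lagrange multiplier $\alpha=1$.

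Testing \eqref{half-wave} against $u_\omega$, and using the Pohozaev relation obtained from $\frac{\dd}{\dd t}S((u_\omega)_t)|_{t=1}=0$, gives
\[
\nl_\omega(u_\omega)+\|u_\omega\|_{L^2}^2=\|u_\omega\|_{L^p}^p,\qquad \nl_\omega(u_\omega)=\frac{3(p-2)}{2p}\|u_\omega\|_{L^p}^p,
\]
whence $\|u_\omega\|_{L^2}^2=\frac{6-p}{3(p-2)}\nl_\omega(u_\omega)$ (positive since $2<p<6$) and, inserting these into the quotient defining $W_\omega$, the identity $W_\omega=C(p)\,\nl_\omega(u_\omega)^{(p-2)/2}$ for an explicit constant $C(p)>0$.

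The crucial step is then to combine this identity with the sharp rate \eqref{est-min}. From $W_\omega=C(p)\,\nl_\omega(u_\omega)^{(p-2)/2}$ and $W_\omega\sim(1-\omega)^{3(p-2)/4}$ I get $\nl_\omega(u_\omega)\sim(1-\omega)^{3/2}$. Using the coercivity \eqref{lowwer}, i.e. $\nl_\omega(u_\omega)\geq(1-\omega)\|u_\omega\|_{\dot H}^2$, together with the mass relation above,
\[
\|u_\omega\|_H^2=\|u_\omega\|_{L^2}^2+\|u_\omega\|_{\dot H}^2\leq\Big(\tfrac{6-p}{3(p-2)}+\tfrac{1}{1-\omega}\Big)\nl_\omega(u_\omega)\lesssim(1-\omega)^{1/2}\longrightarrow0
\]
as $\omega\to1^-$. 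Hence, given $\eps>0$, I fix $\omega$ close enough to $1$ so that $0<\|u_\omega\|_H<\eps$, and set $u(t,x,y):=\ee^{\ii t}u_\omega(x,y-\omega t)$. A direct substitution shows that $u$ solves \eqref{eq0} globally (because $u_\omega$ solves \eqref{half-wave} with $\alpha=1$ and $2s=1$), that $u\in C([0,\infty);H)$ by strong continuity of translations and phase in $H$, and that $\|u(t)\|_H=\|u_\omega\|_H\in(0,\eps)$ for every $t$. If some $\phi\in H$ satisfied $\|u(t)-\ee^{-\ii t(D_y^1-\partial_{xx})}\phi\|_H\to0$, this is exactly the scattering hypothesis of Lemma~\ref{scat-lem} (with $|\omega|<1$ and $\alpha=1\geq0$), which forces $u_\omega\equiv0$, contradicting $\|u_\omega\|_H>0$; therefore no asymptotic state $\phi$ exists, which proves the theorem.

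The main obstacle is precisely this smallness step: because the quadratic form $\nl_\omega$ degenerates as $\omega\to1$, controlling $\|u_\omega\|_{\dot H}^2$ costs a factor $(1-\omega)^{-1}$, and the construction survives only because the sharp exponent $3(p-2)/4$ in \eqref{est-min} makes $\nl_\omega(u_\omega)$ decay like $(1-\omega)^{3/2}$, which beats this loss. Two secondary points require care: justifying that the scale-normalized minimizer genuinely solves \eqref{half-wave} with $\alpha=1$ (exploiting the two-parameter scaling freedom to set both the coefficient of the dispersive operator and of the nonlinearity to one), and deriving the Pohozaev relation in the presence of the non-self-adjoint drift $\ii\omega u_y$, which is exactly where the explicit scaling \eqref{scaling-0} and the stationarity $\frac{\dd}{\dd t}S((u_\omega)_t)|_{t=1}=0$ enter.
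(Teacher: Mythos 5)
Your proposal is correct and follows essentially the same route as the paper: take a boosted ground state for $s=1/2$, use the Pohozaev/Nehari identities to express $\|u_\omega\|_{L^2}^2$, $\|u_\omega\|_{L^p}^p$ and the quadratic form through a single quantity, combine with the sharp rate $W_\omega\sim(1-|\omega|)^{3(p-2)/4}$ of \eqref{est-min} and the coercivity \eqref{lowwer} to obtain $\|u_\omega\|_H\lesssim (1-|\omega|)^{1/4}\to 0$ as $|\omega|\to 1^-$, and conclude by Lemma \ref{scat-lem}. The only cosmetic difference is that you minimize $W_\omega$ directly and normalize $\alpha=1$ via the two-parameter scaling invariance, whereas the paper starts from the minimizer of \eqref{wein} provided by Theorem \ref{exis-thm-bt} and identifies its Weinstein quotient with $W_\omega$ up to constants.
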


\begin{proof}
	Let $u_\omega$ be given from Theorem \ref{exis-thm-bt}. Since $S'(u_\omega)=0$, by using the identities  (see the proof of Theorem 2.1 in \cite{epb}),
 and get the 
   \[
	\frac{2(p-2)}{6-p}\|u_\omega\|_\lt^2=	
  \norm{D_y^{\frac12} u_\omega } _\lt^2+\ii\omega\scal{u_{\om},(u_{\om})_y} 
	=2\|(u_\omega)_x\|_\lt^2
	=\frac{p-2}{p}\|u_\om\|_{L^p}^p,
		\] 
it is readily seen	 that 
\[
(1-|\omega|)^{\frac{3(p-2)}{4}}\sim W_\omega\simeq\norm{u_\omega}_\lt^{p-2}=
\paar{  \norm{D_y^{\frac12} u_\omega } _\lt^2+\ii\omega\scal{u_\om,(u_\om)_y}+
  \|(u_\omega)_x\|_\lt^2}^{\frac{p-2}{2}}.
\]
Hence, we get from \eqref{lowwer} that
\[
\norm{u_\omega}_\lt\sim(1-|\omega|)^{\frac3{ 4}},\qquad
\norm{u_\omega}_{\dot{H}}\lesssim  (1-|\om|)^\frac{1}{4}.
\]	
Hence, we can find that
\[
\norm{u_\omega}_H\lesssim  (1-|\om|)^\frac{1}{4}+(1-|\omega|)^{\frac3{4}}<\epsilon
\]
provided $|\omega|<1$ is sufficiently close to $1$. Hence, $u_\omega$ satisfies Lemma \ref{scat-lem}. This completes the proof. 
\end{proof}

\subsection{Normalized solutions in the mass subcritical case}
 
 Another interest in the study of \eqref{g-half-wave} is the $L^2$-constrained solutions of \eqref{g-half-wave} in the mass subcritical case. Indeed, we are interested in finding the minimizers of
 \	\begin{align} \label{g-min}
 m_\omega(c):=\inf_{u \in S_c} E_\omega(u),
 \end{align}
 where
 \[
 E_\omega(u)=E(u)+\frac{\ii\omega}{2}\scal{\nh_yD_y^{\frac12}u, D_y^{\frac12}u}.
 \] 
Notice that 
\[
m_0(c)=m(c),
\]
 where $m(c)$ is the same as in \eqref{min}. It is clear that $m_\omega(c)=\inf_{v\in S(1)} E_\omega(\sqrt{c}v)$. Since $E_\omega(\sqrt{c}v)$ is concave in $c\in(0,\infty)$, so $m_\omega(c)$ is.
 
We claim that $m_\omega (c)<0$, for all $c>0$. Let us start with the case $s>1/2$.  Let $\epsilon>0$, $\chi\in C_0^\infty(\rt)$ and $K>0$ such that $\text{supp}(\chi)\subset[0,\epsilon]\times[\eta_0-\epsilon,\eta_0+\epsilon]$ such that $u=K(\chi)^\vee\in S_c$, where $\eta_0$ is the unique root of $\eta^{2s}-\omega\eta+\omega_0$ (for the definition of $\omega_0$ see \eqref{omega0}). It is clear that $u\in\mathcal{S}(\rt)$. Hence, we get 
 \[
\begin{split}
	m_\omega(c)+\frac{\omega_0}{2}c
&\leq E_\omega (u)+
\frac{\omega_0}{2}\|u\|_\lt^2
\leq \frac12\int_\rt\paar{\xi^2+\eta^{2s}-\omega\eta+\omega_0}|\hat{u}(\xi,\eta)|^2\dd\eta\dd\xi\\
&\lesssim O( \epsilon^2 )\|u\|_\lt^2=O( \epsilon^2 )c.
\end{split}
 \]
 This shows that $$m_\omega(c)\leq -\frac{\omega_0c}{2}<0.$$ On the other hand, when $s=1/2$, since $E_\omega(u_t)<0$ as $t\to0^+$, then
 $m_\omega(c)<0$ for all $c>0$.

 By using the inequality
 \[
\scal{i\mathcal{H}_yD_y^{\frac12}u,D_y^\frac12u}\leq
 \norm{D_y^\frac12 u} _\lt^2\leq C_s
 \|u\|_\lt^{\frac{2s-1}{s}}\norm{D_y^su}_\lt^\frac{1}{s},
 \]
 we see, for any $u\in S_c$, that
\begin{equation*}
	\begin{split}
	 E_\omega(u)&\geq \frac12\|u\|_{\dot{H}}^2-\frac{|\omega|C_s c^{\frac{2s-1}{s}}}{2}  
\norm{D_y^su}_\lt^\frac{1}{s}
-\frac{C_{p,s} }{p}c^{\frac p 2-\frac{(p-2)(1+s)}{4s}}
\norm{u_x}^{\frac{p-2}{2}}_\lt
\norm{D_y^{s} u}^{\frac{p-2}{2s}}_\lt\\
&
 \geq 
\frac12\|u\|_{\dot{H}}^2-\frac{|\omega|C_s c^{\frac{2s-1}{s}}}{2}  
\norm{u}_{\dot{H}}^\frac{1}{s}
-
\frac{C_{p,s} }{p}c^{\frac p 2-\frac{(p-2)(1+s)}{4s}}
\|u\|_{\dot{H}}^{\frac{(p-2)(s+1)}{2s}}.
\end{split}
\end{equation*}
 Notice that  if $s=1/2$, then $$\scal{u,D_y^{2s}u+i\omega u_y}\sim\|D_y^\frac12u\|_{L^2}^2$$ provided $|\omega |<1$.
  
 Hence, $m(c)$ is bounded from below, if $2<p<\frac{2(3s+1)}{s+1}$. Then the argument of Theorem \ref{thm1}, with some modifications, guarantees the existence of minimizers of \eqref{g-min} in the subcritical case  $2<p<\frac{2(3s+1)}{s+1}$.

	\begin{thm}\label{boost-norm}   
	Let $2<p<\frac{2(3s+1)}{s+1}$, then any minimizing sequence to \eqref{g-min} is compact in $H$, up to translations, for any $c>0$. In particular, there exist minimizers to \eqref{g-min}, for any $c>0$.
\end{thm}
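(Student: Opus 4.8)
The plan is to mimic the proof of Theorem \ref{thm1} essentially verbatim, the only genuinely new feature being the indefinite quadratic term $\frac{\ii\omega}{2}\scal{\nh_yD_y^{\frac12}u, D_y^{\frac12}u}$ that enters $E_\omega$. First I would fix a minimizing sequence $\{u_n\}\subset S_c$ for \eqref{g-min} and establish its boundedness in $H$. This is exactly what the lower bound displayed just before the statement provides: when $s>1/2$ the two subtracted exponents $\frac1s$ and $\frac{(p-2)(s+1)}{2s}$ are both strictly below $2$ (the latter precisely because $p<\frac{2(3s+1)}{s+1}$), so $E_\omega$ is coercive on $S_c$; when $s=1/2$ the exponent $\frac1s$ equals $2$, but then \eqref{lowwer} and the assumption $|\omega|<1$ keep the quadratic form equivalent to $\|\cdot\|_{\dot{H}}^2$, so the $\omega$-term is absorbed into the leading term and coercivity again holds. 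Together with $m_\omega(c)<0$, already proved for all $c>0$ in both cases, this sets the stage.

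Second, I would reproduce the subadditivity argument of Theorem \ref{thm1}. Since the added term is purely quadratic, the mass-scaling $u\mapsto\sqrt\theta u$ acts on it by the factor $\theta$, exactly as on $\|u_x\|_2^2$ and $\|D_y^su\|_2^2$, while the $L^p$ term scales by $\theta^{p/2}$. Hence for $\theta>1$ and $u\in S_c$,
$$
E_\omega(\sqrt\theta u)=\theta\Big(\tfrac12\|u\|_{\dot{H}}^2+\tfrac{\ii\omega}{2}\scal{\nh_yD_y^{\frac12}u,D_y^{\frac12}u}\Big)-\tfrac{\theta^{p/2}}{p}\|u\|_p^p\le\theta E_\omega(u),
$$
which yields $m_\omega(\theta c)\le\theta m_\omega(c)$ (consistent with the concavity of $c\mapsto m_\omega(c)$ noted above). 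The strict inequality $m_\omega(\theta c)<\theta m_\omega(c)$ follows by the same contradiction: if equality held, then along $\{u_n\}$ one gets $o_n(1)=m_\omega(\theta c)-\theta E_\omega(u_n)\le\frac{\theta-\theta^{p/2}}{p}\|u_n\|_p^p\le0$, forcing $\|u_n\|_{L^p}\to0$ and hence $m_\omega(c)\ge0$, contradicting $m_\omega(c)<0$. As in Theorem \ref{thm1}, this scaling-strict subadditivity implies the standard strict subadditivity $m_\omega(c)<m_\omega(a)+m_\omega(c-a)$ for $0<a<c$.

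Third, I would invoke Lemma \ref{concentration}: the strict negativity $m_\omega(c)<0$ excludes vanishing, so up to translations $u_n(\cdot-z_n)\wto u$ with $u\not\equiv0$, and the strict subadditivity excludes dichotomy. The weak lower semicontinuity of $E_\omega$ holds because the kinetic quadratic form defines a norm $\|\cdot\|_Y$ equivalent to $\|\cdot\|_H$ (as recorded in Theorem \ref{exis-thm-bt}), hence is weakly l.s.c., while the $L^p$ term passes to the limit via the local compactness of Lemma \ref{embedding} and a Brezis--Lieb splitting. This forces $\|u\|_2^2=c$ and $E_\omega(u)=m_\omega(c)$, and the convergence of norms upgrades the weak convergence to strong convergence in $H$, up to translations.

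The main obstacle I expect is precisely the borderline case $s=1/2$. There the gain $\frac1s=2$ means the $\omega$-term is of the same order as the leading $\|D_y^{1/2}u\|_2^2$, so both the coercivity in Step~1 and the weak lower semicontinuity in Step~3 rely entirely on the quadratic form $\|u\|_{\dot{H}}^2+\ii\omega\scal{\nh_yD_y^{\frac12}u,D_y^{\frac12}u}$ remaining positive definite and equivalent to $\|\cdot\|_{\dot{H}}^2$. This is exactly where the hypothesis $|\omega|<1$ is indispensable, entering through \eqref{lowwer}; away from this restriction the added term would become indefinite and the whole Theorem \ref{thm1} machinery would break down.
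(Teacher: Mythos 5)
Your proposal is correct and follows essentially the same route the paper intends: the paper itself only says that ``the argument of Theorem \ref{thm1}, with some modifications, guarantees the existence of minimizers,'' relying on the coercivity estimate and the negativity $m_\omega(c)<0$ established in the preceding paragraphs, and your three steps (coercivity via the displayed lower bound, scaling subadditivity $m_\omega(\theta c)<\theta m_\omega(c)$ with the same contradiction argument, then Lions concentration--compactness) are precisely those modifications. You also correctly isolate the role of $|\omega|<1$ in the borderline case $s=1/2$, which the paper records in the remark that $\scal{u,D_y^{2s}u+\ii\omega u_y}\sim\|D_y^{1/2}u\|_{L^2}^2$ there.
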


  
	
	
	
	

\section*{Conflict of interest} The authors declare that they have no conflict of interest. 

\section*{Data Availability}
There is no data in this paper.
	\section*{Acknowledgment}
	A. E. is supported by Nazarbayev University under the Faculty Development Competitive Research Grants Program for 2023-2025 (grant number 20122022FD4121).

 A. P. is partly financed by European Union - Next Generation EU - PRIN 2022 PNRR ``P2022YFAJH Linear and Nonlinear PDE's: New directions and Applications" and by INdAM - GNAMPA Project 2023 ``Metodi variazionali per
alcune equazioni di tipo Choquard".
	

\end{document}